\newcolumntype{x}[1]{>{\centering\arraybackslash}p{#1}}
\def\thmhead@plain#1#2#3{%
  \thmname{#1}\thmnumber{\@ifnotempty{#1}{ }\@upn{#2}}%
  \thmnote{ {\the\thm@notefont#3}}}
\let\thmhead\thmhead@plain
\newtheorem{theorem}{Theorem}[section]
\newtheorem{lemma}[theorem]{Lemma}
\newtheorem{proposition}[theorem]{Proposition}
\newtheorem{corollary}[theorem]{Corollary}
\numberwithin{equation}{section}
\newtheorem{example}{Example}[section]
\theoremstyle{remark}
\newtheorem{remark}[theorem]{Remark}
\theoremstyle{definition}
\newtheorem{definition}[theorem]{Definition}
\newcommand{\norm}[1]{\left\lVert #1 \right\rVert}
\def \sss{\scriptscriptstyle}
\newcommand{\mb}[1]{{\color{black} #1}}
\newcommand{\mc}[1]{\mathcal{#1}}
\newcommand{\mf}[1]{\mathfrak{#1}}
\newcommand{\w}[1]{\widetilde{#1}}
\newcommand{\h}[1]{\hat{#1}}
\def\eqq{\Longleftrightarrow}
\def\q {\quad}
\def \l{\langle}
\def \r{\rangle}
\def\bb{\begin{equation}
  \left\{\ 
   \begin{aligned} }
\def\ee{   \end{aligned}
  \right.
  \end{equation}}
\def\mm{ \left[
 \begin{matrix}}
\def\nn{\end{matrix} \right] } 
\def \lad {\lambda}
\def \Lad {\Lambda}
\def \d{\delta}
\def \ep {\varepsilon}
\def \vp {\varphi}
\def \na {\nabla}
\def \ddiv {{\rm div}}
\def \ww {\omega}
\def \R{\mathbb{R}}
\def \C{\mathbb{C}}
\def \p {\partial}
\def \dd {\cdot}
\def \cd {\cdots}
\def \t {\times}
\def \ck {\chi_\kappa^2}
\def \ka {\kappa}
\def \si{\sigma}
\newcommand{\pure}[1]{\ket{#1}\bra{#1}}
\def \gs {\Gamma_\si}
\def \bh {\mathcal{B}(\mc{H})}
\def \dh {\mathcal{D}(\mc{H})}
\def \dhh {\mathcal{D}_+(\mc{H})}
\DeclareMathOperator{\tr}{Tr}
\DeclareMathOperator{\ran}{Ran}
\DeclareMathOperator{\Var}{Var}
\DeclareMathOperator{\hess}{Hess}
\DeclareMathOperator{\grad}{grad}
\DeclareMathOperator{\Ric}{Ric}
\DeclareMathOperator{\Ent}{Ent}
\def \id {{\rm id}}
\def  \mi {{\bf 1}}
\title[Interpolation quantum functional inequalities]{Interpolation between modified logarithmic Sobolev and Poincar\'{e} inequalities for quantum Markovian dynamics
}
\begin{document}

\author[B. Li]{Bowen Li} %
\address[B. Li]{Department of Mathematics, Duke University, Durham, NC 27708.}
\email{bowen.li200@duke.edu}

\author[J. Lu]{Jianfeng Lu}
\address[J. Lu]{Departments of Mathematics, Physics, and Chemistry, Duke University, Durham, NC 27708.}
\email{jianfeng@math.duke.edu}


\begin{abstract}
We define the quantum $p$-divergences and introduce Beckner's inequalities for primitive quantum Markov semigroups \mb{on a finite-dimensional matrix algebra}
satisfying the detailed balance condition. Such inequalities quantify the convergence rate of the quantum dynamics in the noncommutative $L_p$-norm. We obtain a number of implications between Beckner's inequalities and other quantum functional inequalities, \mb{as well as the hypercontractivity}.
In particular, we show that the quantum Beckner's inequalities interpolate between the Sobolev-type inequalities and the Poincar\'{e} inequality in a sharp way. We provide a uniform lower bound for the Beckner constant $\alpha_p$ in terms of the spectral gap and establish the stability of $\alpha_p$ with respect to the invariant state. As applications, we compute the Beckner constant for the depolarizing semigroup and \mb{discuss the mixing time}. For symmetric quantum Markov semigroups, we derive the moment estimate, which further implies a concentration inequality. 

We introduce a new class of quantum transport distances $W_{2,p}$ interpolating the quantum 2-Wasserstein distance 
by Carlen and Maas [J. Funct. Anal. 273(5), 1810–1869 (2017)] and a noncommutative $\dot{H}^{-1}$ Sobolev distance. We show that the quantum Markov semigroup with \mb{$\si$-GNS detailed balance} is the gradient flow of a quantum $p$-divergence with respect to the metric $W_{2,p}$. We prove that the set of quantum states equipped with $W_{2,p}$ is a complete geodesic space. We then consider
the associated entropic Ricci curvature lower bound via the geodesic convexity of $p$-divergence, and obtain an HWI-type interpolation inequality. This enables us to prove that the positive Ricci curvature implies the quantum Beckner's inequality, from which a transport cost and Poincar\'{e} inequalities can follow.

\end{abstract}

\maketitle

\section{Introduction}

Realistic quantum systems inherently interact with their surroundings and can be generally modeled by the open quantum dynamics. In the weak coupling limit between the system and the environment, the dynamics would be Markovian and described by the so-called quantum Markov semigroup (QMS) or the Lindblad equation, which is a natural analog of the Fokker-Planck equation in the quantum setting \cite{breuer2002theory}. Similarly to the theory of Markov semigroups, the analysis of the mixing time 
is of central importance for a  QMS, and is closely related to the functional inequalities. 
In this work, we are interested in a class of convex Sobolev inequalities, referred to as quantum Beckner's inequalities. We will investigate their main properties and relations with other known quantum functional inequalities, such as Poincar\'{e} and modified log-Sobolev inequalities, via both algebraic and geometric approaches.  

\subsection{Classical convex Sobolev inequality} To motivate this work, we first review the results on the convex Sobolev inequalities in the classical setting. 
Let $(P_t)_{t \ge 0}$ be the symmetric diffusion semigroup associated with a Markov process $(X_t)_{t \ge 0}$ on a Riemannian manifold $M$ with metric $g(\dd,\dd)$ that has a unique invariant measure $\pi$. We denote by $L$ the generator of $P_t$ and define the Dirichlet form $\mc{E}(f,g)  := - \pi [f L g]$ for functions $f$ and $g$, \mb{where $\pi[\dd]$ denotes the expectation with respect to the measure $\pi$}. Bakry and \'{E}mery in their seminal work \cite{bakry1985diffusions} showed that if there exists $\kappa > 0$ such that for $f \ge 0$,
\begin{align} \label{eq:curvature_dimension}
    \Gamma_2(f,f) \ge \kappa \Gamma (f,f)\,,
\end{align}
then the convex Sobolev inequality holds:
\begin{align} \label{eq:convex_ineq}
 2 \kappa \Ent_\pi^\phi(f) \le \mc{E}(\phi'(f),f)\,,
\end{align}
which is equivalent to the exponential decay of the $\phi$-entropy $\Ent_\pi^\phi(f):= \pi[\phi(f)] - \phi(\pi[f])$ and characterizes the convergence rate of the Markov process towards its invariant measure. Here $\phi:[0,\infty) \to \R$ is assumed to be
a smooth convex function such that $\phi(1) = \phi'(1) = 0$ and $1/\phi''$ is concave.
\mb{$\Gamma$ and $\Gamma_2$ are carr\'{e} du champ operators defined as follows \cite[Section\,1.16.1]{bakry2014analysis}: for suitable functions $f$ and $g$, }
\begin{align*}
\mb{ \Gamma (f,g) := \frac{1}{2}(L(fg) - f L(g) - L(f)g)\,,\ \Gamma_2 (f,g) := \frac{1}{2}(L \Gamma(f,g) - \Gamma(f, Lg) - \Gamma(Lf, g))\,.}
\end{align*}
In the cases: $\phi_1(s) = s(\log s - 1) + 1$ and $\phi_2(s) = s^2 - 2 s + 1$, up to some constant,  \eqref{eq:convex_ineq} gives the well-known modified log-Sobolev inequality (MLSI) and the Poincar\'{e} inequality, respectively, 
\begin{align} \label{ineq_a}
\alpha \big(\pi[f \log f] - \pi[f] \log \pi[f]\big) \le \mc{E}(\log f, f)\,,
\end{align}
and 
\begin{align} \label{ineq_b}
    \lad \big(\pi[f^2] - \pi[f]^2 \big) \le \mc{E}(f , f)\,.
\end{align}
If we consider the interpolating family $\phi_p(s) = (s^p - s)/(p - 1) - s + 1$, $1 < p \le 2$, between $\phi_1$ and $\phi_2$, we obtain the Beckner's inequality:
\begin{equation} \label{ineq_c}
    \alpha_p \big( \pi[f^p] - \pi[f]^p \big)  \le p \mc{E}(f^{p - 1}, f)\,.
\end{equation}
Moreover, note from the diffusion property: $L \psi(f) = \psi'(f)L f + \psi''(f) \Gamma f$ for suitably smooth functions $\psi$ and $f$, that $\mc{E}(\log f , f) = 4 \mc{E}(f^{1/2}, f^{1/2})$ and $\mc{E}(f^q, f^{2 - q}) = (2 q - q^2) \mc{E}(f,f)$. By substituting $f = g^2$ and $f = g^q$ with $q = 2/p$ into \eqref{ineq_a} and \eqref{ineq_c}, respectively, up to constants, we have the usual log-Sobolev inequality (LSI):
\begin{equation} \label{ineq_d}
    \beta \big(\pi[g^2 \log g^2] - \pi[g^2] \log \pi[g^2]\big) \le  \mc{E}(g, g)\,,
\end{equation}
and the original Beckner's inequality first introduced in \cite{beckner1989generalized} for the Gaussian measure on $\R^d$:
\begin{equation} \label{ineq_e}
     \beta_q \big( \pi[g^2] - \pi[g^q]^{2/q} \big)  \le (2-q) \mc{E}(g, g)\,.
\end{equation}
The condition \eqref{eq:curvature_dimension} admits a deep geometric interpretation, and it is called the
Bakry-\'{E}mery (curvature-dimension) condition or $\Gamma_2$-criterion. To make this point clearer, let $L = \Delta_g - \na W \dd \na$ be the generator associated with the Ornstein–Uhlenbeck process on the manifold $M$ that admits an invariant measure $d \pi = e^{- W} d \text{vol}_M$, where $W$ is the potential and $\text{vol}_M$ is the volume form on $M$. With the help of Bochner's formula, we can compute $\Gamma(f,f) = |\na f|^2$ and
$\Gamma_2(f,f) = |\na^2 f|^2 + \Ric(L)(\na f , \na f)$, where $\Ric(L)$ is the Ricci tensor for the generator $L$, defined by $\Ric(L): = \Ric_g + \na^2 W$ with $\Ric_g$ being the standard Ricci curvature of $M$. It is easy to prove that the condition \eqref{eq:curvature_dimension} holds if and only if the Ricci curvature of $L$ is bounded below: $\Ric(L)(\dd,\dd) \ge \kappa g(\dd,\dd)$. Otto-Villani \cite{otto2000generalization} and von Renesse-Sturm \cite{von2005transport} further observed that \eqref{eq:curvature_dimension} is also equivalent to that the relative entropy with respect to $d \pi$ is displacement $\kappa$-convex on the Wasserstein space of probability measures on $M$. Inspired by this characterization, Sturm \cite{sturm2006geometry} and Lott-Villani \cite{lott2009ricci} extended the notion of Ricci curvature 
to metric measure spaces by exploiting the convexity properties of entropy functionals. See \cite{bakry2014analysis,villani2009optimal} for more details.

The above framework establishes a beautiful connection between various subjects such as partial differential equations (PDE), probability, and geometry, and has led to important research progress in these fields. The key step in the Bakry-\'{E}mery arguments \cite{bakry1985diffusions} is to estimate the second derivative of the relative entropy along the Markov semigroup, where the calculation depends on  Bochner’s formula or, more abstractly, the diffusion property. Arnold et al. \cite{arnold2001convex,arnold2008large,arnold2014sharp} revisited the Bakry-\'{E}mery method in the PDE framework and characterized the long-time asymptotics for various classes of Fokker-Planck type equations based on the convex Sobolev inequalities; see also \cite{otto2001geometry,del2002best,carrillo2000asymptotic,markowich2000trend} for the applications of functional inequalities in nonlinear Fokker-Planck type equations. Among the general convex Sobolev inequalities, the Beckner's inequality is of particular interest, since it provides an interpolating family between MLSI and Poincar\'{e} inequality \mb{and can estimate the tail of the given measure.} The recent work \cite{gentil2021family} proved a class of weighted Beckner's inequalities and the refined ones based on the Bakry-\'{E}mery method and the curvature-dimension conditions.  We also mention that \cite{arnold2007interpolation} proved the inequality \eqref{ineq_e} by  the hypercontractivity and spectral estimates. In particular, Dolbeault et al. \cite{dolbeault2009new,dolbeault2012poincare} explored the gradient flow structure of the Fokker-Planck equation for general entropy functionals and proved the contraction of the associated transport distance along the Fokker-Planck flow, which gave a unified  gradient flow framework for investigating the convex Sobolev inequalities \eqref{eq:convex_ineq}. 

It is also desirable to extend the theory of convex Sobolev inequalities to the setting of finite Markov chains. In this case, due to the lack of chain rule, the inequalities \eqref{ineq_a} and \eqref{ineq_d}, also, \eqref{ineq_c} and \eqref{ineq_e} are not equivalent (one is stronger than the other) \cite{bobkov2006modified}. For instance, Dai Pra et al.  \cite{dai2002entropy} provided an example where the MLSI \eqref{ineq_a} holds while the LSI \eqref{ineq_d} fails.  In what follows, to avoid confusion between \eqref{ineq_c} and \eqref{ineq_e}, following \cite{adamczak2022modified} and \cite{chafai2004entropies} we call the inequality of the form \eqref{ineq_e} the dual Beckner's inequality. 
Similarly to the diffusion case, the Bakry-\'{E}mery method and gradient flow techniques are two main approaches for the validity of \eqref{eq:convex_ineq}. J\"{u}ngel and Yue \cite{jungel2017discrete} followed Bakry-\'{E}mery's ideas and gave the conditions of $\phi$ under which \eqref{eq:convex_ineq} holds. The proof 
    relies on a discrete Bochner-type identity that was first introduced in \cite{boudou2006spectral,caputo2009convex}. Recently, Weber and Zacher \cite{weber2021entropy} proposed discrete analogs of the condition \eqref{eq:curvature_dimension} such that the MLSI \eqref{ineq_a} and Beckner’s inequality \eqref{ineq_c} hold. Their argument, different from \cite{jungel2017discrete}, is based on the modified $\Gamma$ and $\Gamma_2$ operators that satisfy some kind of discrete diffusion property. 
    We point out that a probabilistic approach, based on the Bakry-\'{E}mery method and the coupling arguments, for the discrete convex Sobolev inequalities can be found in \cite{conforti2022probabilistic} by Conforti. 
    
    The starting point of the gradient flow approach for discrete functional inequalities is \cite{maas2011gradient} where Maas defined a discrete transport distance such that the continuous time finite Markov chains can be identified as the gradient flow of the relative entropy. Following the ideas of \cite{lott2009ricci,sturm2006geometry}, Erbar and Maas \cite{erbar2012ricci} introduced the discrete Ricci curvature based on this discrete Wasserstein metric, and derived a number of functional inequalities including the discrete MLSI and the transport cost inequalities; see also \cite{erbar2018poincare}. Later, Fathi and Maas \cite{fathi2016entropic} generalized the discrete Bochner formula \cite{caputo2009convex} and developed a systematic approach for estimating the discrete Ricci curvature lower bounds. It is worth mentioning that both the discrete Bakry-\'{E}mery condition in \cite{weber2021entropy} and the discrete Ricci curvature in \cite{erbar2012ricci} enjoy the tensorization properties, and the aforementioned general results can be applied to several interesting models such as birth-death processes, random transposition models and Bernoulli-Laplace models (see related papers for details). We refer the readers to the review \cite{maas2017entropic} and the references therein for other notions of the Ricci curvature in the discrete setting and their implications on functional inequalities. 

\subsection{Quantum functional inequalities} 
In analogy with the classical case, quantum functional inequalities play a fundamental role in understanding the asymptotic behavior of a QMS. The study of LSI in the noncommutative setting may date back to \cite{gross1975hypercontractivity},
and its connections with hypercontractivity were fully discussed in the seminal work by Olkiewicz and  Zegarlinski \cite{olkiewicz1999hypercontractivity}. The quantum MLSI was initially introduced by Kastoryano and Temme \cite{kastoryano2013quantum} to derive improved bounds on the mixing time of primitive quantum Markov processes, surpassing those obtained via the Poincaré inequality in \cite{temme2010chi}.
\mb{The investigation of the quantum MLSI constant has been carried out in detail for specific models using various techniques: the depolarizing semigroup by explicit computation \cite{muller2016relative}, the doubly stochastic qubit Lindbladian by a comparison method \cite{muller2016entropy}, and quantum spin
lattice systems by quasi-factorization for the entropy \cite{bardet2021modified,cuevas2019quantum,capel2020modified}, to name a few.}
Regarding the general validity of MLSI in the quantum setting, the notion of Ricci curvature lower bounds (geodesic convexity), pioneered by Carlen and Maas \cite{carlen2014analog,carlen2017gradient}, has shown its utility in proving the quantum MLSI and related functional inequalities with numerous 
applications in concrete physical models. We briefly outline the main progress in this direction below. Carlen and Maas \cite{carlen2017gradient} introduced a quantum analog of 2-Wasserstein distance such that the primitive QMS satisfying $\si$-GNS detailed balance condition (cf.\,Definition 
\ref{def:sidbc}) can be written as the gradient flow of the relative entropy and showed that the relative entropy is geodesically convex for the Fermi and Bose Ornstein-Uhlenbeck semigroups, which extended their previous work \cite{carlen2014analog}. Based on Carlen and Maas's results, Datta and 
Rouz{\'e} \cite{rouze2019concentration,datta2020relating} considered the Ricci curvature of a QMS, and obtained some quantum Sobolev and concentration inequalities, generalizing the results in the classical regime \cite{otto2001geometry,erbar2012ricci,erbar2018poincare}. Wirth 
and Zhang \cite{wirth2021curvature} further introduced noncommutative curvature-dimension conditions and derived some dimension-dependent quantum functional inequalities. \mb{In addition, the Bakry-\'{E}mery 
method, which has been successfully adapted to the discrete case, has also been explored for the quantum semigroups recently with fruitful applications \cite{li2020complete,li2020graph,gao2020fisher,brannan2021complete,brannan2022complete,wirth2021complete}, where the crucial monotonicity of Fisher information was derived from different starting points: the gradient condition \cite{gao2020fisher}, the geometric
Ricci curvature condition \cite{li2020graph,brannan2022complete}, and the gradient estimate \cite{wirth2021complete}. The relations between these conditions, as well as the entropic Ricci curvature bound \cite{carlen2020non,datta2020relating}, can be found in \cite{li2020graph,wirth2021complete,brannan2022complete,wirth2021curvature}.}

One of the favorable features of the classical LSI is the tensorization property, which enables obtaining the functional inequalities for the tensor product systems from those for the subsystems. However, this property is known to fail for the quantum MLSI (cf.\cite[Proposition 4.21]{brannan2022complete}). To circumvent such difficulty, Gao et al. \cite{gao2020fisher} introduced the complete modified log-Sobolev inequality (CMLSI), which is a stronger notion than the MLSI,
and showed that it satisfies the desired tensorization properties. In \cite{gao2021complete},
Gao and Rouz\'{e} proved, by a two-sided estimate for the relative entropy, that the CMLSI holds for any finite-dimensional non-primitive QMS with $\si$-GNS detailed balance. \mb{The very recent work \cite{gao2022complete} provided a generic lower bound for the CMLSI constant by the inverse of completely bounded mixing time and an improved data processing inequality, which improves the results in \cite{gao2021complete}.} See also  \cite{brannan2022complete,li2020graph,brannan2021complete, gao2021geometric} for 
the geometrical approaches for studying the CMLSI.

\subsection{Main results} Although there has been much progress on Beckner's inequalities in both diffusion and discrete cases as reviewed above, the results for quantum Beckner's inequalities are quite limited. We only note the recent work by Li \cite{li2020complete} where the author investigated the matrix-valued Beckner's inequalities, in terms of the Bregman relative entropy \cite{molnar2016maps}, for symmetric semigroups on a finite von Neumann algebra. This work is devoted to further investigation on this topic. 
We consider the primitive QMS
satisfying certain detailed balance conditions, and define the family of quantum $p$-Beckner's inequalities and their dual version, by extending the definitions in \cite{adamczak2022modified,latala2000between,chafai2004entropies} for classical Markov semigroups; see Definition \ref{def:quantum_func} for the functional inequalities that we will mainly focus on.
It turns out that the $p$-Beckner's inequality \eqref{ineq_becp} describes the rate at which the quantum $p$-divergence $\mc{F}_{p,\si}$ \eqref{def:quanpdivi} tends to zero along the QMS. Note that $\mc{F}_{p,\si}$ can be viewed as the normalized noncommutative $L_p$-norm. The diagram in Figure \ref{fig:diagram} below summarizes part of the main results of Sections \ref{sec:inter_primi} and \ref{sec:positive_stab}. 

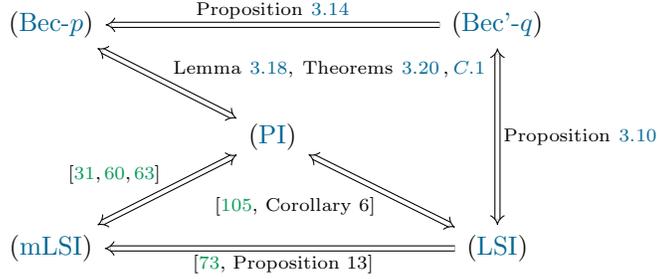
\begin{figure}[!htbp] 
\begin{tikzcd}[row sep= 2.5 em , column sep= 5 em]
\eqref{ineq_becp} 
\arrow[Leftrightarrow]{rd}{\mb{\text{Lemma}\ \ref{lem:upppoin},\ \text{Theorems}\ \ref{them:lowerbeck}\,,\, \ref{thm:beck_poincare}}} & &
\eqref{ineq_dbecq} \arrow[Leftrightarrow]{dd}{\text{Proposition} \ \ref{dualbecktolsi}} \arrow[Rightarrow]{ll}[swap]{\text{Proposition} \ \ref{prop:beck_to_dual_beck}} \\
&  \eqref{ineq_pi2} \arrow[Leftrightarrow]{rd}[swap]{\text{\cite[Corollary 6]{temme2014hypercontractivity}}}       &    \\
\eqref{ineq_mlsi} \arrow[Leftrightarrow]{ur}{\text{\cite{gao2021complete,gao2022complete,cao2019gradient}}}  && \eqref{ineq_lsi}  \arrow[Rightarrow]{ll}{\text{\cite[Proposition 13]{kastoryano2013quantum}}}
\end{tikzcd}
\caption{Chain of quantum convex Sobolev inequalities. \mb{We remark that the implications from \eqref{ineq_pi2} to other inequalities \eqref{ineq_becp}, \eqref{ineq_mlsi}, and \eqref{ineq_lsi} would generally involve a constant depending} \mb{on the properties of invariant states}.}
\label{fig:diagram}
\end{figure}

The relations between \eqref{ineq_pi2}, \eqref{ineq_mlsi}, and \eqref{ineq_lsi} have been well investigated. \cite[Corollary 6]{temme2014hypercontractivity} provided a two-sided bound for the LSI constant by Poincar\'{e} constant; \cite{kastoryano2013quantum} showed that the MLSI constant is bounded below by the LSI constant and above by Poincar\'{e} constant. This also allows us to conclude that the MLSI constant and Poincar\'{e} constant can be compared with each other; 
see also \cite[Proposition I.7]{cao2019gradient} and \cite[Theorem 3.3]{gao2021complete}. In Section \ref{sec:inter_primi}, we prove that the dual  $q$-Beckner's inequalities \eqref{ineq_dbecq}, as an interpolating family between \eqref{ineq_lsi} and \eqref{ineq_pi2}, is comparable to \eqref{ineq_lsi} in the sense of Proposition \ref{dualbecktolsi}; \mb{see Remark \ref{lem:cont_dualbeck} for the tightness of the comparison.}
We also show in Proposition \ref{prop:beck_to_dual_beck} that \eqref{ineq_dbecq} is stronger than \eqref{ineq_becp}. Propositions \ref{prop:mono_dual_beck} and \ref{prop:mono_beck} discuss the monotonicity of the dual Beckner constant $\beta_q(\mc{L})$ and the Beckner constant $\alpha_p(\mc{L})$, respectively.

In Section \ref{sec:positive_stab}, we investigate the quantum $p$-Beckner's inequalities in detail. \mb{We first discuss in Proposition \ref{prop:sand} the connection between $p$-Beckner's inequality \eqref{ineq_becp} and the sandwiched R\'{e}nyi 
entropic inequality \eqref{eq:sand_decay} defined in \cite{muller2018sandwiched}. In brief, the R\'{e}nyi entropic inequality \eqref{eq:sand_decay} implies \eqref{ineq_becp}, 
while \eqref{ineq_becp} only implies a restricted \eqref{eq:sand_decay}. Then, we consider the relations between \eqref{ineq_becp} and quantum $p$-log-Sobolev inequalities 
\eqref{ineq_lsip} (equivalently, the hypercontractivity) in Proposition \ref{prop:connectpsob}, where we find \eqref{ineq_lsip} is stronger (resp.,weaker) than \eqref{ineq_becp} for $p > 1$ (resp., $0 < p < 1$). A two-sided 
estimate for the Beckner constant $\alpha_p(\mc{L})$ in terms of the Poincar\'{e} constant $\lad(\mc{L})$ is given in Lemma \ref{lem:upppoin} and Theorem \ref{them:lowerbeck}. We prove, 
by contradiction, in Theorem \ref{thm:beck_log_sobo} that $\alpha_p(\mc{L}) \to \alpha_1(\mc{L})$ holds as $p \to 1^+$, where $\alpha_1(\mc{L})$ is the MLSI constant.} This extends the result \cite[Theorem 1.1]{adamczak2022modified} in the 
classical setting. We also extend the main result in \cite{junge2019stability} and provide a stability estimate for the Beckner constant $\alpha_p(\mc{L})$ with respect to the invariant state $\si$; see Theorem \ref{thm:beck_stab}. In Section \ref{sec:app_exp}, \mb{we first compute the quantum Beckner constant for the depolarizing semigroup with asymptotically tight lower and upper bounds;  see Propositions \ref{prop:beck_depol} and \ref{prop:upperlowerdep}. We then derive a mixing time bound for the QMS from the $p$-Beckner's inequality \eqref{ineq_becp} in Proposition \ref{prop:mixing}.} Moreover, in Proposition \ref{prop:moment}, we extend \cite[Proposition 3.3]{adamczak2022modified} for the classical case and obtain moment estimates under \eqref{ineq_becp}. \mb{As a complementary result, we also provide a generic lower bound for the Beckner constant for non-primitive QMS in Appendix \ref{app:nonprimitive} based on the key Lemma \ref{lem:two-sided}, following the work \cite{gao2021complete}.}


Another motivation for the current work is \cite{dolbeault2009new,dolbeault2012poincare}, where the authors provided a gradient flow approach for the classical Beckner's inequalities. To be precise, 
Dolbeault et al. \cite{dolbeault2009new} defined the following class of transport distances $\mc{W}_{2,\alpha,\gamma}$ with $\alpha \in [0,1]$ \`{a} la Benamou-Brenier: for probability measures $\mu_0$ and $\mu_1$ on $\R^d$, 
\begin{align} \label{def:distance_savare}
\mc{W}_{2,\alpha,\gamma}(\mu_0,\mu_1): = \inf\Big\{
\int_0^1\int_{\R^d} \rho_t^{-\alpha}|w_t|^2\ d \gamma dt\,;\ \p_t \mu_t + \na \dd {\bf \nu}_t = 0\,,\ \mu_t = \rho_t \gamma + \mu_t^\perp\,,\ \nu_t = w_t \gamma \ll \gamma 
\Big\}\,,
\end{align}
where $\gamma$ is a reference Radon measure \mb{and $\mu_t^\perp$ is the singular part of $\mu_t$ in its Radon-Nikodym decomposition with respect to $\gamma$.}
In the  case $\alpha = 1$, $\mc{W}_{2,\alpha,\gamma}(\mu_0,\mu_1)$ gives the 2-Wasserstein distance \cite{benamou2000computational}, while when $\alpha = 0$, it is equivalent to the weighted homogeneous $\dot{H}_\gamma^{-1}$ Sobolev distance \cite{peyre2018comparison,villani2003topics}:
\begin{align*}
    \norm{\mu_0 - \mu_1}_{\dot{H}_\gamma^{-1}} = \sup \Big\{\int_{\R^d} \xi\ d(\mu_0 - \mu_1)\,;\ \xi \in C_c^1(\R^d)\,,\ \int_{\R^d} |\na \xi|^2\ d \gamma \le 1  \Big\}\,.
\end{align*}
Thus $\mc{W}_{2,\alpha,\gamma}$ can be viewed as a natural interpolating family between them. Moreover, let the reference measure be $\gamma := e^{-V} \mathscr{L}^d $ with the potential $V$ being smooth and convex, where $\mathscr{L}^d$ is the Lebesgue measure on $\R^d$.
With such choice of $\gamma$, they showed that the gradient flow of the (Tsallis) functional, for $\alpha \in [0,1)$,
\begin{align} \label{def:class_func}
    \mathscr{F}_\alpha(\mu): = \frac{1}{(2-\alpha)(1-\alpha)} \int_{\R^d} \rho^{2-\alpha}\  d\gamma\,,\q \mu = \rho \gamma\,,
\end{align}
is the Fokker-Planck equation: 
\begin{align*}
\p_t \mu - \Delta \mu - \na \dd (\mu \na V) = 0\,.
\end{align*}
In the subsequent work \cite{dolbeault2012poincare}, they further proved that $\mc{F}_\alpha$ is geodesically $\lad$-convex under the assumption: $\na^2 V \ge \lad I$ for $\lad > 0$, which implies the classical Beckner's inequality.

It is easy to note that in the commutative setting, up to some constant, our quantum $p$-divergence $\mc{F}_{p,\si}$ \eqref{def:quanpdivi} is nothing else but the functional $ \mathscr{F}_\alpha$ in \eqref{def:class_func} with $p = 2 - \alpha$.  In Section \ref{sec:qot_beck}, we extend the results in \cite{dolbeault2009new,dolbeault2012poincare}
to the quantum regime and provide a geometric characterization for the quantum $p$-Beckner's inequality. To do so, we first construct a Riemannian metric $g_{p,\rho}$ on the quantum states in Section \ref{sec:gradient}, so that the 
$\si$-{\rm GNS} symmetric QMS is the gradient flow of $p$-divergence $\mc{F}_{p,\si}$ with respect to $g_{p,\rho}$. Then in Section \ref{sec:quantum_distance}, we investigate the properties of the associated Riemannian distance, denoted by $W_{2,p}$ (cf.\,\eqref{def:wp1}), which can be regarded as a quantum analog of $W_{2,\alpha,\gamma}$ in \eqref{def:distance_savare}. The main result in this section is Theorem \ref{thm:main_wasser}, where we show that $(\dh,W_{2,p})$ is a complete geodesic metric space. We also prove in Proposition \ref{prop:repkernel} that similarly to the classical case,
the new class of distances $W_{2,p}$ is an interpolating family between the quantum 2-Wasserstein distance defined by Carlen and Maas \cite{carlen2017gradient} and a noncommutative $\dot{H}^{-1}$ Sobolev distance \eqref{eq:distance_1}. With these results, it is straightforward to define the entropic Ricci curvature associated with the functional $\mc{F}_{p,\si}$ 
in the spirit of \cite{sturm2006geometry,lott2009ricci,datta2020relating}. We then derive an HWI-type interpolation inequality from the Ricci curvature lower bound and show that the positive Ricci curvature can imply Beckner's inequality \eqref{ineq_becp}. Further, we prove the 
following chain of quantum functional inequalities:
\begin{equation} \label{eq:chain}
\eqref{ineq_becp}   \xLongrightarrow{\text{Proposition}\  \ref{propa}}   \eqref{ineq:tc} \xLongrightarrow{\text{Proposition}\  \ref{propb}}  \eqref{ineq_pi}\,,
\end{equation} 
where \eqref{ineq:tc} is a transport cost inequality associated with $W_{2,p}$. These results are presented in Section \ref{sec:geodesic_convexity}.

\subsection{Layout and notation} \mb{The rest of this work is organized as follows. We will restrict our discussion to a finite-dimensional matrix algebra. 
In Section \ref{sec:pre}, we give preliminary definitions and results used throughout this work. Some additional preliminaries are included in Appendix \ref{app:pre}. In Section \ref{sec:beck}, we define the family of quantum Beckner's inequalities and investigate its properties and relations with other known functional inequalities. Section \ref{sec:qot_beck} is devoted to a gradient flow framework for Beckner's inequality. In Section \ref{sec:discussion}, we conclude this work with a discussion of some open questions. Moreover, Appendix \ref{app:dbc} includes a comparison of detailed balance conditions, while in Appendix \ref{app:nonprimitive} we give a brief introduction for the non-primitive Beckner's inequality. 

We fix some notations that will be used in this work. }
\begin{enumerate}[\textbullet]
    \item Let $\mc{B}(\mc{H})$ denote the space of bounded operators on a finite-dimensional Hilbert space $\mc{H}$ of dimension $d < \infty$. 
  \mb{We use $\mc{B}_+(\mc{H})$ for the set of full-rank (invertible) operators in $\mc{B}(\mc{H})$.}
    $\mc{B}_{sa}(\mc{H})$ is the subspace of self-adjoint operators on $\mc{H}$, while $\mc{B}^{+}_{sa}(\mc{H})$ is the cone of positive semidefinite operators.  For simplicity, in what follows, by $A \ge 0$ (resp., $A > 0$) we mean a positive semidefinite (resp., definite) operator.  
      \item The identity operator on $\mc{H}$ is written as $\mi_{\mc{H}}$ (or $\mi$, if there is no confusion). Similarly, the identity superoperator on $\bh$ is denoted by $\id_{\mc{H}}$, or simply $\id$.  
       \item We denote by $\mc{D}(\mc{H}) := \{\rho \in \mc{B}_{sa}^+(\mc{H})\,;\  \tr \rho =1 \}$ the set of density operators (quantum states), and by $\mc{D}_+(\mc{H})$ the full-rank density operators.
    \item We denote by $\l\dd,\dd\r$ the Hilbert-Schmidt inner product on $\bh$, i.e., $\l X, Y\r = \tr (X^*Y)$, where $X^*$ is the adjoint operator of $X$. Moreover, we write $\Phi^\dag$ for the adjoint of a superoperator $\Phi: \mc{B}(\mc{H}) \to \mc{B}(\mc{H})$ with respect to the inner product $\l \dd,\dd \r $. 
    The modulus of $X \in \bh$ is defined by $|X|: = \sqrt{X^* X} $. 
    \item We define the Schatten $p$-norm by $\norm{X}_p = \tr(|X|^p)^{1/p}$ \mb{for $X \in \bh$ if $p > 0$;  $X \in \mc{B}_+(\mc{H})$ if $p < 0$,} 
    where $\norm{\dd}_\infty$ is the operator norm. For a superoperator on $\bh$, we simply use $\norm{\dd}$ for its operator norm.
    \item Let $\mc{M}$ be a subset of $\mc{B}(\mc{H})$. We denote by $\mc{M}^J$ the set of vector fields over $\mc{M}$, i.e., ${\bf A} = (A_1, \cdots, A_J)  \in \mc{M}^J$ for $A_j \in \mc{M}$, $1 \le j \le J$. The Hilbert-Schmidt inner product     naturally extends to $\mc{M}^J$ as $\l {\bf A} , {\bf B} \r = \sum_{j = 1}^J \l A_j, B_j\r$. 
    \item \mb{For $p \in \R \backslash \{0,1\}$, we define its H\"{o}lder conjugate $\h{p}: = p/(p-1)$ satisfying $1/p + 1/\h{p} = 1$.}
\end{enumerate}

\mb{We end the introduction with some remarks. 
For ease of exposition, in many statements below, we only consider invertible $X \in \mc{B}_+(\mc{H})$ (so that $|X|^\alpha$ for any $\alpha \in \R$ is well-defined). Most of them still hold for non-invertible matrices by an approximation argument, which can be easily checked. For example, the first statement in Lemma \ref{lem:prop:norm_power} clearly holds for $0 < p \le q$ and $X \in \bh$. Moreover, to make the presentation cleaner, many results below are only stated for $p \neq 1$, while the case $p = 1$ can be easily obtained by taking a limit; see Remark \ref{rem:limit} for example.}

\section{Preliminaries} \label{sec:pre}

\subsection{Quantum Markov semigroup}

Let us first recall preliminaries about the \mb{finite-dimensional} Markovian open quantum dynamics. 
We say that $(\mc{P}_t)_{t \ge 0}: \mc{B}(\mc{H}) \to \mc{B}(\mc{H})$ is a quantum Markov semigroup (QMS) if $\mc{P}_t$ is a $C_0$-semigroup of completely positive, unital maps, whose generator $\mc{L}$ is called the Lindbladian, defined by $\mc{L}(X): = \lim_{t \to 0} t^{-1}(\mc{P}_t (X) - X)$. A quantum channel $\Phi: \bh \to \bh$ is a completely positive trace preserving (CPTP) map. Then, the dual QMS $\mc{P}_t^\dag$ is a semigroup of quantum channels, and the associated equation $\frac{d}{dt} \rho = \mc{L}^\dag \rho$ is referred to as the Lindblad equation. \mb{It is easy to check that $X > 0$ implies $\mc{P}_t (X) > 0$ for any $t \ge 0$. 
A QMS $\mc{P}_t$ is said to be primitive} if it admits a unique full-rank invariant state $\si$ such that $\mc{P}_t^\dag(\si) = \si$ for $t \ge 0$. In this case, there holds \cite{frigerio1982long}
\begin{align} \label{eq:conver_qms}
    \lim_{t \to \infty} \mc{P}_t(X) = \tr(\si X)\mi\,,\q \forall X \in \bh\,.
\end{align}

For $A \in \bh$ we define the left and right multiplication operator on $\bh$ by $L_A = A X$ and $R_A = X A$, respectively. It is easy to see that $L_{f(A)} = f(L_A)$ and $R_{f(A)} = f(R_A)$ holds for $A > 0$ and 
functions $f: (0,\infty) \to \R$. We also define the relative modular operator $\Delta_{\rho,\si} = L_\rho R_\si^{-1}: \mc{B}(\mc{H}) \to \mc{B}(\mc{H})$ for 
$\rho,\si \in \mc{D}_+(\mc{H})$, and simply write it as $\Delta_\si$ if $\si = \rho$. We next introduce the quantum detailed balance condition (DBC). For this, we define a family of inner products on $\bh$: for a given $\si \in \mc{D}_+(\mc{H})$ and $s \in \R$, 
\begin{align} \label{def:s-inner}
    \l X, Y\r_{\si,s} := \tr(\si^s X^* \si^{1-s} Y) = \l X, \Delta_\si^{1-s} (Y) \si \r\,,\q \forall X, Y \in \bh\,,
\end{align}
where $\l \dd, \dd\r_{\si,1}$ and $\l \dd, \dd \r_{\si,1/2}$ are GNS and KMS inner products, respectively. In particular, when $\si$ is the maximally mixed state $\mi/d$, all the inner products $\l \dd,\dd \r_{\si,s}$ reduce to the normalized Hilbert-Schmidt inner product: 
\begin{align} \label{eq:productid}
    \l X , Y\r_{\frac{\mi}{d}} := \frac{1}{d} \l X , Y\r\,.
\end{align}

\begin{definition} \label{def:sidbc}
\mb{We say that a QMS $\mc{P}_t$ satisfies the
$\si$-{\rm GNS DBC} (resp., $\si$-{\rm KMS DBC}) for some $\si \in \mc{D}_+(\mc{H})$ if its generator $\mc{L}$ is self-adjoint with respect to the inner product $\l\dd, \dd\r_{\si,1}$  (resp., $\l\dd, \dd\r_{\si,1/2}$).}
\end{definition}
One can readily see that if $\mc{P}_t$ satisfies the \mb{$\si$-{\rm GNS DBC} or $\si$-{\rm KMS DBC} for some $\si \in \mc{D}_+(\mc{H})$}, then $\si$ is an 
invariant state of $\mc{P}_t$, and that $\mc{P}_t$ is symmetric (i.e., $\mc{P}_t = \mc{P}_t^\dag$) if and only if it 
satisfies the \mb{{\rm GNS DBC} or {\rm KMS DBC}} for the maximally mixed state $\si = \mi/d$. For any $f: (0,\infty) \to (0,\infty)$ and $\si \in \mc{D}_+(\mc{H})$, we can define the operator:
\begin{align} \label{def:operator_kernel}
    J_\si^f := R_\si f(\Delta_\si): \bh \to \bh\,,
\end{align}
and the associated inner product:
\begin{align} \label{eq:general_inner}
    \l X, Y\r_{\si,f} := \l X, J_\si^f (Y)\r\,.
\end{align}
It is clear that $\l X, Y\r_{\si,f}$ with $f = x^{1-s}$ gives the inner product \eqref{def:s-inner}; and the adjoint of a linear operator $\mc{K}$ on $\bh$ with respect to $\l\dd,\dd\r_{\si,f}$ is given by $ ( J_\si^f)^{-1} \mc{K}^\dag   J_\si^f $.
The following result from \cite[Theorem 2.9]{carlen2017gradient} relates the self-adjointness of $\mc{L}$ with respect to different inner products. 

\begin{lemma} \label{lem:self_adjoint}
If a QMS $\mc{P}_t$ satisfies the \mb{$\si$-{\rm GNS DBC}} for some  $\si \in \mc{D}_+(\mc{H})$, then its generator $\mc{L}$  
commutes with the modular operator $\Delta_\si$, and it is self-adjoint with respect to $\l\dd, \dd\r_{\si,f}$ for any $f:(0,\infty) \to (0,\infty)$, i.e., 
\begin{align*}
    \mc{L}\Delta_\si = \Delta_\si \mc{L}\,,\quad   J_\si^f  \mc{L} = \mc{L}^\dag    J_\si^f\,.
\end{align*}
\mb{In consequence, $\mc{P}_t$ also satisfies $\si$-{\rm KMS DBC} and} there holds
\begin{align*}
    \gs \mc{L} = \mc{L}^\dag \gs\,.
\end{align*}
\end{lemma}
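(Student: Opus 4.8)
The plan is to first rewrite the hypothesis as an operator identity, extract the commutation $\mc{L}\Delta_\si=\Delta_\si\mc{L}$ from it, and then bootstrap to arbitrary weights $f$ by the functional calculus. The GNS inner product $\l\dd,\dd\r_{\si,1}$ corresponds to the weight $f\equiv 1$ in \eqref{def:operator_kernel}, i.e.\ $J_\si^{1}=R_\si$. By the adjoint formula $(J_\si^f)^{-1}\mc{K}^\dag J_\si^f$ recorded after \eqref{eq:general_inner}, the $\si$-DBC (self-adjointness of $\mc{L}$ for $\l\dd,\dd\r_{\si,1}$) is exactly the identity $R_\si\mc{L}=\mc{L}^\dag R_\si$, which I will call the \emph{GNS identity}.

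The crux is to produce a \emph{mirror} identity $L_\si\mc{L}=\mc{L}^\dag L_\si$, with $L_\si$ in place of $R_\si$; this is where the structural content beyond GNS-symmetry enters. Since $\mc{P}_t$ is completely positive it is Hermiticity-preserving, $\mc{P}_t(X^*)=\mc{P}_t(X)^*$, and differentiating at $t=0$ gives $\mc{L}(X^*)=\mc{L}(X)^*$. Writing $\Theta(X):=X^*$ for the conjugate-linear adjoint, one has $\Theta^2=\id$, the relations $\Theta R_\si\Theta=L_\si$ and $\Theta L_\si\Theta=R_\si$ (using $\si^*=\si$), and, because $\Theta$ is antiunitary for $\l\dd,\dd\r$, the general conjugation rule $\Theta\Phi^\dag\Theta=(\Theta\Phi\Theta)^\dag$ valid for every linear $\Phi$. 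Hermiticity-preservation reads $\Theta\mc{L}\Theta=\mc{L}$, and hence $\Theta\mc{L}^\dag\Theta=\mc{L}^\dag$; conjugating the GNS identity by $\Theta$ therefore yields the mirror identity. Eliminating $\mc{L}^\dag=R_\si\mc{L}R_\si^{-1}$ from the GNS identity and substituting into the mirror identity gives $L_\si\mc{L}=R_\si\mc{L}R_\si^{-1}L_\si$; multiplying on the left by $R_\si^{-1}$ and using $R_\si^{-1}L_\si=L_\si R_\si^{-1}=\Delta_\si$ collapses this to $\Delta_\si\mc{L}=\mc{L}\Delta_\si$, the first assertion.

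It remains to promote the GNS identity to an arbitrary $f:(0,\infty)\to(0,\infty)$. The commutation just proved implies, via the functional calculus, that $\mc{L}$ commutes with $f(\Delta_\si)$; taking the Hilbert--Schmidt adjoint of $\mc{L}\Delta_\si=\Delta_\si\mc{L}$ and noting $\Delta_\si^\dag=\Delta_\si$ (as $L_\si,R_\si$ are self-adjoint and commute) shows $\mc{L}^\dag$ commutes with $f(\Delta_\si)$ as well. Then
\begin{equation*}
 J_\si^f\mc{L}=R_\si f(\Delta_\si)\mc{L}=R_\si\mc{L}\,f(\Delta_\si)=\mc{L}^\dag R_\si f(\Delta_\si)=\mc{L}^\dag J_\si^f,
\end{equation*}
the third equality being the GNS identity. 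The KMS statement is the special case $f(x)=x^{1/2}$: since $R_\si\Delta_\si^{1/2}(X)=\si^{1/2}X\si^{1/2}=\gs(X)$, we obtain $\gs\mc{L}=\mc{L}^\dag\gs$.

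I expect the mirror identity to be the only genuinely delicate step: the bookkeeping for the conjugate-linear $\Theta$ must be handled with care---in particular verifying antiunitarity and the conjugation rule $\Theta\Phi^\dag\Theta=(\Theta\Phi\Theta)^\dag$---so that Hermiticity-preservation of $\mc{L}$, which is the one input beyond the GNS-symmetry, can be converted into the $L_\si$-version of the hypothesis. Everything downstream of the commutation is routine functional calculus.
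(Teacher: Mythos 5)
Your proof is correct, and every step checks out: the GNS identity $R_\si\mc{L}=\mc{L}^\dag R_\si$, the mirror identity obtained by conjugating with the antiunitary $\Theta(X)=X^*$ using Hermiticity-preservation of $\mc{L}$, the resulting commutation $\mc{L}\Delta_\si=\Delta_\si\mc{L}$, and the upgrade to arbitrary $f$ by functional calculus. The paper itself offers no proof of this lemma—it quotes it from \cite[Theorem 2.9]{carlen2017gradient}—and your argument is essentially the standard one given there, so there is nothing further to compare.
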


The next lemma, due to Alicki \cite{alicki1976detailed}, characterizes the generator $\mc{L}$ of a QMS that satisfies the \mb{$\si$-{\rm GNS DBC}}.


\begin{lemma}\label{lem:struc_gene}
\mb{For a Lindbladian $\mc{L}$ satisfying $\si$-{\rm GNS DBC} for some $\si \in \mc{D}_+(\mc{H})$}, it holds that 
 \begin{align} \label{eq:structure}
        \mc{L}(X) = \sum_{j = 1}^{J} \left(e^{-\omega_j/2}V_j^*[X,V_j] + e^{\omega_j/2}[V_j,X]V^*_j\right)\,,
    \end{align}
with $\ww_j \in \R$ and $J \le d^2 -1$. Here, $V_j \in \bh$, $1\le j \le J$, are trace zero and orthogonal eigenvectors of $\Delta_\si$:
\begin{align} \label{eq:eigmodular}
    \Delta_\si(V_j) =  e^{-\omega_j}V_j\,,\quad \l V_j, V_k\r = c_j\d_{j,k}\,,\quad \tr(V_j) = 0\,,
\end{align}
where $c_j > 0$ are  normalization constants. Moreover, for each $1 \le j \le J$, there exists $1 \le  j' \le J $ such that 
\begin{align} \label{eq:adjoint_index}
    V_j^* = V_{j'}\,,\quad \ww_j = - \ww_{j'}\,.
\end{align}
\end{lemma}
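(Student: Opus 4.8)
The plan is to begin from the Lindblad--GKLS structure theorem, which encodes the complete positivity of $\mc{P}_t$, and then to successively constrain the resulting representation using the two consequences of the $\si$-DBC recorded in Lemma \ref{lem:self_adjoint}: the modular commutation $\mc{L}\Delta_\si = \Delta_\si \mc{L}$ and the GNS-self-adjointness $\l \mc{L}(X),Y\r_{\si,1} = \l X, \mc{L}(Y)\r_{\si,1}$. Concretely, I would first write
\[
\mc{L}(X) = i[H,X] + \sum_{k,l} c_{kl}\Big(F_k^* X F_l - \tfrac12\{F_k^* F_l, X\}\Big),
\]
with $H$ self-adjoint, $(c_{kl})$ a positive semidefinite (Kossakowski) matrix, and $\{F_k\}$ a linearly independent family of trace-zero operators, the traces and the unital normalization being absorbed into $H$. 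This is the raw data on which detailed balance will act.

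The first constraint is the modular commutation. Since $\mc{L}$ commutes with $\Delta_\si$, I would diagonalize $\Delta_\si$ on $\bh$: writing $\si = \sum_i \lambda_i \ketbar{i}{i}$, the matrix units satisfy $\Delta_\si(\ketbar{i}{j}) = (\lambda_i/\lambda_j)\ketbar{i}{j}$, so $\bh$ decomposes into eigenspaces indexed by the distinct ratios $\lambda_i/\lambda_j =: e^{-\omega}$. The commutation forces the span of the $F_k$ to be $\Delta_\si$-invariant and the Kossakowski form to be block-diagonal across these eigenspaces; diagonalizing each (positive) block by a unitary mixing of the $F_k$, which preserves the GKLS form, produces an orthogonal trace-zero family $\{V_j\}$ of genuine eigenvectors, $\Delta_\si(V_j)=e^{-\omega_j}V_j$ with $\l V_j,V_k\r = c_j\delta_{j,k}$, as in \eqref{eq:eigmodular}.

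The second constraint is GNS symmetry together with the adjoint pairing. Here I would use the elementary identity $\Delta_\si(X^*) = (\Delta_\si^{-1}X)^*$, which shows that if $V_j$ is an eigenvector with eigenvalue $e^{-\omega_j}$ then $V_j^*$ is one with eigenvalue $e^{\omega_j}$; hence $\{V_j\}$ is closed under the adjoint up to relabeling, giving $V_j^*=V_{j'}$, $\omega_{j'}=-\omega_j$ of \eqref{eq:adjoint_index}. Imposing $\l \mc{L}(X),Y\r_{\si,1} = \l X, \mc{L}(Y)\r_{\si,1}$ then does two things: a direct computation of the GNS-adjoint of $X\mapsto i[H,X]$ shows this conservative part is GNS-\emph{anti}symmetric once $[H,\si]=0$ (itself forced by the commutation step), so it must vanish against the symmetric dissipative remainder; and matching the GNS-adjoint of each dissipative summand across the pairing $j\leftrightarrow j'$ fixes the coefficients to $e^{-\omega_j/2}$ and $e^{\omega_j/2}$, yielding precisely \eqref{eq:structure}. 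A direct expansion using the pairing confirms self-consistency: the index-$j$ term $e^{-\omega_j/2}V_j^*[X,V_j]$ and the index-$j'$ term $e^{\omega_{j'}/2}[V_{j'},X]V_{j'}^*$ both contribute the sandwich $e^{-\omega_j/2}V_j^* X V_j$, and the residual products combine into a single GNS-symmetric dissipator.

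Finally, the bound $J\le d^2-1$ is the dimension count, since the $V_j$ are orthogonal and trace-zero and the trace-zero operators in $\bh$ form a space of dimension $d^2-1$. The step I expect to be the main obstacle is the second constraint: one must simultaneously respect the eigenspace decomposition of $\Delta_\si$, the orthogonality and $*$-closure of the $\{V_j\}$, and the positivity of each Kossakowski block while diagonalizing it, and then extract the exact detailed-balance weights $e^{\pm\omega_j/2}$ from the GNS-symmetry relation. The bookkeeping that ties the forward term at $j$ to the backward term at $j'$ is where the argument is most delicate, and it is also where eliminating the Hamiltonian relies on the uniqueness-up-to-unitary freedom in the GKLS representation.
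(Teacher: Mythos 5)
The paper does not actually prove this lemma: it is stated as Alicki's theorem and attributed to \cite{alicki1976detailed}, with \cite{kossakowski1977quantum,carlen2017gradient} cited for modern treatments, so there is no in-paper argument to compare against. Judged on its own terms, your sketch follows the standard route of \cite[Theorem 3.1]{carlen2017gradient}: start from the GKLS form, use the modular commutation from Lemma \ref{lem:self_adjoint} to organize the jump operators into eigenvectors of $\Delta_\si$, use $\Delta_\si(X^*) = (\Delta_\si^{-1}X)^*$ to get the pairing $j \leftrightarrow j'$, and use GNS-symmetry to kill the Hamiltonian and pin down the weights $e^{\pm\ww_j/2}$. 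The overall architecture is right and the conclusion follows.

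Two places where the sketch hides real work are worth flagging. First, the claim that commutation with $\Delta_\si$ forces the span of the $F_k$ to be $\Delta_\si$-invariant and the Kossakowski matrix to be block-diagonal is not immediate from $\mc{L}\Delta_\si = \Delta_\si\mc{L}$ alone: $\Delta_\si$ is an algebra automorphism of $\bh$ but not a $*$-automorphism, so one cannot simply conjugate the GKLS representation by it. The clean argument runs through the one-parameter modular group $\Delta_\si^{it}$, which \emph{is} a group of $*$-automorphisms commuting with $\mc{L}$; essential uniqueness of the GKLS representation then gives a unitary cocycle acting on the index space of the $F_k$, and decomposing that representation yields the joint eigenvectors. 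Second, within each modular eigenspace you must simultaneously diagonalize the (positive) Kossakowski block and arrange Hilbert--Schmidt orthogonality of the resulting $V_j$; this is a standard simultaneous reduction of two positive forms, but it is a separate step from merely "diagonalizing each block by a unitary," and it is what produces the normalization $\l V_j, V_k\r = c_j\d_{j,k}$ with the $c_j$ appearing as they do in \eqref{eq:eigmodular}. With those two points made precise, the argument is the one in the cited references.
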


The real numbers $\ww_j \in \R$ are called Bohr frequencies of the Lindbladian $\mc{L}$, which are uniquely determined by the invariant state $\si$, while the operators $V_j$ are only unique up to unitary transformations. \mb{Indeed, let $\{v_k\}$ be orthonormal eigenvectors of $\si$ such that $\si v_k = \si_k v_k$. 
Then, $\ket{v_k}\bra{v_l}$ are eigenvectors of $\Delta_\si$ with $
\Delta_\si \ket{v_k}\bra{v_l} = (\si_k/\si_l) \ket{v_k}\bra{v_l}
$, which implies that for each $1 \le  j \le J$, there exists $1 \le k, l \le d$ such that}
\begin{align} \label{eq:repbohr}
    \mb{\ww_j = \log \si_l - \log \si_k \,.}
\end{align}
In what follows, we will fix a set of $V_j$ for the representation \eqref{eq:structure} with the properties in Lemma \ref{lem:struc_gene}.

\begin{remark} \label{eq:tracial}
When the invariant state $\si$ is the maximally mixed state $\mi/d$, 
$\Delta_\si$ becomes the identity operator and hence $\ww_j = 0$ by \eqref{eq:eigmodular}, and we can take the operators $V_j$ to be self-adjoint. In this symmetric case, the QMS $\mc{P}_t$ may be regarded as a noncommutative heat semigroup, and its generator has the form:
\begin{align} \label{eq:gen_symm}
    \mc{L}(X) = - \sum_j [V_j, [V_j, X]]\,.
\end{align}
\end{remark}

\begin{example} \label{exp:dep_channel}
An important example of QMS is the generalized depolarizing semigroup: 
\begin{align} \label{def:depolt}
    \mc{P}_t(X) = e^{- \gamma t} X + (1 - e^{- \gamma t}) \tr(\si X) \mi\,, \q X \in \bh\,,
\end{align}
for $\si \in \dhh$ and $\gamma > 0$,  which is generated by 
\begin{align} \label{def:depol}
    \mc{L}_{\rm depol}(X) = \gamma  (\tr(\si X) \mi - X)\,.
\end{align}
It is easy to see that $\mc{P}_t$ is primitive and satisfies \mb{$\si$-{\rm GNS DBC}}. 
\end{example}

Lemma \ref{lem:struc_gene} actually gives a very useful first-order differential structure for a  QMS with \mb{$\si$-{\rm GNS DBC}}. We introduce the weighting operator for a full-rank quantum state 
$\si \in \mc{D}_+(\mc{H})$:  
\begin{align*}
    \gs X = \si^{\frac{1}{2}}X\si^{\frac{1}{2}}:\ \bh \to \bh\,,
\end{align*} 
and the noncommutative analog of partial derivatives (associated with $\si \in \mc{D}_+(\mc{H})$): 
\begin{align*}
    \p_j X = [V_j, X]:\ \bh \to \bh\,. 
\end{align*}
Then the noncommutative gradient $\na: \bh \to \bh^J$ and divergence $ \ddiv : \bh^J \to \bh$ can be defined by 
\begin{equation} \label{def:grad}
    \na X = (\p_1 X, \cdots, \p_J X) \q \text{for} \ X \in \bh\,,
\end{equation}
and 
\begin{equation*}
    \ddiv {\bf X} = - \sum_{j = 1}^J \p_j^\dag X_j  \q \text{for} \ {\bf X} \in \bh^J\,,
\end{equation*}
respectively. By definition and \eqref{eq:eigmodular},
it follows that the adjoint of $\p_j$ with respect to $\l \dd, \dd  \r_{\si,1/2}$ is 
\begin{align} \label{eq:adjpjkms}
    \p_{j,\si}^\dag B = \gs^{-1}\p_j^\dag \gs B = e^{-\ww_j/2} V_j^* B - e^{\ww_j/2} B V_j^*\,.  
\end{align}
With the help of $\p^\dag_{j,\si}$ \eqref{eq:adjpjkms}, we can rewrite \eqref{eq:structure} as 
\begin{align} \label{eq:rep_gen}
    \mc{L} (X) = - \sum_{j = 1}^J \p_{j,\si}^\dag \p_j X\,,
\end{align}
and a noncommutative integration by parts formula holds: 
\begin{align} \label{eq:inte_by_parts}
    - \l Y, \mc{L}(X)\r_{\si,1/2} = \sum_{j = 1}^J \l \p_j Y, \p_j X    \r_{\si,1/2} \q \text{for} \ X, Y \in \bh\,.
\end{align}
The following lemma will be useful below, which generalizes \cite[Proposition 4.12]{carlen2020non}. 
\begin{lemma}\label{lem:DBC}
Suppose that $\mc{P}_t$ satisfies \mb{$\si$-{\rm GNS DBC}}. For any continuous function $f:(0,\infty) \to (0,\infty)$, there holds 
\begin{align*}
    - \l Y, \mc{L} X\r_{\si,f} = \sum_{j = 1}^J \left\l \p_j Y,  R_{e^{-\ww_j/2} \si} f\left(L_{e^{\ww_j/2}\si} R^{-1}_{e^{-\ww_j/2} \si}\right) \p_j X \right\r,\q X, Y \in \bh\,.
\end{align*}
\end{lemma}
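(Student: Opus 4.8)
The plan is to reduce the claim to the divergence form of the generator together with a per-mode intertwining identity for the functional calculus of $\Delta_\si$. First I would simplify the operator on the right-hand side: since $R_{e^{-\ww_j/2}\si} = e^{-\ww_j/2}R_\si$ and $L_{e^{\ww_j/2}\si}R^{-1}_{e^{-\ww_j/2}\si} = e^{\ww_j}L_\si R_\si^{-1} = e^{\ww_j}\Delta_\si$, the asserted identity is equivalent to
\begin{align*}
-\l Y, \mc{L}X\r_{\si,f} = \sum_{j=1}^J e^{-\ww_j/2}\l \p_j Y, R_\si f(e^{\ww_j}\Delta_\si)\p_j X\r\,.
\end{align*}
Using the divergence form \eqref{eq:rep_gen}, namely $\mc{L}X = -\sum_j \p_{j,\si}^\dag\p_j X$ with $\p_{j,\si}^\dag B = e^{-\ww_j/2}V_j^* B - e^{\ww_j/2}BV_j^*$ from \eqref{eq:adjpjkms}, together with $\l Y,\mc{L}X\r_{\si,f} = \l Y, J_\si^f\mc{L}X\r$ and $J_\si^f = R_\si f(\Delta_\si)$, it suffices to establish the summand-wise identity
\begin{align*}
\l Y, J_\si^f \p_{j,\si}^\dag \p_j X\r = e^{-\ww_j/2}\l \p_j Y, R_\si f(e^{\ww_j}\Delta_\si)\p_j X\r \,, \q 1 \le j \le J\,.
\end{align*}

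The key step is a functional-calculus intertwining. Writing $V_j^* = V_{j'}$ with $\ww_{j'} = -\ww_j$ as in \eqref{eq:adjoint_index}, the eigen-relation \eqref{eq:eigmodular} gives $\Delta_\si V_j^* = e^{\ww_j}V_j^*$, hence $\si V_j^* = e^{\ww_j}V_j^*\si$. From $\Delta_\si(V_j^* Z) = e^{\ww_j}V_j^*\Delta_\si Z$ and $\Delta_\si(ZV_j^*) = e^{\ww_j}(\Delta_\si Z)V_j^*$ I would deduce, first for polynomials and then (by finite-dimensionality of $\mc{H}$, so that only the finitely many positive eigenvalue ratios of $\Delta_\si$ enter) for any continuous $f:(0,\infty)\to(0,\infty)$, the intertwinings $f(\Delta_\si)L_{V_j^*} = L_{V_j^*}f(e^{\ww_j}\Delta_\si)$ and $f(\Delta_\si)R_{V_j^*} = R_{V_j^*}f(e^{\ww_j}\Delta_\si)$. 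Setting $C_j := f(e^{\ww_j}\Delta_\si)\p_j X$ and applying $J_\si^f$ to $\p_{j,\si}^\dag\p_j X$ then gives $J_\si^f\p_{j,\si}^\dag\p_j X = e^{-\ww_j/2}V_j^* C_j\si - e^{\ww_j/2}C_j V_j^*\si$.

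It then remains to expand both sides against $Y$ and match them. The left side pairs to $e^{-\ww_j/2}\tr(Y^*V_j^*C_j\si) - e^{\ww_j/2}\tr(Y^*C_jV_j^*\si)$, while the right side, using $(\p_j Y)^* = Y^*V_j^* - V_j^*Y^*$, pairs to $e^{-\ww_j/2}\tr(Y^*V_j^*C_j\si) - e^{-\ww_j/2}\tr(V_j^*Y^*C_j\si)$. The first terms coincide, and for the second I would use $V_j^*\si = e^{-\ww_j}\si V_j^*$ with cyclicity of the trace to obtain $\tr(Y^*C_jV_j^*\si) = e^{-\ww_j}\tr(V_j^*Y^*C_j\si)$, whence $e^{\ww_j/2}\tr(Y^*C_jV_j^*\si) = e^{-\ww_j/2}\tr(V_j^*Y^*C_j\si)$, matching exactly. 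This closes the per-mode identity and hence the lemma. The only delicate point is the intertwining relation and the careful bookkeeping of the Bohr-frequency weights $e^{\pm\ww_j/2}$ and $e^{\ww_j}$; once the eigen-relation for $V_j^*$ under $\Delta_\si$ is correctly identified, everything reduces to cyclicity of the trace and holds mode by mode, so that no pairing or reindexing of the modes is actually needed.
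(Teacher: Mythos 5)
Your argument is correct and rests on the same ingredients as the paper's own proof: the divergence form \eqref{eq:rep_gen} together with \eqref{eq:adjpjkms}, the eigenrelation \eqref{eq:eigmodular} used to commute the functional calculus of $\Delta_\si$ past multiplication by $V_j^*$, and a polynomial-approximation step to pass to general continuous $f$. The paper compresses this by invoking Stone--Weierstrass at the outset to reduce to $f = x^s$ and then computing in one line, whereas you spell out the intertwining $f(\Delta_\si)L_{V_j^*} = L_{V_j^*}f(e^{\ww_j}\Delta_\si)$ and the final trace-cyclicity bookkeeping explicitly, but the route is essentially identical.
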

\begin{proof}
By Stone-Weierstrass theorem, it suffices to consider $f = x^s$. For this, we have 
\begin{align*}
    - \l Y, R_\si \Delta_\si^s \mc{L} X\r = \sum_{j = 1}^J \l Y, \Delta_\si^{s-\frac{1}{2}}\p_j^\dag \gs \p_j X  \r & = \sum_{j = 1}^J  e^{(s-\frac{1}{2})\ww_j} \left\l \p_j Y, R_\si \Delta_\si^s \p_j X \right\r \\
    & = \sum_{j = 1}^J  \left\l \p_j Y, R_{e^{-\ww_j/2}\si} \left(L_{e^{\ww_j/2}\si} R^{-1}_{e^{-\ww_j/2}\si}\right)^s \p_j X \right\r,
\end{align*}
by noting $
    \Delta_\si^s \p_j X = e^{-s\ww_j} \p_j \Delta_\si^s X
$ from \eqref{eq:eigmodular}.
\end{proof}

\subsection{Quantum entropy and Dirichlet form}
This section will introduce quantum relative entropies and Dirichlet forms and discuss their basic properties.

\medskip

\noindent \emph{Noncommutative $L_p$ space.}
We start with the noncommutative weighted $L_p$ space. For \mb{$p \in \R \backslash \{0\}$} and $\si \in \dhh$, we define the $\si$-weighted $p$-\mb{functional} \cite{olkiewicz1999hypercontractivity,kastoryano2013quantum}:
\begin{align*}
    \norm{X}_{p,\si} := \tr\Big(|\Gamma_\si^{1/p}(X)|^p \Big)^{1/p}\,,
\end{align*}
\mb{for $X  \in \mc{B}(\mc{H})$ if $p > 0$; $X \in \mc{B}_+(\mc{H})$ if $p < 0$, which is a norm when $p \ge 1$.} In particular, if $\si = \frac{\mi}{d}$, then $\norm{\dd}_{p,\frac{\mi}{d}} = \frac{1}{d^{1/p}}\norm{\dd}_p$ is the normalized Schatten $p$-norm. We also need the power operator $I_{q,p}$ for $p,q \in \R \backslash \{0\}$:
\begin{align*}
    I_{q,p}(X):= \Gamma_{\si}^{-1/q}\left(|\Gamma_{\si}^{1/p}(X)|^{p/q} \right)\,,
\end{align*} 
\mb{for $X \in \mc{B}(\mc{H})$ if $p/q > 0$; $X \in \mc{B}_+(\mc{H})$ if $p/q < 0$.} Some important properties of $\norm{\dd}_{p,\si}$ and $I_{p,q}$ are summarized in the following lemma; see \cite[Lemmas 1,\,2]{kastoryano2013quantum} and 
\cite[Corollary 5]{beigi2020quantum}. 
\begin{lemma} \label{lem:prop:norm_power}
For $p, q, r \in \R \backslash \{0\}$, it holds that
\begin{enumerate}[1.]
\item $\norm{X}_{p,\si} \le \norm{X}_{q,\si}$ \mb{for $p \le q$ and $X \in \mc{B}_+(\mc{H})$}.
    \item $\norm{I_{q,p}(X)}_{q,\si}^q = \norm{X}_{p,\si}^p$ and $I_{q,r} \circ I_{r,p}(X) = I_{q,p}(X)$ \mb{for $X \in \mc{B}_+(\mc{H})$}.
    \item $I_{p,p}(X) = X$ for $X \ge 0$. 
\end{enumerate}
\end{lemma}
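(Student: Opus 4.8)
The plan is to obtain items 3 and 2 by direct functional calculus on the commuting one-parameter family of weights $\gs^{s}(X) = \si^{s/2} X \si^{s/2}$, and to reduce the only substantive claim, item 1, to the generalized H\"{o}lder inequality for Schatten (quasi-)norms. For item 3, I would simply note that if $X \ge 0$ then $\gs^{1/p}(X) = \si^{1/(2p)} X \si^{1/(2p)} \ge 0$, so $|\gs^{1/p}(X)| = \gs^{1/p}(X)$ and hence $I_{p,p}(X) = \gs^{-1/p}(|\gs^{1/p}(X)|) = \gs^{-1/p}(\gs^{1/p}(X)) = X$. For item 2, the key identity is that applying $\gs^{1/q}$ to $I_{q,p}(X)$ undoes the outer weight: since the powers of $\gs$ commute,
\[
\gs^{1/q}\big(I_{q,p}(X)\big) = \big|\gs^{1/p}(X)\big|^{p/q} \ge 0 .
\]
Taking the modulus (trivial), raising to the $q$-th power, and applying the trace then gives $\norm{I_{q,p}(X)}_{q,\si}^q = \tr\big(|\gs^{1/p}(X)|^p\big) = \norm{X}_{p,\si}^p$. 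The semigroup law follows from the same identity with $r$ in place of $q$: since $\gs^{1/r}(I_{r,p}(X)) = |\gs^{1/p}(X)|^{p/r} \ge 0$, we get $\big|\gs^{1/r}(I_{r,p}(X))\big|^{r/q} = |\gs^{1/p}(X)|^{p/q}$, whence $I_{q,r}(I_{r,p}(X)) = \gs^{-1/q}\big(|\gs^{1/p}(X)|^{p/q}\big) = I_{q,p}(X)$.

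The main work is item 1. I would substitute $A := \gs^{1/q}(X)$, so that $\norm{X}_{q,\si} = \norm{A}_q$, and use that powers of $\si$ commute to rewrite
\[
\gs^{1/p}(X) = \si^{\frac{1}{2p} - \frac{1}{2q}}\, A\, \si^{\frac{1}{2p}-\frac{1}{2q}} = \si^{\theta/2} A \si^{\theta/2}, \qquad \theta := \tfrac1p - \tfrac1q \ge 0 .
\]
The claim then becomes $\norm{\si^{\theta/2} A \si^{\theta/2}}_p \le \norm{A}_q$. For $\theta = 0$ this is an equality, and for $\theta > 0$ I would invoke the generalized H\"{o}lder inequality for Schatten norms with the three exponents $2/\theta,\, q,\, 2/\theta$, whose reciprocals sum to $\tfrac{\theta}{2} + \tfrac1q + \tfrac{\theta}{2} = \tfrac1p$, obtaining
\[
\norm{\si^{\theta/2} A \si^{\theta/2}}_p \le \norm{\si^{\theta/2}}_{2/\theta}\,\norm{A}_q\,\norm{\si^{\theta/2}}_{2/\theta} .
\]
The proof closes with the normalization $\norm{\si^{\theta/2}}_{2/\theta} = (\tr \si)^{\theta/2} = 1$, which is exactly where the unit trace of $\si$ (and hence the interpretation of $\norm{\dd}_{p,\si}$ as a $\si$-weighted norm) enters.

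The step I expect to require the most care is this H\"{o}lder estimate in item 1 when $p$ or $q$ lies in $(0,1)$, since then $\norm{\dd}_p$ and $\norm{\dd}_q$ are only quasi-norms and the textbook statement of generalized H\"{o}lder assumes exponents $\ge 1$. Here I would appeal to the version of the trace/H\"{o}lder inequality valid for all positive Schatten exponents, which follows from singular-value majorization; alternatively one can first establish monotonicity for $p,q \ge 1$ and then bootstrap to the general range using the power operator $I_{q,p}$ together with item 2. Everything else is routine functional calculus, relying only on the facts that the superoperators $\gs^{s}$ form a commuting family and that $\tr\si = 1$.
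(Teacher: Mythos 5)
Your proof is correct; note first that the paper does not actually prove this lemma — it is quoted from \cite[Lemmas 1,\,2]{kastoryano2013quantum} and \cite[Proposition 2.6]{gu2019interpolation} — so there is no internal proof to compare against, only the route taken in the cited sources. Items 2 and 3 of your argument are the routine functional-calculus computations one expects, resting only on the facts that the superoperators $\gs^{s}$ commute, preserve positivity, and satisfy $\gs^{1/q}\circ\gs^{-1/q}=\id$, so that $\gs^{1/q}(I_{q,p}(X))=|\gs^{1/p}(X)|^{p/q}\ge 0$ and the outer modulus in the definition of $\norm{\dd}_{q,\si}$ is harmless. For item 1, your reduction to the three-factor H\"{o}lder bound $\norm{\si^{\theta/2}A\si^{\theta/2}}_p\le\norm{\si^{\theta/2}}_{2/\theta}\,\norm{A}_q\,\norm{\si^{\theta/2}}_{2/\theta}$ with $\theta=1/p-1/q$ and the normalization $\tr\si=1$ is a clean, self-contained route; the cited sources (and the closely related monotonicity in $p$ of the sandwiched R\'{e}nyi divergence) instead run through the Araki--Lieb--Thirring inequality \eqref{eq_alt_2}, which this paper records and deploys for very similar norm comparisons in Propositions \ref{prop:mono_beck} and \ref{prop:beck_to_dual_beck}. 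The two routes are of comparable strength, and yours has the merit of isolating exactly where the unit trace of $\si$ enters. You correctly identify the one delicate point, namely H\"{o}lder for Schatten exponents in $(0,1)$; this is genuinely available in finite dimensions (log-majorization of singular values, $\prod_{i\le k}s_i(ABC)\le\prod_{i\le k}s_i(A)s_i(B)s_i(C)$, followed by the scalar H\"{o}lder inequality with exponents $r_i/p\ge1$), and you should rely on that rather than on the alternative bootstrap through $I_{q,p}$ and item 2, which as sketched only converts weighted norms of $I_{q',p}(X)$ back into the $p$-norm of $X$ and does not obviously reproduce the $q$-norm on the right-hand side. There is no gap in the main line of the argument.
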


\medskip

\noindent \emph{Dirichlet form.}
We define the $p$-Dirichlet form \mb{($p \in \R \backslash \{0\}$)} for a QMS $\mc{P}_t$ with generator $\mc{L}$  (our definition differs from the one in \cite{kastoryano2013quantum} by a factor of $p/2$):  for any full-rank invariant state $\si = \mc{P}_t^\dag(\si)$,  
\begin{align} \label{def:p_diri}
    \mc{E}_{p,\mc{L}}(X) := - \frac{\h{p}p}{4} \l I_{\h{p},p}(X),\mc{L}(X) \r_{\si,1/2}\,,
\end{align}
\mb{for $X  \in \mc{B}(\mc{H})$ if $p > 1$; $X \in \mc{B}_+(\mc{H})$ if $p<1$}.  In particular, for $p = 2$, we have 
\begin{align*}
    \mc{E}_{2,\mc{L}}(X) = - \l X, \mc{L}(X) \r_{\si,1/2}\,, \q X \ge 0\,.
\end{align*}
The case $p = 1$ is defined by the limit $p \to 1$ \cite[Proposition  8]{kastoryano2013quantum}:
\begin{align} \label{def:1diri}
  \mc{E}_{1, \mc{L}}(X) := \lim_{p \to 1} \mc{E}_{p,\mc{L}}(X) = - \frac{1}{4} \l \log \gs(X) - \log \si, \mc{L}(X) \r_{\si,1/2}\,,\q X > 0\,.
\end{align}

\begin{lemma} 
  Let $\mc{L}$ be a Lindbladian satisfying $\si$-{\rm GNS DBC} for some $\si \in \mc{D}_+(\mc{H})$. Then, we have, for $p \in \R \backslash \{0\}$,  
\begin{align} \label{eq:rep_epl}
        \mc{E}_{p,\mc{L}}(X)
        & = \frac{p^2}{4} \sum_{j = 1}^J \left\l \gs^{1/p} \big(\p_j X\big), f_p^{[1]}\left(e^{\ww_j/2p}\gs^{1/p}(X), e^{-\ww_j/2p}\gs^{1/p}(X)\right)  \gs^{1/p} \big(\p_j X \big)  \right\r\,, \q X > 0\,,
\end{align}
\mb{where $f_p^{[1]}(\dd,\dd)$ is the double sum operator \eqref{def:doutble_op_sum} associated with the divided difference \eqref{def:divi_diff} of the function: }
\begin{equation} \label{def:funfp}
\mb{f_p(x): = \begin{cases}
    \frac{1}{p-1} x^{p-1} & \text{if} \ p \neq 1\,, \\
    \log x & \text{if} \ p = 1\,.
\end{cases}}
\end{equation} 
\end{lemma}

\begin{proof}
\mb{It is sufficient to show \eqref{eq:rep_epl} for $p \in \R \backslash \{0,1\}$, since the case $p = 1$ can be obtained by taking the limit $p \to 1$.} By formula \eqref{eq:inte_by_parts}, we have 
\begin{align} \label{eq:symm_p_diri}
      \mc{E}_{p,\mc{L}}(X) & = \frac{\h{p}p}{4} \sum_{j = 1}^J \l \p_j I_{\h{p},p}(X), \p_j X \r_{\si,1/2}\,.
\end{align}
Thanks to the relation \eqref{eq:eigmodular}, we can further compute, for $p,q \neq 0$, 
\begin{align} \label{eq:partialpower_1}
    \p_j I_{q,p}(X) & = V_j \Gamma_\si^{-1/q}\left(|\Gamma_\si^{1/p}(X)|^{p/q} \right) - \Gamma_\si^{-1/q}\left(|\Gamma_\si^{1/p}(X)|^{p/q} \right) V_j \notag \\
    & = \Gamma_\si^{-1/q} \left(  V_j |\Gamma_\si^{1/p}(e^{-\ww_j/2p}X)|^{p/q}  -  |\Gamma_\si^{1/p}(e^{\ww_j/2p}X)|^{p/q}  V_j \right)\,,
\end{align} 
and, for $s \neq 0$,  
\begin{align} \label{eq:partialpower_2}
\p_j X =  \Gamma_\si^{-1/s}\left(V_j \Gamma_\si^{1/s}(e^{-\ww_j/2s}X) - \Gamma_\si^{1/s}(e^{\ww_j/2s}X)V_j\right)\,.
\end{align}
Then, substituting the formulas \eqref{eq:partialpower_1} with $q=\h{p}$ and \eqref{eq:partialpower_2} with $s = p$ back into \eqref{eq:symm_p_diri}, and using Lemma \ref{lem:chain_rule}, we can find the desired representation \eqref{eq:rep_epl} of $\mc{E}_{p,\mc{L}}$. 
\end{proof}

\begin{corollary} \label{cor:posi_dirichlet} 
Let $\mc{L}$ be a Lindbladian satisfying $\si$-{\rm GNS DBC} for some $\si \in \mc{D}_+(\mc{H})$. For any $X > 0$ and $p \neq 0$, there holds $\mc{E}_{p,\mc{L}}(X) \ge 0$. Moreover, the equality $\mc{E}_{p,\mc{L}}(X) = 0$ holds if and only if $\na X = 0$,  where $\na$ is given in \eqref{def:grad}.
\end{corollary}
\begin{proof}
   \mb{It suffices to observe that $f_p(x)$ in \eqref{def:funfp} is increasing for $x > 0$, and hence the divided difference $f_p^{[1]}$ is strictly positive on $(0,\infty) \t (0,\infty)$, which, by Lemma \ref{lem:double_inner}, implies that the operator 
    $f_p^{[1]}(e^{\ww_j/2p}\gs^{1/p}(X), e^{-\ww_j/2p}\gs^{1/p}(X))$ is positive definite on $\mc{B}(\mc{H})$. Then, 
     the representation \eqref{eq:rep_epl} readily gives $\mc{E}_{p,\mc{L}}(X) \ge 0$, and $\mc{E}_{p,\mc{L}}(X) = 0$ if and only if $\p_j X = 0$ for any $1 \le j \le J$.}
\end{proof}

We derive integral representation formulas of $f_p^{[1]}$ for later use.  We recall \cite[p.116]{bhatia2013matrix}
\begin{align*}
    x^{p - 1} = \frac{\sin(p \pi)}{\pi}\int_0^\infty \frac{ s^{p-1}}{s + x}\  ds\,, \q \text{for}\ x > 0\,,\ 0 < p < 1\,,
\end{align*}
which yields, for $1 < p < 2$,  
\begin{align} \label{eq:inte_divi_p}
f_p^{[1]}(x,y) & = \frac{1}{x - y} \int_x^y t^{p - 2}\ dt  \notag \\
  &  = \frac{1}{x - y} \frac{\sin((p-1) \pi)}{\pi} \int_x^y \int_0^\infty \frac{s^{p-2}}{s + t}\  d s  dt  \notag \\
  &  =  \frac{\sin((p - 1) \pi)}{\pi} \int_0^\infty s^{p-2} g_s^{[1]}(x,y)\ d s d t \,,
\end{align}
where $g_s(x) = \log(x + s)$ and $g_s^{[1]}$ is the associated divided difference. 
By the integral form of $g_0^{[1]}$:
\begin{align*}
g_0^{[1]}(x,y) = \int_0^\infty \frac{1}{(t + x)(t + y)} dt\,,
\end{align*}
we further have, from the formula \eqref{eq:inte_divi_p},  
\begin{align} \label{eq:integral_thetap}
    f_p^{[1]}(x,y)
  &  =  \frac{\sin((p-1) \pi)}{\pi} \int_0^\infty  s^{p-2} \int_0^\infty \frac{1}{(t + s + x)(t + s + y)}\ dt ds\,.
\end{align}

We next recall the comparison result for Dirichlet forms $\mc{E}_{p,\mc{L}}$, known as the quantum Stroock-Varopoulos inequality, which was proved in \cite[Theorem 14]{beigi2020quantum}. 
 \begin{lemma} \label{lem:qvs}
Let $\mc{L}$ be the generator of a QMS $\mc{P}_t$ satisfying \mb{$\si$-{\rm GNS DBC}} for some invariant state $\si \in \mc{D}_+(\mc{H})$, and $\mc{E}_{p,\mc{L}}$ be defined in \eqref{def:p_diri}. Then, for $X > 0$, we have 
\begin{equation} \label{eq:quantumsv}
    \mc{E}_{p,\mc{L}}(I_{p,2}(X)) \ge \mc{E}_{q,\mc{L}}(I_{q,2}(X))\,, \q 0 < p \le q \le 2\,.
\end{equation}
\end{lemma}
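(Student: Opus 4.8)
\emph{Strategy.} The plan is to pass to a joint eigenbasis so that, after inserting the Dirichlet-form representation \eqref{eq:rep_epl}, both sides of \eqref{eq:quantumsv} become sums of the \emph{same} nonnegative weights against a scalar two-point kernel whose arguments are $p$-independent; the inequality \eqref{eq:quantumsv} then follows termwise from a one-variable monotonicity statement for that kernel. Concretely, I would fix $W \ge 0$ and show that $p \mapsto \mc{E}_{p,\mc{L}}(I_{p,2}(W))$ is non-increasing on $(0,2]$. Set $A := \gs^{1/2}(W) \ge 0$. Since $I_{p,2}(W) = \gs^{-1/p}\big((\gs^{1/2}W)^{2/p}\big) \ge 0$ and $\gs^{1/p}\gs^{-1/p} = \id$, one has $\gs^{1/p}\big(I_{p,2}(W)\big) = A^{2/p}$, which is $p$-independent up to the exponent. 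Plugging $Y = I_{p,2}(W)$ into \eqref{eq:rep_epl} and using \eqref{eq:partialpower_2} with $s = p$ gives $\gs^{1/p}(\p_j Y) = e^{-\ww_j/2p}V_j A^{2/p} - e^{\ww_j/2p}A^{2/p}V_j$.

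\emph{Reduction to a scalar kernel.} I would then diagonalize $A$, writing $a_k$ for its eigenvalues and $(V_j)_{kl}$ for the matrix entries of $V_j$ (from Lemma \ref{lem:struc_gene}) in that basis. The double operator sum $f_p^{[1]}\big(e^{\ww_j/2p}A^{2/p}, e^{-\ww_j/2p}A^{2/p}\big)$ acts diagonally on the matrix units, multiplying the $(k,l)$ entry by $f_p^{[1]}(u,v)$ with $u = e^{\ww_j/2p}a_k^{2/p}$, $v = e^{-\ww_j/2p}a_l^{2/p}$, while $\gs^{1/p}(\p_jY)$ has $(k,l)$ entry $(V_j)_{kl}(v-u)$. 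Hence, using \eqref{def:funfp} in the form $(u-v)^2 f_p^{[1]}(u,v) = \tfrac{1}{p-1}(u-v)(u^{p-1}-v^{p-1})$ and introducing the $p$-independent quantities $\mu_{jk} := u^p = e^{\ww_j/2}a_k^2$ and $\nu_{jl} := v^p = e^{-\ww_j/2}a_l^2$ (so $u=\mu_{jk}^{1/p}$, $u^{p-1}=\mu_{jk}^{1/\h{p}}$, and $\tfrac{p^2}{p-1}=p\h{p}$), I obtain
\[
\mc{E}_{p,\mc{L}}\big(I_{p,2}(W)\big) = \frac{p\h{p}}{4}\sum_{j,k,l}|(V_j)_{kl}|^2\,\big(\mu_{jk}^{1/p}-\nu_{jl}^{1/p}\big)\big(\mu_{jk}^{1/\h{p}}-\nu_{jl}^{1/\h{p}}\big)\,.
\]
The key point is that the weights $|(V_j)_{kl}|^2$ and the numbers $\mu_{jk},\nu_{jl}$ are all independent of $p$; the same computation with $p$ replaced by $q$ on $I_{q,2}(W)$ reproduces exactly the same weights and the same $\mu,\nu$. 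Thus \eqref{eq:quantumsv} reduces to a scalar inequality, applied termwise.

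\emph{The scalar inequality (main obstacle).} It remains to prove that for fixed $\mu,\nu > 0$ the function $\psi(p):=\frac{p\h{p}}{4}(\mu^{1/p}-\nu^{1/p})(\mu^{1/\h{p}}-\nu^{1/\h{p}})$ is non-increasing in $p\in(0,2]$. Substituting $\theta = 1/p \in [1/2,\infty)$, so that $1/\h{p}=1-\theta$ and $p\h{p}=1/(\theta(1-\theta))$, the claim becomes that $H(\theta):=\frac{(\mu^\theta-\nu^\theta)(\mu^{1-\theta}-\nu^{1-\theta})}{\theta(1-\theta)}$ is non-decreasing for $\theta\ge 1/2$. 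After factoring out $\nu$ and setting $r=\mu/\nu$, I would use the elementary representation $\frac{r^\theta-1}{\theta}=\ln r\int_0^1 r^{\theta\tau}\,d\tau$ for both factors to write $H(\theta)=\nu(\ln r)^2\int_0^1\!\int_0^1 r^{\theta\tau+(1-\theta)\tau'}\,d\tau\,d\tau'$. Differentiating in $\theta$ and symmetrizing in $(\tau,\tau')$ gives
\[
H'(\theta)=\frac{\nu(\ln r)^3}{2}\int_0^1\!\!\int_0^1(\tau-\tau')\,r^{\theta\tau'+(1-\theta)\tau}\big(r^{(2\theta-1)(\tau-\tau')}-1\big)\,d\tau\,d\tau'\,.
\]
For $\theta\ge 1/2$ one has $2\theta-1\ge 0$, and then $(\ln r)^3$ and $(\tau-\tau')\big(r^{(2\theta-1)(\tau-\tau')}-1\big)$ always carry the same sign (checking $r>1$ and $0<r<1$ separately, with $r=1$ trivial), so the integrand is nonnegative and $H'(\theta)\ge 0$. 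This yields the desired monotonicity, hence the lemma, with the endpoint $p=1$ recovered by continuity, where $f_p$ degenerates to $\log$ as in \eqref{def:1diri}. I expect this scalar step to be the crux: the manipulations above are bookkeeping, whereas $H'(\theta)\ge 0$ is precisely where the hypothesis $q\le 2$ (i.e. $\theta\ge 1/2$) enters, and the integral representation plus symmetrization is what makes the sign definite. The only points I would verify carefully are the legitimacy of the double-operator-sum/divided-difference calculus used in the reduction (commuting left/right multiplications, and continuity allowing $f=x^{p-1}$), which is routine given \eqref{eq:rep_epl}.
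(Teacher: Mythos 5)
Your proof is correct. Note that the paper does not actually prove Lemma \ref{lem:qvs}: it is imported from \cite[Theorem 14]{beigi2020quantum}, so there is no internal argument to compare against, and what you have written is essentially a self-contained reconstruction of the standard proof. The two structural points you identify are exactly the right ones: after the spectral reduction both sides of \eqref{eq:quantumsv} are sums over the \emph{same} $p$-independent weights $|(V_j)_{kl}|^2$ and points $\mu_{jk}=e^{\ww_j/2}a_k^2$, $\nu_{jl}=e^{-\ww_j/2}a_l^2$, and the whole lemma collapses to the monotonicity of $H(\theta)=(\mu^\theta-\nu^\theta)(\mu^{1-\theta}-\nu^{1-\theta})/(\theta(1-\theta))$ for $\theta\ge 1/2$, which your double-integral representation plus symmetrization establishes cleanly (the sign bookkeeping for $r>1$ and $r<1$ checks out, since both $(\ln r)^3$ and $(\tau-\tau')\big(r^{(2\theta-1)(\tau-\tau')}-1\big)$ carry the sign of $\ln r$). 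Two small points are worth a sentence each in a polished write-up: (i) the representation \eqref{eq:rep_epl} is stated in the paper in the context $p>1$, but its derivation via \eqref{eq:partialpower_1}, \eqref{eq:partialpower_2} and Lemma \ref{lem:chain_rule} goes through verbatim for $p\in(0,1)$ (where $\h{p}<0$), so your use of it there is legitimate; (ii) if $X\ge 0$ is not invertible, $A=\gs^{1/2}(X)$ has zero eigenvalues and for $p<1$ the quantity $\mc{E}_{p,\mc{L}}(I_{p,2}(X))$ involves negative powers of $0$, hence equals $+\infty$, so the inequality holds trivially for those terms; your termwise monotonicity remains valid in the extended reals, but the degenerate case should be acknowledged rather than silently assuming $\mu,\nu>0$.
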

In the special case $p \ge 1$ and $q = 2$, Lemma \ref{lem:qvs} gives the strong $L_p$ regularity of the Dirichlet form \cite{kastoryano2013quantum,bardet2017estimating}, which we slightly generalize as follows. The proof follows from the basic inequality: for $p \in (1,2]$, $a,b > 0$,
\begin{align*}
    (a-b)(a^{p-1} - b^{p-1}) \le \left( a^{p/2} - b^{p/2}\right)^2 \le \frac{p^2}{4(p-1)}(a-b)(a^{p-1} - b^{p-1})\,,
\end{align*}
and similar arguments in \cite[Theorem 4.1]{bardet2017estimating}. Hence we omit it here. 

\begin{corollary} \label{lem:com_diri}
Under the same assumptions as in Lemma \ref{lem:qvs}, it holds that, for $p \in (1,2]$ and $X \ge 0$, 
\begin{align} \label{eq:lpreg}
    \mc{E}_{2, \mc{L}}(I_{2,p}(X)) \le  \mc{E}_{p, \mc{L}}(X) \le \frac{p^2}{4(p-1)} \mc{E}_{2, \mc{L}}(I_{2,p}(X))\,.
\end{align}
In particular, the lower inequality in \eqref{eq:lpreg} holds for all $p > 0$. 
\end{corollary}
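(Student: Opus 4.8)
The plan is to reduce both inequalities in \eqref{eq:lpreg} to the elementary two-sided scalar inequality recorded just above the statement, applied termwise. The idea is to write both $\mc{E}_{p,\mc{L}}(X)$ and $\mc{E}_{2,\mc{L}}(I_{2,p}(X))$ as one and the same spectral sum, indexed by pairs of eigenvalues of $A := \gs^{1/p}(X) \ge 0$, but weighted by the two different scalar expressions appearing in that inequality. First I would invoke the double operator sum representation \eqref{eq:rep_epl}: for $\mc{E}_{p,\mc{L}}(X)$ it gives the divided difference $f_p^{[1]}$ of $f_p(x) = x^{p-1}/(p-1)$, while for $\mc{E}_{2,\mc{L}}(I_{2,p}(X))$ the exponent $2$ makes $f_2^{[1]} \equiv 1$ collapse, leaving $\mc{E}_{2,\mc{L}}(Y) = \sum_j \norm{\gs^{1/2}(\p_j Y)}^2$ with $Y = I_{2,p}(X)$.

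Next I would diagonalize $A = \sum_k a_k \ket{k}\bra{k}$ and use that $\gs^{1/2}(I_{2,p}(X)) = A^{p/2}$ (by the definition of the power operator and Lemma \ref{lem:prop:norm_power}, since $X \ge 0$). Setting $\alpha_{jk} := e^{\ww_j/2p} a_k$ and $\beta_{jl} := e^{-\ww_j/2p} a_l$ to absorb the Bohr-frequency weights, formula \eqref{eq:partialpower_2} yields $\bra{k}\gs^{1/p}(\p_j X)\ket{l} = (\beta_{jl} - \alpha_{jk})\bra{k}V_j\ket{l}$ and $\bra{k}\gs^{1/2}(\p_j I_{2,p}(X))\ket{l} = (\beta_{jl}^{p/2} - \alpha_{jk}^{p/2})\bra{k}V_j\ket{l}$. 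Substituting these into the double operator sums leaves
\[
\mc{E}_{p,\mc{L}}(X) = \frac{p^2}{4(p-1)}\sum_{j,k,l}(\alpha_{jk}-\beta_{jl})\big(\alpha_{jk}^{p-1}-\beta_{jl}^{p-1}\big)\,w_{jkl}, \qquad \mc{E}_{2,\mc{L}}(I_{2,p}(X)) = \sum_{j,k,l}\big(\alpha_{jk}^{p/2}-\beta_{jl}^{p/2}\big)^2 w_{jkl},
\]
with nonnegative weights $w_{jkl} := |\bra{k}V_j\ket{l}|^2$. Applying the scalar inequality with $a = \alpha_{jk}$, $b = \beta_{jl}$ inside each summand and summing then gives both estimates in \eqref{eq:lpreg} at once.

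For the final assertion about all $p \ge 1$, I would note that the two halves of the scalar inequality have different ranges. The right-hand half $\big(a^{p/2}-b^{p/2}\big)^2 \le \frac{p^2}{4(p-1)}(a-b)(a^{p-1}-b^{p-1})$, which produces the lower estimate $\mc{E}_{2,\mc{L}}(I_{2,p}(X)) \le \mc{E}_{p,\mc{L}}(X)$, is valid for every $p > 1$: it follows from $a^{p/2}-b^{p/2} = \frac{p}{2}\int_b^a s^{p/2-1}\,ds$ together with the Cauchy--Schwarz bound $\big(\int_b^a s^{p/2-1}\,ds\big)^2 \le (a-b)\int_b^a s^{p-2}\,ds$, while the left-hand half (giving the upper estimate) degenerates for $p > 2$. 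For $p \in (1,2]$ one may alternatively read the lower estimate off Lemma \ref{lem:qvs} with $q = 2$ applied to $Z = I_{2,p}(X)$, using $I_{p,2}(I_{2,p}(X)) = X$ from Lemma \ref{lem:prop:norm_power}. The hard part will not be the scalar inequality but the rigorous bookkeeping of the double operator sums: one must track the factors $e^{\pm\ww_j/2p}$ as shifts of the left and right spectra entering $f_p^{[1]}$, and treat the rank-deficient boundary case $X \ge 0$ (some $a_k = 0$) by continuity --- precisely the steps carried out in \cite[Theorem 4.1]{bardet2017estimating}.
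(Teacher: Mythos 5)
Your proposal is correct and follows essentially the route the paper intends: it omits the proof, pointing to the scalar two-sided inequality and the spectral-decomposition argument of \cite[Theorem 4.1]{bardet2017estimating}, which is exactly the termwise bookkeeping you carry out (and your matrix-element computations via \eqref{eq:rep_epl} and \eqref{eq:partialpower_2}, including the $e^{\pm\ww_j/2p}$ shifts, check out). Your Cauchy--Schwarz derivation of the right-hand scalar bound for all $p>1$ also correctly covers the final assertion about the lower inequality holding for all $p\ge 1$.
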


\medskip

\noindent \emph{Relative entropy.} We now introduce the entropy function $\Ent_{p,\si}(X)$, \mb{for $p \in \R \backslash \{0\}$} and $\si \in \mc{D}_+(\mc{H})$, as follows \cite{beigi2020quantum} (our definition differs from the one in \cite{olkiewicz1999hypercontractivity,kastoryano2013quantum} by a factor of $p$): 
\begin{align*}
    \Ent_{p,\si}(X) := \tr \left( \left(\gs^{1/p}(X)\right)^p \left(\log \left(\gs^{1/p}(X) \right)^p - \log \si \right) \right) - \norm{X}_{p,\si}^p \log \norm{X}_{p,\si}^p\,,\q X > 0\,,
\end{align*}
and Umegaki's relative entropy:
\begin{equation} \label{def:relative_entropy}
    D(\rho \| \si) = \tr(\rho \log \rho - \rho \log \si)\,,\q \rho \in \dh\,.
\end{equation}
We recall from \cite[Proposition 3]{beigi2020quantum} that \mb{for $p \neq 0$ and invertible $Y \in \mc{B}_{sa}(\mc{H})$}, 
\begin{align} \label{eq:differ_norm}
\frac{d}{d p} \norm{Y}_{p,\si} = \frac{1}{p^2} \norm{Y}_{p,\si}^{1-p} 
\Ent_{p,\si}(I_{p,p}(Y))\,,
\end{align} 
which, by chain rule, implies
\cite[Theorem 2.7]{olkiewicz1999hypercontractivity} 
\begin{equation} \label{eq:dev_pnorm}
    \frac{d}{d p} \norm{Y}_{p,\si}^p = \tr \Big((\Gamma_\si^{1/p}(Y))^p (\log \Gamma_\si^{1/p}(Y) - \frac{1}{p} \log \si ) \Big)\,,\q Y > 0\,.
\end{equation}
The above formulas relate the differential of $L_p$-norm $\norm{\dd}_{p,\si}$ and the entropy function $\Ent_{p,\si}$. The following lemma provides some basic properties of $\Ent_{p,\si}$; see \cite{kastoryano2013quantum,beigi2020quantum}. 

\begin{lemma} \label{lem:ent_rela}
For all $X > 0$ and $p \neq 0$, we have $\Ent_{p,\si}(X) \ge 0$. Moreover, for a state $\rho \in \mc{D}(\mc{H})$, it holds that 
\begin{align*}
    \Ent_{2,\si}\big(\gs^{-1/2}\left(\sqrt{\rho}\right)\big) = D(\rho || \si) \q \text{and}\q  \Ent_{1,\si}\big(\gs^{-1}(\rho) \big) = D(\rho || \si)\,.
\end{align*}
\end{lemma}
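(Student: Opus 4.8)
The plan is to reduce all three assertions to the nonnegativity of Umegaki's relative entropy $D(\dd \| \si)$ (Klein's inequality). Writing $Y := \gs^{1/p}(X)$, positivity of $X$ together with positivity of the weighting map $\gs^{1/p}$ gives $Y \ge 0$, so that $|Y|^p = Y^p$ and $\norm{X}_{p,\si}^p = \tr(Y^p)$. The central claim I would establish is the clean identity
\[
\Ent_{p,\si}(X) = \norm{X}_{p,\si}^p\, D(\rho_X\|\si)\,,\qquad \rho_X := \frac{Y^p}{\tr(Y^p)}\,,
\]
from which nonnegativity is immediate, since $\norm{X}_{p,\si}^p \ge 0$ and $D(\rho_X\|\si)\ge 0$.

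The key computational step is to split the operator logarithm using the fact that $\tr(Y^p)$ is a positive scalar: from $Y^p = \tr(Y^p)\,\rho_X$ one obtains $\log Y^p = \big(\log\tr(Y^p)\big)\mi + \log\rho_X$. Substituting this into the first trace in the definition of $\Ent_{p,\si}(X)$ and expanding, the contribution $\tr(Y^p)\log\tr(Y^p)$ cancels exactly against the subtracted normalization term $\norm{X}_{p,\si}^p\log\norm{X}_{p,\si}^p$. What remains is $\tr(Y^p)\big(\tr(\rho_X\log\rho_X) - \tr(\rho_X\log\si)\big) = \norm{X}_{p,\si}^p\, D(\rho_X\|\si)$, which is the displayed identity. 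Here $D(\rho_X\|\si)$ is finite because $\si \in \dhh$ is full-rank, so the support condition for the relative entropy holds automatically.

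For the two stated equalities I would simply specialize. Taking $p = 2$ and $X = \gs^{-1/2}(\sqrt{\rho})$ gives $Y = \gs^{1/2}(X) = \sqrt{\rho}$, hence $Y^2 = \rho$, $\tr(Y^2) = \tr\rho = 1$, and $\rho_X = \rho$; the normalization term vanishes and the identity returns $\Ent_{2,\si}(X) = D(\rho\|\si)$. Likewise, taking $p = 1$ and $X = \gs^{-1}(\rho)$ gives $Y = \gs(X) = \rho$, so $\tr(Y) = 1$ and again $\rho_X = \rho$, yielding $\Ent_{1,\si}(X) = D(\rho\|\si)$. I do not expect a serious obstacle here: the only points demanding care are the scalar/operator splitting of $\log Y^p$ and the tracking of the normalization constant, together with the routine remark that one may first assume $X > 0$ and then recover the general case $X \ge 0$ by continuity if one prefers to avoid discussing kernels of $Y$ directly.
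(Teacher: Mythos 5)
Your argument is correct: the identity $\Ent_{p,\si}(X) = \norm{X}_{p,\si}^p\, D(\rho_X\|\si)$ with $\rho_X = Y^p/\tr(Y^p)$, $Y = \gs^{1/p}(X)$, follows exactly as you describe (the scalar splitting of $\log Y^p$ cancels the normalization term), and the two special cases are immediate since $Y=\sqrt{\rho}$, resp.\ $Y=\rho$, gives $\tr(Y^p)=1$. The paper itself offers no proof but defers to \cite{kastoryano2013quantum,beigi2020quantum}, where this same reduction of $\Ent_{p,\si}$ to a rescaled Umegaki relative entropy plus Klein's inequality is the standard argument, so your route coincides with the intended one.
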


We also recall the sandwiched R\'{e}nyi relative entropy 
introduced in \cite{muller2013quantum,wilde2014strong}: for $p \in (0,1) \cup (1,\infty)$,
\begin{align} \label{def:sandwi}
   D_p(\rho \| \si) := 
    \h{p}\log\big(\norm{\Gamma_\si^{-1}(\rho)}_{p,\si}\big), \q  \rho \in \dh\,,
\end{align}
and 
\begin{align} \label{def:maxentropy}
    D(\rho\|\si) = \lim_{p \to 1} D_p(\rho\|\si)\,, \q D_{\infty}(\rho \| \si) := \log \inf\{c > 0\,;\ \rho \le c \si
\} = \lim_{p \to \infty} D_p(\rho \| \si)\,,
\end{align}
where $ D(\rho\|\si)$ is given in \eqref{def:relative_entropy}, and $D_\infty(\rho\|\si)$ is the max-relative entropy \cite{datta2009min,muller2013quantum}. \cite[Lemma 2.1]{muller2018sandwiched} shows that for $\si \in \dhh$ and $p \in [1,\infty]$, the sandwiched R\'{e}nyi entropy $D_p$ satisfies
\begin{align} \label{eq:suprhodp}
\sup_{\rho \in \dh} D_p(\rho \| \si) = \log \si_{\min}^{-1}\,.
\end{align}
\mb{Here and throughout this work, we denote by $\si_{\min}$ the minimal eigenvalue of a state $\si \in \dhh$.}

We finally recall the Araki–Lieb–Thirring (ALT) inequality \cite{lieb1997inequalities,araki1990inequality}, which is also useful in the sequel.
\begin{lemma}
For any $A \ge 0$, $B \ge 0$, and $q \ge 0$, it holds that 
\begin{align} \label{eq_alt_2}
    \tr( (B^r A^r B^r)^q) \le \tr( (BAB)^{rq})\,,\q  0\le r \le 1\,. 
\end{align}
\end{lemma}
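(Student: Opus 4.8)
The plan is to prove the ALT inequality through Araki's log-majorization and then to pass from log-majorization to the trace inequality by a convexity argument. Write $\lambda_1(M) \ge \lambda_2(M) \ge \cdots \ge 0$ for the eigenvalues of a positive semidefinite $M$ in decreasing order. Since every quantity below is continuous in $A,B$, I may replace $A,B$ by $A + \ep \mi$ and $B + \ep \mi$ and let $\ep \to 0^+$ at the end, so throughout I assume $A, B > 0$.

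The engine of the argument is the Cordes inequality
\[
\norm{X^s Y^s}_\infty \le \norm{XY}_\infty^s\,,\q X, Y > 0\,,\ 0 \le s \le 1\,,
\]
which I would prove by complex interpolation. Consider the entire operator-valued function $G(z) = X^z Y^z$ on the strip $0 \le \Re z \le 1$. On the line $\Re z = 0$ the factors $X^{iy} = e^{iy \log X}$ and $Y^{iy}$ are unitary, so $\norm{G(iy)}_\infty \le 1$; on the line $\Re z = 1$, writing $X^{1+iy} = X^{iy}X$ and $Y^{1+iy} = Y\, Y^{iy}$ peels off unitaries and gives $G(1+iy) = X^{iy}(XY)Y^{iy}$, whence $\norm{G(1+iy)}_\infty \le \norm{XY}_\infty$. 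Hadamard's three-lines theorem then yields $\norm{G(s)}_\infty \le \norm{XY}_\infty^s$.

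From Cordes the operator-norm base case follows immediately: since $B^r A^r B^r = (B^r A^{r/2})(B^r A^{r/2})^*$ one has $\norm{B^r A^r B^r}_\infty = \norm{B^r A^{r/2}}_\infty^2$, and Cordes with $X = B$, $Y = A^{1/2}$, $s = r$ gives $\norm{B^r A^{r/2}}_\infty \le \norm{B A^{1/2}}_\infty^r$; as $\norm{B A^{1/2}}_\infty^2 = \norm{BAB}_\infty$ this produces
\[
\norm{B^r A^r B^r}_\infty \le \norm{BAB}_\infty^r\,.
\]
To upgrade this to log-majorization I would use the $k$-th antisymmetric tensor power $\wedge^k$, which is multiplicative, satisfies $\wedge^k(M^r) = (\wedge^k M)^r$ for $M \ge 0$, and obeys $\norm{\wedge^k M}_\infty = \prod_{j=1}^k \lambda_j(M)$ for $M \ge 0$. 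Applying the base case to the positive operators $\wedge^k A$ and $\wedge^k B$ and reading off these identities gives, for every $k$,
\[
\prod_{j=1}^k \lambda_j(B^r A^r B^r) \le \prod_{j=1}^k \lambda_j\big((BAB)^r\big)\,,
\]
i.e.\ the eigenvalues of $B^r A^r B^r$ are log-majorized by those of $(BAB)^r$.

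Finally I pass to traces. The function $f(t) = t^q$ with $q \ge 0$ is nonnegative and nondecreasing and satisfies that $t \mapsto f(e^t) = e^{qt}$ is convex; for such $f$, weak log-majorization implies the weak majorization $\sum_j f(\lambda_j(B^r A^r B^r)) \le \sum_j f(\lambda_j((BAB)^r))$. Since $f(\lambda_j((BAB)^r)) = \lambda_j(BAB)^{rq} = \lambda_j((BAB)^{rq})$, summing over $j$ gives $\tr((B^r A^r B^r)^q) \le \tr((BAB)^{rq})$, which is the assertion. I expect the main obstacle to be the Cordes base case: it is the only step that is not formal bookkeeping, and its proof genuinely needs the complex-interpolation input that imaginary operator powers are unitary. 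The exterior-power reduction and the majorization-to-trace passage are then routine once the correct majorization machinery is invoked.
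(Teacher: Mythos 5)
Your proof is correct: the Cordes inequality via the three-lines theorem, the upgrade to weak log-majorization by antisymmetric tensor powers, and the passage to traces via $f(t)=t^q$ (with $f\circ\exp$ convex) are all sound, and the $\ep$-regularization properly handles zero eigenvalues. The paper itself gives no proof of this lemma — it recalls it as the classical Araki--Lieb--Thirring inequality with citations to Araki's 1990 paper — and your argument is precisely Araki's log-majorization proof from that cited source, so there is nothing to reconcile.
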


\section{Quantum interpolation functional inequalities}  \label{sec:beck}
In this section, we shall introduce and 
investigate two new families of quantum functional inequalities:  quantum $p$-Beckner's inequality and quantum dual $q$-Beckner's inequality, which interpolate quantum Sobolev-type inequalities and Poincar\'{e} inequality. 

\subsection{Quantum divergence} For our purpose, we need a modified sandwiched R\'{e}nyi entropy, called \emph{quantum $p$-divergence}, defined by, for $\si \in \dhh$ and \mb{$p \in \R \backslash \{0,1\}$}, 
\begin{align} \label{def:quanpdivi}
    \mc{F}_{p,\si}(\rho) := \frac{1}{p(p-1)} \big(\norm{\Gamma_\si^{-1}(\rho)}_{p,\si}^p - 1 \big)\,,
\end{align}
\mb{where $\rho \in \dh$ if $p \ge 0$; $\rho \in \dhh$ if $p < 0$}. By definition \eqref{def:sandwi}, it follows that
\begin{equation} \label{eq:rela_sandpdivi}
     \mc{F}_{p,\si}(\rho) = \frac{1}{p(p-1)}  \big( \exp((p-1)D_p(\rho\|\si)) - 1 \big)\,,
\end{equation}
and then, by \eqref{def:maxentropy}, we have
\begin{align*}
  \mc{F}_{1,\si}(\rho): = \lim_{p \to 1} \mc{F}_{p,\si}(\rho)  = D(\rho\| \si)\,.
\end{align*}
The operator $\Gamma_\si^{-1}(\rho)$ can be viewed as the relative density of $\rho \in \dh$ with respect to the reference state $\si \in \dhh$. For convenience, we refer to operators $X \ge 0$ with $\norm{X}_{1,\si} = 1$ as relative densities with respect to $\si$ in what follows. We recall that the variance of $X \in \mc{B}_{sa}(\mc{H})$ is defined by 
\begin{align*}
    \Var_\si(X): = \norm{X - \tr(\si X)}_{2,\si}^2 = \norm{X}_{2,\si}^2 - \norm{X}_{1,\si}^2\,.
\end{align*}
Clearly, when $p = 2$, $\mc{F}_{p,\si}(\rho)$ reduces to the variance of the relative density of $\rho$, up to a constant factor,
\begin{align*}
    \mc{F}_{2,\si}(\rho) = \frac{1}{2} \Var_{\si}(X)\,, \q X = \gs^{-1} (\rho)\,.
\end{align*}
Thanks to \eqref{eq:rela_sandpdivi}, many properties of $D_p(\rho\|\si)$ can be directly translated to $\mc{F}_{p,\si}(\rho)$.

\begin{lemma} \label{lem:prop_divi}
For any \mb{$\rho\,,\si \in \dhh$}, we have 
\begin{enumerate}[1.]
    \item $\mc{F}_{p,\si}(\rho) \ge 0$, and $ \mc{F}_{p,\si}(\rho) = 0$ if and only if $\rho = \si$, \mb{for $p \in \R \backslash \{0\}$}.  
    \item $\mc{F}_{p,\si}(\rho)$ is jointly convex with respect to $(\rho,\si)$, \mb{for $p \in \R \backslash (-1,1/2)$}. 
    \item The data processing inequality holds for $ \mc{F}_{p,\si}(\rho)$ with \mb{$p \in \R \backslash (-1,1/2)$}, 
    \begin{align} \label{eq:dpi_divi}
        \mc{F}_{p,\Phi(\si)}(\Phi(\rho)) \le \mc{F}_{p,\si}(\rho)\,,
    \end{align}
    where $\Phi$ is a quantum channel such that $\mc{F}_{p,\Phi(\si)}(\Phi(\rho))$ is well-defined.
\end{enumerate}
\end{lemma}

\begin{proof}\mb{
It was proved in \cite[Theorem 5]{beigi2013sandwiched} that $\norm{\Gamma_\si^{-1}(\rho)}_{p,\si} \ge 1$ for $p > 1$, and $\norm{\Gamma_\si^{-1}(\rho)}_{p,\si} \le 1$ for $p < 1$, with the equality condition $\rho = \si$. Hence, the first statement follows. For the second and third statements, the case $p \ge 1/2$ has been discussed in  \cite[Theorems 1,\,2]{frank2013monotonicity}. We now consider the case $p \le -1$, where the joint convexity of $\mc{F}_{p,\si}(\rho)$ is a special case of the general result \cite[Theorem 1.1]{zhang2020wigner}. Then, the data processing inequality \eqref{eq:dpi_divi} can be derived by the standard argument; see \cite{frank2013monotonicity}. }
\end{proof}

We next extend the key estimates in \cite[Lemmas 2.1,\,2.2]{gao2021complete} for the relative entropy $D(\rho \| \si)$ to our $p$-divergence \eqref{def:quanpdivi}, which are new and will be useful in the sequel.

\begin{lemma}\label{lem:mono_norm}
Let operators $X_i, Y_i > 0$, $ i = 1,2$, satisfy $X_i \le c Y_i$ for some $c > 0$. It holds that, \mb{for $p \in (1,2)$},
\begin{align} \label{eq:mono_norm_two}
\l A, f_p^{[1]}(Y_1, Y_2) A\r \le c^{2-p} \l A , f_p^{[1]}( X_1,  X_2)  A \r\,, \q \forall A \in \bh\,,
\end{align}
where $f_p^{[1]}$ is the divided difference of the function $f_p$ defined in \eqref{def:funfp}. 
\end{lemma}

\begin{proof}
The proof follows from the integral form \eqref{eq:integral_thetap} of $f_p^{[1]}$: 
\begin{align} \label{eq:auxest_mono} 
 \l A,  f_p^{[1]}(X_1, X_2) A\r
  &  =  \frac{\sin((p-1) \pi)}{\pi} \int_0^\infty  s^{p-2} \int_0^\infty \tr \left( A^* \frac{1}{t + s + X_1} A \frac{1}{t + s + X_2} \right) dt ds \notag \\
 & \ge  \frac{\sin((p-1) \pi)}{\pi} \int_0^\infty  s^{p-2} \int_0^\infty \tr \left(A^* \frac{1}{t + s +  c Y_1} A \frac{1}{t + s + c Y_2} \right) dt ds \notag \\
  & =  \frac{\sin((p-1) \pi)}{\pi} \int_0^\infty  s^{p-2} \frac{1}{c^2} \int_0^\infty \tr \left(A^* \frac{1}{\frac{t + s}{c} +  Y_1} A \frac{1}{\frac{t + s}{c} +  Y_2} \right) dt ds \notag \\
   & =  \frac{\sin((p-1) \pi)}{\pi} \int_0^\infty  (cs)^{p-2} \int_0^\infty \tr \left(A^* \frac{1}{t + s + Y_1} A \frac{1}{t + s + Y_2} \right) dt ds \\
   & = c^{p-2}  \l A,  f_p^{[1]}(Y_1, Y_2) A \r\,,\notag
\end{align}
where in the first inequality we have used the fact that $t^{-1}$ is operator monotone decreasing; in the third line we have used the change of variable $r \to r/s$ and $t \to t/s$. 
\end{proof}

Before we proceed, several interesting and helpful observations are in order. First, we define the function
\begin{align*}
    \vp_p(x) = \frac{1}{p-1}\frac{x - x^{1/p}}{x^{1/p} - 1}\,,
\end{align*}
\mb{for $x > 0$ and $p \in \R \backslash \{0,1\}$}.  
Then the kernel $J_\si^{\vp_p} = \vp_p(\Delta_\si) R_\si $ for the inner product $\l \dd, \dd\r_{\si,\vp_p}$ (cf.\,\eqref{def:operator_kernel} and \eqref{eq:general_inner}) can be reformulated as follows, in terms of the double sum operator associated with $f_p^{[1]}$, 
\begin{align} \label{eq:kernel_iden}
   \vp_p(\Delta_\si) R_\si & = \frac{1}{p-1} \frac{\Delta_{\si} - \Delta_\si^{1/p}}{\Delta_{\si}^{1/p} - 1} R_\si \notag \\
   & = \frac{1}{p-1} \frac{L_{\si}^{(p-1)/p} - R^{(p-1)/p}_\si}{L_{\si}^{1/p} - R^{1/p}_\si}L_\si^{1/p}R_\si^{1/p} \notag \\ & = \gs^{1/p} f_p^{[1]}\big(\si^{1/p},\si^{1/p}\big)\gs^{1/p}\,.
\end{align}
Second, the inner product $\l \dd , \dd  \r_{\si,\vp_p}$ \mb{with $p \in \R \backslash \{0,1\}$} also relates to the $\chi^2$-divergence \eqref{def:quantumchi} corresponding to the power difference $\kappa_\alpha$ \eqref{def:pdiff} \mb{with $\alpha \in \R \backslash \{0,1\}$}. Indeed, for $\rho \in \dh$ with the relative density $X = \gs^{-1}(\rho)$, we can directly compute 
\begin{equation*}
  \norm{X - \mi}_{\si,\vp_p}^2 = \big\l \rho - \si, \gs^{-1}\vp_p(\Delta_\si)R_\si \gs^{-1}(\rho - \si)\big\r = \big\l \rho-\si, \Omega_\si^{x^{-1}\vp_p}(\rho - \si)\big\r\,,
\end{equation*}
by noting
\begin{align}  \label{eq:kernel_iden_2}
    \gs^{-1}\vp_p(\Delta_\si)R_\si \gs^{-1} = \Delta_\si^{-1}\vp_p(\Delta_\si)R_\si^{-1} = \Omega_\si^{x^{-1}\vp_p}\,,
\end{align}
where the operator $\Omega_\si^{x^{-1}\vp_p}$ is defined as in \eqref{def:omega}.  Then we find, by letting $\alpha = 1/p$ and \eqref{def:pdiff}, 
\begin{align} \label{auxeq_power_diff}
    x^{-1}\vp_{1/\alpha}(x) = \frac{1}{p-1}\frac{1-x^{(1-p)/p}}{x^{1/p}-1} = \frac{\alpha}{\alpha-1} \frac{x^{\alpha-1} - 1}{x^{\alpha} - 1} = \kappa_\alpha\,.
\end{align}
With the notation defined above, we conclude
\begin{align} \label{eq:chi_heisen}
    \norm{X - \mi}_{\si,\vp_p}^2 = \chi^2_{\kappa_\alpha}(\rho,\si)\,.
\end{align}
Third, by Lemma \ref{lem:DBC} \mb{with $f = \vp_p$ and the same calculation as in \eqref{eq:kernel_iden}}, it holds that, \mb{for $p \in \R \backslash \{0,1\}$}, 
\begin{align} \label{eq:rep_dbcl}
  \mb{ - \left\l X, \mc{L} X \right\r_{\si,\vp_p} 
    = \sum_{j = 1}^J  \left\l \gs^{1/p}  \big(\p_j X\big), f_p^{[1]}\left(e^{\ww_j/2p}\si^{1/p}, e^{-\ww_j/2p}\si^{1/p}\right) \gs^{1/p}\big(\p_j X\big) \right\r,\q X \in \bh\,.}
\end{align}

We are ready to give the following lemma, which shows that 
the $p$-divergence $\mc{F}_{p,\si}(\rho)$ can be bounded from above and below by the $\chi^2$-divergence associated with the power difference $\kappa_{1/p}$.

\begin{lemma} \label{lem:two-sided}
For $\rho, \si \in \dhh$ satisfying $\rho \le c \si$ for some $c > 0$, it holds that, \mb{for $p \in (1,2)$,}
\begin{equation} \label{eq:two_sided_dense}
    k_p(c) \chi_{\kappa_{1/p}}^2(\rho,\si) \le \mc{F}_{p,\si}(\rho) \le p^{-1} \chi_{\kappa_{1/p}}^2(\rho,\si)\,,
\end{equation}
where the constant $k_p(c)$ is given by 
\begin{align*}
    k_p(c) =  \frac{c^p - 1 - p(c-1)}{p(c-1)^2 (p-1)}\,.
\end{align*}
In particular, the upper bound estimate in \eqref{eq:two_sided_dense} holds for any $\rho \in \dhh$. 
\end{lemma}

\begin{proof}
Recalling the relation \eqref{eq:chi_heisen}, we will prove the inequality \eqref{eq:two_sided_dense} in the Heisenberg picture:
\begin{align} \label{eq:two_sided_est}
 k_p(c) \norm{X - \mi}^2_{\si,\vp_p}   \le  \frac{1}{p(p-1)}
 \left(\norm{X}_{p,\si}^p - 1\right) \le p^{-1} \norm{X - \mi}^2_{\si,\vp_p},
\end{align}
for $X > 0$ satisfying $\tr(\si X) = 1$ and $X \le c \mi$. 
We define $X_t = (1 - t) \mi + t X$, $t \in [0,1]$, and consider the function: 
\begin{equation*}
    \vp(t) := \frac{1}{p(p-1)} \left(\norm{X_t}_{p,\si}^p - 1 \right). 
\end{equation*}
It is easy to compute the derivatives:
\begin{align*}
    \vp'(t) =  \frac{1}{p-1} \tr \left( \left( \gs^{1/p}(X_t)\right)^{p-1} \gs^{1/p}\left(X-\mi\right) \right)\,,
\end{align*}
and 
\begin{align} \label{auxeq:2nd_der}
    \vp''(t) = \left\l \gs^{1/p}(X-\mi), f_p^{[1]}\left(\gs^{1/p}(X_t),\gs^{1/p}(X_t) \right) \gs^{1/p}(X-\mi) \right\r. 
\end{align}
By assumption $X \le c \mi$, we have
\begin{align} \label{aauxeq}
(1 - t) \mi   \le X_t \le (1 + (c - 1) t) \mi\,,
\end{align}
and hence $(1-t) \si^{1/p}\le \gs^{1/p}(X_t) \le (1 + (c - 1) t) \si^{1/p}$. 
Then applying Lemma \ref{lem:mono_norm} to \eqref{auxeq:2nd_der} gives 
\begin{align*}
(1 + (c-1)t)^{p-2} \left \l X -\mi, J_\si^{\vp_p} (X -\mi) \right\r  \le \vp''(t) \le (1 - t)^{p-2} \left \l X -\mi, J_\si^{\vp_p} (X -\mi) \right\r,
\end{align*}
where we have used \eqref{aauxeq} and the observation \eqref{eq:kernel_iden}. 
Note from $\tr (\si X) = 1$ that $\vp'(0)= 0$. It follows that 
\begin{align*}
    \vp(1) - \vp(0) = \int_0^1 \int_0^x \vp''(t) dt dx & \le  \Big( \int_0^1 \int_0^x  (1 - t)^{p-2}\ dt dx \Big) \big\l X -\mi, J_\si^{\vp_p} (X -\mi) \big\r \\
    & \le  p^{-1} \norm{X - \mi}^2_{\si,\vp_p}\,.
\end{align*}
Similarly, for the lower bound, we have 
\begin{align*}
    \vp(1) - \vp(0) & \ge   \Big(\int_0^1 \int_0^x (1 + (c - 1) t)^{p-2} \ dt dx \Big) \big \l X -\mi, J_\si^{\vp_p} (X -\mi) \big\r \\
    & \ge \frac{c^p - 1 - p(c-1)}{p(c-1)^2 (p-1)} \norm{X - \mi}^2_{\si,\vp_p}. 
\end{align*}
The proof is completed by noting $\vp(0) = 0$. 
\end{proof}

\begin{remark} \label{rem:limit}
   \mb{Lemmas \ref{lem:mono_norm} and \ref{lem:two-sided} above rely on the integral formula \eqref{eq:integral_thetap} of $f^{[1]}_p$, which limit the range of $p$ to $(1,2)$. The cases $p = 1$ and $p =2$ can be easily obtained by taking the limit. In particular, when $p = 1$, we recover the the results in \cite[Lemmas 2.1,\,2.2]{gao2021complete}, while, when $p = 2$, the estimates in \eqref{eq:mono_norm_two} and \eqref{eq:two_sided_dense} are trivial.}
\end{remark}

\subsection{Interpolation between Sobolev and Poincar\'{e} inequalities} \label{sec:inter_primi}

To motivate the quantum Beckner's inequalities, we consider a primitive QMS $\mc{P}_t$ \mb{satisfying $\si$-KMS DBC for some $\si \in \dhh$}. 
By the limit \eqref{eq:conver_qms} and the data processing inequality \eqref{eq:dpi_divi},  $\mc{F}_{p,\si}$ decreases along the dynamic $\rho_t = \mc{P}_t^{\sss \dag}(\rho)$ and there holds $\mc{F}_{p,\si}(\rho_t) \to 0$ as $t \to \infty$. 
We will see that the quantum Beckner's inequality characterizes the convergence rate of $\mc{F}_{p,\si}(\rho_t)$. It is convenient to consider the evolution of the relative density $X = \gs^{-1}(\rho)$, i.e., the QMS in the 
Heisenberg picture. By \mb{KMS DBC} and Lemma \ref{lem:self_adjoint}, we have
\begin{align} \label{eq:evo_density}
    X_t := \gs^{-1}(\rho_t) =  \gs^{-1} e^{t \mc{L}^\dag} \gs (X) = \mc{P}_t(X)\,.
\end{align}
Then we compute the time derivative of $\mc{F}_{p,\si}(\rho_t)$ as follows, \mb{for $p \in \R \backslash (-1,1/2)$,}
\begin{align} \label{eq:ep_divi}
    \frac{d}{d t} \mc{F}_{p,\si}(\rho_t) & = \frac{1}{p-1}
 ((\gs^{-1/\h{p}}(\rho_t))^{p-1}), \mc{L}^\dag(\rho_t)\r \notag \\
    & = \frac{1}{p-1} \l \gs^{-1/\h{p}}(\gs^{1/p}(X_t))^{p-1}),  \gs\mc{L}(X_t)\r \notag \\ 
    & = - \frac{4}{p^2}\mc{E}_{p,\mc{L}}(X_t) \le 0\,,
\end{align}
\mb{where $\rho_t = \mc{P}_t^{\dag}(\rho) > 0$ for any $ t \ge 0$ with $\rho \in \dhh$, and last inequality is by \eqref{eq:dpi_divi}.
If $\mc{P}_t$ satisfies the $\si$-{\rm GNS DBC}, by 
the positivity of $\mc{E}_{p,\mc{L}}$ in Corollary \ref{cor:posi_dirichlet}, the calculation \eqref{eq:ep_divi} implies the contractivity of $\mc{F}_{p,\si}$ along the dynamic for all $p \in \R \backslash \{0,1\}$.} In view of \eqref{eq:ep_divi}, we can call
$- 4\,\mc{E}_{p,\mc{L}}(\gs^{-1}(\rho))/p^2$ the entropy production of the QMS $\mc{P}_t = e^{t \mc{L}}$ for the $p$-divergence $\mc{F}_{p,\si}$. A simple use of Gr\"{o}nwall’s inequality gives the equivalence between the exponential decay of $\mc{F}_{p,\si}$: for some $\alpha_p > 0$,
\begin{align} \label{eq:exp_divi}
    \mc{F}_{p,\si}(\rho_t) \le \mb{e^{- 4 \alpha_p t}}  \mc{F}_{p,\si}(\rho)\,, \q \forall\rho \in \dhh\,,
\end{align}
and the functional inequality:
\begin{equation} \label{eq_beck}
    \alpha_p \mc{F}_{p,\si}(\rho) \le \mb{p^{-2}} \mc{E}_{p,\mc{L}}(\gs^{-1}(\rho))\,, \q \forall\rho \in \dhh\,,
\end{equation}
which we call the quantum Beckner's inequality. By analogy with the classical case \eqref{ineq_e}, we can also easily define 
the quantum dual Beckner's inequality; see \eqref{ineq_dbecq} below.

Let us now formally define the quantum functional inequalities in the Heisenberg picture, which will be the main focus in the following sections. 
\begin{definition} \label{def:quantum_func}
For a primitive QMS $\mc{P}_t$ with generator $\mc{L}$ \mb{satisfying $\si$-{\rm KMS DBC} for some $\si \in \dhh$}, let $\mc{E}_{p,\mc{L}}$ be the associated $p$-Dirichlet form \eqref{def:p_diri}. Then we say that $\mc{P}_t$ satisfies:
\begin{enumerate}[1.]
    \item the Poincar\'{e} inequality if there exists a constant $\lad > 0$ such that for all $X \in \mc{B}(\mc{H})$,
    \begin{equation} \label{ineq_pi2}
        \lad \Var_\si(X) \le \mc{E}_{2,\mc{L}}(X) \,. \tag{PI}
    \end{equation}
    \item  the modified log-Sobolev inequality (MLSI) if there exists $\alpha_1 > 0$ such that for all $X \ge 0$, 
\begin{equation} \label{ineq_mlsi}
    \alpha_1 \Ent_{1,\si}(X) \le \mc{E}_{1,\mc{L}}(X)\,. \tag{mLSI} 
\end{equation}
    \item the $p$-Beckner's inequality with \mb{$p \in \R \backslash \{0,1\}$} if there exists $\alpha_p > 0$ such that for all $X > 0$,
    \begin{align} \label{ineq_becp}
       \mb{ \alpha_p\,\h{p}\, (\norm{X}^p_{p,\si} - \norm{X}_{1,\si}^p) \le  \mc{E}_{p,\mc{L}}(X)\,. }\tag{Bec-$p$}
    \end{align}
    \item the log-Sobolev inequality (LSI) if there exists  $\beta > 0$ such that for all $Y \ge 0$,  
    \begin{align} \label{ineq_lsi}
        \beta \Ent_{2,\si}(Y) \le \mc{E}_{2,\mc{L}}(Y)\,.  \tag{LSI}
    \end{align} 
    \item the dual $q$-Beckner's inequality with \mb{$q \in (0,2)$} if there exists  $\beta_q > 0$ such that for all $Y \ge 0$,
    \begin{align}  \label{ineq_dbecq}
        \beta_q \Var_{q,\si}(Y)\le (2-q) \mc{E}_{2,\mc{L}}(Y)\,,  \tag{Bec'-$q$}
    \end{align}
    where $\Var_{q,\si}(Y)$ is the $q$-variance: $\Var_{q,\si}(Y): = \norm{Y}_{2,\si}^2 - \norm{Y}_{q,\si}^2$.
\end{enumerate}
\end{definition}

\begin{remark} 
Note that inequalities in Definition \ref{def:quantum_func} can be easily reformulated in the Schr\"{o}dinger picture by inserting $X = \gs^{-1}(\rho)$. For instance, by definition \eqref{def:quanpdivi} of $\mc{F}_{p,\si}(\rho)$, \eqref{ineq_becp} is clearly equivalent to \eqref{eq_beck}, while, by Lemma \ref{lem:ent_rela}, \eqref{ineq_mlsi} with $X = \gs^{-1}(\rho)$ gives the familiar one in terms of quantum states \cite{rouze2019concentration}:
\begin{equation*}
    \alpha_1 D(\rho \| \si) \le - \frac{1}{4} \tr \left(\mc{L}^\dag(\rho) (\log \rho - \log \si) \right)\,, \q \forall \rho \in \dh\,.
\end{equation*}
\end{remark}

\begin{remark}\mb{
Recalling the classical cases \eqref{ineq_c} and \eqref{ineq_e}, the most interesting regimes for \eqref{ineq_becp} and \eqref{ineq_dbecq} are $p \in (1,2]$ and $q \in [1,2)$, respectively. Since many arguments  in the following discussions can work beyond these ranges, here we choose to define \eqref{ineq_becp} and \eqref{ineq_dbecq} for general $p$ and $q$.}
\end{remark}

We call the optimal constant in \eqref{ineq_becp}
the quantum Beckner constant, denoted by $\alpha_p(\mc{L})$. Similar 
notions apply to other functional inequalities defined above. In particular, the Poincar\'{e} constant $\lad(\mc{L})$ is nothing  but the spectral gap of $\mc{L}$. Indeed, since $\mc{L}$ is self-adjoint with respect to $\l\dd,\dd\r_{\si,1/2}$ and $\ker({\mc{L}}) = {\rm span}\{ \mi_{\mc{H}} \}$ holds for a primitive QMS, 
by the min-max theorem, the Poincar\'{e} constant
\begin{equation}  \label{def:spect}
    \lad(\mc{L}) = \inf_{X \in \bh, X \neq \mi} \frac{- \l X, \mc{L} X \r_{\si, 1/2}}{\norm{X - \tr(\si X)\mi}^2_{2, \si}} 
\end{equation}
characterizes the smallest non-zero eigenvalue of $- \mc{L}$ (i.e., the spectral gap). It is worth pointing out that as $\mc{L}$ is Hermitian-preserving, the infimum in \eqref{def:spect} can be taken 
over $X$ in $\mc{B}_{sa}(\mc{H})$. \mb{Moreover, by 
Lemma \ref{lem:self_adjoint}, for a primitive QMS satisfying $\si$-GNS DBC, the Poincar\'{e} inequality 
can be equivalently defined by, for any  $X \in \mc{B}(\mc{H})$, 
 \begin{equation}  \label{ineq_pi}
        \lad \norm{X - \tr(\si X) \mi}^2_{\si,f} \le - \l X, \mc{L} X \r_{\si,f}\,, \tag{$\text{PI}_f$}
    \end{equation}
 where $f:(0,\infty) \to (0,\infty)$ and the norm $\norm{\dd}_{\si,f}$ is defined by the inner product \eqref{eq:general_inner}. It is clear from Lemma \ref{lem:self_adjoint} that under $\si$-GNS DBC, the optimal constant $\lad(\mc{L})$ is independent of the choice of $f$.}

\mb{In the rest of this section}, we will derive some properties of the optimal constants for functional inequalities in Definition 
\ref{def:quantum_func} and the relations among them. 
We summarize some known relations between \eqref{ineq_mlsi}, \eqref{ineq_lsi}, and 
\eqref{ineq_pi2} from \cite{kastoryano2013quantum,temme2014hypercontractivity,olkiewicz1999hypercontractivity} 
in the following lemma for completeness and future use. 

\begin{lemma} \label{lem:aux_func}
For a primitive QMS $\mc{P}_t$ satisfying \mb{$\si$-{\rm KMS DBC}}, it holds that\mb{
\begin{align*}
2 \alpha_1(\mc{L}) \le \lad (\mc{L})\q \text{and}\q  \frac{\lad (\mc{L})}{2 - \log (\si_{\min})} \le 2 \beta(\mc{L}) \le \lad(\mc{L})\,.
\end{align*}
Moreover, if $\mc{P}_t$ satisfies $\si$-{\rm GNS DBC}, we also have $\beta(\mc{L}) \le  \alpha_1(\mc{L})$. }
\end{lemma}

We first consider the properties of quantum dual $q$-Beckner's inequalities \eqref{ineq_dbecq}. It is clear from definition that when $q = 1$, the inequality \eqref{ineq_dbecq} reduces to the Poincar\'{e} inequality \eqref{ineq_pi2}. On the other hand, it is easy to see that in the limit $q \to 2$, \eqref{ineq_dbecq} gives \eqref{ineq_lsi}. In this sense, \eqref{ineq_dbecq} can be considered as an interpolating family between the quantum LSI and the Poincar\'{e} inequality. Indeed, we have Proposition \ref{dualbecktolsi} below. The proof is based on the following monotonicity lemma.

\begin{lemma}\label{lem:monotone}
The function $  \frac{\Var_{q,\si}(Y)}{1/q - 1/2}$ is monotone increasing \mb{for $q \in (0, \infty)\backslash \{2\}$}.
\end{lemma}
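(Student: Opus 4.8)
The plan is to pass to the reciprocal exponent and recognize the statement as a convexity (secant-slope) property. Set $t=1/q$, so that as $q$ ranges over $[1,2)$ the variable $t$ ranges over $(1/2,1]$, and define $G(t):=\norm{Y}_{1/t,\si}^2$. Since $G(1/2)=\norm{Y}_{2,\si}^2$, the quantity in question becomes
\[
\frac{\Var_{q,\si}(Y)}{1/q-1/2}=\frac{\norm{Y}_{2,\si}^2-\norm{Y}_{q,\si}^2}{1/q-1/2}=\frac{G(1/2)-G(t)}{t-1/2}=-\,\frac{G(t)-G(1/2)}{t-1/2}\,,
\]
which is minus the slope of the secant of $G$ joining $1/2$ to $t$. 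Because $q\mapsto t=1/q$ is decreasing, monotone increase in $q$ is the same as monotone decrease in $t$; and $-\tfrac{G(t)-G(1/2)}{t-1/2}$ is non-increasing in $t$ exactly when the secant slopes of $G$ based at $1/2$ are non-decreasing, i.e. when $G$ is convex on $[1/2,1]$. Thus the lemma reduces to the claim that $t\mapsto G(t)=\norm{Y}_{1/t,\si}^2$ is convex on $[1/2,1]$.

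To prove the claim I would first establish the log-convexity of the weighted norm, namely that $t\mapsto\log\norm{Y}_{1/t,\si}$ (equivalently $1/p\mapsto\log\norm{Y}_{p,\si}$) is convex. This is the Lyapunov/interpolation inequality
\[
\norm{Y}_{p_\theta,\si}\le\norm{Y}_{p_0,\si}^{1-\theta}\,\norm{Y}_{p_1,\si}^{\theta}\,,\qquad \frac{1}{p_\theta}=\frac{1-\theta}{p_0}+\frac{\theta}{p_1}\,,
\]
for the $\si$-weighted noncommutative norms. In the commutative case $\norm{Y}_{p,\si}$ is exactly the $L^p$-norm of the relative density against the measure given by the spectrum of $\si$, so this is classical Lyapunov; in general it follows from the complex-interpolation (Stein--Hirschman three-lines) description of the Kosaki $L_p$ spaces associated with $\si$. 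Granting log-convexity, write $G(t)=e^{2\phi(t)}$ with $\phi(t)=\log\norm{Y}_{1/t,\si}$ convex; since $x\mapsto e^{2x}$ is convex and increasing, $G$ is convex, which proves the claim and hence the lemma.

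A second, more computational route that avoids invoking interpolation directly is to differentiate: using \eqref{eq:differ_norm} (with $I_{p,p}(Y)=Y$ for $Y\ge 0$, by Lemma \ref{lem:prop:norm_power}) one expresses $\tfrac{d}{dp}\norm{Y}_{p,\si}$ through the nonnegative entropy $\Ent_{p,\si}(Y)$, and the monotonicity one must verify turns out to be equivalent to the tangent-line inequality
\[
\norm{Y}_{2,\si}^2-\norm{Y}_{q,\si}^2\ \ge\ \frac{q(2-q)}{2}\,\frac{d}{dq}\norm{Y}_{q,\si}^2\,,\qquad q\in[1,2)\,,
\]
which is precisely the first-order convexity inequality for $G$ at the point $t=1/q$ evaluated at $t=1/2$. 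I expect the main obstacle to be the convexity input itself: the reduction to convexity of $G$ is a routine secant-slope argument, but establishing convexity of the squared weighted norm (equivalently log-convexity of $\norm{Y}_{p,\si}$ in $1/p$) in the noncommutative $\si$-weighted setting is where the real content lies, since the exponent $p$ enters both the Schatten index and the weight $\gs^{1/p}$. Once that interpolation fact is in hand, the remainder is immediate.
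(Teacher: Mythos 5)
Your proposal is correct and follows essentially the same route as the paper: the paper likewise reduces the claim to convexity of $\vp(t)=\norm{Y}_{1/t,\si}^2$, obtained by composing the convex increasing map $x\mapsto e^{2x}$ with the convex function $t\mapsto\log\norm{Y}_{1/t,\si}$, the latter coming from the noncommutative interpolation inequality (cited there as \cite[Corollary 3]{beigi2013sandwiched}). Your secant-slope bookkeeping and the alternative differential formulation are just more explicit renderings of the same argument.
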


\begin{proof}
We recall the interpolation of the noncommutative $L_p$ space \cite{beigi2013sandwiched,gu2019interpolation}: \mb{for $0 < p_0 < p_1 \le \infty$} and $\theta \in [0,1]$, letting $1/p_\theta = (1-\theta)/p_0 + \theta/p_1$ with $\theta \in [0,1]$, there holds
$$
\norm{Y}_{p_\theta,\si} \le \norm{Y}_{p_0,\si}^\theta \norm{Y}_{p_1,\si}^{1-\theta}\,, \q  \forall\, Y \in \bh\,.
$$ 
We immediately see that the function $\log \norm{Y}_{1/t,\si}$ is a convex function for \mb{$t \in [0, \infty)$}, which implies that 
\begin{equation*}
\vp(t) := \exp(2 \log \norm{Y}_{1/t,\si}) =\norm{Y}^2_{1/t,\si}    
\end{equation*}
is also convex. Therefore, we have that the function
\begin{align*}
    \frac{\Var_{q,\si}(Y)}{1/q - 1/2} = \frac{\vp(1/2) - \vp(1/q)}{1/q - 1/2} 
\end{align*}
is increasing in $q$. 
\end{proof}

\begin{proposition} \label{dualbecktolsi} 
Let $\mc{P}_t$ be a primitive QMS with $\si$-{\rm KMS DBC}.  If \eqref{ineq_dbecq} holds with $\limsup_{q \to 2^-}\beta_q > 0$, then \eqref{ineq_lsi} holds with $\beta \ge \limsup_{q \to 2^-} \beta_q/2$.  Conversely, if \eqref{ineq_lsi} holds with $\beta > 0$, then \eqref{ineq_dbecq} holds for any \mb{$q \in (0,2)$} with constant $\beta_q \ge q \beta$.
\end{proposition}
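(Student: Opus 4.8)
The plan is to funnel both implications through a single monotone quantity built from the $q$-variance. Set $\varphi(t) := \norm{Y}_{1/t,\si}^2$, which is convex on $[0,1]$ by the interpolation argument already used in the proof of Lemma \ref{lem:monotone}. Writing $\Var_{q,\si}(Y) = \varphi(1/2) - \varphi(1/q)$, the object I care about becomes the (negative) difference quotient
\begin{equation*}
\frac{\Var_{q,\si}(Y)}{1/q - 1/2} = - \frac{\varphi(1/q) - \varphi(1/2)}{1/q - 1/2}\,,
\end{equation*}
which by Lemma \ref{lem:monotone} is increasing in $q \in [1,2)$ and is therefore bounded above by its limit as $q \to 2^-$, namely $-\varphi'(1/2)$.

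The next step is to identify this limit with the order-two entropy. I would differentiate $\varphi$ by the chain rule with $p = 1/t$, so that $\varphi'(t) = -p^2\,\tfrac{d}{dp}\norm{Y}_{p,\si}^2$. Applying the entropy--norm differentiation formula \eqref{eq:differ_norm} together with $I_{p,p}(Y) = Y$ for $Y \ge 0$ (Lemma \ref{lem:prop:norm_power}) gives $\tfrac{d}{dp}\norm{Y}_{p,\si}^2 = \tfrac{2}{p^2}\norm{Y}_{p,\si}^{2-p}\Ent_{p,\si}(Y)$, whence $\varphi'(t) = -2\norm{Y}_{p,\si}^{2-p}\Ent_{p,\si}(Y)$ and in particular $\varphi'(1/2) = -2\Ent_{2,\si}(Y)$. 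Combined with the monotonicity this yields the master estimate
\begin{equation*}
\frac{\Var_{q,\si}(Y)}{1/q - 1/2} \le 2\,\Ent_{2,\si}(Y) \quad (q \in [1,2))\,, \qquad \lim_{q \to 2^-} \frac{\Var_{q,\si}(Y)}{1/q - 1/2} = 2\,\Ent_{2,\si}(Y)\,.
\end{equation*}

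With this in hand both directions are short. For the converse (\eqref{ineq_lsi} $\Rightarrow$ \eqref{ineq_dbecq}), the identity $1/q - 1/2 = (2-q)/(2q)$ turns the master estimate into $\Var_{q,\si}(Y) \le \tfrac{2-q}{q}\Ent_{2,\si}(Y)$, and bounding $\Ent_{2,\si}(Y) \le \beta^{-1}\mc{E}_{2,\mc{L}}(Y)$ by LSI gives $q\beta\,\Var_{q,\si}(Y) \le (2-q)\,\mc{E}_{2,\mc{L}}(Y)$, i.e. $\beta_q \ge q\beta$. For the forward direction, rewriting \eqref{ineq_dbecq} with the same identity $(2-q)/(1/q-1/2) = 2q$ yields $\frac{\Var_{q,\si}(Y)}{1/q-1/2} \le \frac{2q}{\beta_q}\mc{E}_{2,\mc{L}}(Y)$ for every $q$; I would then choose a sequence $q_n \to 2^-$ with $\beta_{q_n} \to \limsup_{q \to 2^-}\beta_q$ and pass to the limit using the master estimate, obtaining $2\Ent_{2,\si}(Y) \le \frac{4}{\limsup_{q\to 2^-}\beta_q}\mc{E}_{2,\mc{L}}(Y)$, i.e. \eqref{ineq_lsi} with $\beta \ge \limsup_{q \to 2^-}\beta_q/2$.

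The main obstacle I anticipate is the derivative computation in the second step: relating $\varphi'(1/2)$ cleanly to $\Ent_{2,\si}$ hinges on the precise formula \eqref{eq:differ_norm} and the simplification $I_{p,p}(Y)=Y$, and one must confirm that the one-sided difference quotient genuinely converges to $\varphi'(1/2)$ (guaranteed by convexity together with the smoothness of $p \mapsto \norm{Y}_{p,\si}$). A secondary subtlety is that the sharp forward constant is $\limsup_{q\to 2^-}\beta_q/2$ rather than a $\liminf$, which forces me to select a subsequence realizing the $\limsup$ \emph{before} passing to the limit; the monotone master estimate is exactly what makes the limit of the left-hand side robust under this choice.
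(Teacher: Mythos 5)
Your proposal is correct and follows essentially the same route as the paper: the convexity of $t \mapsto \norm{Y}_{1/t,\si}^2$ giving the monotonicity of $\Var_{q,\si}(Y)/(1/q-1/2)$ (the paper's Lemma \ref{lem:monotone}), the identification of its limit with $2\Ent_{2,\si}(Y)$ via \eqref{eq:differ_norm}, and then the two directions by taking the limit along a sequence realizing the $\limsup$ (forward) and by combining monotonicity with \eqref{ineq_lsi} (converse). Your explicit computation $\varphi'(1/2) = -2\Ent_{2,\si}(Y)$ is just a spelled-out version of the paper's equation \eqref{auxeq_1:limit}, so there is nothing substantively different to flag.
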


\begin{proof}
Suppose that \eqref{ineq_dbecq} holds with $\limsup_{q \to 2^-}\beta_q > 0$. By the formula \eqref{eq:differ_norm}, we have
\begin{align} \label{auxeq_1:limit}
    \frac{1}{2}\Ent_{2,\si}(Y) = \lim_{q \to 2^-}\frac{\norm{Y}_{2,\si}^2 - \norm{Y}_{q,\si}^2}{2-q}  \q \text{for any}\ Y \ge 0\,.
\end{align}
Then taking the upper limit as $q \to 2^-$ in \eqref{ineq_dbecq}, we find 
\begin{align*}
   \frac{1}{2} \bigl(\limsup_{q \to 2^-} \beta_q\bigr) \Ent_{2,\si}(Y) \le \mc{E}_{2,\mc{L}}(Y)\,.
\end{align*}
Thus, by definition, \eqref{ineq_lsi} holds with $\beta \ge \limsup_{q \to 2^-} \beta_q/2$.  
For the reverse direction, by Lemma \ref{lem:monotone},
it follows, from \eqref{auxeq_1:limit} and the assumption  \eqref{ineq_lsi}, that 
\begin{align*}
    \frac{\Var_{q,\si}(Y)}{1/q - 1/2} \le \limsup_{q \to 2^-}  \frac{\Var_{q,\si}(Y)}{1/q - 1/2}  = 2 \Ent_{2,\si}(Y) \le 2\beta^{-1} \mc{E}_{2,\mc{L}}(Y)\,,
\end{align*}
that is, \eqref{ineq_dbecq} holds with $\beta_q \ge q \beta$.
\end{proof}

\begin{remark} \label{lem:cont_dualbeck}
\mb{Note from  $\norm{Y}_{1,\si} \le \norm{Y}_{q,\si}$ for $q \in [1,2)$ that
\begin{align*}
\Var_{q,\si}(Y)\le \Var_{\si}(Y) \le \lad^{-1}\mc{E}_{2,\mc{L}}(Y)\,,
\end{align*}
which gives $\beta_q \ge (2-q) \lad$. It follows that for $q \in [1,2)$, the lower bound $\beta_q \ge q \beta$ above can be improved:
\begin{align} \label{bound_dbeck}
    \beta_q(\mc{L}) \ge \max\{(2-q) \lad (\mc{L}), q \beta(\mc{L})\}\,,
\end{align}
which is tight when $q \to 1^+$ and $q \to 2^-$. Indeed, for $q = 1$, we have $\beta_1(\mc{L}) = \lad(\mc{L}) = \max\{\lad (\mc{L}), \beta(\mc{L})\}$ by Lemma \ref{lem:aux_func}. When  $p \to 2^-$, by Proposition \ref{dualbecktolsi}, there holds, 
\begin{align} \label{est:limit_dbeck}
    \liminf_{q \to 2^-} \beta_q(\mc{L}) \ge  \lim_{q \to 2^-} \max\{(2-q) \lad (\mc{L}), q \beta(\mc{L})\} =  2 \beta(\mc{L}) \ge \limsup_{q \to 2^-} \beta_q(\mc{L})\,,
\end{align}
that is, $\lim_{q \to 2^-} \beta_q(\mc{L}) = \lim_{q \to 2^-} \max\{(2-q) \lad (\mc{L}), q \beta(\mc{L})\} =  2 \beta(\mc{L})$.}
\end{remark}

The quantum dual Beckner constant $\beta_q(\mc{L})$ has the following monotonicity property, which implies that if \eqref{ineq_dbecq} holds for some \mb{$q \in (0,2)$}, then it holds for all \mb{$q \in (0,2)$}. 

\begin{proposition} \label{prop:mono_dual_beck} 
Let $\mc{P}_t = e^{t\mc{L}}$ be a primitive QMS with $\si$-{\rm KMS DBC}.
For the optimal dual Beckner constant $\beta_q(\mc{L})$ in \eqref{ineq_dbecq}, it holds that $\beta_q(\mc{L})/(2-q)$ is increasing and  $\beta_q(\mc{L})/q$ is decreasing for \mb{$q \in (0,2)$}. 
\end{proposition}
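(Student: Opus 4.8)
The plan is to reduce both monotonicity statements to pointwise-in-$Y$ facts and then pass them through the infimum. Rearranging the definition \eqref{ineq_dbecq}, the optimal constant admits the variational characterization
$$\beta_q(\mc{L}) = \inf \frac{(2-q)\,\mc{E}_{2,\mc{L}}(Y)}{\Var_{q,\si}(Y)}\,,$$
where the infimum runs over all $Y \ge 0$ that are not multiples of $\mi$ (for a scalar $Y$ both sides of \eqref{ineq_dbecq} vanish, so such $Y$ impose no constraint). On this feasible set, which does not depend on $q$, the denominator is strictly positive: evaluating Lemma \ref{lem:monotone} at $q=1$ gives $\Var_{q,\si}(Y) \ge 2(1/q - 1/2)\,\Var_{1,\si}(Y) > 0$ for $q \in [1,2)$, since the ordinary variance $\Var_{1,\si}(Y)$ is positive for non-scalar $Y$. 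The numerator is likewise strictly positive, because $\mc{E}_{2,\mc{L}}(Y) = \sum_{j=1}^J \norm{\p_j Y}_{\si,1/2}^2 > 0$ whenever $Y \notin {\rm span}\{\mi\} = \ker(\na)$.

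For the first assertion I factor out the $q$-independent numerator, so that $\beta_q(\mc{L})/(2-q) = \inf_Y \mc{E}_{2,\mc{L}}(Y)/\Var_{q,\si}(Y)$. By the first item of Lemma \ref{lem:prop:norm_power} the weighted norm $\norm{Y}_{q,\si}$ is nondecreasing in $q$, so $\Var_{q,\si}(Y) = \norm{Y}_{2,\si}^2 - \norm{Y}_{q,\si}^2$ is nonincreasing in $q$ for every fixed $Y$; hence each quotient $\mc{E}_{2,\mc{L}}(Y)/\Var_{q,\si}(Y)$ is nondecreasing in $q$. Since the infimum of a family of nondecreasing functions is nondecreasing, $\beta_q(\mc{L})/(2-q)$ is increasing.

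For the second assertion I invoke Lemma \ref{lem:monotone} directly. Writing $1/q - 1/2 = (2-q)/(2q)$, that lemma states that $\Var_{q,\si}(Y)/(1/q-1/2) = 2\,q\,\Var_{q,\si}(Y)/(2-q)$, and hence $q\,\Var_{q,\si}(Y)/(2-q)$, is nondecreasing in $q$ for each fixed $Y$. Therefore the factor $(2-q)/(q\,\Var_{q,\si}(Y))$ is nonincreasing in $q$, and so is
$$\frac{(2-q)\,\mc{E}_{2,\mc{L}}(Y)}{q\,\Var_{q,\si}(Y)} = \mc{E}_{2,\mc{L}}(Y)\cdot\frac{2-q}{q\,\Var_{q,\si}(Y)}\,,$$
because $\mc{E}_{2,\mc{L}}(Y)$ is a positive $q$-independent factor. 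Taking the infimum over $Y$, which turns a family of nonincreasing functions into a nonincreasing infimum, shows that $\beta_q(\mc{L})/q$ is decreasing.

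I do not expect a serious analytic obstacle here: the whole argument is the transport of two elementary monotonicities through the infimum. The only point that genuinely needs care is the bookkeeping in the first step — verifying that the feasible set over which the Rayleigh quotient is minimized is the same for every $q$ and that neither numerator nor denominator degenerates there — so that the two inputs (the norm monotonicity of Lemma \ref{lem:prop:norm_power} and the convexity-based Lemma \ref{lem:monotone}) apply verbatim and the elementary fact about infima of monotone families can be used without qualification.
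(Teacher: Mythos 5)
Your proof is correct and follows essentially the same route as the paper's: the first monotonicity comes from the norm ordering in Lemma \ref{lem:prop:norm_power} (so that $\Var_{q,\si}(Y)$ is nonincreasing in $q$ pointwise), and the second from Lemma \ref{lem:monotone}; the paper phrases this as a direct chain of inequalities rather than as an infimum of pointwise-monotone Rayleigh quotients, but the content is identical. Your extra care about the $q$-independence of the feasible set and the non-degeneracy of numerator and denominator is a harmless (and slightly more rigorous) elaboration of what the paper leaves implicit.
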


\begin{proof}
For the first claim, by the ordering of $\norm{\dd}_{p,\si}$ in Lemma \ref{lem:prop:norm_power}, we have, for \mb{$ 0 < q \le q' < 2$} and $Y \ge 0$,
\begin{equation*}
\beta_q(\norm{Y}^2_{2,\si} - \norm{Y}^2_{q',\si}) \le \beta_q(\norm{Y}^2_{2,\si} - \norm{Y}^2_{q,\si}) \le \frac{2-q}{2-q'} (2 - q') \mc{E}_{2,\mc{L}}(Y)\,,
\end{equation*}
that is, $\beta_{q'}/(2-q') \ge \beta_q/(2-q)$.  The second claim is a direct consequence of Lemma \ref{lem:monotone}. Indeed, due to the monotonicity, we have 
\begin{equation*}
    2q \beta_{q'} \frac{\Var_{q,\si}(Y)}{2 - q} \le 2q' \beta_{q'} \frac{\Var_{q',\si}(Y)}{2-q'} \le 2q'  \mc{E}_{2,\mc{L}}(Y)\,,
\end{equation*}
which clearly shows $\beta_q/q \ge \beta_{q'}/q'$. 
\end{proof}

We next prove an analog result for the quantum Beckner constant $\alpha_p(\mc{L})$.

\begin{proposition} \label{prop:mono_beck} 
Let $\mc{P}_t = e^{t\mc{L}}$ be a primitive QMS with \mb{$\si$-{\rm GNS DBC}}.
If the Beckner's inequality \eqref{ineq_becp} holds for some $p' \in (1,2]$ with $\alpha_{p'} > 0$, then for any $1 < p \le p'$, the inequality \eqref{ineq_becp} holds with constant $\alpha_p$ satisfying
\begin{align} \label{eq:monoconst}
    \mb{\h{p}}\,\alpha_p \ge  \mb{\widehat{p'}}\, \alpha_{p'}. 
\end{align}
Equivalently, $\mb{\h{p}}\,\alpha_p(\mc{L})$, as a function of $p \in (1,2]$, is nonincreasing. 
\end{proposition}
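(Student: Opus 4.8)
The plan is to freeze the leading term of the Beckner functional and reduce the claim to a one-parameter monotonicity. Since both sides of \eqref{ineq_becp} are homogeneous of degree $s$ in $X$, the optimal constant admits the variational form
\[
\frac{s}{s-1}\,\alpha_s(\mc{L}) \;=\; \inf_{X \ge 0,\ X \notin \R\mi}\ \frac{s\,\mc{E}_{s,\mc{L}}(X)}{\norm{X}_{s,\si}^s - \norm{X}_{1,\si}^s}\,.
\]
I would then substitute $X = I_{s,2}(W)$ with $W \ge 0$. By parts~2--3 of Lemma~\ref{lem:prop:norm_power} the map $I_{s,2}$ is a bijection of the positive cone onto itself (with inverse $I_{2,s}$), and it freezes the top term, $\norm{I_{s,2}(W)}_{s,\si}^s = \norm{W}_{2,\si}^2 =: a(W)$, for every $s$. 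Writing $\Psi(s) := \mc{E}_{s,\mc{L}}(I_{s,2}(W))$ and $D(s) := a(W) - \norm{I_{s,2}(W)}_{1,\si}^s$, we get $\frac{s}{s-1}\,\alpha_s(\mc{L}) = \inf_{W \ge 0} R(s;W)$ with $R(s;W) = s\Psi(s)/D(s)$, so it suffices to show that for each fixed $W$ the ratio $R(s;W)$ is nonincreasing on $(1,2]$, as an infimum of nonincreasing functions is nonincreasing.

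The numerator and denominator are governed by two separate facts. Applying the quantum Stroock--Varopoulos inequality (Lemma~\ref{lem:qvs}) with its free argument equal to $W$ shows at once that $\Psi(s)$ is nonincreasing in $s$. For the denominator, part~2 of Lemma~\ref{lem:prop:norm_power} gives $\norm{I_{1,2}(W)}_{1,\si} = \norm{W}_{2,\si}^2 = a(W)$, so $D(1) = 0$; and the noncommutative $L_p$-interpolation that already underlies Lemma~\ref{lem:monotone} shows $s \mapsto \norm{I_{s,2}(W)}_{1,\si}^s$ to be log-convex (essentially the operator-perspective transform of $q \mapsto \log\norm{\cdot}_{1/q,\si}$), so that $D$ is nonnegative and concave on $[1,2]$ with $D(1)=0$.

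To combine them I would pass to the logarithmic derivative,
\[
\frac{d}{ds}\log R(s;W) = \frac{1}{s} + \frac{\Psi'(s)}{\Psi(s)} - \frac{D'(s)}{D(s)}\,,
\]
computing $D'(s)$ from the norm-derivative identity \eqref{eq:differ_norm} and the chain rule, and then show this is $\le 0$. The genuinely hard point is that the two monotonicities of the previous paragraph do \emph{not} simply multiply: $D(s)/s$ is not monotone (it starts at $0$, increases, and may decrease again before $s=2$), so the crude step of replacing $\Psi(s)$ by its Stroock--Varopoulos lower limit is too lossy and fails. One must instead exploit the \emph{quantitative} rate of decrease of $\Psi(s)$, which is accessible from the integral representation \eqref{eq:integral_thetap}--\eqref{eq:rep_epl} behind the proof of Lemma~\ref{lem:qvs}, and match $-\Psi'(s)/\Psi(s)$ against $D'(s)/D(s) - 1/s$ pointwise in $s$. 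The endpoint $s \to 1^+$, where $D(s)\to 0$ and $R(s;W)\to +\infty$, should be treated separately and is consistent with the limit $\alpha_s(\mc{L}) \to \alpha_1(\mc{L})$. I expect essentially all of the difficulty to lie in this coupling of the Dirichlet-form derivative to the entropy derivative.
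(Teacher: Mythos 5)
Your setup is sound up to the point where the work actually has to happen, and there it stops. The reduction to showing that $R(s;W)=s\Psi(s)/D(s)$ is nonincreasing for each fixed $W$, and the observation that $\Psi(s)=\mc{E}_{s,\mc{L}}(I_{s,2}(W))$ is nonincreasing by the Stroock--Varopoulos inequality (Lemma \ref{lem:qvs}), are both correct and match the skeleton of the paper's argument. But the two remaining ingredients are not delivered. First, the claimed log-convexity of $s\mapsto\norm{I_{s,2}(W)}_{1,\si}^s$ does not follow from the interpolation behind Lemma \ref{lem:monotone}: in the noncommutative setting $\norm{I_{s,2}(W)}_{1,\si}$ is \emph{not} equal to $\norm{W}_{2/s,\si}^{2/s}$ (the two traces differ by a rearrangement of the powers of $\si$ and are related only by a one-sided ALT inequality), so you cannot read it off as $\exp(2\log\norm{W}_{1/t,\si})$. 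Second, and decisively, the differential inequality $\tfrac1s+\Psi'(s)/\Psi(s)\le D'(s)/D(s)$ is exactly the statement to be proved, and you explicitly defer it to an unspecified ``quantitative'' matching of derivatives. As written, the proposal is a plan with its central step missing.

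The missing idea is far more elementary than the pointwise-in-$s$ calculus you set up, and it removes any need for convexity of $D$ or derivatives of $\Psi$. The paper compares only the two endpoints $p\le p'$: for $X\ge0$ one has $\norm{I_{p',p}(X)}_{p',\si}^{p'}=\norm{X}_{p,\si}^p$, while the Araki--Lieb--Thirring inequality \eqref{eq_alt_2} gives $\norm{I_{p',p}(X)}_{1,\si}\le\norm{X}_{p/p',\si}^{p/p'}\le\norm{X}_{1,\si}^{p/p'}$, the last step by the ordering of the $\si$-weighted norms in Lemma \ref{lem:prop:norm_power}. In your notation this says that $D$ is \emph{nondecreasing} in $s$ --- contrary to your worry that the denominator may turn around before $s=2$, it moves in the favorable direction. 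Combining $\Psi(p)\ge\Psi(p')$ with $D(p)\le D(p')$ gives $\Psi(p)/D(p)\ge\Psi(p')/D(p')$ with no calculus at all, hence $\alpha_p/(p-1)\ge\alpha_{p'}/(p'-1)$ after taking infima, which is the inequality the paper's proof actually produces for \eqref{eq:monoconst}. Your instinct that the extra factor of $s$ in $\tfrac{s}{s-1}\alpha_s$ is the delicate point is not misplaced --- the two-point argument controls $\alpha_s/(s-1)$ rather than $s\alpha_s/(s-1)$, and since $p\le p'$ the former does not formally upgrade to the latter --- but your proposed machinery does not resolve that normalization issue either, and it is independent of the analytic difficulties on which you spend your effort.
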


\begin{proof}
It suffices to prove the inequality \eqref{eq:monoconst}.
For this, by the quantum Stroock-Varopoulos inequality in \eqref{eq:quantumsv} and \eqref{ineq_becp} for $p'$, we have
\begin{align} \label{auxeq:mono}
    \mc{E}_{p,\mc{L}}(X) \ge  \mc{E}_{p',\mc{L}}(I_{p',p}(X)) \ge \alpha_{p'} \mb{\widehat{p'}} \left(\norm{I_{p',p}(X)}_{p',\si}^{p'} - \norm{I_{p',p}(X)}_{1,\si}^{p'}\right)\,.
\end{align}
Note from Lemma \ref{lem:prop:norm_power} that $\norm{I_{p',p}(X)}_{p',\si}^{p'} = \norm{X}_{p,\si}^p$. By ALT inequality \eqref{eq_alt_2}, we find
\begin{align*}
    \norm{I_{p',p}(X)}_{1,\si} & = \tr\left( \si^{\frac{1}{2\widehat{p'}}} \left(\si^{\frac{1}{2p}} X \si^{\frac{1}{2p}}\right)^{\frac{p}{p'}}\si^{\frac{1}{2\widehat{p'}}}\right) \le \norm{X}^{\frac{p}{p'}}_{\frac{p}{p'},\si}\,,
\end{align*}
since 
\begin{equation*}
    \frac{p}{p'} < 1\,,\q \frac{1}{2\widehat{p'}} \frac{p'}{p} + \frac{1}{2p} = \frac{p'}{2p}\,.
\end{equation*}
Therefore, by \eqref{auxeq:mono} and Lemma \ref{lem:prop:norm_power}, it follows that 
\begin{equation*}
    \mc{E}_{p,\mc{L}}(X)  \ge \alpha_{p'} \mb{\widehat{p'}} \left(\norm{X}_{p,\si}^p - \norm{X}^p_{\frac{p}{p'},\si}\right)  \ge \alpha_{p'} \mb{\widehat{p'}} \left(\norm{X}_{p,\si}^p - \norm{X}^p_{1,\si}\right)\,.
\end{equation*}
The proof is complete by definition \eqref{ineq_becp}.  
\end{proof}

We finally relate the $p$-Beckner's inequality and the dual $q$-Beckner's inequality with $q = 2/p$.

\begin{proposition} \label{prop:beck_to_dual_beck} 
Let $\mc{P}_t = e^{t\mc{L}}$ be a primitive QMS with \mb{$\si$-{\rm GNS DBC}}. 
Let $p \in (1,2]$ and $q = 2/p \in [1,2)$. If \eqref{ineq_dbecq} holds with $\beta_q$, then \eqref{ineq_becp} holds with \mb{$\alpha_p \ge \beta_q/2$}.
\end{proposition}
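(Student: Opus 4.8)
The plan is to test the dual $q$-Beckner inequality \eqref{ineq_dbecq} on the single operator $Y = I_{2,p}(X)$, where $q = 2/p \in [1,2)$, and then translate each of the three quantities appearing there — the $2$-norm, the $q$-norm, and the Dirichlet form — back into quantities expressed through $X$. First I would observe that since $X \ge 0$ we have $Y = I_{2,p}(X) \ge 0$, and that the identity $\norm{I_{q,p}(X)}_{q,\si}^q = \norm{X}_{p,\si}^p$ from Lemma \ref{lem:prop:norm_power} gives at once
\[
\norm{Y}_{2,\si}^2 = \norm{I_{2,p}(X)}_{2,\si}^2 = \norm{X}_{p,\si}^p .
\]
Next, the lower bound in the strong $L_p$-regularity estimate of Corollary \ref{lem:com_diri} yields $\mc{E}_{2,\mc{L}}(Y) = \mc{E}_{2,\mc{L}}(I_{2,p}(X)) \le \mc{E}_{p,\mc{L}}(X)$, so the right-hand side of \eqref{ineq_dbecq} is already controlled by $(2-q)\mc{E}_{p,\mc{L}}(X)$.

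The crux of the argument, and the step I expect to need the most care, is to show that the remaining term is dominated by the $L_1$-norm of $X$, namely
\[
\norm{Y}_{q,\si}^2 \le \norm{X}_{1,\si}^p, \qquad \text{equivalently}\qquad \norm{Y}_{q,\si}^q \le \norm{X}_{1,\si},
\]
using $q = 2/p$. To do this I would compute $\Gamma_\si^{1/q}(Y)$ explicitly: writing $B = \Gamma_\si^{1/p}(X) = \si^{1/(2p)} X \si^{1/(2p)} \ge 0$ one has $Y = \si^{-1/4} B^{p/2} \si^{-1/4}$, and since $1/(2q) = p/4$ the powers of $\si$ collect to give $\Gamma_\si^{1/q}(Y) = \si^{(p-1)/4} B^{p/2} \si^{(p-1)/4} \ge 0$. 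Hence $\norm{Y}_{q,\si}^q = \tr\big( [\si^{(p-1)/4} B^{p/2} \si^{(p-1)/4}]^{2/p} \big)$. I would then apply the ALT inequality \eqref{eq_alt_2} with exponent $r = p/2 \in (0,1]$ (valid for $p \in (1,2]$) to the positive operators $\si^{(p-1)/(2p)}$ and $B$; since $(p/2)(2/p) = 1$, the right-hand side of ALT collapses to $\tr\big( \si^{(p-1)/(2p)} B\, \si^{(p-1)/(2p)} \big) = \tr(\si^{1/2} X \si^{1/2}) = \tr(\si X) = \norm{X}_{1,\si}$, which is exactly the claimed bound. The delicate point is precisely the choice of the splitting and the exponent $r$ so that the powers of $\si$ recombine to $\si^{1/2}$ on each side.

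With these three facts assembled, the conclusion is immediate. Applying \eqref{ineq_dbecq} to $Y$ and using the above,
\[
\norm{X}_{p,\si}^p - \norm{X}_{1,\si}^p \le \norm{Y}_{2,\si}^2 - \norm{Y}_{q,\si}^2 = \Var_{q,\si}(Y) \le \frac{2-q}{\beta_q}\,\mc{E}_{2,\mc{L}}(Y) \le \frac{2-q}{\beta_q}\,\mc{E}_{p,\mc{L}}(X).
\]
Substituting $2 - q = 2(p-1)/p$ and rearranging gives $\tfrac{p\beta_q}{2}\big(\norm{X}_{p,\si}^p - \norm{X}_{1,\si}^p\big) \le (p-1)\,\mc{E}_{p,\mc{L}}(X)$, which is precisely \eqref{ineq_becp} with $\alpha_p \ge p\beta_q/2$, as desired. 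Everything apart from the ALT step is routine bookkeeping with the norm and power identities of Lemma \ref{lem:prop:norm_power} and the comparison of Dirichlet forms in Corollary \ref{lem:com_diri}.
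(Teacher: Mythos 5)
Your proposal is correct and follows essentially the same route as the paper: substitute $Y = I_{2,p}(X)$ into \eqref{ineq_dbecq}, use $\norm{I_{2,p}(X)}_{2,\si}^2 = \norm{X}_{p,\si}^p$, bound $\norm{I_{2,p}(X)}_{2/p,\si}^{2/p} \le \norm{X}_{1,\si}$ via the ALT inequality with $r = p/2$ (your explicit bookkeeping of the $\si$-powers matches the paper's display \eqref{auxeq_dualbeck_to_beck_2}), and finish with the lower inequality of Corollary \ref{lem:com_diri}. The final arithmetic with $2-q = 2(p-1)/p$ is also identical.
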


\begin{proof}
We substitute $Y  = I_{2,p}(X)$ for $X \ge 0$ in \eqref{ineq_dbecq} and find 
\begin{equation} \label{auxeq_dualbeck_to_beck}
    \norm{I_{2,p}(X)}^2_{2,\si} - \norm{I_{2,p}(X)}_{2/p,\si}^{2} = \norm{X}^p_{p,\si} - \norm{I_{2,p}(X)}_{2/p,\si}^{2} \le \beta_q^{-1}(2-q) \mc{E}_{2,\mc{L}}(I_{2,p}(X))\,.
\end{equation}
By ALT inequality \eqref{eq_alt_2} and Lemma \ref{lem:prop:norm_power}, we have 
\begin{align} \label{auxeq_dualbeck_to_beck_2}
    \norm{I_{2,p}(X)}_{2/p,\si}^{2/p} & = \tr \big(\si^{\frac{p}{4} - \frac{1}{4}} \big(\si^{\frac{1}{2p}} X \si^{\frac{1}{2p}}  \big)^{\frac{p}{2}}\si^{\frac{p}{4} - \frac{1}{4}}\big)^{\frac{2}{p}} \notag \\
    & \le \tr \bigl((\Gamma_\si(X))^{\frac{p}{2}}\bigr)^{\frac{2}{p}} \le \norm{X}_{1,\si}.
\end{align}
Then, by $L_p$ regularity in Corollary \ref{lem:com_diri}, it follows from \eqref{auxeq_dualbeck_to_beck} and \eqref{auxeq_dualbeck_to_beck_2} that 
\begin{align*}
  \norm{X}^p_{p,\si} - \norm{X}_{1,\si}^p \le \beta_q^{-1}(2-q) \mc{E}_{p,\mc{L}}(X)\,,
 \end{align*}
 which gives \mb{$\alpha_p \ge \beta_q/2$}. 
\end{proof}

\subsection{Quantum Beckner constant} \label{sec:positive_stab}
In this section, we focus on quantum Beckner's inequalities \eqref{ineq_becp} and investigate the properties of  
$\alpha_p(\mc{L})$. \mb{We will first exploit the connections between \eqref{ineq_becp} and the functional inequality related to sandwiched R\'{e}nyi entropy, as well as the hypercontractivity. We then provide a two-sided bound for $\alpha_p(\mc{L})$ in terms of the Poincar\'{e} constant $\lad(\mc{L})$ for a certain range of $p$.} The relations between \eqref{ineq_becp} and \eqref{ineq_mlsi} will also be discussed. 
Moreover, we extend the quantum Holley-Stroock’s argument from \cite{junge2019stability} and give a 
stability estimate for the Beckner constant $\alpha_p(\mc{L})$ with respect to the invariant state $\si$. \mb{In the remaining of this work, we mainly consider the range $p \in (1,2]$ for \eqref{ineq_becp} for ease of exposition, unless otherwise specified.}

\mb{\subsubsection*{Connection with sandwiched R\'{e}nyi entropic inequality} 
In \cite[Definition 3.1]{muller2018sandwiched},
a functional inequality very similar to Beckner's inequality \eqref{eq_beck} above was introduced for quantifying the convergence of sandwiched R\'{e}nyi entropy \eqref{def:sandwi} with $p > 1$ along the QMS:   for some $\h{\alpha}_p > 0$,
\begin{align} \label{ineq:sand}
    2 \h{\alpha}_p D_p(\rho\|\si) \le \frac{4\mc{E}_{p,\mc{L}}(\gs^{-1}(\rho))}{p\norm{\gs^{-1}(\rho)}^p_{p,\si}}\,, \q \forall\rho \in \dh\,.
\end{align}
Note from \cite[Corollary 3.1]{muller2018sandwiched} that the right-hand side of \eqref{ineq:sand} is the entropy production of $D_p(\rho\| \si)$. Since the sandwiched R\'{e}nyi entropy $D_p(\rho \| \si)$ is the logarithm of $p$-divergence in some sense, we may expect that its exponential convergence is a stronger notion than the one of $\mc{F}_{p,\si}$. Indeed, we show in the following proposition that the inequality \eqref{ineq:sand} for $D_p(\rho\|\si)$ can imply the one  \eqref{eq_beck} for $\mc{F}_{p,\si}$, while \eqref{eq_beck} can only guarantee \eqref{ineq:sand} for $\rho$ in a neighborhood of $\si$ (in other words, the exponential convergence of $D_p(\rho_t\| \si)$ if a warm start is given).}

\begin{proposition} \label{prop:sand} \mb{
Let $\mc{P}_t = e^{t \mc{L}}$ be a primitive QMS with \mb{$\si$-{\rm KMS DBC}} and $p > 1$. If the inequality \eqref{ineq:sand} for sandwiched R\'{e}nyi entropy holds, then Beckner's inequality \eqref{eq_beck} holds with $\alpha_p \ge \h{\alpha}_p/2$.  Conversely, if \eqref{eq_beck} holds, then \eqref{ineq:sand} holds in a neighborhood of the invariant state $\si$: for any $a > 0$ with $c_a = (1 - e^{-a})/a$, }
\begin{equation}   \label{eq:sand_decay}
\mb{   \frac{4\mc{E}_{p,\mc{L}}(\gs^{-1}(\rho))}{p \norm{\Gamma_\si^{-1}(\rho)}_{p,\si}^p} \ge
    \left\{
    \begin{aligned}
& 4 c_a \alpha_p  D_p(\rho\|\si) && \text{if} \ D_p(\rho\|\si) \le \frac{a}{p-1}\,, \\
        & \frac{4 \alpha_p}{p-1}  (1-e^{-a})  &&  \text{if} \ D_p(\rho\|\si) \ge  \frac{a}{p-1}\,.
    \end{aligned}
    \right.}
\end{equation}
\end{proposition}
\begin{proof} \mb{
We start with the easy direction \eqref{ineq:sand} $\Longrightarrow$ \eqref{eq_beck}. We reformulate \eqref{ineq:sand} as:
\begin{align} \label{sand_beck}
     \frac{\h{\alpha}_p}{2} \h{p} \log\big(\norm{\Gamma_\si^{-1}(\rho)}^p_{p,\si}\big) \norm{\gs^{-1}(\rho)}^p_{p,\si} \le \mc{E}_{p,\mc{L}}(\gs^{-1}(\rho))\,.
\end{align}
Then, by the elementary inequality $x\log x \ge x - 1$ with $x = \norm{\gs^{-1}(\rho)}^p_{p,\si}$, Beckner's inequality \eqref{eq_beck} with $\alpha_p \ge \h{\alpha}_p/2$ follows from  \eqref{sand_beck} immediately. Now, assuming \eqref{eq_beck} holds, we rewrite it as
\begin{align} \label{eq_becsand}
     \frac{4 \alpha_p}{p-1} \Big(1 - \norm{\Gamma_\si^{-1}(\rho)}_{p,\si}^{-p}\Big)
     \le \frac{4\mc{E}_{p,\mc{L}}(\gs^{-1}(\rho))}{p \norm{\Gamma_\si^{-1}(\rho)}_{p,\si}^p}\,.
\end{align}
Note that for any $a > 0$, there holds, with $c_a = (1 - e^{-a})/a$, 
\begin{align} \label{elem_eq}
      1 - e^{-x} \ge c_a x\,,\q \forall  x \in [0,a]\,.
\end{align}
Letting $e^x = \norm{\Gamma_\si^{-1}(\rho)}_{p,\si}^p$ in \eqref{elem_eq}, we readily have the restricted inequality \eqref{eq:sand_decay} from \eqref{eq_becsand}.}
\end{proof}

\mb{\subsubsection*{Connection with hypercontractivity}  
In view of existing works \cite{beckner1989generalized,arnold2007interpolation} and the direct relations between \eqref{ineq_becp} and the noncommutative $p$-norm, one may expect that there also exist natural connections between Beckner's inequalities and the hypercontractivity of QMS, which we will elaborate below. We first recall the $p$-log-Sobolev inequality ($p \in \R \backslash \{0\}$): for some $\w{\beta}_p > 0$,
\begin{align} \label{ineq_lsip}
   \w{\beta}_p  \Ent_{p,\si}(X) \le \mc{E}_{p,\mc{L}}(X)\,,\ \forall X > 0\,, \tag{LSIp}
\end{align} 
which reduces to \eqref{ineq_mlsi} and \eqref{ineq_lsi} when $p = 1$ and $p = 2$, respectively. \eqref{ineq_lsip} is known to be equivalent to the hypercontractivity ($p \ge 1$) and the reverse hypercontractivity ($p < 1$) \cite{olkiewicz1999hypercontractivity,beigi2020quantum}. 
To be specific, we have the following result from \cite[Theorem\,11 and Corollary\,17]{beigi2020quantum}.

\begin{lemma} \label{lem:sobo_hyper}
Let $\mc{P}_t$ be a primitive QMS satisfying $\si$-{\rm KMS DBC} for some $\si \in \dhh$. It holds that 
\begin{itemize}
    \item If for $1 \le p \le q(t) = 1 + (p-1)e^{4 \beta_c t}$ with fixed $p$ and $\beta_c > 0$, there holds 
    \begin{align} \label{eq:hyper}
        \norm{\mc{P}_t(X)}_{q(t), \si} \le \norm{X}_{p,\si}\,, \q  \forall t \ge 0\,,\ X > 0\,,
    \end{align}
    then the p-log-Sobolev inequality \eqref{ineq_lsip} holds with $\w{\beta}_p \ge \beta_c$. Conversely, suppose that $\mc{P}_t$ satisfies $\si$-{\rm GNS DBC} and \eqref{ineq_lsi} holds with optimal constant $\beta(\mc{L})$. Then \eqref{eq:hyper} holds with $\beta_c = \beta(\mc{L})$. 
     \item If for $1 > p \ge q(t) = 1 + (p-1)e^{4 \beta_c t}$ with $p,q \neq 0$ and $\beta_c > 0$, there holds 
    \begin{align} \label{eq:reverhyper}
        \norm{\mc{P}_t(X)}_{q(t), \si} \ge \norm{X}_{p,\si}\,, \q  \forall t \ge 0\,,\ X > 0\,,
    \end{align}
    then the p-log-Sobolev inequality \eqref{ineq_lsip} holds with $\w{\beta}_p \ge \beta_c$. Conversely, suppose that $\mc{P}_t$ satisfies $\si$-{\rm GNS DBC} and \eqref{ineq_mlsi} holds with optimal constant $\alpha_1(\mc{L})$. Then \eqref{eq:reverhyper} holds with $\beta_c = \alpha_1(\mc{L})$. 
\end{itemize}
\end{lemma}

 It is straightforward to derive the next proposition, which, along with Lemma \ref{lem:sobo_hyper}, relates \eqref{ineq_becp}, \eqref{ineq_lsip}, and the hypercontractivity of $\mc{P}_t$.} 

\begin{proposition} \label{prop:connectpsob}
\mb{Let $\mc{L}$ be the generator of a primitive QMS with $\si$-{\rm KMS DBC} and $\alpha_p(\mc{L})$ and $\w{\beta}_p(\mc{L})$ be the optimal constants for \eqref{ineq_becp} and \eqref{ineq_lsip}, respectively, for $p \in (0,1)\cup(1,\infty)$. Then, we have $p\alpha_p(\mc{L}) \ge  \w{\beta}_p(\mc{L})$ for $p > 1$ and $p\alpha_p(\mc{L}) \le \w{\beta}_p(\mc{L})$ for $0 < p < 1$. }
\end{proposition}

\begin{proof}\mb{
We recall that $\log \norm{X}_{p, \si}^{\h{p}}$ for $X = \gs^{-1} (\rho)$ with $\rho \in \dhh$ is increasing in $ p > 0$, by the monotonicity of sandwiched R\'{e}nyi entropy $D_p(\rho \| \si)$ in $p$. Then, a direct computation gives 
\begin{align*}
    \frac{d}{d p} \log \norm{X}_{p, \si}^{\h{p}} = - \frac{1}{(p-1)^2} \log \norm{X}_{p,\si} + \frac{1}{p(p-1)} \norm{X}_{p,\si}^{-p} \Ent_{p,\si}(X) \ge 0\,,
\end{align*}
which implies, by again $x\log x \ge x - 1$ with $x = \norm{\gs^{-1}(\rho)}^p_{p,\si}$, 
\begin{align*}
       \frac{p}{p-1}  \Ent_{p,\si}(X) \ge  \frac{p}{(p-1)^2} \norm{X}_{p,\si}^{p}  \log \norm{X}^p_{p,\si} \ge \frac{\h{p}}{p-1} (\norm{X}^p_{p,\si} - 1)\,.
\end{align*}
The proof is complete by the definition of \eqref{ineq_becp} and \eqref{ineq_lsip}. }
\end{proof}


 \subsubsection*{Two-sided estimates of $\alpha_p(\mc{L})$.} We next consider the estimation of the Beckner constant $\alpha_p(\mc{L})$
in terms of $\lad(\mc{L})$. We first show that \eqref{ineq_becp} implies \eqref{ineq_pi} with $f = \vp_p$  by the standard linearization argument. 
\begin{lemma} \label{lem:upppoin}
\mb{Let $\mc{P}_t = e^{t\mc{L}}$ be a primitive QMS satisfying $\si$-{\rm GNS DBC}. Then we have, for $p \in \R \backslash \{0,1\}$,}
\begin{align*}
\mb{ 2 \alpha_p(\mc{L}) \le \lad(\mc{L})\,. }
\end{align*}
\end{lemma}

\begin{proof}
We consider $Z = \mi + \ep X$ for a $X \in \mc{B}_{sa}(\mc{H})$ with $\tr(\si X) = 0$, where $\ep > 0$ is small enough such that $Z > 0$. By a direct expansion with respect to $\ep$, we find 
\begin{equation} \label{eq:asy_divi}
    \norm{Z}_{p,\si}^p = 1 + \ep p \tr(\si X) + \frac{\ep^2}{2}p(p-1)\norm{X}_{\si,\vp_p}^2 + O(\ep^3)\,,
\end{equation}
and 
\begin{align*}
    \norm{Z}_{1,\si}^p = \big(1 + \ep \tr(\si X)\big)^p = 1 + \ep p \tr(\si X) + \frac{\ep^2}{2}p(p-1)\tr(\si X)^2 + O(\ep^3)\,.
\end{align*}
We also compute 
\begin{align} \label{eq:expansion_power}
 \gs I_{\h{p},p}(Z) = \si + \ep (p-1) \vp_p(\Delta_\si)R_\si(X) + O(\ep^2)\,,
\end{align}
and $\mc{L}(Z) = \ep \mc{L}(X)$, which yields 
\begin{align} \label{eq:asy_diri}
    \mc{E}_{p,\mc{L}}(Z) = -  \ep^2(p-1)\frac{\h{p}p}{4} \l X, \mc{L} X\r_{\si,\vp_p} + O(\ep^3)\,. 
\end{align}
Hence, applying \eqref{ineq_becp} to $Z$ with above expansions, we have
\begin{align} \label{midauxeq}
    \alpha_p \frac{\ep^2}{2} p^2 \left(\norm{X}_{\si,\vp_p}^2 - \tr(\si X)^2\right) + O(\ep^3) 
    \le -  \ep^2\frac{p^2}{4} \l X, \mc{L} X\r_{\si,\vp_p} + O(\ep^3)\,.
\end{align}
Due to $\tr(\si X) = 0$, by dividing both sides of \eqref{midauxeq} by $\ep^2$ and letting $\ep \to 0$, 
it readily follows that $$2 \alpha_p \norm{X}_{\si,\vp_p}^2 \le - \l X, \mc{L} X \r_{\si,\vp_p}\,,$$ 
which gives the desired estimate: $2 \alpha_p \le \lad$. 
\end{proof}

We next generalize \cite[Proposition 2.8]{adamczak2022modified} in the classical setting to the quantum regime, which shows that the quantum Poincar\'{e} inequality \eqref{ineq_pi2} implies $p$-Beckner's inequality \eqref{ineq_becp} for $p \in (1,2]$. 

\begin{proposition} \label{prop:pb} 
Let $\mc{P}_t = e^{t\mc{L}}$ be a primitive QMS with \mb{$\si$-{\rm GNS DBC}}. 
If the Poincar\'{e} inequality \eqref{ineq_pi2} holds with constant $\lad$, then, for $p \in (1,2]$, the Beckner's inequality \eqref{ineq_becp} holds with constant $\alpha_p$ satisfying 
\begin{equation} \label{eq:poin_to_beck}
\mb{\alpha_p \ge \frac{p-1}{p} \lad\,.}
\end{equation}
\end{proposition}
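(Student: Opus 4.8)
The plan is to push the Poincar\'{e} inequality from the quadratic Dirichlet form up to the $p$-Dirichlet form, using the strong $L_p$-regularity as a bridge and the Araki--Lieb--Thirring (ALT) inequality to control the lower-order norm. Fix $X \ge 0$ and put $Y := I_{2,p}(X) \ge 0$. First I would invoke the lower bound in Corollary~\ref{lem:com_diri}, $\mc{E}_{p,\mc{L}}(X) \ge \mc{E}_{2,\mc{L}}(I_{2,p}(X)) = \mc{E}_{2,\mc{L}}(Y)$, to reduce the whole estimate to the $2$-Dirichlet form. Since the Poincar\'{e} constant $\lad(\mc{L})$ in \eqref{def:spect} is independent of $f$, I would then apply \eqref{ineq_pi} with the choice $f = x^{1/2}$, for which $J_\si^{x^{1/2}} = \gs$; thus $\l\dd,\dd\r_{\si,f}$ becomes the KMS inner product $\l\dd,\dd\r_{\si,1/2}$ and $\norm{\dd}_{\si,f} = \norm{\dd}_{2,\si}$. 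With this choice, \eqref{ineq_pi} applied to $Y \ge 0$ reads $\lad\,\Var_\si(Y) = \lad(\norm{Y}_{2,\si}^2 - \norm{Y}_{1,\si}^2) \le -\l Y, \mc{L} Y\r_{\si,1/2} = \mc{E}_{2,\mc{L}}(Y)$.

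It then remains to identify the two norms with the Beckner quantities. The leading term is immediate from Lemma~\ref{lem:prop:norm_power}: $\norm{Y}_{2,\si}^2 = \norm{I_{2,p}(X)}_{2,\si}^2 = \norm{X}_{p,\si}^p$. For the lower-order term I need $\norm{Y}_{1,\si}^2 \le \norm{X}_{1,\si}^p$. Because $p \in (1,2]$ forces $1 \le 2/p$, the ordering of the weighted norms (Lemma~\ref{lem:prop:norm_power}) gives $\norm{Y}_{1,\si} \le \norm{Y}_{2/p,\si}$, and the ALT estimate---carried out exactly as in \eqref{auxeq_dualbeck_to_beck_2}, i.e.\ applying \eqref{eq_alt_2} with $r = p/2$, $q = 2/p$, $A = \si^{1/2p}X\si^{1/2p}$ and $B = \si^{(p-1)/(2p)}$---yields $\norm{I_{2,p}(X)}_{2/p,\si}^{2/p} \le \tr(\si X) = \norm{X}_{1,\si}$, that is $\norm{Y}_{2/p,\si} \le \norm{X}_{1,\si}^{p/2}$. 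Squaring and chaining, $\norm{Y}_{1,\si}^2 \le \norm{X}_{1,\si}^p$. Substituting both norm identities back gives $\mc{E}_{p,\mc{L}}(X) \ge \mc{E}_{2,\mc{L}}(Y) \ge \lad(\norm{X}_{p,\si}^p - \norm{X}_{1,\si}^p)$, which upon multiplication by $p-1$ is exactly \eqref{ineq_becp} with $\alpha_p = \lad(p-1)$.

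The one delicate point---and the place I expect to spend the most care---is the ALT bookkeeping for $\norm{I_{2,p}(X)}_{2/p,\si}$; the rest is substitution. This also points to a shorter packaging that I would record as an alternative: \eqref{ineq_pi} with $f = x^{1/2}$ is literally \eqref{ineq_dbecq} at $q = 1$, so $\beta_1(\mc{L}) = \lad(\mc{L})$, and the monotonicity that $\beta_q(\mc{L})/(2-q)$ is nondecreasing (Proposition~\ref{prop:mono_dual_beck}) gives $\beta_{2/p}(\mc{L}) \ge \lad(\mc{L})(2-2/p)$; feeding this into Proposition~\ref{prop:beck_to_dual_beck} yields $\alpha_p(\mc{L}) \ge p\,\beta_{2/p}(\mc{L})/2 \ge \lad(\mc{L})(p-1)$. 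Finally, the ``in particular'' assertion follows at once: if \eqref{ineq_mlsi} holds with constant $\alpha_1$, then Lemma~\ref{lem:aux_func} gives $\lad(\mc{L}) \ge 2\alpha_1(\mc{L}) \ge 2\alpha_1$, so the bound just proved yields $\alpha_p \ge \lad(\mc{L})(p-1) \ge 2\alpha_1(p-1)$.
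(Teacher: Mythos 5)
Your argument is correct, and it reaches the paper's bound by the same basic toolkit (ALT, the Stroock--Varopoulos/$L_p$-regularity estimate \eqref{eq:lpreg}, and the Poincar\'{e} inequality in the KMS norm), but the organization is genuinely different and somewhat more streamlined. The paper first proves the comparison $\norm{X}_{p,\si}^p - \norm{X}_{1,\si}^p \le \l I_{\h{p},p}(X), X - \tr(\si X)\mi\r_{\si,1/2}$ via ALT, recognizes the right-hand side as $\tfrac{4}{\h{p}p}\,\mc{E}_{p,\mc{L}_{depol}}(X)$, and then chains the \emph{upper} half of \eqref{eq:lpreg} applied to $\mc{L}_{depol}$ (to reach $\Var_\si(I_{2,p}(X))$, after the cancellation $\tfrac{4}{\h{p}p}\cdot\tfrac{p^2}{4(p-1)}=1$), the Poincar\'{e} inequality, and the \emph{lower} half of \eqref{eq:lpreg} applied to $\mc{L}$. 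You bypass the depolarizing-semigroup interlude entirely: setting $Y = I_{2,p}(X)$, you get $\norm{Y}_{2,\si}^2 = \norm{X}_{p,\si}^p$ from Lemma \ref{lem:prop:norm_power} and $\norm{Y}_{1,\si}^2 \le \norm{X}_{1,\si}^p$ from the ALT computation already recorded in \eqref{auxeq_dualbeck_to_beck_2}, so that $\Var_\si(Y) \ge \norm{X}_{p,\si}^p - \norm{X}_{1,\si}^p$ is immediate; only the lower half of \eqref{eq:lpreg} is then needed. This buys a shorter proof with no constant bookkeeping, at the cost of not exhibiting the structural fact that the left-hand side of \eqref{ineq_becp} is itself a depolarizing Dirichlet form, which the paper reuses conceptually elsewhere. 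Your alternative packaging via $\beta_1(\mc{L}) = \lad(\mc{L})$, the monotonicity of $\beta_q/(2-q)$ (Proposition \ref{prop:mono_dual_beck}) and Proposition \ref{prop:beck_to_dual_beck} is also valid and non-circular, since both propositions precede this one; it is in fact the cleanest formulation of the same chain of estimates. The deduction of the ``in particular'' clause from Lemma \ref{lem:aux_func} matches the paper.
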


\begin{proof}
We first claim that there holds 
\begin{equation} \label{auxeq:convex_bec}
      \norm{X}_{p,\si}^p -  \norm{X}_{1,\si}^p \le \l I_{\h{p},p}(X), X - \tr(\si X) \mi \r_{\si,1/2}\,,\q \forall X \ge 0\,.
\end{equation}
Indeed, a direct computation gives 
\begin{align} \label{auxeq:ipp}
     \left\l I_{\h{p},p}(X), X - \tr(\si X) \mi \right\r_{\si,1/2} = \norm{X}_{p,\si}^p - \norm{X}_{1,\si} \tr(\si^{1/p}(\gs^{1/p}X)^{p-1})\,. 
\end{align}
Then, by ALT inequality \eqref{eq_alt_2} and the ordering of $\norm{\dd}_{p,\si}$ in Lemma \ref{lem:prop:norm_power}, we have
\begin{align*}
   \tr(\gs^{1/p}(\gs^{1/p}X)^{p-1}) & \le  \tr((\gs^{1/(p-1)}X )^{p-1}) = \norm{X}_{p-1,\si}^{p-1} \le \norm{X}_{1,\si}^{p-1}\,,
\end{align*}
which, along with \eqref{auxeq:ipp},  implies the desired inequality \eqref{auxeq:convex_bec}. 
One can readily note that up to some constant, the right-hand term in \eqref{auxeq:convex_bec} is the $p$-Dirichlet form $\mc{E}_{p,\mc{L}_{\rm depol}}$ defined by \eqref{def:p_diri} 
associated with the generator $\mc{L}_{\rm depol}$ with $\gamma = 1$; see Example \ref{exp:dep_channel}. We proceed by using Corollary \ref{lem:com_diri} and find 
\begin{align} \label{auxeq:diri_depo}
     \mc{E}_{p, \mc{L}_{\rm depol}}(X) \le \frac{p^2}{4(p-1)} \Var_{2,\si}(I_{2,p}(X)) \le \frac{1}{\lad}\frac{p^2}{4(p-1)} \mc{E}_{2,\mc{L}}(I_{2,p}(X)) \le \frac{1}{\lad}\frac{p^2}{4(p-1)} \mc{E}_{p, \mc{L}}(X)\,,
\end{align}
where we have also used the assumption that \eqref{ineq_pi2} holds, and the observation:
\begin{equation*}
    \Var_{2,\si}(X) = \l X - \tr(\si X) \mi, X - \tr(\si X) \mi \r_{\si,1/2} =  \mc{E}_{2,\mc{L}_{\rm depol}}(X)\,.
\end{equation*}
Then it follows from \eqref{auxeq:convex_bec} and \eqref{auxeq:diri_depo} that 
\begin{align*}
\lad (\norm{X}_{p,\si}^p - \norm{X}_{1,\si}^p) \le   \mc{E}_{p, \mc{L}}(X)\,,
\end{align*}
that is, $\h{p}\alpha_p \ge \lad$ holds. 
\end{proof}

Note that the lower bound for Beckner constant $\alpha_p(\mc{L})$ in \eqref{eq:poin_to_beck} vanishes as $p \to 1^+$. \mb{Thanks to Proposition \ref{prop:connectpsob}, it is easy to establish a uniform lower bound for $\alpha_p(\mc{L})$, which improves the estimate in \eqref{eq:poin_to_beck}.
\begin{theorem} \label{them:lowerbeck} 
  Let $\mc{P}_t = e^{t\mc{L}}$ be a primitive QMS satisfying $\si$-{\rm GNS DBC}. Then we have, for $p \in (1,2]$, 
    \begin{equation} \label{est:lowerbeck} 
      \mb{  \alpha_p(\mc{L}) \ge \max\Big\{  \frac{1}{2p(2 - \log (\si_{\min}))}\,, \frac{p-1}{p}  \Big\} \lad (\mc{L})\,.}
    \end{equation}
\end{theorem}

\begin{proof}
Recall from \cite[Corollary 16]{beigi2020quantum} that $\w{\beta}_p(\mc{L})$ is decreasing in $p \in (0,2]$ for a QMS with $\si$-{\rm GNS DBC}. Then, we obtain, by Lemma \ref{lem:aux_func} and Proposition \ref{prop:connectpsob},
    \begin{align} \label{est:lowerbeck2} 
      \alpha_p(\mc{L}) \ge \frac{1}{p}\w{\beta}_p(\mc{L}) \ge \frac{1}{p} \w{\beta}_2(\mc{L}) \ge \frac{1}{2p}  \frac{\lad (\mc{L})}{2 - \log (\si_{\min})}\,.
        \end{align}
The proof is complete by Proposition \ref{prop:pb}. 
\end{proof}

\begin{remark}
   The estimate \eqref{est:lowerbeck2} holds for any $p \ge 1$ by using $\w{\beta}_p(\mc{L}) = \w{\beta}_{\h{p}}(\mc{L})$ in \cite[Proposition 10]{beigi2020quantum}. 
\end{remark}

\begin{remark} 
The minimal eigenvalue $\si_{\min}$ for the invariant state $\si$ has been estimated in several interesting cases; see \cite[Remark 1]{kastoryano2013quantum} and \cite[Section 7]{muller2018sandwiched}. Note that a crude estimate  
$ \si_{\min} \le 1/2$ gives $2(2 - \log(\si_{\min})) \ge c_0: =  2(2 + \log 2)$. It follows that if $p \ge 1 + c_0^{-1} \approx 1.186$, the estimate \eqref{est:lowerbeck} reduces to the one \eqref{eq:poin_to_beck}, which is clearly tight when $p \to 2^-$.  However, when $p \to 1^+$, by $2 \alpha_1 \le \lad$ in Lemma \ref{lem:aux_func}, 
we only have
$$
\alpha_p(\mc{L}) \ge \frac{\alpha_1(\mc{L})}{p(2 - \log (\si_{\min}))} \gtrsim \frac{\alpha_1(\mc{L})}{\log d}\,, 
$$
if $\si_{\min}$ is of the same order as $1/d$, which is far from tight, given \eqref{eq:opt_becktoopt_msli} below. For the symmetric QMS with invariant state $\si = \mi/d$, one can extend the results in the classical case \cite[Proposition 2.9 and Theorem 2.1]{adamczak2022modified} to have a tighter lower bound for $\alpha_p$, which goes to $C \alpha_1(\mc{L})$ as $p \to 1^+$, with constant $C$ independent of the dimension $d$ of $\mc{H}$. However, the argument involved in \cite[Proposition 2.9]{adamczak2022modified} seems hard to be generalized to general invariant states $\si \in \dhh$. 
\end{remark}

\begin{remark}
    In Appendix \ref{app:nonprimitive}, we will briefly discuss Beckner's inequalities in the non-primitive setting. In particular, we adopt the analysis framework recently proposed in \cite{gao2021complete} for CMLSI with some tools developed above to give a lower bound for the non-primitive Beckner constant, which is asymptotically worse than the one in \eqref{est:lowerbeck} but can apply to more general Lindbladian $\mc{L}$. 
\end{remark}

\subsubsection*{Relation between $\alpha_1(\mc{L})$ and $\alpha_p(\mc{L})$} Recall from Remark \ref{lem:cont_dualbeck} that  $\lim_{q \to 2^-} \beta_q(\mc{L}) = 2 \beta(\mc{L})$. Next, we give a similar result for Beckner constant $\alpha_p(\mc{L})$, which is more technical; see Theorem \ref{thm:beck_log_sobo} below.}  First, by taking the right limit $p \to 1^+$ in \eqref{ineq_becp} and using formulas \eqref{def:1diri} and \eqref{eq:dev_pnorm}, we obtain the following lemma. 

\begin{lemma} \label{lem:mb} 
Let $\mc{P}_t$ be a primitive QMS with $\si$-{\rm KMS DBC}. If \eqref{ineq_becp} holds with $\limsup_{p \to 1^+} \alpha_p > 0$, then \eqref{ineq_mlsi} holds with constant
\begin{equation} \label{eq:beckmsli_one}
    \alpha_1 \ge \limsup_{p \to 1^+} \alpha_p\,.
\end{equation}
\end{lemma}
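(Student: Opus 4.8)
The plan is to pass to the limit $p \to 1^+$ in the Beckner's inequality \eqref{ineq_becp} after first normalizing it by the factor $(p-1)$. First I would rewrite \eqref{ineq_becp} in the equivalent form
\begin{align*}
    \alpha_p \frac{\norm{X}_{p,\si}^p - \norm{X}_{1,\si}^p}{p-1} \le \mc{E}_{p,\mc{L}}(X)\,,\q X \ge 0\,.
\end{align*}
The strategy is then to identify the limits of both sides as $p \to 1^+$ and to compare them. For the right-hand side, the key fact is already recorded in \eqref{def:1diri}: by definition $\mc{E}_{1,\mc{L}}(X) = \lim_{p \to 1^+} \mc{E}_{p,\mc{L}}(X)$, so the Dirichlet form converges to the $1$-Dirichlet form with no further work.

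The main content is therefore the left-hand side, where I must show that the normalized difference of norms converges to the entropy $\Ent_{1,\si}(X)$. Here I would invoke the differentiation formula \eqref{eq:dev_pnorm}, which gives
\begin{align*}
    \frac{d}{dp}\norm{X}_{p,\si}^p = \tr\Big((\Gamma_\si^{1/p}(X))^p \big(\log \Gamma_\si^{1/p}(X) - \tfrac{1}{p}\log\si\big)\Big)\,.
\end{align*}
Writing $h(p) := \norm{X}_{p,\si}^p$ and $g(p) := \norm{X}_{1,\si}^p$, both functions are smooth in $p$ near $p = 1$ with $h(1) = g(1) = \norm{X}_{1,\si}$, so the normalized difference $(h(p) - g(p))/(p-1)$ converges, by l'Hôpital or a first-order Taylor expansion, to $h'(1) - g'(1)$. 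Evaluating $h'(1)$ via \eqref{eq:dev_pnorm} at $p = 1$ yields $\tr(\Gamma_\si(X)(\log \Gamma_\si(X) - \log\si))$, while $g'(1) = \norm{X}_{1,\si}\log\norm{X}_{1,\si}$ is an elementary computation from $g(p) = \norm{X}_{1,\si}^p$. Comparing with the definition of $\Ent_{1,\si}(X)$ — recalling that $\Gamma_\si^{1/1}(X) = \gs(X)$ and $\norm{X}_{1,\si} = \tr(\gs(X))$ — shows precisely that
\begin{align*}
    \lim_{p \to 1^+}\frac{\norm{X}_{p,\si}^p - \norm{X}_{1,\si}^p}{p-1} = \Ent_{1,\si}(X)\,.
\end{align*}

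Finally I would combine these two limits: taking $\limsup$ as $p \to 1^+$ on the left of the normalized inequality and using that $\mc{E}_{p,\mc{L}}(X) \to \mc{E}_{1,\mc{L}}(X)$ on the right, I obtain $\big(\limsup_{p\to 1^+}\alpha_p\big)\Ent_{1,\si}(X) \le \mc{E}_{1,\mc{L}}(X)$ for every $X \ge 0$, which is exactly \eqref{ineq_mlsi} with the claimed constant \eqref{eq:beckmsli_one}. The one point requiring a little care — and the step I expect to be the main (though modest) obstacle — is the interchange of the $\limsup$ over $\alpha_p$ with the $p$-limits of the two sides: since $\alpha_p$ need not converge, I should argue along a subsequence $p_n \to 1^+$ realizing the $\limsup$ and use that the norm-difference and Dirichlet-form limits hold for the full family, so the product passes to the limit cleanly. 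Everything else reduces to the already-established differentiation formula \eqref{eq:dev_pnorm} and the definition of the $1$-Dirichlet form \eqref{def:1diri}.
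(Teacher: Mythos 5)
Your proposal is correct and follows essentially the same route as the paper, which likewise obtains the lemma by dividing \eqref{ineq_becp} by $(p-1)$, letting $p \to 1^+$, and invoking \eqref{def:1diri} for the Dirichlet form together with \eqref{eq:dev_pnorm} to identify the limit of the normalized norm difference as $\Ent_{1,\si}(X)$. The paper states this in one sentence without detail; your write-up simply makes explicit the Taylor/l'H\^{o}pital computation and the subsequence argument for the $\limsup$, both of which are sound.
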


\begin{theorem} \label{thm:beck_log_sobo}
Let $\mc{P}_t = e^{t \mc{L}}$ be a primitive QMS with \mb{$\si$-{\rm GNS DBC}}. Then 
we have
\begin{equation} \label{eq:opt_becktoopt_msli}
    \alpha_1(\mc{L}) = \lim_{p \to 1^+} \alpha_p({\mc{L}})\,.
\end{equation}
\end{theorem}

The proof of \eqref{eq:opt_becktoopt_msli} needs the following lemma that extends \cite[Theorem 6.5]{bobkov2006modified} for the discrete MLSI.   

\begin{lemma} \label{lem:extremal_func} 
Let $\mc{P}_t = e^{t \mc{L}}$ be a primitive QMS with \mb{$\si$-{\rm GNS DBC}}. If $\alpha_p(\mc{L}) < \lad(\mc{L})/2$ holds, then the following infimum is attained:
\begin{align} \label{auxeq:defoptap}
    \alpha_{p}(\mc{L}) = \inf_{\substack{X\ge 0, X \neq \mi \\ \norm{X}_{1,\si} = 1}} \frac{\mc{E}_{p,\mc{L}}(X)}{\h{p}(\norm{X}_{p,\si}^p- 1)}\,.
\end{align} 
\end{lemma}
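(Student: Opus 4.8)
The plan is to exploit finite-dimensionality to reduce the claim to a compactness argument, with the only genuine difficulty being the degenerate behavior of the quotient near $X = \mi$.

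First I would observe that the constraint set $K := \{X \in \mc{B}_{sa}^+(\mc{H}) : \norm{X}_{1,\si} = 1\}$ is compact. Indeed, since $\dim \mc{H} < \infty$, each $\norm{\dd}_{p,\si} = \norm{\gs^{1/p}(\dd)}_p$ is a genuine norm (the composition of the invertible map $\gs^{1/p}$ with the Schatten $p$-norm), so all of them are mutually equivalent; thus $\norm{X}_{1,\si} = 1$ bounds $\norm{X}_{p,\si}$, and together with closedness of the positive cone and the affine constraint $\tr(\si X) = 1$, the set $K$ is compact. Writing $Q(X) := (p-1)\mc{E}_{p,\mc{L}}(X)/(\norm{X}_{p,\si}^p - 1)$ for the quotient in \eqref{auxeq:defoptap}, I would note that on $K$ one has $\norm{X}_{p,\si} \ge \norm{X}_{1,\si} = 1$ by Lemma \ref{lem:prop:norm_power}, and that, setting $\rho = \gs(X) \in \dh$, the denominator equals $e^{(p-1)D_p(\rho\|\si)} - 1$ by \eqref{eq:rela_sandpdivi}; hence it is strictly positive on $K \setminus \{\mi\}$ and vanishes only at $X = \mi$. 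Therefore $Q$ is continuous and nonnegative on $K \setminus \{\mi\}$.

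Next I would take a minimizing sequence $X_n \in K \setminus \{\mi\}$ with $Q(X_n) \to \alpha_p(\mc{L})$ and, by compactness of $K$, pass to a subsequence with $X_n \to X_*$. If $X_* \neq \mi$, then $Q$ is continuous there and $Q(X_*) = \alpha_p(\mc{L})$, so the infimum is attained and the proof ends. The crux is therefore to rule out $X_* = \mi$, and this is exactly where the hypothesis $\alpha_p(\mc{L}) < p\lad(\mc{L})/2$ enters. To exclude concentration at $\mi$, suppose $X_n \to \mi$ and write $X_n = \mi + \ep_n W_n$ with $\ep_n = \norm{X_n - \mi} \to 0$, $\norm{W_n} = 1$, and $\tr(\si W_n) = 0$ (forced by $\norm{X_n}_{1,\si} = \norm{\mi}_{1,\si} = 1$). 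The second-order expansions \eqref{eq:asy_divi} and \eqref{eq:asy_diri} from the proof of Theorem \ref{thm:beck_poincare}, being uniform over the compact unit sphere of admissible directions, give
\[
\norm{X_n}_{p,\si}^p - 1 = \tfrac{p(p-1)}{2}\ep_n^2 \norm{W_n}_{\si,\vp_p}^2 + O(\ep_n^3), \quad (p-1)\mc{E}_{p,\mc{L}}(X_n) = \tfrac{p^2(p-1)}{4}\ep_n^2\bigl(-\l W_n, \mc{L}W_n\r_{\si,\vp_p}\bigr) + O(\ep_n^3).
\]
Since $\norm{W_n}_{\si,\vp_p}$ is bounded below (norm equivalence again), dividing yields $Q(X_n) = \tfrac{p}{2}\,(-\l W_n,\mc{L}W_n\r_{\si,\vp_p})/\norm{W_n}_{\si,\vp_p}^2 + o(1)$. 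By the spectral gap characterization \eqref{def:spect} applied with $f = \vp_p$ and using $\tr(\si W_n) = 0$, each Rayleigh quotient is at least $\lad(\mc{L})$, so $\liminf_n Q(X_n) \ge p\lad(\mc{L})/2 > \alpha_p(\mc{L})$, contradicting $Q(X_n) \to \alpha_p(\mc{L})$. Hence $X_* \neq \mi$ and the infimum is attained.

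The main obstacle is the final step: the limit of $Q$ at $\mi$ is an indeterminate $0/0$, so attainment can genuinely fail there, and one must control both numerator and denominator to second order \emph{uniformly} in the approach direction $W_n$ and then invoke the Poincar\'{e} inequality to beat the threshold $p\lad(\mc{L})/2$. The preceding compactness and strict-positivity steps are routine in the finite-dimensional setting.
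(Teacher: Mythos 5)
Your proposal is correct and follows essentially the same route as the paper's proof: a minimizing sequence in the compact set $\{X \ge 0, \norm{X}_{1,\si}=1\}$, with the case $X_n \to \mi$ excluded via the second-order expansions \eqref{eq:asy_divi}, \eqref{eq:asy_diri} and the Poincar\'{e} inequality, which force $\liminf_n Q(X_n) \ge p\lad(\mc{L})/2 > \alpha_p(\mc{L})$. Your normalization $X_n = \mi + \ep_n W_n$ with $\norm{W_n}=1$ is a slightly more careful way to make the uniformity of the expansion explicit, but the argument is the same.
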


\begin{proof}
By definition, there exists a sequence of $X_n \in \{X \ge 0\,;\ \norm{X}_{1,\si} = 1\,,\, X \neq \mi\}$ such that 
\begin{align} \label{eq:assp}
    V_p(X_n): = \frac{\mc{E}_{p,\mc{L}}(X_n)}{\h{p}(\norm{X_n}_{p,\si}^p- 1)} \to \alpha_p(\mc{L})\,,\q \text{as}\ n \to \infty\,.
\end{align}
Since the set $\{X \ge 0\,;\ \norm{X}_{1,\si} = 1\}$ is compact, without loss of generality, we assume $X_n \to X $ as $n \to \infty$ for some $X \ge 0$ with $\norm{X}_{1,\si} = 1$. Suppose that $X = \mi$. Then we can write $X_n = \mi + Y_n$ with $Y_n \to 0$ and $\tr(\si Y_n) = 0$. Recalling the asymptotic expansions \eqref{eq:asy_divi} and \eqref{eq:asy_diri}, we obtain, by \eqref{ineq_pi}, 
\begin{align*}
\liminf_{n \to \infty} V_p(X_n) = \liminf_{n \to \infty} - \frac{1}{2} \frac{\l Y_n, \mc{L} Y_n \r_{\si,\vp_p} + O(\norm{Y_n}^3_{1,\si})}{\norm{Y_n}^2_{\si,\vp_p} + O(\norm{Y_n}^3_{1,\si})} \ge \frac{1}{2}\lad(\mc{L}) > \alpha_p(\mc{L})\,,
\end{align*}
which contradicts \eqref{eq:assp}. Thus, the limiting operator $X$ is in the desired set $\{X \ge 0\,;\ \norm{X}_{1,\si} = 1\,,\, X \neq \mi\}$ and the infimum in \eqref{auxeq:defoptap} is attained. 
\end{proof}

\begin{proof}[Proof of Theorem \ref{thm:beck_log_sobo}]
To show \eqref{eq:opt_becktoopt_msli}, by Lemma \ref{lem:mb}, it suffices to prove 
\begin{align} \label{auxeq:infmb}
    \liminf_{p \to 1^+}  \alpha_p (\mc{L}) \ge  \alpha_1 (\mc{L})\,.
\end{align}
We shall prove it by contradiction. If \eqref{auxeq:infmb} does not hold, there is a sequence $p_n \to 1^+$ as $n \to \infty$ such that 
\begin{align*} 
    \lim_{n \to \infty}  \alpha_{p_n}(\mc{L}) \le \alpha_1(\mc{L}) - \ep\,, 
\end{align*}
for some small enough $\ep > 0$, that is, for any $k > 0$, there exists $N$ such that for $n \ge N$, 
\begin{align} \label{auxeq:seq}
   \alpha_{p_n}(\mc{L}) \le \alpha_1(\mc{L}) - \ep + \frac{1}{k}\,.
\end{align}
Suppose that $\alpha_{p_n}(\mc{L}) = \lad(\mc{L})/2$ holds for infinitely many $n$. It follows from \eqref{auxeq:seq} that $\lad(\mc{L})/2 \le \alpha_1(\mc{L}) - \ep$, which is a contradiction with Lemma \ref{lem:aux_func}. Thus, without loss of generality, we assume $\alpha_{p_n}(\mc{L}) < \lad(\mc{L})/2$ for all $n$. Then, Lemma \ref{lem:extremal_func} gives the existence of the minimizer $X_n$ associated with $\alpha_{p_n}(\mc{L})$. By compactness, we further assume that $X_n$ converges to some $X \ge 0$ with $\norm{X}_{1,\si} = 1$. To proceed, we consider two cases.  If $X = \mi$, similarly to the proof of Lemma \ref{lem:extremal_func} above,
we write $X_n = \mi + Y_n$ and find 
\begin{align*}
  \alpha_1(\mc{L}) - \ep + \frac{1}{k} \ge   \liminf_{n \to \infty} \alpha_{p_n}(\mc{L}) = \liminf_{n \to \infty} - \frac{1}{2} \frac{\l Y_n, \mc{L} Y_n \r_{\si,\vp_{p_n}} + O(\norm{Y_n}_{1,\si}^3)}{\norm{Y_n}^2_{\si,\vp_{p_n}} + O(\norm{Y_n}_{1,\si}^3)} \ge \frac{1}{2}\lad(\mc{L})\,,
\end{align*}
which, by letting $k \to \infty$, again contradicts with Lemma \ref{lem:aux_func}. If $X \neq \mi$, by definition \eqref{ineq_becp} and \eqref{auxeq:seq}, we have 
\begin{align*}
     \mc{E}_{p_n,\mc{L}}(X_{n}) \le \Big(\alpha_1(\mc{L}) - \ep + \frac{1}{k}\Big) \frac{p_n(\norm{X_{n}}_{p_n,\si}^{p_n}- 1)}{p_n-1}\,,
\end{align*}
for large enough $n$. It implies that, by letting $n \to \infty$ and $k \to \infty$ and using \eqref{eq:dev_pnorm} with elementary analysis, 
\begin{align*}
     \mc{E}_{1,\mc{L}}(X) \le \big(\alpha_1(\mc{L}) - \ep\big) \Ent_{1,\si}(X)\,,
\end{align*}
which contradicts the optimality of $\alpha_1(\mc{L})$. The proof is complete. 
\end{proof}

\subsubsection*{Stability of $\alpha_p(\mc{L})$}
We proceed to investigate the stability of the quantum Beckner constant $\alpha_p(\mc{L})$ with respect to the invariant state. We will compare the constants $\alpha_p(\mc{L})$
for the following two generators $\mc{L}_\si$ and $\mc{L}_{\si'}$ that satisfy the detailed balance conditions with respect to two different but commuting full-rank states $\si$ and $\si'$:
\begin{align} \label{eq:gen_1}
        \mc{L}_\si(X) = \sum_{j = 1}^{J} \big(e^{-\omega_j/2}V_j^*[X,V_j] + e^{\omega_j/2}[V_j,X]V^*_j\big)\,,
\end{align}
and 
\begin{align} \label{eq:gen_2}
        \mc{L}_{\si'}(X) = \sum_{j = 1}^{J} \big(e^{-\nu_j/2}V_j^*[X,V_j] + e^{\nu_j/2}[V_j,X]V^*_j\big)\,,
\end{align}
where $e^{-\ww_j}$ and $e^{-\nu_j}$ are the eigenvalues of $\Delta_\si$ and $\Delta_{\si'}$, respectively. For ease of  exposition, we assume that the states $\si$ and $\si'$ admit the spectral decompositions: 
\begin{align} \label{auxeq_specdecomp}
    \si = \sum_{k = 1}^{d} \si_k \pure{v_k}\,,\q  \si' = \sum_{k = 1}^{d} \si'_k \pure{v_k}\,,
\end{align}
respectively. The following result is extended from \cite[Theorem 3.1]{junge2019stability}.

\begin{theorem} \label{thm:beck_stab}
Let $\mc{L}_\si$ and $\mc{L}_{\si'}$ be the generators of two primitive QMS satisfying \mb{$\si$-{\rm GNS DBC}} and \mb{$\si'$-{\rm GNS DBC}}, given in \eqref{eq:gen_1} and \eqref{eq:gen_2}, respectively.
Then it holds that, \mb{for $p \in (1,2]$},
\begin{align} \label{est_stability}
\frac{\Lad_{\min}}{\Lad_{\max}} \min_{j} e^{-\frac{|\ww_j-\nu_j|(2-p)}{2p}} \alpha_p(\mc{L}_{\si'}) \le \alpha_p(\mc{L}_\si)\,,
\end{align}
where constants $\Lad_{\min}$ and $\Lad_{\max}$ are defined as  
\begin{align} \label{eq:const_state}
 \Lad_{\min} = \min_k \frac{\si_k}{\si'_k}  \quad \text{and} \quad \Lad_{\max} = \max_k \frac{\si_k}{\si'_k}\,.
\end{align}
\end{theorem}

\begin{remark}
\mb{The most useful case of the above result might be $\si' = \mi/d$, which allows us to reduce the estimate of $\alpha_p(\mc{L}_\si)$ for a $\si$-{\rm GNS} symmetric QMS to  $\alpha_p(\mc{L}_{\frac{\mi}{d}})$ for a symmetric QMS. In this case, the estimate \eqref{est_stability} can be simplified as follows, by the relation \eqref{eq:repbohr},
\begin{align*}
\alpha_p(\mc{L}_\si) \ge \min_{k,l} \frac{\si_l}{\si_k} \min_{j} e^{-\frac{|\ww_j|(2-p)}{2p}} \alpha_p(\mc{L}_{\frac{\mi}{d}}) = \Big(\min_{k,l} \frac{\si_l}{\si_k}\Big)^{\frac{2+p}{2p}} \alpha_p(\mc{L}_{\frac{\mi}{d}})\,.
\end{align*}
It is also worth mentioning that the assumption ($\si$ and $\si'$ commute) is restrictive in the sense that the jump operators $\{V_j\}$ in \eqref{eq:gen_1} and \eqref{eq:gen_2} are the same. A very recent work \cite{junge2022stability} by Junge and Wu gives a general stability (continuity) result for two non-primitive generators $\mc{L}$ and $\mc{L}'$ that 
satisfy $\si$-{\rm GNS DBC} for the same $\si \in \dhh$ and have the same fixed point algebra. They showed that for $\norm{\mc{L} - \mc{L}'} < \d$, the MLSI constant satisfies $(1-\ep)\alpha_1(\mc{L}) \le \alpha_1(\mc{L}')$, where the dependence of $\ep$ on $\d$ is implicit. It would be interesting to generalize the results in \cite{junge2022stability} to  Beckner constant $\alpha_p(\mc{L})$ for a more general class of generators $\mc{L}$, which is beyond the scope of this work.}
\end{remark}

\begin{proof}[Proof of Theorem \ref{thm:beck_stab}]
We first establish a comparison result for the  $p$--divergence $\mc{F}_{p,\si}(\rho)$. We define the map
\begin{align*}
    \Phi(A): = \mm
    \Lambda_{\max}^{-1} \gs \Gamma_{\si'}^{-1} (A) & 0 \\ 0 & 
    \tr(A) - \Lad_{\max}^{-1} \tr(\gs \Gamma_{\si'}^{-1} (A))
    \nn : \ \bh \to \mc{B}(\mc{H}\oplus \C)\,.
\end{align*}
It is easy to check from \eqref{eq:const_state} that $\tr \big(  \Lambda_{\max}^{-1} \gs \Gamma_{\si'}^{-1} (A) \big) \le \tr (A) $ \mb{for any $A \in  \mc{B}^+_{\rm sa}(\mc{H})$}. Then, $\Phi$ is completely positive and trace preserving by Kraus representation theorem. Let $X := \Gamma_{\si'}^{-1}(\rho) \ge 0$ for $\rho \in \dh$. By the data processing inequality \eqref{eq:dpi_divi}, we have 
\begin{align} \label{eq:dpi}
    \mc{F}_{p, \Phi(\si')}\left(\Phi(\rho)\right) \le \mc{F}_{p,\si'}\left(\rho\right) = \frac{1}{p(p-1)} \left( \norm{X}_{p,\si'}^p - 1 \right). 
\end{align}
We now compute, by definition,
\begin{align} \label{eq:auxdp_2}
      \mc{F}_{p, \Phi(\si')}\left(\Phi(\rho)\right) = \frac{1}{p(p-1)} \left( 
   \Lad^{-1}_{\max} \norm{X}_{p,\si}^p + \big(1 - \Lad^{-1}_{\max} \norm{X}_{1,\si}\big)^p \left(1 - \Lad^{-1}_{\max}\right)^{1-p} - 1
     \right).
\end{align}
Note from \eqref{eq:const_state} that $\Lad_{\max} \ge 1 \ge \Lad_{\min}$ and $\Lad^{-1}_{\max} \norm{X}_{1,\si} \le 1$. 
Then  the convexity of $x^p$, $1 < p \le 2$, gives
\begin{align*}
   &(1 - \Lad^{-1}_{\max})  \left(\frac{1 - \Lad^{-1}_{\max} \norm{X}_{1,\si}}{1 - \Lad^{-1}_{\max}} \right)^p +   \Lad^{-1}_{\max} \left(\frac{ \Lad^{-1}_{\max} \norm{X}_{1,\si}}{\Lad^{-1}_{\max}} \right)^p \\
   = &     \big(1 - \Lad^{-1}_{\max} \norm{X}_{1,\si}\big)^p \big(1 - \Lad^{-1}_{\max}\big)^{1-p} + \Lad^{-1}_{\max} \norm{X}_{1,\si}^p  \ge 1\,,
\end{align*}
which, by \eqref{eq:auxdp_2}, implies
\begin{align} \label{auxeq_stab}
     \mc{F}_{p, \Phi(\si') }\left(\Phi(\rho) \right) \ge \frac{1}{p(p-1)} \left( 
   \Lad^{-1}_{\max} \norm{X}_{p,\si}^p - \Lad^{-1}_{\max} \norm{X}^p_{1,\si}
     \right)\,.
\end{align}
Combining \eqref{eq:dpi} and \eqref{auxeq_stab}, we can find
\begin{equation} \label{eq:sta_main_est_1}
    \frac{1}{p(p-1)} \left( 
    \norm{X}_{p,\si}^p -  \norm{X}^p_{1,\si}
     \right) \le \frac{\Lad_{\max}}{p(p-1)} \left( \norm{X}_{p,\si'}^p - 1  \right)\,.
\end{equation}

We next give the comparison result for $\mc{E}_{p,\mc{L}}$. We recall \eqref{eq:rep_epl}, and, by Lemma \ref{lem:mono_norm}, obtain
\begin{align} \label{est:p-diri}
     \mc{E}_{p,\mc{L}_\si}(X) \ge \left(\inf_{j} e^{-|\ww_j - \nu_j|(2-p)/2p}\right) \frac{p^2}{4} \sum_{j = 1}^J \left\l \gs^{1/p}(\p_j X), f_p^{[1]}\left(e^{\nu_j/2p}\gs^{1/p}(X), e^{-\nu_j/2p}\gs^{1/p}(X)\right)  \gs^{1/p} (\p_j X)   \right\r,
\end{align}
since 
\begin{equation*}
e^{\pm \ww_j/2p} \gs^{1/p}(X) \le (\max_j e^{|\ww_j - \nu_j|/2p}) e^{\pm \nu_j/2p} \gs^{1/p}(X)\,.    
\end{equation*}
By the integral representation \eqref{eq:integral_thetap} of $f_p^{[1]}$, we can estimate
\small
\begin{align}  \label{auxeqq_stabbdiri}
& \gs^{1/p} f_p^{[1]}\left(e^{\nu_j/2p}\gs^{1/p}(X), e^{-\nu_j/2p}\gs^{1/p} (X)\right) \gs^{1/p} \\ = &   \frac{\sin((p-1) \pi)}{\pi} \int_0^\infty  s^{p-2} 
\gs^{1/p} g_0^{[1]}\left(s + e^{\nu_j/2p}\gs^{1/p}(X), s + e^{-\nu_j/2p}\gs^{1/p} (X)\right) \gs^{1/p}
\ ds  \notag \\
 \ge & \frac{\sin((p-1) \pi)}{\pi} \int_0^\infty  s^{p-2} \gs^{1/p} g_0^{[1]}\left(s \Lad_{\min}^{-1/p} \si^{1/p}(\si')^{-1/p} + e^{\nu_j/2p}\gs^{1/p}(X), s \Lad_{\min}^{-1/p} \si^{1/p}(\si')^{-1/p} + e^{-\nu_j/2p}\gs^{1/p} (X)\right) \gs^{1/p}\ ds\,, \notag 
\end{align}
\normalsize
where we used the following observation from \eqref{auxeq_specdecomp} and \eqref{eq:const_state}:
\begin{equation} \label{eq:simple_ob}
\Lad_{\max}^{-1/p} \si^{1/p} (\si')^{-1/p} \le \mi \le \Lad_{\min}^{-1/p} \si^{1/p} (\si')^{-1/p}\,,     
\end{equation}
and the operator monotonicity of $t^{-1}$. The inequality \eqref{eq:simple_ob} also implies that $\Lad_{\min}^{1/p} \gs^{-1/p}\Gamma_{\si'}^{1/p}$ is completely positive and trace non-increasing. Then, by \cite[Proposition 3.6]{junge2019stability}, it follows that 
\small
\begin{multline*}
\gs^{1/p} g_0^{[1]}\left(s \Lad_{\min}^{-1/p} \si^{1/p}(\si')^{-1/p} + e^{\nu_j/2p}\gs^{1/p}(X), s \Lad_{\min}^{-1/p} \si^{1/p}(\si')^{-1/p} + e^{-\nu_j/2p}\gs^{1/p} (X)\right) \gs^{1/p} \\
\ge \Lad_{\min}^{2/p} \Gamma_{\si'}^{1/p} g_0^{[1]}\left(s + \Lad_{\min}^{1/p} e^{\nu_j/2p} \Gamma_{\si'}^{1/p}(X), s  + \Lad_{\min}^{1/p} e^{-\nu_j/2p}\Gamma_{\si'}^{1/p} (X)\right) \Gamma_{\si'}^{1/p}\,. 
\end{multline*}
\normalsize
Therefore, by \eqref{eq:integral_thetap} and \eqref{auxeqq_stabbdiri}, we have 
\begin{align*}
& \gs^{1/p} f_p^{[1]}\left(e^{\nu_j/2p}\gs^{1/p}(X), e^{-\nu_j/2p}\gs^{1/p} (X)\right) \gs^{1/p} \\
\ge & \frac{\sin((p-1) \pi)}{\pi} \int_0^\infty  s^{p-2} \Lad_{\min}^{2/p} \Gamma_{\si'}^{1/p} g_0^{[1]}\left(s + \Lad_{\min}^{1/p} e^{\nu_j/2p} \Gamma_{\si'}^{1/p}(X), s  + \Lad_{\min}^{1/p} e^{-\nu_j/2p}\Gamma_{\si'}^{1/p} (X)\right) \Gamma_{\si'}^{1/p}
\ ds \\
\ge & \Lad_{\min} \Gamma_{\si'}^{1/p} f_p^{[1]}\left(e^{\nu_j/2p}\Gamma_{\si'}^{1/p}(X), e^{-\nu_j/2p}\Gamma_{\si'}^{1/p} (X)\right) \Gamma_{\si'}^{1/p}\,. 
\end{align*}
Combining the above estimate with \eqref{est:p-diri} and recalling \eqref{eq:rep_epl}, we readily have
\begin{align} \label{eq:sta_main_est_2}
     \mc{E}_{p,\mc{L}_\si}(X) \ge \left(\inf_{j} e^{-|\ww_j - \nu_j|(2-p)/2p}\right) \Lad_{\min} \mc{E}_{p,\mc{L}_{\si'}}(X)\,.
\end{align}
The proof is completed by the following simple estimate, with the help of \eqref{eq:sta_main_est_1} and \eqref{eq:sta_main_est_2},
\begin{align*}
     \mb{\frac{p}{p-1}} \left( 
    \norm{X}_{p,\si}^p -  \norm{X}_{1,\si}
     \right) & \le \frac{\Lad_{\max}}{\alpha_p(\mc{L}_{\si'})} \mc{E}_{p,\mc{L}_{\si'}}(X) \\
     & \le \frac{\Lad_{\max}}{\alpha_p(\mc{L}_{\si'})} \left(\inf_{j} e^{-|\ww_j-\nu_j|(2-p)/2p}\right)^{-1} \Lad_{\min}^{-1}  \mc{E}_{p,\mc{L}_\si}(X)\,. \qedhere
\end{align*}
\end{proof}

\subsection{Applications and examples} \label{sec:app_exp}

This section is devoted to the applications of $p$-Beckner's inequalities. We first analyze the Beckner constant $\alpha_p(\mc{L})$ for the depolarizing semigroup. We then derive a bound on the mixing time of quantum Markov dynamics in terms of $\alpha_p(\mc{L})$. For the symmetric semigroups, by borrowing the techniques from \cite{adamczak2022modified,junge2015noncommutative}, we obtain the moment estimates from Beckner's inequalities \eqref{ineq_becp}, which further allows us to derive a concentration inequality.

\subsubsection*{Beckner constant for depolarizing semigroups} In general, it is challenging to explicitly compute or estimate the optimal constant for the functional inequalities, even in the classical setting. We will consider the quantum Beckner constant $\alpha_p$ for the simplest QMS: the depolarizing semigroup \eqref{def:depolt} with $\gamma = 1$ and $\si = \mi/d$, and show that in this case, the computation of $\alpha_p$ is equivalent to the classical one for a Markov chain on the two-point space. We mention that the explicit values of LSI constant $\beta$ and MLSI constant $\alpha_1$ for $\mc{L}_{\rm depol}$ with a general invariant state $\si \in \dhh$ have been obtained in \cite{beigi2020quantum} and \cite{muller2016relative}, respectively. 

\begin{proposition} \label{prop:beck_depol}
Let $\mc{L}_{\rm depol}(X) = \tr(X/d)\mi - X$ be the Lindbladian of the depolarizing semigroup. Then we have 
\begin{align} \label{eq:const_beck}
    \alpha_p(\mc{L}_{\rm depol})  = \inf_{\substack{\theta x + (1 - \theta) y = 1 \\ x,y \ge 0,\  \theta \in \{\frac{1}{d},\ldots, 1 - \frac{1}{d}\} }} \mb{\frac{p}{4}}  \frac{ (\theta x^p + (1-\theta)y^p) - (\theta x^{p-1} + (1-\theta)y^{p-1})}{(\theta x^p + (1-\theta)y^p) - 1}\,,
\end{align}
for $p \in (1,2]$, where $d$ is the dimension of the underlying Hilbert space $\mc{H}$. 
\end{proposition}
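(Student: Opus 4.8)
The plan is to exploit the tracial structure $\si=\mi/d$ to reduce the variational problem defining $\alpha_p(\mc{L}_{depol})$ to a finite-dimensional optimization over the \emph{spectrum} of $X$, and then to show by a first-order (Lagrange multiplier) analysis that the optimal spectrum takes at most two distinct values. First I would compute the Beckner ratio explicitly. Since $(p-1)\mc{E}_{p,\mc{L}_{depol}}(X)/(\norm{X}_{p,\si}^p-\norm{X}_{1,\si}^p)$ is invariant under $X\mapsto cX$ for $c>0$, one may normalize $\norm{X}_{1,\si}=1$, i.e. $\tfrac1d\tr X=1$. Writing $\mc{L}_{depol}(X)=\tr(\si X)\mi-X$ and using \eqref{def:p_diri} with the identity \eqref{auxeq:ipp}, in the tracial case $\si=\mi/d$ (so $\Gamma_\si^{1/p}(X)=d^{-1/p}X$ and $\ww_j=0$; cf.\ Remark \ref{eq:tracial}), a short computation gives $\mc{E}_{p,\mc{L}_{depol}}(X)=\tfrac{p^2}{4(p-1)}\,\tfrac1d\sum_k(\lambda_k^p-\lambda_k^{p-1})$, where $\lambda_1,\dots,\lambda_d\ge0$ are the eigenvalues of $X$ with $\tfrac1d\sum_k\lambda_k=1$ (so that the normalizing constant $\norm{X}_{1,\si}^p=1$). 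Hence the ratio equals $\tfrac{p^2}{4}\cdot(\tfrac1d\sum_k(\lambda_k^p-\lambda_k^{p-1}))/(\tfrac1d\sum_k\lambda_k^p-1)$, a symmetric function of the spectrum alone, and $\alpha_p(\mc{L}_{depol})$ is its infimum over the simplex $\Delta:=\{\lambda\in[0,\infty)^d:\tfrac1d\sum_k\lambda_k=1\}$ with the point $\lambda=(1,\dots,1)$ (i.e.\ $X=\mi$) removed.

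Next I would establish existence of a minimizer with a two-valued spectrum. The set $\Delta$ is compact, and the ratio extends continuously across $\lambda=(1,\dots,1)$ with value $p/2$ (the linearized, Poincar\'e regime: by the expansions \eqref{eq:asy_divi}--\eqref{eq:asy_diri} the limiting value is $\tfrac p2\lad(\mc{L}_{depol})$ and $\lad(\mc{L}_{depol})=1$), so the infimum is attained. At a minimizer I would first rule out zero eigenvalues: since $\tfrac{d}{d\lambda}(\lambda^p-\lambda^{p-1})\to-\infty$ as $\lambda\to0^+$ while the denominator is perturbed only at order $\lambda^{p-1}\to0$, raising a vanishing eigenvalue while compensating on a positive one (to preserve $\tfrac1d\sum_k\lambda_k=1$) strictly decreases the ratio, a contradiction. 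Thus the minimizer is interior, where the Lagrange condition reads $h(\lambda_k)=\text{const}$ for every $k$, with $h(t):=(B-A)p\,t^{p-1}-B(p-1)\,t^{p-2}$ and $A,B>0$ the (fixed) numerator and denominator at the minimizer. Because $h(t)=t^{p-2}\big((B-A)pt-B(p-1)\big)$ and $h'(t)=(p-1)t^{p-3}\big((B-A)pt-B(p-2)\big)$ has at most one zero on $(0,\infty)$, the function $h$ is monotone or unimodal, so the level set $\{h=\text{const}\}\cap(0,\infty)$ has at most two points. Hence the $\lambda_k$ take at most two distinct values.

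Finally, writing the two-valued minimizer as $m$ eigenvalues equal to $x$ and $d-m$ equal to $y$, and setting $\theta:=m/d$, the constraint becomes $\theta x+(1-\theta)y=1$ and the ratio becomes exactly the right-hand side of \eqref{eq:const_beck}; two genuinely distinct values force $m\in\{1,\dots,d-1\}$, i.e.\ $\theta\in\{1/d,\dots,1-1/d\}$, so $\alpha_p(\mc{L}_{depol})$ is no smaller than the stated infimum. The reverse inequality is immediate, since each admissible triple $(\theta,x,y)$ is realized by a diagonal $X$ with the corresponding spectrum, which is a valid test operator; combining the two bounds yields \eqref{eq:const_beck}.

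The main obstacle is the middle step: justifying that the minimizer is interior and that the stationarity equation has at most two positive roots. In particular the boundary analysis (excluding vanishing eigenvalues) and the degenerate behaviour near $X=\mi$, where $B\to0$ and the raw ratio is indeterminate, must be handled carefully; the continuity extension with limiting value $p/2=\tfrac p2\lad(\mc{L}_{depol})$ (consistent with the upper bound $\alpha_p\le\tfrac p2\lad$ of Theorem \ref{thm:beck_poincare}) is what keeps the compactness argument valid.
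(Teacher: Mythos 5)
Your proposal is correct and follows essentially the same route as the paper's proof: reduce the Beckner ratio to a symmetric function of the spectrum, rule out vanishing eigenvalues at a minimizer via the $-\ep^{p-1}$ boundary perturbation, and use the stationarity equation of the form $c_1 t^{p-1}-c_2 t^{p-2}=c_3$ (at most two positive roots) to force a two-valued spectrum, then set $\theta=m/d$. Your added care on attainment of the infimum (the continuous extension with value $p/2$ at $X=\mi$, consistent with Lemma \ref{lem:extremal_func}) and on why the stationarity equation has at most two roots merely fills in details the paper leaves implicit.
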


\begin{proof} 
We first compute from definition \eqref{ineq_becp} that 
\begin{equation} \label{auxeq:expbeck}
\alpha_{p}(\mc{L}_{\rm depol}) = \inf_{X \ge 0} \frac{\h{p}^{-1}\mc{E}_{p,\mc{L}_{\rm depol}}(X)}{\norm{X}_{p,\frac{\mi}{d}}^p - \norm{X}_{1,\frac{\mi}{d}}^p} = \frac{p}{4} \inf_{X \ge 0} \frac{\norm{X}_{p,\frac{\mi}{d}}^p- \norm{X}_{p-1,\frac{\mi}{d}}^{p-1}\norm{X}_{1,\frac{\mi}{d}} }{\norm{X}_{p,\frac{\mi}{d}}^p - \norm{X}_{1,\frac{\mi}{d}}^p}\,.
\end{equation}
We only consider $p \in (1,2)$, since the case $p = 2$, corresponding to the spectral gap, is trivial. Let $\mu_i \ge 0$, $1 \le i \le d$, be the eigenvalues of $X \ge 0$. It is easy to reformulate \eqref{auxeq:expbeck} as
\begin{align*}
\alpha_{p}(\mc{L}_{\rm depol}) = \frac{p}{4} \inf_{\mu_i \ge 0}  \frac{ \sum_i \mu_i^p - d^{-1} \big(\sum_{i} \mu_i^{p-1}\big)\big(\sum_i \mu_i\big)}{\sum_i \mu_i^p - d^{-(p-1)}\big(\sum_i \mu_i\big)^p }\,,
\end{align*}
which is equivalent to, for any $\mu_i \ge 0$,  
\begin{equation} \label{auxeq:expbeck_2}
F(\mu_1,\ldots, \mu_d): =   \sum_i \mu_i^p - d^{-1} \big(\sum_{i} \mu_i^{p-1}\big)\big(\sum_i \mu_i\big) - \frac{4 \alpha_{p}(\mc{L}_{\rm depol}) }{p} \Big(\sum_i \mu_i^p - d^{-(p-1)}\big(\sum_i \mu_i\big)^p  \Big) \ge 0\,.
\end{equation}
Suppose that $(r_i)_{i =1}^d$ achieves the equality in \eqref{auxeq:expbeck_2}. We claim that all $r_i$ are strictly positive. If not, without loss of generality, we assume $r_1 = 0$, $r_2 > 0$, and
$\sum_{i = 2}^{d} r_i  = 1$. Then, from \eqref{auxeq:expbeck_2}, for small enough $\ep > 0$, we have 
\begin{align*}
F(\ep, r_2 - \ep, r_3, \ldots, r_d) = &\sum_{i = 3} r_i^p - d^{-1} \big(\sum_{i = 3}^d r_i^{p-1}\big) - C_p \Big(\sum_{i = 3}^d r_i^p - d^{-(p-1)}  \Big)  \\
& - d^{-1} \big( (r_2 - \ep) ^{p-1} + \ep^{p-1} \big) - (C_p - 1)  \big( (r_2 - \ep)^p + \ep^p \big) \\
= & - d^{-1} \ep^{p-1} + O(\ep) < 0\,,
\end{align*}
where $C_p := 4 \alpha_p/p$. 
This fact contradicts the assumption that $(r_i)_{i}$ saturates the equality, so the claim holds. 
Now, by $r_i > 0$ for all $i$, we have $\na F (r_1,\ldots,r_d) = 0$, which, by a direct computation, gives the following equations in the variables $r_i$:
\begin{equation} \label{auxeq:expbeck_3}
\big(\frac{p}{4} - \alpha_p \big)r_i^{p-1} - \frac{p-1}{4 d}\norm{X}_1 r_i^{p-2} 
  = \frac{1}{4d}\norm{X}_{p-1}^{p-1} - \alpha_p d^{1-p}\norm{X}_1^{p-1}.
\end{equation}
When $\norm{X}_1$ and $\norm{X}_{p-1}$ are fixed, the above equation clearly
has at most two solutions, denoted by $a$ and $b$, which means that $r_i$ takes the value either $a$ or $b$. Let $n$ be the number of $r_i$ equal to $a$. Then, it follows that
 \begin{align*}
\alpha_{p}(\mc{L}_{\rm depol}) & = \inf_{\substack{n a + (d - n) b = 1 \\ a,b \ge 0,\ n \in \{1,\ldots, d-1\} }}  \frac{p}{4}  \frac{ (n a^p + (d - n)b^p) - d^{-1}(n a^{p-1} + (d-n)b^{p-1})}{(n a^p + (d-n)b^p) - d^{-(p-1)}} \\  
& =  \inf_{\substack{\theta x + (1 - \theta) y = 1 \\x,y \ge 0,\ \theta \in \{\frac{1}{d},\ldots, 1-\frac{1}{d}\} }} \frac{p}{4}  \frac{ (\theta x^p + (1-\theta)y^p) - (\theta x^{p-1} + (1-\theta)y^{p-1})}{(\theta x^p + (1-\theta)y^p) - 1}\,,
\end{align*}
by setting $\theta = n/d$, $x = d a$, and $y = d b$. 
\end{proof}

To connect the expression \eqref{eq:const_beck} with the classical Beckner constant, we consider a Markov chain on $\{0,1\}$ with the transition matrix: for $\theta > 0$,
$$
P = \mm \theta & 1- \theta \\ \theta & 1- \theta \nn\,,
$$
which has the invariant measure $\pi(0) = \theta$, $\pi(1) = 1-\theta$. The Beckner constant for this chain is give by \cite[(4.1)]{bobkov2006modified}
\begin{align} \label{eq:rep_beck}
    \alpha_{p,\theta} & = \inf_{f \ge 0} \frac{p}{2} \frac{- \l f^{p-1}, (P - I) f \r_{\pi}}{\pi(f^p) - \pi(f)^p }  \notag \\
    & =  \inf_{\substack{\theta x + (1 - \theta) y = 1 \\x,y \ge 0 }}  \frac{p}{2}  \frac{ (\theta x^p + (1-\theta)y^p) - (\theta x^{p-1} + (1-\theta)y^{p-1})}{(\theta x^p + (1-\theta)y^p) - 1}\,.
\end{align}
Then we can see 
\begin{align} \label{connec_class}
\alpha_{p}(\mc{L}_{\rm depol}) = \frac{1}{2} \inf \big\{ \alpha_{p,\theta}\,;\ \theta \in \{\frac{1}{d},\frac{2}{d},\ldots, 1-\frac{1}{d}\}\big\}\,.    
\end{align}
However, although the representation \eqref{eq:rep_beck} is simple, numerical techniques are still necessary to find
the explicit values of $ \alpha_{p,\theta}$ and $\alpha_{p}(\mc{L}_{\rm depol})$. \mb{We next derive upper and lower bounds for $\alpha_{p}(\mc{L}_{\rm depol})$.} 

\begin{proposition} \label{prop:upperlowerdep}
\mb{For $\mc{L}_{\rm depol}$ given in Proposition \ref{prop:beck_depol} and $p \in (1,2]$, there holds}
\begin{align} \label{est:depol}
   \mb{ \frac{p}{4} \le \alpha_{p}(\mc{L}_{\rm depol}) \le \min \Big\{\frac{1}{2},  \frac{pd^{p-1}}{4(d^{p-1} - 1)}\Big\} \,.}
\end{align}
\end{proposition}

\begin{proof}\mb{
The lower bound follows from \eqref{auxeq:expbeck} and $\norm{X}_{p-1,\frac{\mi}{d}} \le \norm{X}_{1,\frac{\mi}{d}}$,  
    \begin{align*}
       \frac{4}{p} \alpha_{p}(\mc{L}_{\rm depol}) = \inf_{X \ge 0\,, \norm{X}_{1,\frac{\mi}{d} = 1}} \frac{\norm{X}_{p,\frac{\mi}{d}}^p- \norm{X}_{p-1,\frac{\mi}{d}}^{p-1} }{\norm{X}_{p,\frac{\mi}{d}}^p - 1} \ge  \inf_{X \ge 0\,, \norm{X}_{1,\frac{\mi}{d} = 1}} \frac{\norm{X}_{p,\frac{\mi}{d}}^p- 1 }{\norm{X}_{p,\frac{\mi}{d}}^p - 1}\,.
    \end{align*}
    For the upper bound, again by \eqref{auxeq:expbeck}, we have 
    \begin{align*}
       \frac{4}{p} \alpha_{p}(\mc{L}_{\rm depol}) \le \inf_{X \ge 0\,, \norm{X}_{1,\frac{\mi}{d} = 1}} \frac{\norm{X}_{p,\frac{\mi}{d}}^p}{\norm{X}_{p,\frac{\mi}{d}}^p - 1}\,.
    \end{align*}
     Note that $ \frac{x^p}{x^p - 1}$ is decreasing in $x \ge 1$ for $p \in (1,2]$, and that, by \eqref{eq:suprhodp} and definition of $D_p(\rho \| \si) $,
    \begin{align*}
        \sup_{X \ge 0\,, \norm{X}_{1,\frac{\mi}{d} = 1}} \norm{X}_{p,\frac{\mi}{d}} = d^{\frac{p-1}{p}}\,.
    \end{align*}
   It follows that $\alpha_{p}(\mc{L}_{\rm depol}) \le  \frac{p}{4} \frac{d^{p-1}}{d^{p-1} - 1}$. The proof is completed by Lemma \ref{lem:upppoin} and $2\alpha_{2}(\mc{L}_{\rm depol}) = \lad(\mc{L}_{\rm depol}) = 1$. 
   }
\end{proof}

\begin{remark}
\mb{When $p \to 1$, the estimate \eqref{est:depol}  gives $1/4 \le \alpha_1(\mc{L}_{\rm depol}) \le 1/2$, which recovers the known bound for MLSI constant for the depolarizing semigroup \cite[Figure 1]{muller2016relative}. It is also easy to see that \eqref{est:depol} is asymptotically tight for any fixed $p \in (1,2]$ when $d \to \infty$. Indeed, we have $\alpha_p(\mc{L}_{\rm depol}) \to p/4$ as $d \to \infty$. Another special case is $d = 2$, where we directly have 
$\alpha_p(\mc{L}_{\rm depol})  = 1/2$ from \eqref{est:depol}, which can also be implied by the relation \eqref{connec_class}: $\alpha_p(\mc{L}_{\rm depol})  = \alpha_{p,\frac{1}{2}}/2$, and $\alpha_{p,\frac{1}{2}} = 1$ in \cite[Proposition 4.3]{bobkov2006modified}.}
\end{remark}

\subsubsection*{Mixing time}  
\mb{We shall analyze the mixing time of a primitive QMS $\mc{P}_t = e^{t\mc{L}}$ from quantum $p$-Beckner's inequalities.} We define the $l_1$ mixing time for $\ep > 0$ by
\begin{align} \label{def:l1mix}
t_1(\ep) = \inf\{t > 0\,;\  \big\lVert\mc{P}^\dag_t(\rho) - \si\big\rVert_1 \le \ep \ \text{for all} \ \rho \in \dh \}\,.
\end{align}
To bound $t_1(\ep)$, we need the following lemma 
extending \cite[Theorem 4.1]{li2020complete} for the symmetric QMS, which characterizes the convergence of QMS in terms of $\si$-weighted $p$-norm. 

\begin{lemma} \label{lem:conver_p_norm}
Let $\mc{P}_t$ be a primitive QMS satisfying \mb{$\si$-{\rm KMS DBC}}. Then it holds that, \mb{for $p \in (1,2]$},
\begin{align*}
      \norm{\mc{P}_t(X) - \tr(\si X) \mi}_{p,\si} \le \mb{e^{- 2 \alpha_p(\mc{L})t}} \norm{X}_{p,\si}^{1-p/2} \sqrt{\frac{2}{p(p-1)} \left(  \norm{X}_{p,\si}^p - \norm{X}_{1,\si}^{p} \right)}\,, \q  X \ge 0\,,
\end{align*}
where $\alpha_p (\mc{L}) > 0$ is the quantum Beckner constant for $\mc{P}_t$. 
\end{lemma}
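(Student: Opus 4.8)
The plan is to split the bound into a decay factor, produced by the exponential convergence of the quantum $p$-divergence, and a single $t$-independent structural inequality that converts the divergence into the $\si$-weighted $p$-norm distance. Write $a := \norm{X}_{1,\si} = \tr(\si X)$ and $X_t := \mc{P}_t(X)$; since $\si$ is invariant, $\tr(\si X_t) = \tr(\si X) = a$, so $\norm{X_t}_{1,\si}=a$ for every $t$. Assuming $a>0$ (the case $a=0$ forces $X=0$), the operator $X/a$ is the relative density of the state $\rho := \gs(X)/a \in \dh$, and by \eqref{eq:evo_density} the relative density of $\rho_t := \mc{P}_t^\dag(\rho)$ is $\mc{P}_t(X/a) = X_t/a$. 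Using the definition \eqref{def:quanpdivi} of $\mc{F}_{p,\si}$ together with the equivalence of \eqref{ineq_becp} and the exponential decay \eqref{eq:exp_divi} (valid with the optimal constant $\alpha_p(\mc{L})$), I would first record
\begin{equation*}
\norm{X_t}_{p,\si}^p - a^p = p(p-1)\,a^p\,\mc{F}_{p,\si}(\rho_t) \le e^{-\frac{4\alpha_p(\mc{L})}{p}t}\bigl(\norm{X}_{p,\si}^p - a^p\bigr).
\end{equation*}
Since the exponential factor is at most $1$ and $\norm{X}_{p,\si}^p - a^p \ge 0$, this also gives the $p$-norm contractivity $\norm{X_t}_{p,\si}\le\norm{X}_{p,\si}$.

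The heart of the matter is the pointwise inequality
\begin{equation*}
\norm{Y - a\mi}_{p,\si}^2 \le \tfrac{2}{p(p-1)}\,\norm{Y}_{p,\si}^{2-p}\bigl(\norm{Y}_{p,\si}^p - a^p\bigr), \qquad a=\norm{Y}_{1,\si},\ Y\ge 0,
\end{equation*}
which is exactly the asserted bound at $t=0$. To prove it I would pass to the Schatten picture, setting $A := \gs^{1/p}(Y)\ge 0$ and $B := a\,\si^{1/p}$, so that $\norm{Y}_{p,\si}=\norm{A}_p$, $a=\norm{B}_p$, and $\norm{Y-a\mi}_{p,\si}=\norm{A-B}_p$. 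The key input is the sharp two-uniform convexity of the Schatten class $S_p$ for $1<p\le 2$ of Ball, Carlen, and Lieb, namely $\norm{\tfrac{U+V}{2}}_p^2 + (p-1)\norm{\tfrac{U-V}{2}}_p^2 \le \tfrac12\bigl(\norm{U}_p^2 + \norm{V}_p^2\bigr)$, which expresses that $G := \tfrac12\norm{\cdot}_p^2$ is $(p-1)$-strongly convex and hence obeys the first-order estimate $G(A) \ge G(B) + \l \nabla G(B), A-B\r + \tfrac{p-1}{2}\norm{A-B}_p^2$. A short computation gives $\nabla G(B) = a\,\si^{(p-1)/p}$, and the decisive point is that the cross term vanishes, $\l \nabla G(B), A-B\r = a\bigl(\tr(\si^{(p-1)/p}A) - a\bigr)=0$, because $\tr(\si^{(p-1)/p}A) = \tr(\si Y) = a$. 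This collapses the Bregman divergence and yields $\norm{Y-a\mi}_{p,\si}^2 \le \tfrac{1}{p-1}\bigl(\norm{Y}_{p,\si}^2 - a^2\bigr)$.

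It then remains to convert the $2$-homogeneous right-hand side into the $p$-divergence form through the elementary scalar inequality $b^2-a^2 \le \tfrac{2}{p}\,b^{2-p}(b^p-a^p)$ for $0\le a\le b$ and $p\in(1,2]$; this is equivalent to $h(r):=r^p-\tfrac{p}{2}r^2 \le 1-\tfrac{p}{2}$ for $r=a/b\in[0,1]$, which holds because $h$ is nondecreasing with $h(1)=1-\tfrac{p}{2}$, and here $a=\norm{Y}_{1,\si}\le\norm{Y}_{p,\si}=b$ by Lemma~\ref{lem:prop:norm_power}. This establishes the displayed pointwise inequality. Applying it with $Y=X_t$, bounding $\norm{X_t}_{p,\si}^{2-p}\le\norm{X}_{p,\si}^{2-p}$ by the contractivity above (as $2-p\ge 0$), inserting the divergence decay for $\norm{X_t}_{p,\si}^p-a^p$, and taking square roots reproduces the claimed estimate. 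I expect the only genuinely delicate ingredient to be this strong-convexity/Bregman step; the rest is bookkeeping, and the collapse of the cross term rests precisely on the match between $a=\tr(\si Y)$ and $\tr(\si^{(p-1)/p}\gs^{1/p}(Y))$.
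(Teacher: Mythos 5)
Your proof is correct, and it reaches the paper's own $t$-independent structural inequality by a genuinely different route. The paper introduces the auxiliary function $G_{X,Y}(s) = \norm{X+sY}_{p,\si}^p - \tfrac{p(p-1)}{2}s^2\norm{X+sY}_{p,\si}^{p-2}\norm{Y}_{p,\si}^2$, invokes the noncommutative martingale convexity result of Ricard and Xu (following Li's argument in \cite{li2020complete}) to get $G''_{X,Y}(0)\ge 0$ and hence convexity of $G_{X,Y}$, and then applies this with $A=\tr(\si X)\mi$, $B=X_t-\tr(\si X)\mi$: the norm ordering gives $G'_{A,B}(0)\ge 0$, so $G_{A,B}(1)\ge G_{A,B}(0)$ yields directly $\norm{X_t-\tr(\si X)\mi}_{p,\si}^2 \le \tfrac{2}{p(p-1)}\norm{X_t}_{p,\si}^{2-p}\bigl(\norm{X_t}_{p,\si}^p-\norm{X}_{1,\si}^p\bigr)$. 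You instead take the Ball--Carlen--Lieb optimal $2$-uniform convexity of $S_p$ in its first-order Bregman form at the reference point $B=a\si^{1/p}$, where the cross term vanishes because $\tr(\si^{(p-1)/p}\gs^{1/p}(Y))=\tr(\si Y)=a$; this produces the slightly sharper intermediate bound $\norm{Y-a\mi}_{p,\si}^2\le\tfrac{1}{p-1}\bigl(\norm{Y}_{p,\si}^2-a^2\bigr)$, which you then convert to the $p$-divergence form by the elementary scalar inequality. Both arguments ultimately rest on the same $(p-1)$-uniform convexity of the Schatten classes and on the invariance $\tr(\si X_t)=\tr(\si X)$ killing the first-order term, but your organization is more transparent: it isolates the convexity input as a single off-the-shelf inequality rather than a second-derivative computation, and it makes explicit the contractivity $\norm{X_t}_{p,\si}\le\norm{X}_{p,\si}$ needed to replace $\norm{X_t}_{p,\si}^{2-p}$ by $\norm{X}_{p,\si}^{2-p}$ in the final bound --- a step the paper's proof leaves implicit. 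The Grönwall/decay part is identical to the paper's.
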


\begin{proof}
We define, for $X, Y \in \mc{B}_{sa}(\mc{H})$,  
\begin{equation*}
G_{X,Y}(s) := \norm{X + s Y}_{p,\si}^p - \frac{p(p-1)}{2} s^2 \norm{X + s Y}_{p,\si}^{p-2} \norm{Y}_{p,\si}^2.
\end{equation*}
Similarly to \cite[Theorem 4.1]{li2020complete}, by results in \cite{ricard2016noncommutative}, it is easy to prove that $G_{X,Y}''(0) \ge 0$ for any  self-adjoint $X, Y$, which implies that $G_{X,Y}(s)$ is a convex function on $\R$. We now consider $A := \tr(\si X) \mi $ and $B := X_t - \tr(\si X) \mi$ with $X_t = \mc{P}_t(X)$ for $X \ge 0$. Then, by Lemma \ref{lem:prop:norm_power}, we have, for any $s \in \R$,
\begin{equation*}
    \norm{A + s B}_{p,\si}^p \ge  \norm{A + s B}_{1,\si}^p \ge \tr(\si(A + s B))^p  = \tr(\si X)^p = \norm{A}_{p,\si}^p\,.
\end{equation*}
It then follows from definition that $G_{A,B}'(0) \ge 0$, which, along with the convexity of $G_{A,B}(s)$, yields $G'_{A,B}(s) \ge 0$ for any $s \ge 0$. Hence, we have $G_{A,B}(1) \ge G_{A,B}(0) $, that is (recalling the definitions of $A, B$), 
\begin{equation} \label{auxeq:decay}
     \norm{X_t}_{p,\si}^p - \frac{p(p-1)}{2} \norm{X_t}_{p,\si}^{p-2} \norm{X_t - \tr(\si X) \mi}_{p,\si}^2 \ge \norm{X}_{1,\si}^p\,.
\end{equation}
By \eqref{ineq_becp} and Gr\"{o}nwall’s inequality, it holds that 
\begin{align*}
    \norm{X_t}_{p,\si}^p - \norm{X_t}_{1,\si}^{p} \le \mb{e^{- 4 \alpha_p t}} \left(  \norm{X}_{p,\si}^p - \norm{X}_{1,\si}^{p} \right)\,,\q X \ge 0\,,
\end{align*}
which, along with \eqref{auxeq:decay}, implies 
\begin{align*}
     \norm{X_t - \tr(\si X) \mi}_{p,\si}^2 \le \frac{2}{p(p-1)} \mb{e^{- 4 \alpha_p t}} \norm{X}_{p,\si}^{2-p}\left(  \norm{X}_{p,\si}^p - \norm{X}_{1,\si}^{p} \right)\,.
\end{align*}
The proof is complete by taking the square root of the above inequality. 
\end{proof}

\begin{proposition} \label{prop:mixing}
Under the same assumption as in Lemma \ref{lem:conver_p_norm}, it holds that \mb{for $p \in (1,2]$},
\begin{align} \label{eq:mixing}
t_1(\ep) \le h(p,\si_{\min},\ep)\,,
\end{align}
where 
\begin{align*}
    h(p,\si_{\min},\ep) := \mb{\frac{1}{2\alpha_p(\mc{L})} }\log \bigg( \ep^{-1} \sqrt{\frac{2}{p(p-1)} \left(  \si_{\min}^{\frac{2}{p}-2} -  \si_{\min}^{ p + \frac{2}{p} -3}  \right)} \bigg)\,.
\end{align*}
\end{proposition}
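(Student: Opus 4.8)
The plan is to reduce everything to the Heisenberg picture and then feed the decay estimate of Lemma~\ref{lem:conver_p_norm} into the definition \eqref{def:l1mix} of $t_1(\ep)$. Fix $p \in (1,2]$ and $\rho \in \dh$, and set $X := \gs^{-1}(\rho) \ge 0$, so that $\tr(\si X) = \tr(\rho) = 1$ and $\norm{X}_{1,\si} = \norm{\gs(X)}_1 = \norm{\rho}_1 = 1$. By \eqref{eq:evo_density} the evolved relative density is $X_t = \mc{P}_t(X) = \gs^{-1}(\rho_t)$ with $\rho_t = \mc{P}_t^\dag(\rho)$, whence $\rho_t - \si = \gs(X_t - \mi)$. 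The first step is to observe that the trace distance is exactly a weighted $1$-norm: since $\norm{W}_{1,\si} = \tr|\gs(W)| = \norm{\gs(W)}_1$, we get
\[
\norm{\rho_t - \si}_1 = \norm{X_t - \mi}_{1,\si} = \norm{X_t - \tr(\si X)\mi}_{1,\si} \le \norm{X_t - \tr(\si X)\mi}_{p,\si},
\]
the last inequality being the monotonicity of $\norm{\dd}_{q,\si}$ in $q$ from Lemma~\ref{lem:prop:norm_power}. Applying Lemma~\ref{lem:conver_p_norm} and using $\norm{X}_{1,\si} = 1$ then yields
\[
\norm{\rho_t - \si}_1 \le e^{-\frac{2\alpha_p(\mc{L})}{p}t}\,\norm{X}_{p,\si}^{1-p/2}\sqrt{\tfrac{2}{p(p-1)}\big(\norm{X}_{p,\si}^p - 1\big)}.
\]

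Next I would bound the initial-data factor uniformly over all states. Writing $a := \norm{X}_{p,\si}^p \ge 1$, this factor equals $g(a) := a^{1/p - 1/2}\sqrt{\tfrac{2}{p(p-1)}(a-1)}$. Since $X = \gs^{-1}(\rho)$ is a relative density we have $X \le C(\si)\mi = \si_{\min}^{-1}\mi$ by \eqref{def:constcsi}, so $\norm{X}_{\infty,\si} = \norm{X}_\infty \le \si_{\min}^{-1}$; interpolating between $q = 1$ and $q = \infty$ — equivalently, using the convexity of $t \mapsto \log\norm{X}_{1/t,\si}$ on $[0,1]$ established in the proof of Lemma~\ref{lem:monotone}, which gives $\norm{X}_{p,\si} \le \norm{X}_{1,\si}^{1/p}\norm{X}_{\infty,\si}^{1-1/p} \le \si_{\min}^{1/p - 1}$ — we obtain $a \le \si_{\min}^{1-p}$. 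Because $1/p - 1/2 \ge 0$ for $p \le 2$, the scalar function $g$ is nondecreasing on $[1,\infty)$, so we may substitute the extremal value $a = \si_{\min}^{1-p}$. A direct simplification of the exponents then gives
\[
\norm{X}_{p,\si}^{1-p/2}\sqrt{\tfrac{2}{p(p-1)}\big(\norm{X}_{p,\si}^p - 1\big)} \le \si_{\min}^{\frac1p + \frac p2 - \frac32}\sqrt{\tfrac{2}{p(p-1)}\big(\si_{\min}^{1-p}-1\big)} = \sqrt{\tfrac{2}{p(p-1)}\big(\si_{\min}^{\frac2p - 2} - \si_{\min}^{p + \frac2p - 3}\big)},
\]
which is precisely the quantity appearing under the logarithm in $h(p,\si_{\min},\ep)$.

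Combining the two displays, for every $\rho \in \dh$ we have $\norm{\rho_t - \si}_1 \le e^{-\frac{2\alpha_p(\mc{L})}{p}t} M_p$, where $M_p$ denotes the $\si_{\min}$-dependent constant above. Requiring the right-hand side to be $\le \ep$ and inverting the exponential shows that $\norm{\mc{P}_t^\dag(\rho)-\si}_1 \le \ep$ for all states once $t \ge \frac{p}{2\alpha_p(\mc{L})}\log(\ep^{-1}M_p) = h(p,\si_{\min},\ep)$; hence $t_1(\ep) \le h(p,\si_{\min},\ep)$. As $p \in (1,2]$ was arbitrary, taking the infimum over $p$ gives \eqref{eq:mixing}.

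The main obstacle is the uniform control of the initial-data factor in the second paragraph: one must both pin down the correct powers of $\si_{\min}$ through the interpolation bound $a \le \si_{\min}^{1-p}$ and justify replacing $a$ by its extremal value, which relies on the monotonicity of $g$ and hence on the restriction $p \le 2$ (so that $1/p - 1/2 \ge 0$). The remaining steps — the Heisenberg-picture identity $\norm{\rho_t-\si}_1 = \norm{X_t-\mi}_{1,\si}$ and the final Grönwall-type inversion into a lower bound on $t$ — are routine.
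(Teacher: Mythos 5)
Your argument is correct and follows essentially the same route as the paper: pass to the Heisenberg picture, bound the trace distance by the weighted $p$-norm, feed in Lemma \ref{lem:conver_p_norm}, and maximize the initial-data factor over relative densities using $\norm{X}_{p,\si}\le\si_{\min}^{1/p-1}$ together with the monotonicity of the resulting scalar function before inverting the exponential. The only (immaterial) difference is that you derive the bound $\norm{X}_{p,\si}\le\si_{\min}^{1/p-1}$ from $X\le\si_{\min}^{-1}\mi$ and $L_1$--$L_\infty$ interpolation, whereas the paper quotes the identity \eqref{eq:suprhodp} for the supremum of the sandwiched R\'enyi entropy.
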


\begin{proof}
Let $X_t$ be the relative density of $\rho_t = e^{t \mc{L}^\dag}(\rho)$; see \eqref{eq:evo_density}. We write
\begin{align*}
    \big\lVert\mc{P}^\dag_t(\rho) - \si\big\rVert_1  = \big\lVert \gs (\gs^{-1} \mc{P}^\dag_t(\rho) - \mi)\big\rVert_1 = \norm{X_t - \mi}_{1,\si}\,.
\end{align*}
By Lemma \ref{lem:conver_p_norm}, we have 
\begin{align}  \label{eq:sup_pnorm}
  \sup_{\rho \in \dh}  \big\lVert\mc{P}^\dag_t(\rho) - \si\big\rVert_1 \le \mb{e^{- 2 \alpha_p(\mc{L}) t}} \sup_{X \ge 0,\, \tr(\si X) = 1} \sqrt{\frac{2}{p(p-1)} \left(  \norm{X}_{p,\si}^2 - \norm{X}_{p,\si}^{2-p} \right)}\,. 
\end{align}
Recalling the formula \eqref{eq:suprhodp}, there holds 
\begin{align*}
    \sup_{X \ge 1,\, \tr(\si X) = 1} \norm{X}_{p,\si} = \si_{\min}^{\frac{1}{p}-1}\,.
\end{align*}
Also note that the function $x^2 - x^{2 - p}$ with $p \in (1,2]$ is increasing for $x \ge 1$. It follows from \eqref{eq:sup_pnorm} that 
\begin{align*}
      \sup_{\rho \in \dh}  \big\lVert\mc{P}^\dag_t(\rho) - \si\big\rVert_1 \le \mb{e^{- 2 \alpha_p(\mc{L}) t}} \sqrt{\frac{2}{p(p-1)} \left(  \si_{\min}^{\frac{2}{p}-2} -  \si_{\min}^{ p + \frac{2}{p} -3} \right)}\,. 
\end{align*}
By definition \eqref{def:l1mix} and a direct computation, we obtain the estimate \eqref{eq:mixing} for $t_1(\ep)$. 
\end{proof}
By elementary calculus, we find 
\begin{align*}
     h(2,\si_{\min},\ep) = \frac{1}{\lad(\mc{L})} \log \bigg(\ep^{-1} \sqrt{  \si_{\min}^{-1} -  1  } \bigg)\,,
\end{align*}
and, as $p \to 1^+$,
\begin{align*}
     h(p, \si_{\min}, \ep) \to \frac{1}{2\alpha_1(\mc{L})} \log \bigg(\ep^{-1} \sqrt{  2 \log \big(\si_{\min}^{-1}\big)} \bigg)\,,
\end{align*}
which are nothing else but the mixing time bounds obtained from the decays of the variance and the relative entropy, respectively \cite{kastoryano2013quantum,temme2010chi}. \mb{If the QMS satisfies the $\si$-{\rm GNS DBC}, then \eqref{ineq_becp} holds for all $p \in (1,2]$ by Theorem \ref{them:lowerbeck}. In principle, we can take the infimum in \eqref{eq:mixing} over $p \in (1,2]$ and obtain $t_1(\ep) \le \inf_{p \in (1,2]} h(p,\si_{\min},\ep)$. However, this observation might be not that useful in practice, since it is hard to have a good estimate of $\alpha_p(\mc{L})$ for a general $\mc{L}$ and answer when the infimum is attained in the interior of $[1,2]$. If the constants $\alpha_p(\mc{L})$ are of the same order, it is easy to see that when $\si_{\min} \le 1/d$ is small enough, we have $\inf_{p \in (1,2]} h(p,\si_{\min},\ep) = h(1,\si_{\min},\ep)$.}

\subsubsection*{Moment estimates and concentration inequalities} In this section, we consider the primitive symmetric QMS $\mc{P}_t = \mc{P}_t^\dag$ (see Remark \ref{eq:tracial}). We will derive a moment estimate from  $p$-Beckner's inequalities, by extending the arguments of \cite[Proposition 3.3]{adamczak2022modified} for classical Markov semigroups. This helps us to obtain concentration inequalities in a similar manner as \cite{junge2015noncommutative}, which could find applications in  quantum parameter estimation problems \cite[Section V]{rouze2019concentration}. 
We first recall  the carr\'{e} du champ operator (gradient form) associated with $\mc{P}_t = e^{t \mc{L}}$ \cite{junge2015noncommutative,wirth2021curvature}:  
\begin{align} \label{def:gamma}
    \Gamma(X,Y) = \frac{1}{2}(\mc{L}(X^*Y) - X^* (\mc{L}Y) - (\mc{L}X)^*Y)\quad \text{for}\  X,Y \in \mc{B}(\mc{H})\,.
\end{align}
As usual, we write $\Gamma(X)$ for $\Gamma(X,X)$. By the self-adjointness $\mc{L}^\dag = \mc{L}$ and $\mc{L}(\mi) = 0$, a direct computation gives the relation between the $\Gamma$ operator and the Dirichlet form $\mc{E}_{2,\mc{L}}(X,Y) = -\l X, \mc{L} Y \r_{\frac{\mi}{d}}$ (see \eqref{eq:productid} for the notation $\l \dd,\dd \r_{\frac{\mi}{d}}$) : 
\begin{align} \label{eq:rela_gamma}
\mc{E}_{2,\mc{L}}(X,Y) = \big\l \frac{\mi}{d}, \Gamma(X,Y)\big\r\,.   
\end{align}
Before we state our main result on moment estimates, we need the following useful lemma. 

\begin{lemma}\label{lem:diri_convex}
Let $\Gamma(\dd)$ be given in \eqref{def:gamma} for a symmetric Lindbladian $\mc{L}$. For any differentiable convex and increasing function $\vp: [0,\infty) \to \R$ and $c \in \R$, it holds that
\begin{align} \label{eq:diri_convex}
    \mc{E}_{2,\mc{L}}(\vp(|X + c|), |X + c|) \le 2 \l\vp'(|X + c|), \Gamma(X) \r_{\frac{\mi}{d}} \le 2 \norm{\vp'(|X+c|)}_{p,\frac{\mi}{d}} \norm{\Gamma(X)}_{\h{p},\frac{\mi}{d}}\,,
\end{align}
for $X \in \mc{B}_{sa}(\mc{H})$, where $p \ge 1$. 
\end{lemma}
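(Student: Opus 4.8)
The plan is to treat the two inequalities separately: the right-hand one is a one-line application of H\"older, while the left-hand one needs a spectral reduction to an elementary two-point estimate. For the H\"older step, note that since $\vp$ is increasing we have $\vp'(|X+c|) \ge 0$, and since $X$ is self-adjoint, $\Gamma(X) = \Gamma(X,X) \ge 0$; both operators in the pairing are therefore positive semidefinite. Writing $\l A, B\r_{\frac{\mi}{d}} = \tfrac1d \tr(A^* B)$ and using $\norm{\dd}_{p,\frac{\mi}{d}} = d^{-1/p}\norm{\dd}_p$, the tracial H\"older inequality $\tr(AB) \le \norm{A}_p\norm{B}_{\h p}$ (valid with $\tr(AB)\ge 0$ for positive $A,B$) gives $2\l \vp'(|X+c|), \Gamma(X)\r_{\frac{\mi}{d}} \le 2\norm{\vp'(|X+c|)}_{p,\frac{\mi}{d}}\norm{\Gamma(X)}_{\h p,\frac{\mi}{d}}$, which is exactly the second inequality.

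For the first inequality I would rewrite both sides through the gradient form. By \eqref{eq:rela_gamma}, $\mc{E}_{2,\mc{L}}(\vp(|X+c|),|X+c|) = \l \tfrac{\mi}{d}, \Gamma(\vp(|X+c|),|X+c|)\r$, and for the symmetric generator \eqref{eq:gen_symm} a direct expansion of the Leibniz defect of $\mc{L}$ (using that the $V_j$ are self-adjoint and $\mc{L}$ is Hermitian-preserving) yields the closed form $\Gamma(A,B) = -\sum_j [V_j,A^*][V_j,B]$. I would then diagonalize: choose an orthonormal eigenbasis $\{\ket{k}\}$ of $X$ with $X\ket{k} = x_k\ket{k}$, so that $X+c$, $Y:=|X+c|$, $\vp(Y)$ and $\vp'(Y)$ are all diagonal with eigenvalues $\mu_k := x_k+c$, $\lambda_k := |\mu_k|$, $\vp(\lambda_k)$, $\vp'(\lambda_k)$. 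Computing both traces in this basis, using $(V_j)_{lk} = \overline{(V_j)_{kl}}$ and $([V_j,X])_{kl} = (V_j)_{kl}(x_l - x_k) = (V_j)_{kl}(\mu_l-\mu_k)$, the $\tfrac1d$ factors cancel and the claim reduces to
\begin{align*}
\sum_{j,k,l} |(V_j)_{kl}|^2\,(\vp(\lambda_l)-\vp(\lambda_k))(\lambda_l-\lambda_k) \le 2\sum_{j,k,l} \vp'(\lambda_k)\,|(V_j)_{kl}|^2\,(\mu_l-\mu_k)^2.
\end{align*}

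The decisive step is to symmetrize the right-hand side: since $|(V_j)_{kl}|^2(\mu_l-\mu_k)^2$ is invariant under $k\leftrightarrow l$, the factor $2$ lets me replace $\vp'(\lambda_k)$ by $\tfrac12(\vp'(\lambda_k)+\vp'(\lambda_l))$, so with the nonnegative weights $|(V_j)_{kl}|^2$ it suffices to check the termwise inequality
\begin{align*}
(\vp(\lambda_l)-\vp(\lambda_k))(\lambda_l-\lambda_k) \le (\vp'(\lambda_k)+\vp'(\lambda_l))(\mu_l-\mu_k)^2
\end{align*}
for all $k,l$. Here the absolute value enters: because $\lambda_k=|\mu_k|$, the reverse triangle inequality gives $(\lambda_l-\lambda_k)^2 = (|\mu_l|-|\mu_k|)^2 \le (\mu_l-\mu_k)^2$, so it is enough to prove $(\vp(\lambda_l)-\vp(\lambda_k))(\lambda_l-\lambda_k) \le (\vp'(\lambda_k)+\vp'(\lambda_l))(\lambda_l-\lambda_k)^2$. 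Assuming $\lambda_l \ge \lambda_k$ and dividing by $\lambda_l-\lambda_k \ge 0$, this becomes $\vp(\lambda_l)-\vp(\lambda_k) = \int_{\lambda_k}^{\lambda_l}\vp'(t)\,dt \le \vp'(\lambda_l)(\lambda_l-\lambda_k) \le (\vp'(\lambda_k)+\vp'(\lambda_l))(\lambda_l-\lambda_k)$, which follows from monotonicity of $\vp'$ (convexity of $\vp$) and $\vp'\ge 0$.

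I expect the main obstacle to be the bookkeeping in the spectral reduction, namely obtaining the closed form of $\Gamma(A,B)$, tracking which index carries the $\vp'$ factor, and justifying the symmetrization that produces $\vp'(\lambda_k)+\vp'(\lambda_l)$; the analytic core (the two-point convexity estimate together with the reverse triangle inequality) is elementary. It is also worth remarking that using $\Gamma(X)$ rather than $\Gamma(|X+c|)$ on the right is precisely what supplies the slack $(\mu_l-\mu_k)^2 \ge (\lambda_l-\lambda_k)^2$, so the stated form is the natural one for the concentration applications that follow.
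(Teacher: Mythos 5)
Your proposal is correct and follows essentially the same route as the paper: spectral decomposition of $X$, reduction of both sides to weighted double sums over eigenvalues with nonnegative weights $|(V_j)_{kl}|^2 = \tr(E_k V_j E_l V_j)$, the two-point convexity bound $\vp^{[1]}(x,y) \le \vp'(x)+\vp'(y)$, the reverse triangle inequality $\big||\lad_i+c|-|\lad_k+c|\big| \le |\lad_i-\lad_k|$, and H\"older for the final step. The only cosmetic difference is that you obtain the symmetric form $\vp'(\lad_k)+\vp'(\lad_l)$ on the right-hand side by a $k\leftrightarrow l$ symmetrization after deriving the closed form $\Gamma(A,B)=-\sum_j[V_j,A^*][V_j,B]$, whereas the paper reaches the same identity by expanding $2\l\vp'(|X+c|),\Gamma(X)\r_{\frac{\mi}{d}}$ directly from the definition \eqref{def:gamma}.
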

\begin{proof}
Assume that $X \in \mc{B}_{sa}(\mc{H})$ has the spectral decomposition $X = \sum \lad_i E_i$, where $E_i$ are the eigen-projections associated with the eigenvalues $\lad_i$. Note from the convexity of $\vp$ that for any $x,y \ge 0$, 
\begin{align} \label{auxeq:convex_fun}
      \frac{\vp(x) - \vp(y)}{x - y} \le \max \{\vp'(x), \vp'(y)\} \le \vp'(x) + \vp'(y)\,,
\end{align} 
since $\vp'(x) \ge 0$ holds by the monotonicity of $\vp$. Recalling the formula \eqref{eq:inte_by_parts}, by the eigen-decomposition of $X$ and the inequality \eqref{auxeq:convex_fun}, we have
\begin{align} \label{auxeq_momen_1}
      \mc{E}_{2,\mc{L}}(\vp(|X+c|), |X + c|) & = \sum_{j = 1}^J \big\l \p_j \vp(|X+c|), \p_j |X + c| \big\r_{\frac{\mi}{d}} \notag \\
      & = \frac{1}{d} \sum_{j = 1}^J \sum_{i,k = 1}^d (\vp(|\lad_i+c|) - \vp(|\lad_k + c|))(|\lad_i + c| - |\lad_k+c|) \tr (E_k V_j E_i V_j) \notag \\
      & \le \frac{1}{d} \sum_{j = 1}^J \sum_{i,k = 1}^d (\vp'(|\lad_i+c|) + \vp'(|\lad_k + c|))(|\lad_i + c| - |\lad_k+c|)^2 \tr (E_k V_j E_i V_j)\,.
\end{align}
Similarly, by definition \eqref{def:gamma} of $\Gamma(X)$, we  compute 
\begin{align} \label{auxeq_momen_2}
  2 \big\l\vp'(|X+c|), \Gamma(X) \big\r_{\frac{\mi}{d}} = & \frac{1}{d} \sum_{j =1}^J \sum_{i,k = 1}^d \big\{(\vp'(|\lad_i+c|) - \vp'(|\lad_k + c|))(\lad^2_i - \lad_k^2) \notag \\
  & \qquad \qquad \  -  2(\lad_i\vp'(|\lad_i + c|) - \lad_k \vp'(|\lad_k + c|))(\lad_i - \lad_k) \big\} \tr (E_k V_j E_i V_j) \notag \\
  = & \frac{1}{d} \sum_{j = 1}^J \sum_{i,k = 1}^d (\vp'(|\lad_i+c|) + \vp'(|\lad_k + c|))(\lad_i - \lad_k)^2 \tr (E_k V_j E_i V_j)\,.
\end{align}
Since there holds $\big||\lad_i + c| - |\lad_k+c| \big| \le |\lad_i - \lad_k|$, by \eqref{auxeq_momen_1} and \eqref{auxeq_momen_2}, and using H\"{o}lder's inequality, we obtain the desired estimate \eqref{eq:diri_convex}. 
\end{proof}

\begin{proposition} \label{prop:moment}
\mb{Let $\mc{L}$ be a primitive symmetric Lindbladian and $\Gamma(\dd)$ be defined in \eqref{def:gamma}.}
Suppose that the quantum $p$-Beckner's inequality \eqref{ineq_becp} holds
for all $p \in (1,2]$ with $\alpha_p \ge a(p-1)^s$ for some $a > 0$ and $s \ge 0$. Then we have, for $X \in \mc{B}_{sa}(\mc{H})$ and $r \ge 2$,
\begin{align} \label{eq:moment}
    \norm{X - \mb{\frac{1}{d}\tr(X)} }_{r,\frac{\mi}{d}}^2 \le \frac{r^{s + 1}\kappa(s)}{\mb{2a}}\norm{\Gamma(X)}_{\frac{r}{2},\frac{\mi}{d}}\,,
\end{align}
where $\kappa(s) := (1 - e^{-(s+1)/2})^{-1}$. 
\end{proposition}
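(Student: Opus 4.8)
The plan is to run a moment iteration driven by \eqref{ineq_becp}, converting the Dirichlet form into a Schatten norm of $\Gamma(X)$ through Lemma~\ref{lem:diri_convex}, in the spirit of \cite{adamczak2022modified,junge2015noncommutative}. Since $\mc{L}(\mi)=0$ gives $\Gamma(X+c\mi)=\Gamma(X)$ for every scalar $c$, I may recenter freely; I set $W:=|X-\norm{X}_{1,\frac{\mi}{d}}\mi|\ge 0$, so that the left-hand side of \eqref{eq:moment} is $\norm{W}_{r,\frac{\mi}{d}}^2$ and \eqref{ineq_becp} is applicable to positive test operators built from $W$. Throughout I use that for $\si=\mi/d$ the power operator is $I_{q,p}(Y)=Y^{p/q}$, so the symmetric representation \eqref{eq:symm_p_diri} reads $\mc{E}_{p,\mc{L}}(Y)=\frac{\h p p}{4}\,\mc{E}_{2,\mc{L}}(Y^{p-1},Y)$, with $\mc{E}_{2,\mc{L}}$ as in \eqref{eq:rela_gamma}.

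First I would isolate the one-step estimate. Fix $r\ge 2$, set $p=\frac{r}{r-1}\in(1,2]$ and apply \eqref{ineq_becp} to $Y=W^{\,r-1}\ge 0$. Since $p(r-1)=r$ and $(p-1)(r-1)=1$, the two norms on the left become $\norm{W}_{r,\frac{\mi}{d}}^{r}$ and $\norm{W}_{r-1,\frac{\mi}{d}}^{r}$, while $Y^{p-1}=W$, so by the symmetry of $\mc{E}_{2,\mc{L}}$ one gets $\mc{E}_{p,\mc{L}}(Y)=\frac{\h p p}{4}\,\mc{E}_{2,\mc{L}}(\vp(W),W)$ with $\vp(t)=t^{\,r-1}$. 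This is exactly the configuration required by Lemma~\ref{lem:diri_convex}: $\vp$ is convex and increasing because $r-1\ge 1$ (the naive choice $Y=W$ would instead produce the concave $t^{p-1}$ and the lemma would fail). Applying Lemma~\ref{lem:diri_convex} with Hölder exponents tuned to $(\,u,\h u\,)=(\tfrac{r}{r-2},\tfrac{r}{2})$, together with $\vp'(t)=(r-1)t^{\,r-2}$ and $u(r-2)=r$, produces the factor $\norm{\Gamma(X)}_{\frac r2,\frac{\mi}{d}}$. Inserting $\alpha_p\ge a(p-1)^s=a(r-1)^{-s}$ and dividing by $\norm{W}_{r,\frac{\mi}{d}}^{\,r-2}$ yields the recursion
\begin{align*}
\norm{W}_{r,\frac{\mi}{d}}^{2}\le \norm{W}_{r-1,\frac{\mi}{d}}^{2}\Big(\tfrac{\norm{W}_{r-1,\frac{\mi}{d}}}{\norm{W}_{r,\frac{\mi}{d}}}\Big)^{r-2}+\frac{r^{2}(r-1)^{s-1}}{2a}\,\norm{\Gamma(X)}_{\frac r2,\frac{\mi}{d}}\,.
\end{align*}

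Finally I would solve this recursion. Iterating it downward in the exponent and using the monotonicity of the normalized Schatten norms (Lemma~\ref{lem:prop:norm_power}) to dominate every $\norm{\Gamma(X)}_{\frac{r_k}{2},\frac{\mi}{d}}$ by $\norm{\Gamma(X)}_{\frac r2,\frac{\mi}{d}}$, the accumulated coefficients must be summed. The contraction factors $(\norm{W}_{r-1,\frac{\mi}{d}}/\norm{W}_{r,\frac{\mi}{d}})^{r-2}$ behave like $e^{-1/2}$ (reflecting limits of the type $(1-1/r)^{r}\to e^{-1}$), and coupling this with the coefficient growth $r^{2}(r-1)^{s-1}\sim r^{s+1}$ turns the telescoped sum into a geometric series of ratio $e^{-(s+1)/2}$, whose total is $r^{s+1}(1-e^{-(s+1)/2})^{-1}=r^{s+1}\kappa(s)$; this is the origin of the constant $\kappa(s)$. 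The main obstacle is precisely this summation: one must make the heuristic contraction rigorous by controlling the ratios $\norm{W}_{r-1,\frac{\mi}{d}}/\norm{W}_{r,\frac{\mi}{d}}$ self-consistently, keep $p(r)\in(1,2]$ along the whole sequence, and terminate the iteration once the exponent reaches $(1,2]$, absorbing that base step into the variance/Poincaré estimate so that the leading power stays $r^{s+1}$ rather than $r^{s+2}$. Once the geometric series is summed, \eqref{eq:moment} follows.
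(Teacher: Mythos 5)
Your setup and one-step estimate coincide with the paper's proof: the recentering $W=|X-\norm{X}_{1,\frac{\mi}{d}}|$, the choice $p=\h{r}=r/(r-1)$ applied to $W^{r-1}$ so that the Beckner left-hand side becomes $\norm{W}_{r,\frac{\mi}{d}}^{r}-\norm{W}_{r-1,\frac{\mi}{d}}^{r}$, the use of Lemma~\ref{lem:diri_convex} with $\vp(t)=t^{r-1}$ and H\"older exponents $(\tfrac{r}{r-2},\tfrac{r}{2})$, and the bound $\alpha_{\h r}\ge a(r-1)^{-s}$ all reproduce the paper's inequality $l_r^r-l_{r-1}^r\le \frac{r^{s+1}}{a}\,l_r^{r-2}\norm{\Gamma(X)}_{\frac r2,\frac{\mi}{d}}$ (with $l_r=\norm{W}_{r,\frac{\mi}{d}}$), and your base case via the Poincar\'e inequality for $r\in(1,2]$ is also the paper's. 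Up to this point the proposal is correct and essentially identical to the paper.

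The gap is in how you close the recursion. Your plan is to iterate downward and sum a geometric series whose ratio comes from the contraction factors $(l_{r-1}/l_r)^{r-2}\approx e^{-1/2}$. But these ratios depend on $W$ and are only bounded by $1$ (monotonicity of the normalized norms); they are not uniformly bounded away from $1$, so there is no genuine geometric series, and the constant $\kappa(s)$ cannot be extracted this way. You flag this as "the main obstacle" but do not resolve it. The paper's resolution is different: it proves by induction on $\lfloor r\rfloor$ the stronger claim $l_r^2\le c_r\norm{\Gamma(X)}_{\max\{\frac r2,1\},\frac{\mi}{d}}$ with the explicit constants $c_r=\frac{r^{s+1}\kappa_r(s)}{a}$, $\kappa_r(s)=\bigl(1-(\tfrac{r-1}{r})^{(s+1)r/2}\bigr)^{-1}$. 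Substituting the inductive bound on $l_{r-1}$ into the one-step inequality, dividing by $(c_r\norm{\Gamma(X)}_{\frac r2,\frac{\mi}{d}})^{r/2}$, and using $c_{r-1}/c_r\le (\tfrac{r-1}{r})^{s+1}\kappa_{r-1}(s)/\kappa_r(s)$ reduces the induction step to showing that $x:=l_r^2/(c_r\norm{\Gamma(X)}_{\frac r2,\frac{\mi}{d}})$ satisfies $x\le (\tfrac{r-1}{r})^{(s+1)r/2}+\kappa_r^{-1}x^{1-2/r}$, and the concavity argument of \cite[Proposition 3.1]{adamczak2022modified} (the function $h(x)=(1-1/r)^{(1+s)r/2}+\kappa_r^{-1}x^{1-2/r}-x$ is strictly concave with $h(0)>0$, $h(1)=0$) forces $x\le 1$. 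This self-consistent induction, handling simultaneously the cases where $l_{r-1}/l_r$ is close to $1$ and where it is not, is the missing idea; the limit $\kappa_r(s)\to\kappa(s)$ then gives the stated constant.
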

\begin{proof}
We shall prove by induction that for all positive integers $k$ and $r \in (k,k+1]$, there holds 
\begin{align} \label{eq:ineq_induc}
     \norm{X - \mb{\frac{1}{d}\tr(X)} }_{r,\frac{\mi}{d}}^2 \le c_r \norm{\Gamma(X)}_{\max\{\frac{r}{2},1\},\frac{\mi}{d}}\,, \quad \forall X \in \mc{B}_{sa}(\mc{H})\,,
\end{align}
where 
\begin{align} \label{def:constant_moment}
    c_r := \frac{r^{s + 1}\kappa_r(s)}{\mb{2a}} \,, \quad \kappa_r(s) := \Big( 1 - \big(  \frac{r-1}{r} \big)^{(s+1)r/2}\Big)^{-1}\,.
\end{align}
The desired estimate \eqref{eq:moment} is a direct consequence of \eqref{eq:ineq_induc}, since $\kappa_r$ increases in $r$ and $\kappa_r(s) \to \kappa(s)$ as $r \to \infty$.  We first note that by Lemma \ref{lem:upppoin}, \eqref{ineq_pi2} holds with $\lad(\mc{L}) \ge 2a$. For $k = 1$ and $r \in (1,2]$, by  \eqref{ineq_pi2} and \eqref{eq:rela_gamma}, we have
\begin{align*}
 \norm{X - \mb{\frac{1}{d}\tr(X)}}_{r,\frac{\mi}{d}}^2 \le \norm{X - \mb{\frac{1}{d}\tr(X)}}_{2,\frac{\mi}{d}}^2 \le \frac{1}{\lad(\mc{L})} \mc{E}_{2,\mc{L}}(X) \le c_r \norm{\Gamma(X)}_{1,\frac{\mi}{d}}\,,
\end{align*}
since $c_r$ is increasing in $r$, which gives $c_r \ge c_1 = \mb{1/2a} \ge 1/\lad(\mc{L})$. Suppose that \eqref{eq:ineq_induc} holds for all integers smaller than some $k > 1$. Now we consider $r \in (k,k+1]$. We define $Y = | X - \mb{\frac{1}{d}\tr(X)}|$, and then estimate by using \eqref{ineq_becp},
\begin{align} \label{auxeqpr_1}
\mb{r} \alpha_{\h{r}}\big(\norm{Y^{r-1}}_{\h{r},\frac{\mi}{d}}^{\h{r}} -  \norm{Y^{r-1}}_{1,\frac{\mi}{d}}^{\h{r}}\big) \le \mc{E}_{\h{r},\mc{L}}(Y^{r-1})\,.
\end{align}
By Lemma \ref{lem:diri_convex} with $\vp(x) = x^{r - 1}$, $c = - \mb{\frac{1}{d}\tr(X)}$, and $p = r/(r-2)$, we have 
\begin{align} \label{auxeqpr_2}
    \mc{E}_{\h{r},\mc{L}}(Y) = - \frac{\mb{\h{r} r}}{4}\l Y^{r-1}, \mc{L}Y \r_{\frac{\mi}{d}} & \le \frac{\mb{\h{r} r}}{2}(r - 1)\norm{Y^{r-2}}_{p,\frac{\mi}{d}} \norm{\Gamma(X)}_{\h{p},\frac{\mi}{d}} \notag \\
    & = \mb{\frac{r^2}{2}} \norm{Y}_{r,\frac{\mi}{d}}^{r-2} \norm{\Gamma(X)}_{\frac{r}{2},\frac{\mi}{d}}\,.
\end{align}
We write $l_r = \norm{Y}_{r,\frac{\mi}{d}}$ and note $\alpha_{\h{r}} \ge a(\h{r} - 1)^s = a (r - 1)^{-s}$. Then combining estimates \eqref{auxeqpr_1} and \eqref{auxeqpr_2} gives
\begin{align} \label{auxeqpr_3}
     l_r^r - l_{r-1}^{r} \le \mb{\frac{r(r-1)^s}{2a}} l_r^{r-2} \norm{\Gamma(X)}_{\frac{r}{2},\frac{\mi}{d}} \le \frac{r^{s+1}}{\mb{2a}} l_r^{r-2} \norm{\Gamma(X)}_{\frac{r}{2},\frac{\mi}{d}}\,. 
\end{align}
Applying the assumption \eqref{eq:ineq_induc} to bound $l_{r-1}$ and by \eqref{auxeqpr_3}, we obtain 
\begin{align} \label{auxeqpr_4}
     l_r^r \le \big(c_{r-1}\norm{\Gamma(X)}_{\max\{\frac{r-1}{2},1\},\frac{\mi}{d}}\big)^{\frac{r}{2}} + \frac{r^{s+1} }{\mb{2a}} l_r^{r-2} \norm{\Gamma(X)}_{\frac{r}{2},\frac{\mi}{d}}\,.
\end{align}
Note from \eqref{def:constant_moment} that 
\begin{align} \label{auxeqpr_5}
    \frac{c_{r-1} \norm{\Gamma(X)}_{\max\{\frac{r-1}{2},1\},\frac{\mi}{d}}}{c_r \norm{\Gamma(X)}_{\frac{r}{2},\frac{\mi}{d}}} \le \frac{c_{r-1}}{c_r} \le \frac{(r-1)^{s+1}\kappa_{r-1}(s)}{r^{s+1}\kappa_r(s)}\,.
\end{align}
By dividing both sides of \eqref{auxeqpr_4} by $(c_r\norm{\Gamma(X)}_{\frac{r}{2},\frac{\mi}{d}})^{r/2}$ and using \eqref{auxeqpr_5}, it follows that 
\begin{align} \label{auxeqpr_6}
    \Big( \frac{l_r^2}{c_r\norm{\Gamma(X)}_{\frac{r}{2},\frac{\mi}{d}}}\Big)^{\frac{r}{2}} \le \Big( \frac{r-1}{r} \Big)^{\frac{(s+1)r}{2}} + \frac{1}{\kappa_r} \Big( \frac{l_r^2}{c_r \norm{\Gamma(X)}_{\frac{r}{2},\frac{\mi}{d}}} \Big)^{\frac{r-2}{2}}\,.
\end{align}
To complete the proof, we need the fact from \cite[Proposition 3.1]{adamczak2022modified} that the function $h(x) = (1-1/r)^{(1+s)r/2} + \kappa_r^{-1} x^{1-2/r} - x$ is strictly concave on $[0,\infty)$ and satisfies $h(0) > 0$ and $h(1) = 0$, which means that $h(x) \ge 0$ implies $x \le 1$. This fact, along with \eqref{auxeqpr_6}, readily implies 
\begin{equation*}
   \norm{X - \mb{\frac{1}{d}\tr(X)}}_{r,\frac{\mi}{d}}^2 \le c_r \norm{\Gamma(X)}_{\frac{r}{2},\frac{\mi}{d}}\,. \qedhere
\end{equation*}
\end{proof}
\begin{remark}
In the case of $s = 0$, Proposition \ref{prop:moment} shows that if \eqref{ineq_becp} holds with $\inf_{p\in(1,2]}\alpha_p \ge a$, then
\begin{align*} 
    \norm{X - \mb{\frac{1}{d}\tr(X)}}^2_{r,\frac{\mi}{d}} \le  \frac{r\kappa}{\mb{2a}} \norm{\Gamma(X)}_{\infty}\,,\q \forall X \in \mc{B}_{sa}(\mc{H})\,,
\end{align*} 
by $\norm{\Gamma(X)}_{\frac{r}{2},\frac{\mi}{d}} \le  \norm{\Gamma(X)}_{\infty}$, where $\kappa := (1 - e^{-1/2})^{-1}$.  A similar result was given in \cite[Theorem 4.4]{junge2015noncommutative} for non-primitive symmetric QMS under the Bakry-\'{E}mery curvature-dimension condition ($\Gamma_2$-criterion): 
\begin{align}\label{def:becdc}
 \Gamma_2(X,X) \ge \alpha \Gamma(X,X)\,, \q \forall X \in \bh\,, \tag{BE$(\alpha,\infty)$}
\end{align}
for some $\alpha > 0$, where $\Gamma_2$ is the iterated carr\'{e} du champ operator: $\Gamma_2(X,Y): = - \frac{1}{2}(\Gamma(X,\mc{L} Y) + \Gamma(\mc{L}X,Y) - \mc{L}\Gamma(X,Y))$. To be precise, they used the noncommutative martingale methods and proved that
under some necessary regularity condition, if \ref{def:becdc} holds, then we have
\begin{align*}
    \norm{X - E(X)}_r^2 \le \frac{8r}{\alpha} \norm{\Gamma(X)}_{\infty}\,,\q \forall X \in \mc{B}_{sa}(\mc{H})\,,
\end{align*}
where $E$ is the conditional expectation to the fixed point algebra $\{X\,;\ \mc{P}_t(X)= X\ \text{for}\  t \ge 0\}$ and $\norm{\dd}_r$ is the Schatten norm with respect to a normal faithful tracial state. It is currently unknown whether or not Beckner's inequality can be implied from $\Gamma_2$-criterion \ref{def:becdc}.  Hence, our result is complementary to theirs.  
\end{remark}
 
Similarly to \cite[Corollary 4.13]{junge2015noncommutative}, we next show that the moment estimate \eqref{eq:moment} with $s= 0$ implies a noncommutative exponential integrability and a Gaussian concentration inequality. The idea of the proof is standard and borrowed from the commutative case \cite{ben2008poincare} by Efraim and Lust-Piquard.

\begin{corollary} \label{cor:concern}
\mb{Let $\mc{L}$ be a primitive symmetric Lindbladian and $\Gamma(\dd)$ be defined in \eqref{def:gamma}.}
Suppose that $p$-Beckner's inequality \eqref{ineq_becp} holds
 with \mb{$\inf_{p\in(1,2]}\alpha_p \ge a$} for some $a > 0$.
 Then it holds that, for $X \in \mc{B}_{sa}(\mc{H})$,
\begin{equation} \label{eq:ex_int}
    \norm{\exp\Big(\Big|X- \mb{\frac{1}{d}\tr(X)} \Big|\Big)}_{1,\frac{\mi}{d}} \le 2 \exp\bigg(  \frac{e\kappa \norm{\Gamma(X)}_{\infty}}{\mb{4 a}}  \bigg)\,,
\end{equation}
and, for any $t > 0$,
\begin{align} \label{eq:concentration}
      \frac{1}{d}\tr\Big(1_{[t,\infty)}\Big(\Big|X- \mb{\frac{1}{d}\tr(X)}\Big|\Big)\Big) \le 2 \exp \left( -
      \frac{a t^2}{\mb{e\kappa}\norm{\Gamma(X)}_{\infty}}
      \right)\,,
\end{align}
where $\kappa = (1 - e^{-1/2})^{-1}$. 
\end{corollary}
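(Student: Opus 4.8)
The plan is to deduce both estimates from the moment bound of Proposition \ref{prop:moment} specialized to $s=0$ (the form recorded in the Remark following it), namely
\[
\norm{Z}_{r,\frac{\mi}{d}}^2 \le \frac{r\kappa M}{a},\qquad Z:=\big|X-\norm{X}_{1,\frac{\mi}{d}}\big|,\quad M:=\norm{\Gamma(X)}_\infty,
\]
valid for every $r\ge 2$ and every $X\in\mc{B}_{sa}(\mc{H})$; I write $\sigma^2:=\kappa M/a$ throughout. First I would establish the exponential integrability \eqref{eq:ex_int}. Since $Z\ge 0$ one has the operator inequality $e^{Z}\le 2\cosh Z$ (because $e^{-Z}\ge 0$), so that, expanding $\cosh$ in its power series and inserting $\frac1d\tr(Z^{2m})=\norm{Z}_{2m,\frac{\mi}{d}}^{2m}\le(2m\sigma^2)^m$,
\[
\norm{e^{Z}}_{1,\frac{\mi}{d}}=\tfrac1d\tr\, e^{Z}\le \tfrac2d\tr\cosh Z=2\sum_{m\ge 0}\frac{\tfrac1d\tr(Z^{2m})}{(2m)!}\le 2\sum_{m\ge 0}\frac{(2m\sigma^2)^m}{(2m)!}.
\]
Using $e^{Z}\le 2\cosh Z$ sidesteps the odd moments entirely, which is what makes the constant clean.

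The only genuinely delicate point — and the main obstacle to recovering the sharp constants in \eqref{eq:ex_int} — is to sum this last series against $2\exp(e\sigma^2/2)$. I would do this term by term via the elementary inequality $(2m)!\ge m!\,(4m/e)^m$, which follows from $\sum_{j=m+1}^{2m}\log j\ge\int_m^{2m}\log x\,dx=m\log(4m/e)$ (a right Riemann sum of the increasing function $\log$ dominates its integral, avoiding Stirling). This gives $\frac{(2m\sigma^2)^m}{(2m)!}\le\frac{(e\sigma^2/2)^m}{m!}$, whence $\sum_{m\ge 0}\frac{(2m\sigma^2)^m}{(2m)!}\le e^{e\sigma^2/2}$, and \eqref{eq:ex_int} follows with the stated bound $2\exp(e\kappa M/(2a))$.

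For the concentration inequality \eqref{eq:concentration} I would first upgrade \eqref{eq:ex_int} to a moment-generating-function bound by scaling: since $\Gamma$ is sesquilinear one has $\Gamma(\lambda X)=\lambda^2\Gamma(X)$ and $\big|\lambda X-\norm{\lambda X}_{1,\frac{\mi}{d}}\big|=\lambda Z$ for $\lambda>0$, so applying \eqref{eq:ex_int} to $\lambda X$ yields $\frac1d\tr\,e^{\lambda Z}\le 2\exp\!\big(e\kappa\lambda^2 M/(2a)\big)$ for all $\lambda>0$. A Chernoff–Markov step then finishes: the scalar inequality $\mathbf 1_{[t,\infty)}(s)\le e^{-\lambda t}e^{\lambda s}$, read through the functional calculus of $Z\ge 0$, gives $\mathbf 1_{[t,\infty)}(Z)\le e^{-\lambda t}e^{\lambda Z}$, hence $\frac1d\tr\big(\mathbf 1_{[t,\infty)}(Z)\big)\le 2\exp\!\big(e\kappa\lambda^2 M/(2a)-\lambda t\big)$; optimizing over $\lambda>0$ at $\lambda=at/(e\kappa M)$ produces the exponent $-at^2/(2e\kappa M)$, which is exactly \eqref{eq:concentration}. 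Apart from the series estimate, every step is routine, so I expect the combinatorial bound $(2m)!\ge m!\,(4m/e)^m$ together with the reduction $e^Z\le 2\cosh Z$ to be where the real care is needed.
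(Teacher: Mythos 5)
Your proof is correct and follows essentially the same route as the paper's: reduce $e^{|Z|}$ to $2\cosh Z$, expand in even moments, insert the moment bound of Proposition \ref{prop:moment} (with $s=0$), control the resulting series by a factorial estimate, and then apply the scaling $\Gamma(\lambda X)=\lambda^2\Gamma(X)$ with a Chernoff--Markov step and optimization in $\lambda$. Your inequality $(2m)!\ge m!\,(4m/e)^m$ is exactly equivalent to the paper's bound $\frac{j^j}{(2j-1)!!}\le (e/2)^j$ (which the paper attributes to Stirling), so the constants match; the only cosmetic difference is that you use $\norm{\Gamma(X)}_\infty$ from the outset rather than keeping $\norm{\Gamma(X)}_{j,\frac{\mi}{d}}$ until the last step.
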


\begin{proof}
Note that $\Gamma(X) = \Gamma\big(X - \mb{\frac{1}{d}\tr(X)}\big)$. Without loss of generality, we assume $\mb{\frac{1}{d}\tr(X)} = 0$.  By functional calculus and
Proposition \ref{prop:moment}, we obtain 
\begin{align}  \label{auxeq:ex_int}
    \frac{1}{2d} \tr (e^{|X|}) &\le  \frac{1}{d}\tr(\cosh X) = 1 + \sum_{j = 1}^\infty \frac{1}{(2j)!}\norm{X}^{2j}_{2j, \frac{\mi}{d}} \notag \\
    & \le 1 + \sum_{j = 1}^\infty \frac{1}{(2j)!} \frac{(2j)^{j}\kappa^j}{\mb{(2a)}^j}\norm{\Gamma(X)}_{j,\frac{\mi}{d}}^j \notag \\
    & \le 1 + \sum_{j = 1}^\infty \frac{j^j}{j! (2 j -1)!!} \bigg( \frac{\kappa \norm{\Gamma(X)}_{j,\frac{\mi}{d}} }{\mb{2a}} \bigg)^j \notag \\
    & \le 1 + \sum_{j = 1}^\infty \frac{1}{j!} \bigg( \frac{e\kappa \norm{\Gamma(X)}_{j,\frac{\mi}{d}}}{\mb{4a}} \bigg)^j\,, 
\end{align}
where in the last inequality we have used $\frac{j^j}{(2j-1)!!} \le \left( \frac{e}{2} \right)^j$ for all $j \in \mathbb{N}$ from Stirling's formula. Then the inequality \eqref{eq:ex_int} follows from \eqref{auxeq:ex_int} and again $\norm{\Gamma(X)}_{j,\frac{\mi}{d}} \le \norm{\Gamma(X)}_\infty$. For the concentration inequality \eqref{eq:concentration}, by \eqref{eq:ex_int} and Chebyshev inequality, we have, for any $\lad > 0$, 
\begin{align*}
    \frac{1}{d}\tr\big(1_{[t,\infty)}(|X|)\big) \le e^{-\lad t} \frac{1}{d} \tr \big( e^{\lad |X|} \big) \le 2 \exp\bigg(  \frac{e\kappa \norm{\Gamma(X)}_{\infty}}{\mb{4a}} \lad^2 - t \lad \bigg)\,.
\end{align*}
Then, choosing $\lad$ to minimize the right-hand side of the above estimate gives the desired \eqref{eq:concentration}.
\end{proof}

\begin{remark}
\mb{The classical Gaussian-type concentration inequalities can be derived by various approaches; see \cite{raginsky2013concentration} for a review. In particular, Marton \cite{marton1996bounding,marton1986simple} showed that 
Gaussian concentration can be implied from a  transportation cost inequality (${\rm TC}_1$) associated with Wasserstein distance of order one $W_1$. Later, Bobkov and G\"{o}tze \cite{bobkov1999exponential} proved the exponential integrability (EI) by a log-Sobolev inequality and established the equivalence between EI and ${\rm TC}_1$ (note that Gaussian concentration is an easy consequence of EI). Such an approach  
turns out to be very useful for noncommutative concentration inequalities and has been exploited in different settings. We refer the interested readers to \cite[Theorem 4.18]{junge2015noncommutative}, \cite[Proposition 6.13]{gao2020fisher}, \cite[Theorem 3]{de2021quantum1}, and \cite[Theorem 8]{rouze2019concentration} for recent results, all of which involve a quantum version of 
 Lipschitz constant $\norm{O}_{\text{Lip}}$ for observables $O$ and an associated 1-Wasserstein distance defined by $W_1(\rho,\si) = \sup\{|\tr(\rho O) - \tr(\si O)|\,;\ O \ \text{self-adjoint}\,,\ \norm{O}_{\text{Lip}} \le 1\}$. 

In our case \eqref{eq:concentration}, the Lipschitz constant is defined by $\norm{X}_{\text{Lip},1}: = \norm{\Gamma(X)}^{1/2}_\infty$, which is a 
natural generalization from the classical case \cite[Definition 3.3.24]{bakry2014analysis}. Such a definition has been used for defining a  quantum 
$W_1$ distance \cite[Definition 4.15]{junge2015noncommutative} and deriving concentration inequalities \cite[Corollary 4.13]
{junge2015noncommutative}; see also \cite{gao2020fisher} for similar results for non-ergodic semigroups. An alternative definition of the Lipschitz constant associated with $\si$-{\rm GNS} symmetric QMS is given by Rouz\'{e} and Datta \cite[Section E.2]{rouze2019concentration}: $\norm{X}_{\text{Lip},2}: = (\sum_j (e^{-\ww_j/2} + e^{\ww_j/2})\norm{\p_j X}^2_\infty)^{1/2}$, using the differential structure in Lemma \ref{lem:struc_gene}. It is clear that both $\norm{X}_{\text{Lip},1}$ in \cite{junge2015noncommutative,gao2020fisher} and $\norm{X}_{\text{Lip},2}$ in \cite{rouze2019concentration} depend on the generator $\mc{L}$ of a QMS. In \cite{de2021quantum1}, De Palma et al. suggested another way to define the Lipschitz constant (denoted by $\norm{X}_{\text{Lip},3}$) and $W_1$ distance, which is based on the notion of neighboring states instead of the QMS. 
However, there is no consensus on the definition of $\norm{\dd}_{\text{Lip}}$, and each one has its own interest and advantage. In certain scenarios, they can be compared with each other. For instance, for a symmetric Lindbladian $\mc{L}$, by \eqref{eq:rep_gen} and the Leibniz rule for $\p_j$, we have 
$\Gamma(X) = \sum_j (\p_j X)^* (\p_j X)$, which gives
\begin{align*}
    \norm{X}_{\text{Lip},1}^2 = \Big\lVert\sum_j (\p_j X)^* (\p_j X)\Big\rVert_\infty \le \sum_j \norm{\p_j X}^2_\infty = \frac{1}{2} \norm{X}^2_{\text{Lip},2}\,.
\end{align*}
Moreover, \cite[Proposition\,10]{de2022quantum} compares $\norm{X}_{\text{Lip},2}$ and $\norm{X}_{\text{Lip},3}$ in the case of $\si$ being the Gibbs state of a local commuting Hamiltonian.}
\end{remark}

\begin{remark}
\mb{The concentration inequality \eqref{eq:concentration} above is derived from \eqref{ineq_becp}, thanks to the moment estimate \eqref{eq:moment}. Due to some technical issues, we are not able to extend \eqref{eq:moment} to general QMS satisfying $\si$-{\rm GNS DBC}, which is more desirable. Nevertheless, similarly to \cite{rouze2019concentration}, we can bypass the moment estimate to obtain a Gaussian concentration, by a geometric characterization of \eqref{ineq_becp} that will be explored in Section \ref{sec:qot_beck}. To be precise, in Proposition \ref{propa} below, we imply from \eqref{ineq_becp} a transport cost inequality associated with the distance $W_{2,p}$ in \eqref{def:wp1}. As in \cite[Lemma\,6]{rouze2019concentration}, we can also show that $W_{1,\mc{L}}(\rho,\si) \le C W_{2,p} (\rho,\si)$ for some constant $C > 0$, where $W_{1,\mc{L}}$ is 1-Wasserstein distance defined in \cite{rouze2019concentration}. Then a concentration inequality can follow in the same way as \cite[Theorems 3,\,8]{rouze2019concentration}. }
\end{remark}



\section{Generalized quantum optimal transport} \label{sec:qot_beck}

\mb{In this section, we limit our discussion to the primitive QMS satisfying $\si$-{\rm GNS DBC} for some $\si \in \dhh$ and only consider $\mc{F}_{p,\si}$ with $p \in (1,2]$ for clarity.}

\subsection{Gradient flow of quantum \texorpdfstring{$p$} --divergence} \label{sec:gradient}

This subsection aims to identify the QMS $\mc{P}_t^\dag = e^{t \mc{L}^\dag}$ with $\si$-{\rm GNS DBC} as the gradient flow 
of quantum $p$--divergence $\mc{F}_{p,\si}$ with respect to some Riemannian metric $g_{p,\mc{L}}$ constructed below. The argument follows closely with those in \cite{carlen2017gradient} (see also \cite{otto2001geometry} for the classical case). 
The necessary and sufficient conditions for the existence of such a Riemannian metric will also be discussed.

For our purpose, we first compute the functional derivative $\d_\rho \mc{F}_{p,\si}(\rho)$ of $\mc{F}_{p,\si}$, which is defined by 
\begin{align*}
    \frac{d}{d t}\Big|_{t = 0} \mc{F}_{p,\si}(\rho_t)  = \l  \d_\rho \mc{F}_{p,\si}(\rho), \dot{\rho}\r\,,
\end{align*}
where $\rho_t: (-\ep,\ep) \to \dhh$ with $\ep > 0$ is any 
smooth curve satisfying $\rho_0 = \rho$. Similarly to \eqref{eq:ep_divi}, we find 
\begin{equation} \label{eq:deriv_func}
    \d_\rho \mc{F}_{p,\si}(\rho) = \frac{1}{p-1}
\gs^{-1/\h{p}}\big(\big(\gs^{-1/\h{p}}(\rho)\big)^{p-1}\big)\,, \q \forall \mb{\rho \in \dhh}\,.
\end{equation}
Then, defining $Y := \gs^{-1/\h{p}}(\rho)$, by \eqref{eq:partialpower_2} and Lemma \ref{lem:chain_rule},
we derive, for $1 \le j \le J$, 
\begin{align} \label{eq:deriv_entropy}
    \p_j  \d_\rho \mc{F}_{p,\si}(\rho) & =  \frac{1}{p-1} \p_j\gs^{-1/\h{p}}\left(Y^{p-1}\right) \notag \\ & = \frac{1}{p-1} \gs^{-1/\h{p}}\left(V_j \left(e^{-\ww_j/2p} Y\right)^{p-1} - \left(e^{\ww_j/2p} Y\right)^{p-1} V_j\right) \notag \\
    & = \gs^{-1/\h{p}} \left(f_p^{[1]}\left(e^{\ww_j/2p} Y,  e^{-\ww_j/2p} Y \right) \left(V_j \left(e^{-\ww_j/2p} Y \right) - \left(e^{\ww_j/2p} Y\right) V_j \right)\right),
\end{align}
with $f_p$ given in \eqref{def:funfp}.
Again by \eqref{eq:partialpower_2} and $\p_{j,\si} = \gs \p_j \gs^{-1}$ in \eqref{eq:adjpjkms}, it follows that
\begin{align} \label{eq:deriv_entropy_aux}
    V_j \left(e^{-\ww_j/2p} Y\right) - \left(e^{\ww_j/2p} Y\right) V_j &= \gs^{1/p} \left( \p_j \gs^{-1/p} (Y) \right)  \notag \\ & =  \gs^{1/p} \left( \p_j \gs^{-1} (\rho) \right) =  \gs^{-1/\h{p}} (\p_{j,\si}\rho)\,.
\end{align}
Combining \eqref{eq:deriv_entropy} and \eqref{eq:deriv_entropy_aux} readily gives
\begin{align} \label{eq:double_deriv}
      \p_j  \d_\rho \mc{F}_{p,\si}(\rho) = \gs^{-1/\h{p}} \left(f_p^{[1]}\left(e^{\ww_j/2p} \gs^{-1/\h{p}}(\rho),  e^{-\ww_j/2p}  \gs^{-1/\h{p}}(\rho)\right)\left(\gs^{-1/\h{p}} (\p_{j,\si} \rho)\right)\right).
\end{align}

We next define the Riemannian structure, associated with $\mc{F}_{p,\si}(\rho)$, on the matrix manifold $\mc{D}_+(\mc{H})$, that is, a family of inner products on the tangent space $T_\rho = \mc{B}^0_{sa}(\mc{H}):= \{X \in \mc{B}_{sa}(\mc{H})\,;\ \tr(X) = 0\}$ that depends smoothly on $\rho$. 
Motivated by \eqref{eq:double_deriv}, we first define the operator $ [\rho]_{p,\ww}: \bh \to \bh$ for $\rho \in \mc{D}_+(\mc{H})$ and $\ww \in \R$ by
\begin{align} \label{def:kernel_multiplication}
    [\rho]_{p,\ww} = \gs^{1/\h{p}} \circ
    \theta_p\left(e^{\ww/2p}  \gs^{-1/\h{p}}(\rho),  e^{-\ww/2p}  \gs^{-1/\h{p}}(\rho)\right) \circ
    \gs^{1/\h{p}}\,,
\end{align}
where
\begin{align} \label{def:theta_p}
     \theta_p(x,y) := \big(f_p^{[1]}(x,y)\big)^{-1}
     = \begin{cases}
         (p-1) \frac{x - y}{x^{p-1} - y^{p-1}}\,, & x \neq y\,, \\
         x^{2-p} \,,\q  & x = y\,.
     \end{cases}
\end{align}
It is clear that when $\si = \mi/d$ and $\ww = 0$, we have $[\rho]_{p,\ww} A = d^{1-p} \rho^{2-p} A = \gs( (\gs^{-1}(\rho))^{2-p}) A$ for any $A$ that commutes with $\rho$. Thus, $ [\rho]_{p,\ww}$ can be regarded as 
a noncommutative analog of the multiplication by the relative density $(\gs^{-1}\rho)^{2-p}$ with respect to the reference state $\si$. Noting $\gs^{-1/\h{p}}(\rho) > 0$, by Lemma \ref{lem:double_inner} the operator $[\rho]_{p,\ww}$ is
evidently invertible. Then, we immediately obtain from \eqref{eq:double_deriv} that
\begin{align} \label{auxeq_deriva}
     \p_j  \d_\rho \mc{F}_{p,\si}(\rho) = [\rho]_{p,\ww_j}^{-1}\p_{j,\si}\rho\,.
\end{align}
With the help of $[\rho]_{p,\ww}$, we now introduce the operator:
\begin{align} \label{def:dprho} 
\mf{D}_{p,\rho}(A) := \sum_{j = 1}^J \p_j^\dag \left([\rho]_{p,\ww_j} \p_j A \right):\ \bh \to \bh\,,
\end{align}
which is crucial for defining the desired Riemannian metric on $\mc{D}_+(\mc{H})$; see Definition \ref{def:metric}. The next lemma gives the main properties of $[\rho]_{p,\ww}$ and $\mf{D}_{p,\rho}$ and can be proved in the same manner as \cite[Lemma 5.8]{carlen2017gradient} and \cite[Lemma 7.3]{carlen2020non}. Hence we omit its proof.

\begin{lemma} \label{lem:basic_krho} 
Let $\rho \in \mc{D}_+(\mc{H})$, $\ww \in \R$.  It holds that 
\begin{enumerate}[1.]
    \item  $\l \dd, [\rho]_{p,\ww}(\dd)\r$ gives an inner product on $\bh$. Moreover, both $[\rho]_{p,\ww}$ and $[\rho]_{p,\ww}^{-1}$ are $C^\infty$ on  $\mc{D}_+(\mc{H})$. 
    \item  $\mf{D}_{p,\rho}$ is a positive semidefinite operator on $\bh$ and preserves self-adjointness. Moreover, we have 
\begin{align*}
    \ran(\mf{D}_{p,\rho}) = \ran(\mc{L}^\dag) = \ran(\ddiv)\,,\q \ker(\mf{D}_{p,\rho}) = \ker(\mc{L})=\ker(\na)\,.
\end{align*} 
\end{enumerate}
\end{lemma}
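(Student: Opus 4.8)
The plan is to reduce both claims to two elementary facts: the strict positivity of the scalar function $\theta_p$ on $(0,\infty)^2$, and the first-order differential structure \eqref{eq:rep_gen}--\eqref{eq:inte_by_parts}. For the inner product claim in item 1, I would write $[\rho]_{p,\ww} = \gs^{1/\h{p}}\circ\Theta\circ\gs^{1/\h{p}}$ with $\Theta := \theta_p(e^{\ww/2p}Y, e^{-\ww/2p}Y)$ and $Y := \gs^{-1/\h{p}}(\rho) > 0$. Since $\gs^{1/\h{p}}$ is invertible and self-adjoint for $\l\dd,\dd\r$, it suffices to treat $\Theta$. Expanding in the spectral projections $E_k$ of $Y$ gives $\l A, \Theta A\r = \sum_{k,l}\theta_p(e^{\ww/2p}y_k, e^{-\ww/2p}y_l)\,\norm{E_k A E_l}_2^2$, which is a real symmetric form (as $\theta_p$ is real and symmetric by \eqref{def:theta_p}) that is strictly positive unless $A=0$, since $\theta_p>0$; this is exactly Lemma \ref{lem:double_inner}, which also gives invertibility of $[\rho]_{p,\ww}$. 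For the $C^\infty$ dependence I would use the integral representation \eqref{eq:integral_thetap} of $f_p^{[1]}$ (equivalently a resolvent formula for $\Theta^{-1}$): the integrand depends smoothly on $Y>0$, hence on $\rho$, and smoothness of $[\rho]_{p,\ww}^{-1}$ follows from smoothness of inversion of a smoothly varying invertible family.

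For the positivity and kernel statements in item 2, recalling that $\p_j^\dag$ is the Hilbert--Schmidt adjoint of $\p_j$, I would compute directly from \eqref{def:dprho} that $\l X, \mf{D}_{p,\rho}X\r = \sum_{j}\l\p_j X, [\rho]_{p,\ww_j}\p_j X\r \ge 0$, which shows $\mf{D}_{p,\rho}$ is positive semidefinite and self-adjoint for $\l\dd,\dd\r$. Because each $[\rho]_{p,\ww_j}$ is positive definite by item 1, this quadratic form vanishes iff $\p_j X=0$ for all $j$, so $\ker\mf{D}_{p,\rho} = \bigcap_j\ker\p_j = \ker\na$, which equals $\ker\mc{L}$ by \eqref{eq:inte_by_parts}.

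The step I expect to be the main obstacle is showing that $\mf{D}_{p,\rho}$ preserves self-adjointness, as it requires careful bookkeeping with the index pairing \eqref{eq:adjoint_index}. I would first establish the conjugation rule $([\rho]_{p,\ww}B)^* = [\rho]_{p,-\ww}(B^*)$, which follows from the symmetry of $\theta_p$ and the spectral expansion of item 1 (conjugation transposes the two slots and thereby flips the sign of $\ww$), together with the elementary identities $(\p_j A)^* = -\p_{j'}(A^*)$ and $(\p_j^\dag C)^* = -\p_j(C^*)$, where $V_j^*=V_{j'}$. For self-adjoint $A$, applying these termwise yields $\mf{D}_{p,\rho}(A)^* = \sum_j \p_j([\rho]_{p,-\ww_j}\p_{j'}A)$; relabelling $j\mapsto j'$ via the involution and using $\ww_{j'}=-\ww_j$ together with $\p_{j'}=\p_j^\dag$ recovers $\sum_j\p_j^\dag([\rho]_{p,\ww_j}\p_j A)=\mf{D}_{p,\rho}(A)$.

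Finally, all three range identities are Hilbert--Schmidt orthogonality statements. Since $\mf{D}_{p,\rho}$ is self-adjoint, $\ran(\mf{D}_{p,\rho}) = (\ker\mf{D}_{p,\rho})^\perp = (\ker\na)^\perp$. The divergence satisfies $\ddiv = -\na^\dag$, so $\ran(\ddiv) = \ran(\na^\dag) = (\ker\na)^\perp$, while $\ran(\mc{L}^\dag) = (\ker\mc{L})^\perp = (\ker\na)^\perp$. Hence the three ranges coincide, which completes the proof.
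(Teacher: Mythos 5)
Your proof is correct and is essentially the argument the paper has in mind: the paper omits the proof of this lemma, stating only that it follows "in the same manner as" Carlen--Maas (\cite[Lemma 5.8]{carlen2017gradient}, \cite[Lemma 7.3]{carlen2020non}), and your write-up supplies exactly that argument — positivity of $\theta_p$ via the spectral expansion of Lemma \ref{lem:double_inner}, the conjugation rule $([\rho]_{p,\ww}B)^*=[\rho]_{p,-\ww}(B^*)$ combined with the involution $j\mapsto j'$ of \eqref{eq:adjoint_index} for preservation of self-adjointness, and finite-dimensional orthogonality $\ran(T^\dag)=(\ker T)^\perp$ for the range identities. All the individual identities you invoke (in particular $\p_j^\dag=\p_{j'}$ and $(\p_j^\dag C)^*=-\p_j(C^*)$) check out against \eqref{eq:eigmodular} and \eqref{eq:adjoint_index}, so there is nothing to correct.
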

Recalling $\ker(\mc{L}) = {\rm span}\{\mi_{\mc{H}}\}$ for a primitive QMS $\mc{P}_t = e^{t \mc{L}}$, a direct consequence of the above lemma is that $\mf{D}_{p,\rho}$ for $\rho \in \mc{D}_+(\mc{H})$ is a positive definite operator on $\mc{B}_{sa}^0(\mc{H}) = (\ker(\mf{D}_{p,\rho}))^\perp$ that depends $C^\infty$ on $\rho$.
These facts allow us to define the following class of Riemannian metrics on $\mc{D}_+(\mc{H})$.
\begin{definition}\label{def:metric}
For each $\rho \in \mc{D}_{+}(\mc{H})$, we define the metric tensor $g_{p,\rho}$ on the tangent space $T_\rho = \mc{B}_{sa}^0(\mc{H})$ by 
\begin{align} \label{def:metric_tensor}
    g_{p,\rho}(\nu_1, \nu_2) := \l \mf{D}_{p,\rho}^{-1} (\nu_1), \nu_2 \r \,,\q \nu_1, \nu_2 \in T_\rho\,.
\end{align}
\end{definition}
We introduce $U_i := \mf{D}_{p,\rho}^{-1} (\nu_i) \in \mc{B}_{sa}^0(\mc{H})$ and define the inner product on $\mc{B}(\mc{H})^J$: 
\begin{align*}
    \l {\bf A}, {\bf B}\r_{p,\rho} := \sum_{j = 1}^J \l A_j, [\rho]_{p,\ww_j} B_j\r \q \text{for} \ {\bf A}, {\bf B} \in \mc{B}(\mc{H})^J\,.
\end{align*}
Then, by \eqref{def:dprho}, the metric tensor $g_{p,\rho}$ can be rewritten as 
\begin{align} \label{def:gp2}
     g_{p,\rho}(\nu_1, \nu_2) = \sum_{j = 1}^{J} \l \p_j U_1, [\rho]_{p,\ww_j} \p_j U_2\r = \l \na U_1, \na U_2\r_{p,\rho}\,.
\end{align}
We are now ready to conclude Proposition \ref{prop:gradient_flow} below. We first recall that the Riemannian gradient of $\mc{F}_{p,\si}$ 
with respect to the metric $g_{p,\rho}$, denoted by
$\grad \mc{F}_{p,\si}(\rho)$, 
is determined by the relation:
\begin{align*}
 g_{p,\rho}({\rm grad}\mc{F}_{p,\si}(\rho), \nu) = \l \d_\rho \mc{F}_{p,\si}(\rho), \nu\r \,, \quad \forall \nu \in T_\rho\,.
\end{align*}
Then, by Definition \ref{def:metric} and formulas \eqref{auxeq_deriva} and \eqref{def:dprho}, it follows that
\begin{align} \label{eq:riemannian_grad}
    {\rm grad} \mc{F}_{p,\si}(\rho) = \mf{D}_{p,\rho} (\d_\rho \mc{F}_{p,\rho}(\rho)) = - \mc{L}^\dag(\rho)\,,
\end{align} 
and the following result holds. 
\begin{proposition}\label{prop:gradient_flow}
The dual primitive QMS, $\rho_t = e^{t \mc{L}^\dag}(\rho)$ for $\rho \in \dhh$, satisfying $\si$-{\rm GNS DBC} is the gradient flow of $p$--divergence $\mc{F}_{p,\si}$ for $p \in (1,2]$ with respect to the Riemannian metric $g_{p,\rho}$ in \eqref{def:metric_tensor}:
\begin{align*}
    \p_t{\rho}_t = - {\rm grad} \mc{F}_{p,\si}(\rho_t) = \mc{L}^\dag(\rho_t)\,.
\end{align*}
\end{proposition}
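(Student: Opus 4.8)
The plan is to reduce everything to the identity $\grad \mc{F}_{p,\si}(\rho) = -\mc{L}^\dag(\rho)$ recorded in \eqref{eq:riemannian_grad}. Once this is in hand, the abstract gradient flow equation $\p_t \rho_t = -\grad \mc{F}_{p,\si}(\rho_t)$ becomes $\p_t \rho_t = \mc{L}^\dag(\rho_t)$, i.e.\ the Lindblad equation, whose unique solution with $\rho_0 = \rho$ is $\rho_t = e^{t\mc{L}^\dag}(\rho)$. Smoothness of the metric tensor $g_{p,\rho}$ on $\mc{D}_+(\mc{H})$ (Lemma \ref{lem:basic_krho}) makes the gradient flow a well-posed ODE, so no separate existence argument is needed beyond exhibiting the explicit semigroup; since $\mc{P}_t^\dag$ is a semigroup of quantum channels and $\mc{P}_t$ is primitive, $\rho_t$ remains in $\mc{D}_+(\mc{H})$ for $t \ge 0$, keeping $g_{p,\rho_t}$ well defined along the trajectory.

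To establish \eqref{eq:riemannian_grad}, I would first determine $\grad \mc{F}_{p,\si}(\rho)$ from its defining relation $g_{p,\rho}(\grad \mc{F}_{p,\si}(\rho),\nu) = \l \d_\rho \mc{F}_{p,\si}(\rho),\nu\r$ for all $\nu \in T_\rho = \mc{B}_{sa}^0(\mc{H})$. Rewriting the left side through Definition \ref{def:metric} as $\l \mf{D}_{p,\rho}^{-1}(\grad \mc{F}_{p,\si}(\rho)),\nu\r$ and testing against all trace-zero self-adjoint $\nu$ forces $\mf{D}_{p,\rho}^{-1}(\grad \mc{F}_{p,\si}(\rho)) - \d_\rho \mc{F}_{p,\si}(\rho)$ to lie in $(\mc{B}_{sa}^0(\mc{H}))^\perp = {\rm span}\{\mi\} = \ker(\mf{D}_{p,\rho})$; applying $\mf{D}_{p,\rho}$ and using $\mf{D}_{p,\rho}(\mi) = 0$ then yields $\grad \mc{F}_{p,\si}(\rho) = \mf{D}_{p,\rho}(\d_\rho \mc{F}_{p,\si}(\rho))$, independently of the undetermined additive constant. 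This is the step where care is required: $g_{p,\rho}$ is a genuine (nondegenerate) inner product only after quotienting by ${\rm span}\{\mi\}$, so I must invoke the nondegeneracy and the range/kernel identities of Lemma \ref{lem:basic_krho}(2) to guarantee that $\grad \mc{F}_{p,\si}(\rho)$ is well defined as an element of $T_\rho$.

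The computational heart is then to evaluate $\mf{D}_{p,\rho}(\d_\rho \mc{F}_{p,\si}(\rho))$ from \eqref{def:dprho} together with the key formula \eqref{auxeq_deriva}, namely $\p_j \d_\rho \mc{F}_{p,\si}(\rho) = [\rho]_{p,\ww_j}^{-1}\p_{j,\si}\rho$. Substituting produces the telescoping cancellation $\mf{D}_{p,\rho}(\d_\rho \mc{F}_{p,\si}(\rho)) = \sum_{j} \p_j^\dag([\rho]_{p,\ww_j}[\rho]_{p,\ww_j}^{-1}\p_{j,\si}\rho) = \sum_j \p_j^\dag \p_{j,\si}\rho$. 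I regard this cancellation as the conceptual crux: it is precisely what the choice $\theta_p = (f_p^{[1]})^{-1}$ in \eqref{def:theta_p} — hence the definition of the weighting operator $[\rho]_{p,\ww}$ — was engineered to produce, so that the nonlinear divided-difference factor in the functional derivative is absorbed by the metric.

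Finally I would identify $\sum_j \p_j^\dag \p_{j,\si} = -\mc{L}^\dag$ by taking the Hilbert–Schmidt adjoint of the representation $\mc{L} = -\sum_j \p_{j,\si}^\dag \p_j$ from \eqref{eq:rep_gen}, using that $\gs$ is Hilbert–Schmidt self-adjoint so that $(\p_{j,\si}^\dag)^\dag = \gs \p_j \gs^{-1} = \p_{j,\si}$. This gives $\mf{D}_{p,\rho}(\d_\rho \mc{F}_{p,\si}(\rho)) = -\mc{L}^\dag(\rho)$, i.e.\ \eqref{eq:riemannian_grad}, and combining with the first paragraph closes the argument. The only genuine obstacle is the well-definedness issue in the second paragraph; the remaining steps are forced algebraic identities already assembled in the preceding part of the section.
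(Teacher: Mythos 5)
Your proposal is correct and follows essentially the same route as the paper: the paper likewise obtains $\grad \mc{F}_{p,\si}(\rho) = \mf{D}_{p,\rho}(\d_\rho\mc{F}_{p,\si}(\rho)) = -\mc{L}^\dag(\rho)$ directly from Definition~\ref{def:metric}, the key identity \eqref{auxeq_deriva}, and the representation \eqref{def:dprho}, with the telescoping cancellation $[\rho]_{p,\ww_j}[\rho]_{p,\ww_j}^{-1}$ and the adjoint identification $\sum_j\p_j^\dag\p_{j,\si} = -\mc{L}^\dag$ playing exactly the roles you describe. Your extra care about well-definedness of the gradient modulo ${\rm span}\{\mi\}$ is a sound elaboration of what the paper leaves implicit via Lemma~\ref{lem:basic_krho}.
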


A direct consequence of Proposition  \ref{prop:gradient_flow} above is the decrease of $\mc{F}_{p,\si}(\rho)$ along the quantum dynamic $\rho_t = \mc{P}_t^\dag(\rho)$ (also recall \eqref{eq:ep_divi}):
\begin{align} \label{eq:grad_entropy}
    \frac{d}{d t} \mc{F}_{p,\si}(\rho_t) & = g_{p,\rho_t}({\rm grad} \mc{F}_{p,\si}(\rho_t),\dot{\rho}_t) = - g_{p,\rho_t}(\mc{L}^\dag(\rho_t), \mc{L}^\dag(\rho_t)) \le 0\,.
\end{align} 
We shall see in Section \ref{sec:geodesic_convexity} that the decay rate of $\mc{F}_{p,\si}(\rho_t)$, characterized by the quantum Beckner's inequality (cf.\,\eqref{eq:exp_divi} and \eqref{eq_beck}), naturally connects with the geodesic convexity of the functional $\mc{F}_{p,\si}$. 

It was proved by Dietert \cite{dietert2015characterisation} that if a Markov chain with finite states can be formulated as the gradient flow of the relative entropy, then it must be time-reversible. This result, along with \cite{maas2011gradient}, implies that the reversible Markov chains are exactly those that can be characterized by the gradient flows. We next give the necessary condition for $\mc{P}_t^\dag$ being the gradient flow of $\mc{F}_{p,\si}$ for some Riemannian metric, which extends the previous results \cite{carlen2020non,cao2019gradient}.  Before we proceed, we give the following lemma that will be useful in the sequel. 

\begin{lemma} \label{lem:key_rela}
For $\si \in \dhh$, let $[\si]_{p,0}$ and $[\si]_{p,\ww}$ be defined as in \eqref{def:kernel_multiplication} \mb{with $p \in \R \backslash \{0,1\}$}. Then there holds
\begin{equation} \label{auxeqq_rela_00}
    [\si]_{p,0} = J_\si^{\kappa^{-1}_{1/p}} = R_\si \kappa_{1/p}^{-1}(\Delta_\si)\,,
\end{equation}
where the function $\kappa_{1/p}$ is the power difference \eqref{def:pdiff}. \mb{In addition, for $\mc{L}$ satisfying $\si$-{\rm GNS DBC}, we have}
\begin{align} \label{auxeq_repinn}
     - \l X, \mc{L} X \r_{\si,\vp_p} = \mb{\sum_{j = 1}^J}  \l \gs \p_j X, [\si]_{p,\ww_j}^{-1} \gs \p_j X\r\,,
\end{align}
and 
\begin{align} \label{auxeqq_rela}
  [\si]_{p,\ww_j} \circ \p_j \circ [\si]_{p,0}^{-1}   =   \gs \circ \p_j  \circ \gs^{-1}\,.
\end{align}
We also have 
\begin{align} \label{rep_dirichlet}
  \mb{\mc{E}_{p,\mc{L}}(\gs^{-1}(\rho))
         = \frac{p^2}{4} \sum_{j = 1}^J \left\l \p_{j, \si} \rho ,  [\rho]_{p,\ww_j}^{-1} \p_{j, \si} \rho  \right\r\,, \q \rho \in \dhh\,.}
\end{align}
\end{lemma}

\begin{proof}
The first identity \eqref{auxeqq_rela_00} follows from \eqref{eq:kernel_iden}, \eqref{eq:kernel_iden_2}, \eqref{auxeq_power_diff}, and definition \eqref{def:kernel_multiplication}. 
The representation \eqref{auxeq_repinn} is a reformulation of \eqref{eq:rep_dbcl} by $[\si]_{p,\ww}$. Note from formulas \eqref{eq:rep_gen} and \eqref{auxeqq_rela_00} that 
\begin{align*} 
  - \l X, \mc{L} X \r_{\si,\vp_p} =  \sum_{j = 1}^J \l \gs X, [\si]_{p,0}^{-1} \p_j^\dag \gs \p_j X\r\,,
\end{align*}
which, along with \eqref{auxeq_repinn}, gives the identity \eqref{auxeqq_rela}. The last one \eqref{rep_dirichlet} is reformulated from \eqref{eq:rep_epl} by using operators $\p_{j,\si}$ in \eqref{eq:adjpjkms} and $[\rho]_{p,\ww}$ in  \eqref{def:kernel_multiplication}. 
\end{proof}

\begin{proposition} \label{prop:necessary}
If a primitive QMS $\mc{P}_t^\dag = e^{t \mc{L}^\dag}$ is the gradient flow of $\mc{F}_{p,\si}(\rho)$ for $p \in (1,2]$ with respect to some smooth Riemannian metric $g_\rho$, then $\mc{L}$ is self-adjoint with respect to the inner product:
\begin{equation} \label{def:nece_inner}
    \l X , Y \r_{[\si]_{p,0}}: = \l X, [\si]_{p,0} Y \r\,,\q X, Y \in \bh\,.
\end{equation}
\end{proposition}

\begin{proof}
Note that any Riemannian metric $g_\rho$ on $\dhh$ can be written as 
\begin{align*}
    g_\rho(\nu_1,\nu_2) = \l \mf{D}_\rho^{-1} \nu_1, \nu_2  \r\,,\q \nu_1,\nu_2 \in T_\rho\,,
\end{align*}
where $\mf{D}_\rho$ is a positive definite operator from the  cotangent space $T^*_\rho$ to the tangent space $T_\rho$ at $\rho$ (both $T_\rho$ and $T^*_\rho$ can be identified with $\mc{B}^0_{sa}(\mc{H})$). The dual QMS $\mc{P}_t^\dag$ is the gradient flow of $\mc{F}_{p,\si}(\rho)$ with respect to $g_\rho$ means that 
\begin{align} \label{eq:grad_1}
    \mc{L}^\dag \rho = - \mf{D}_{\rho}(\d_\rho \mc{F}_{p,\si}(\rho))\,.
\end{align}
Substituting $\rho = \si + \ep X$ for $X \in \mc{B}^0_{sa}(\mc{H})$ into \eqref{eq:grad_1} above and differentiating it with respect to $\ep$ gives 
\begin{align} \label{eq:grad_2}
    \mc{L}^\dag X = - \mf{D}_{\rho} \gs^{-1/\h{p}}f_p^{[1]}(\si^{1/p},\si^{1/p})\gs^{-1/\h{p}}(X) = - \mf{D}_{\rho}[\si]^{-1}_{p,0} X\,,
\end{align}
by \eqref{eq:deriv_func} and \eqref{eq:chain_curve}. Hence, it follows from \eqref{eq:grad_2} that, for $X, Y \in \mc{B}_{sa}^0(\mc{H})$, 
\begin{align*}
   \l \mc{L}Y,  X\r_{[\si]_{p,0}} = - \l Y, \mf{D}_\rho X \r = - \l \mf{D}_\rho Y,  X \r = \l Y, \mc{L} X \r_{[\si]_{p,0}}\,,
\end{align*}
which implies that $\mc{L}$ is self-adjoint on $\mc{B}(\mc{H})$ with respect to $\l \dd,\dd \r_{[\si]_{p,0}}$, since $\mc{L}$ and $[\si]_{p,0}$ are Hermitian-preserving and satisfy
$\mc{L}(\mi) = 0$ and $[\si]_{p,0}\mi = \si$. 
\end{proof}

In the limiting case $p \to 1^+$, which corresponds to the relative entropy $D(\rho \| \si)$, the inner product $ \l \dd , \dd \r_{[\si]_{p,0}}$ in \eqref{def:nece_inner}  reduces to the BKM inner product:
\begin{equation*}
    \l X, Y \r_{{\rm BKM}}: = \int_0^1 \tr(\si^{1-t} X^* \si^t Y)\ dt\,,
\end{equation*}
hence our result generalizes \cite[Theorem 2.9]{carlen2020non} by Carlen and Maas. In the case of $p = 2$, where $\mc{F}_{p,\si} $ is the variance (up to a constant factor), $ \l \dd , \dd \r_{[\si]_{p,0}}$ becomes the familiar KMS inner product $\l \dd,\dd \r_{\si,1/2}$. Similarly to Definition \ref{def:sidbc}, it is convenient to 
say that $\mc{L}$ satisfies the $[\si]_{p,0}$-DBC if it is self-adjoint with respect to $\l\dd,\dd\r_{[\si]_{p,0}}$ (the cases $p = 1$ and $p = 2$ are the known BKM DBC and KMS DBC, respectively). In Appendix \ref{app:dbc}, we show that the class of QMS satisfying the $[\si]_{p,0}$-DBC with $p \in (1,2)$ is strictly larger than the class of QMS with $\si$-{\rm GNS DBC}. Thus, in view of Propositions \ref{prop:gradient_flow}
and \ref{prop:necessary}, there is a gap between the necessary and sufficient conditions for a QMS being the gradient flow of  $\mc{F}_{p,\si}(\rho)$. Very recently, Brooks and Maas \cite{brooks2022characterisation} characterizes the gradient flow for a given functional on a general smooth manifold, and show that the BKM DBC is the necessary and sufficient condition for a primitive QMS being the gradient flow of $D(\rho\|\si)$, which closes the gap in the case $p = 1$. We next further exploit the arguments in \cite{brooks2022characterisation} for our $p$-divergence $\mc{F}_{p,\si}$, which complements the discussion in Proposition \ref{prop:necessary}. 

\begin{proposition}
Let $\mc{L}$ be the generator of a primitive QMS with invariant state $\si \in \dhh$ and $p \in (1,2]$. Suppose that $\mc{L}$ satisfies the $[\si]_{p,0}$-DBC and $\mc{E}_{p,\mc{L}}(\gs^{-1} (\rho))$ is strictly positive for $\rho \in \dhh$ except at $\rho = \si$. Then there exists a Riemannian metric on $\mc{D}_+(\mc{H})$ such that the QMS $\mc{P}_t^\dag = e^{t \mc{L}^\dag}$ is the gradient flow of $\mc{F}_{p,\si}(\rho)$. 
\end{proposition}

\begin{proof}
This result is a simple consequence of \cite[Corollary 2.5]{brooks2022characterisation}, which shows that for a smooth function $f$ on the manifold $M$, a vector field $\vp^\alpha$, and a co-vector field $\phi_\beta = \na_\beta f$ with some mild assumptions, there exists a Riemannian metric $g_{\alpha\beta}$ such that $\phi_\beta = g_{\alpha\beta} \vp^\alpha$ if i) the field $\phi$ has a unique zero $\bar{m}\in M$, where $f$ takes its minimum value and there holds $\vp^\alpha|_{\bar{m}} = 0$; ii) $\na_{\vp^\alpha}f|_{m} < 0$ holds for all $m \neq \bar{m}$; iii) the map $\na_{\alpha}\vp^\beta: T_{\bar{m}}M \to T_{\bar{m}}M$ is positive and symmetric with respect to the inner product on $T_{\bar{m}}M$ induced by the Hessian $h_{\alpha\beta} = \na_\alpha \na_\beta f|_{\bar{m}}$. 

For our case, we consider the manifold $M = \mc{D}_+(\mc{H})$, the vector field $\vp = \mc{L}^\dag$, and the functional $f= \mc{F}_{p,\si}$. Note from Lemma \ref{lem:prop_divi} that $\mc{F}_{p,\si}(\rho)$ is strictly positive except at its global minimum $\rho = \si$. By definition \eqref{eq:deriv_func}, we have $\l \d_\rho \mc{F}_{p,\si}(\rho), A \r = 0$, $\forall A \in T_\rho = \mc{B}^0_{sa}(\mc{H})$, if and only if $\rho = \si$. Moreover, in our case, $\vp^\alpha|_{\bar{m}} = 0$ simply means $\mc{L}^\dag(\si) = 0$. Thus, the condition i) holds. Since $\na_{\vp^\alpha}f$ is nothing than the entropy production of $\mc{F}_{p,\si}$ (i.e., the minus Dirichlet form $\mc{E}_{p,\mc{L}}$) by \eqref{eq:ep_divi}, the condition ii) is guaranteed by the assumption. It suffices to verify the condition iii). For this, note that $\na_\alpha \vp^\beta = \mc{L}^\dag$, as $\vp$ is linear, and that the Hessian can be computed as follows: for $X, Y \in \mc{B}_{sa}(\mc{H})$ and $A = \gs(X)$ and $B = \gs(Y)$,
\begin{align*}
    h(A,B)  & = \p_{\ep}|_{\ep = 0} \p_{\eta}|_{\eta = 0} \mc{F}_{p,\si}(\si + \ep A + \eta B) \\ 
    & = \p_{\ep}|_{\ep = 0} \frac{1}{p-1} \l \gs^{-1/\h{p}}((\gs^{-1/\h{p}}(\si + \ep A))^{p-1}), B \r \notag \\
    & = \p_{\ep}|_{\ep = 0} \frac{1}{p-1} \l I_{\h{p},p}(\mi + \ep X) ,  \gs(Y)\r \notag  = \l A, [\si]_{p,0}^{-1} B \r\,,
\end{align*}
where the second identity follows from \eqref{eq:ep_divi} and the last identity is by \eqref{eq:kernel_iden_2} and \eqref{eq:expansion_power}. Recall that the Lindbladian $\mc{L}$ is assumed to satisfy the $[\si]_{p,0}$-DBC, which is equivalent to the fact that $\mc{L}^\dag$ is self-adjoint with respect to the inner product $\l \dd, [\si]_{p,0}^{-1} \dd \r$. The proof is complete. 
\end{proof}

\begin{remark} \label{rem:positivediri}
Compared to \cite[Theorem 1.2]{brooks2022characterisation}, here we have an additional assumption of the strict positivity of the $p$-Dirichlet form, so there is still a small gap between the necessary and sufficient conditions in our case. When $p = 2$, it is easy to show that such an assumption can be removed by the KMS DBC of the Lindbladian. Indeed, by \eqref{eq:ep_divi} and the detailed balance condition, we have that $\mc{E}_{2,\mc{L}}(\gs^{-1}(\rho))$ is convex  and attains its minimum at $\rho = \si$. Then, the invertibility of $\mc{L}^\dag $ on the tangent space readily implies the strict convexity of  $\mc{E}_{2,\mc{L}}(\gs^{-1}(\rho))$ at $\rho = \si$, which further gives its strict positivity for any $\rho \neq \si$. However, for a general $p$, it seems not a very easy task to remove this assumption. A potential barrier may be the lack of the characterization of the QMS satisfying the $[\si]_{p,0}$-DBC; see  \cite{alicki1976detailed,amorim2021complete,fagnola2010generators,vernooij2023derivations} for related results. 
\end{remark}

\subsection{Generalized quantum transport distances} \label{sec:quantum_distance} The Riemannian distance $W_{2,p}$ induced by the metric $g_{p,\rho}$ in \eqref{def:metric_tensor} can be defined as: for $\rho_0, \rho_1 \in \dhh$, 
\begin{align} 
    W_{2,p}(\rho_0,\rho_1)^2  =\ & \inf  \left\{
    \int_0^1 g_{p, \gamma(s)}\left(\dot{\gamma}(s),\dot{\gamma}(s) \right) ds\,;\ \gamma(0) = \rho_0\,, \gamma(1) = \rho_1
    \right\}  \label{def:wp1} \\
     \overset{\eqref{def:gp2}}{=} & 
       \inf  \left\{
    \int_0^1 \norm{\na U(s)}^2_{p,\gamma(s)} ds \,;\ \dot{\gamma}(s) = \mf{D}_{p,\rho}U(s)\ \text{with}\  \gamma(0) = \rho_0\,, \gamma(1) = \rho_1
    \right\}, \label{def:wp2}
\end{align}
where the infimum is taken over the smooth $(C^\infty)$ curves $\gamma(s):[0,1] \to \dhh$. In this section, we will investigate the basic properties of the distance function $W_{2,p}$.  By the standard reparameterization techniques (cf.\cite[Lemma 1.1.4]{ambrosio2005gradient} or \cite[Theorem 5.4]{dolbeault2009new}),
we have that $W_{2,p}$ equals to the minimum arc length:
\begin{align} \label{def:wp3}
    W_{2,p}(\rho_0,\rho_1) = \inf  \left\{
    \int_0^1 g_{p, \gamma(s)}\left(\dot{\gamma}(s),\dot{\gamma}(s) \right)^{1/2} ds\,;\ \gamma(0) = \rho_0\,, \gamma(1) = \rho_1
    \right\},
\end{align}
where the infimum is taken over smooth curves of constant speed (i.e., $g_{p, \gamma(s)}\left(\dot{\gamma}(s),\dot{\gamma}(s) \right)^{1/2}$ is constant). Then it follows from \eqref{def:wp3} that the Riemannian manifold $\mc{D}_+(\mc{H})$ equipped with the distance $W_{2,p}$ is a metric space. Moreover, it turns out that $W_{2,p}$ can be extended continuously to the boundary of $\mc{D}_+(\rho)$.

\begin{lemma} \label{lem:exten_bry}
The metric $W_{2,p}$ for $p \in (1,2]$ on $\mc{D}_+(\mc{H})$ extends continuously to $\dh$. 
\end{lemma}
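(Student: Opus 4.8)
The claim is that the Riemannian distance $W_{2,p}$, a priori only defined on the open manifold $\mc{D}_+(\mc{H})$ of full-rank states, extends continuously to the closed set $\dh$ of all density operators, including rank-deficient (boundary) states.

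**The core difficulty.** The issue is that the metric tensor $g_{p,\rho}$ degenerates as $\rho$ approaches the boundary: the operator $[\rho]_{p,\ww}$ in \eqref{def:kernel_multiplication} involves $\gs^{-1/\hat{p}}(\rho)$ raised to power $2-p$, and near the boundary eigenvalues of $\rho$ vanish, so $\mf{D}_{p,\rho}$ and its inverse behave badly. The plan is to show nevertheless that distances to the boundary remain finite and that $W_{2,p}$ is Cauchy-continuous there.

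=== PLAN ===

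The plan is to prove the lemma in two stages: first establish a finite upper bound on the distance from an arbitrary boundary state to full-rank states (so that the extension is well-defined and finite), and then verify continuity via a uniform estimate.

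First I would show that for any $\rho_0 \in \dh$ on the boundary and any full-rank $\rho_1 \in \dhh$, the distance $W_{2,p}(\rho_0, \rho_1)$ is finite. The natural approach is to exhibit an explicit connecting curve of finite $g_{p,\cdot}$-length. A convenient choice is the linear interpolation $\gamma(s) = (1-s)\rho_0 + s\rho_1$, or better, a curve that first flows $\rho_0$ slightly into the interior using the QMS $\mc{P}_t^\dag$ (which instantly regularizes rank, since $\mc{P}_t^\dag(\rho_0)$ becomes full-rank for $t>0$ by primitivity and \eqref{eq:conver_qms}) and then connects inside $\dhh$. The key estimate is to bound $g_{p,\gamma(s)}(\dot\gamma(s),\dot\gamma(s))$ by controlling $\mf{D}_{p,\gamma(s)}^{-1}$; here I would use the integral representation \eqref{eq:integral_thetap} of $f_p^{[1]}$ together with Lemma \ref{lem:mono_norm} to bound the action of $[\gamma(s)]_{p,\ww}^{-1}$ on tangent vectors. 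The crucial point is that the exponent $2-p \in [0,1)$ for $p \in (1,2]$ keeps the singularity integrable: although $[\rho]_{p,\ww}$ degenerates like a power of the smallest eigenvalue, the length functional scales so that $\int_0^1 g_{p,\gamma(s)}(\dot\gamma,\dot\gamma)^{1/2}\,ds$ converges. This mirrors the classical computation in \cite[Theorem 5.4]{dolbeault2009new}, where the weight $\rho^{-\alpha}$ with $\alpha = 2-p \in [0,1)$ yields finite transport distance to the boundary.

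Next I would establish continuity of the extension. Having a finite distance function on $\dhh$ that I want to extend to $\dh$, I would prove that $W_{2,p}$ is uniformly continuous (or at least Cauchy-continuous) on $\dhh$ with respect to, say, the trace-norm topology, so that it admits a unique continuous extension to the closure $\dh$. The mechanism is to bound $|W_{2,p}(\rho_0,\rho_1) - W_{2,p}(\rho_0',\rho_1')|$ by $W_{2,p}(\rho_0,\rho_0') + W_{2,p}(\rho_1,\rho_1')$ via the triangle inequality, and then to show that $W_{2,p}(\rho,\rho') \to 0$ as $\rho' \to \rho$ in trace norm, uniformly near the boundary. This again reduces to the upper-bound estimate from the first stage applied to nearby states: connecting $\rho$ to $\rho'$ by a short curve whose $g_{p,\cdot}$-length is controlled by $\norm{\rho - \rho'}_1$ raised to an appropriate positive power. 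The degeneracy of the metric must be handled carefully so the bound does not blow up; the exponent $2-p < 1$ is what makes this work.

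The main obstacle will be the first stage: controlling the length integral near the boundary where $\mf{D}_{p,\rho}^{-1}$ degenerates. One must verify that the singular behavior of the metric tensor is mild enough that curves reaching the boundary have finite length, i.e., that the boundary is at finite distance. I expect this to require a careful estimate of $g_{p,\gamma(s)}(\dot\gamma(s),\dot\gamma(s))$ along the chosen curve using the integral representation \eqref{eq:integral_thetap} and the monotonicity in Lemma \ref{lem:mono_norm}, combined with a change of variables near the endpoint $s \to 0$ to extract an integrable power singularity. Once finiteness to the boundary is in hand, the continuity and uniqueness of the extension follow from standard metric-space arguments (density of $\dhh$ in $\dh$ and completion), so the technical weight of the proof rests on this single a priori distance bound.
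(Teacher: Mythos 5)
Your proposal follows essentially the same route as the paper's proof: connect nearby interior states by a linear interpolation toward a regularization $(1-\ep)\rho_0+\ep\mi$, bound $\mf{D}_{p,\gamma(s)}^{-1}$ via Lemma \ref{lem:mono_norm} so the metric degeneracy contributes a factor $(s\ep)^{p-2}$ whose square root is integrable in $s$, and conclude that approximating sequences give Cauchy sequences of distances, whence the extension exists by density. The key mechanism you identify (the exponent $2-p<1$ making the length integral to the boundary finite, combined with the standard metric-space extension argument) is exactly what the paper uses.
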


\begin{proof}
As the proof is similar to that of \cite[Proposition 9.2]{carlen2020non}, we only sketch its main steps below. Consider $\rho_0,\rho_1 \in \dh$ and let $\{\rho_0^n\}, \{\rho_1^n\}$ be any sequences in $\dhh$ such that $\norm{\rho_i^n - \rho_i}_2 \to 0$ as $n \to \infty$ for $i = 0,1$. It suffices to show that $W_{2,p}(\rho^n_0,\rho^n_1)$ is a Cauchy sequence. For this, by the triangle inequality, we only need to show $W_{2,p}(\rho_i^n, \rho_i^m) \to 0$ as $n, m \to \infty$ for $i = 0,1$. For $\ep \in (0,1)$, 
we define $\w{\rho}_0 = (1- \ep) \rho_0 + \ep \mi$, and the linear interpolation $\gamma_n(s):= (1 - s) \rho_0^n + s \w{\rho}_0$ which satisfies $\gamma_n(s) \ge s \ep \mi$. Then it is easy to see 
\begin{align*} 
e^{\pm \ww_j/2p}\gs^{-1/\h{p}}(\gamma_n(s)) \ge \inf_j \{e^{-|\ww_j|/2p}\} s \ep \si^{-1/\h{p}}\,,
\end{align*}
by which, Lemma \ref{lem:mono_norm} implies that $[\gamma_n(s)]_{p,\ww}^{-1} \le C (s \ep)^{p-2} \id_{\mc{H}}$ holds for some constant $C > 0$, and hence that
\begin{align*}
    \mf{D}_{p,\gamma_n(s)}^{-1} \le C (s \ep)^{p-2} \norm{(-\ddiv \circ \na(\dd))^{-1}} \id_{\mc{H}}\,.
\end{align*}
Recalling the expression \eqref{def:wp3}, we obtain 
\begin{align} \label{auxeq:bry}
    W_{2,p}(\rho_0^n, \w{\rho}_0) &\le \int_0^1 \left\l \dot{\gamma}_n(s), \mf{D}_{p,\gamma_n(s)}^{-1} \dot{\gamma}_n(s)  \right\r^{1/2}\, ds \notag \\
    & \le C (s \ep)^{p-2} \norm{(-\ddiv \circ \na(\dd))^{-1}} \int_0^1 \norm{ \dot{\gamma}_n(s)}_2\, ds\,.
\end{align}
Note the estimate:
\begin{align*}
    \norm{ \dot{\gamma}_n(s)}_2 = \norm{\w{\rho}_0 - \rho^n_0}_2 \le \ep \norm{\rho_0 - \mi}_2 + \norm{\rho_0 - \rho_0^n}_2\,,
\end{align*}
which, by \eqref{auxeq:bry}, gives
\begin{align*}
   \limsup_{n \to \infty}  W_{2,p}(\rho_0^n, \w{\rho}_0) \le C (s \ep)^{p-1} \norm{(-\ddiv \circ \na(\dd))^{-1}} \norm{\rho_0 - \mi}_2\,.
\end{align*}
Since $p \in (1,2]$ and $\ep$ is arbitrary, it follows that $ \lim_{n,m \to \infty} W_{2,p}(\rho_0^n, \rho_0^m) = 0$, by triangle inequality.
\end{proof} 

In analogy with the classical 2-Wasserstein distance, it is helpful to introduce the quantum moment variable ${\bf B} = ([\rho]_{p,\ww_1}\p_1 U, \ldots, [\rho]_{p,\ww_J}\p_J U )$ for $U \in \mc{B}_{sa}^0$ and reformulate \eqref{def:wp2} as a convex optimization problem: 
\begin{align} \label{def:wp4}
    W_{2,p}(\rho_0,\rho_1)^2 = \inf \Big\{
    \int_0^1 \norm{{\bf B}}^2_{-1,p,\gamma(s)} ds \,;\ &\dot{\gamma}(s) + \ddiv {\bf B} (s) = 0\ \text{with}\  \gamma(0) = \rho_0\,, \gamma(1) = \rho_1,  \notag \\ 
    & \gamma(s) \in C\left([0,1]; \dh\right),\ \text{and}\  {\bf B} (s)\in L^1\left([0,1]; \bh^J\right) 
    \Big\}\,,
\end{align}
where the continuity equation $\dot{\gamma}(s) + \ddiv {\bf B}(s) = 0$ is understood in the sense of distributions, and
$\norm{\dd}_{-1,p,\rho}$ is the norm from 
the inner product $\l\dd,\dd\r_{-1,p,\rho}$  on $\mc{B}(\mc{H})^J$ defined by
\begin{align*}
    \l {\bf A}, {\bf B}\r_{-1,p,\rho} := \sum_{j = 1}^J \l A_j, [\rho]_{p,\ww_j}^{-1} B_j\r\,.
\end{align*}
Indeed, by approximation techniques in \cite[Proposition 1]{wirth2022dual} and a mollification argument from \cite[Lemma 2.9]{erbar2012ricci}, we can show that the infimum in the above representation \eqref{def:wp4} of $W_{2,p}$ can be equivalently taken over smooth curves $\gamma \in C^\infty([0,1];\dhh)$. Then, the equivalence between formulations \eqref{def:wp2} and \eqref{def:wp4} follows from the same arguments as in \cite[Lemma 9.1]{carlen2020non}, while the convexity of the optimization problem \eqref{def:wp4} is a simple consequence of the following lemma and \cite[Theorem 9.7]{carlen2020non}.

\begin{lemma}\label{lem:joint_convexity}
$\l X, [\rho]_{p,\ww}^{-1}X \r$ with $p \in (1,2]$ is jointly convex in $(\rho,X) \in \mc{D}_+(\mc{H}) \t \mc{B}(\mc{H})$. 
\end{lemma}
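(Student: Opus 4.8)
The plan is to deduce the joint convexity from two ingredients: the operator concavity of the map $\rho \mapsto [\rho]_{p,\ww}$ and the standard joint convexity of the matrix-fractional function. Recall that for a positive invertible superoperator $\mc{A}$ on $\bh$ and $X \in \bh$, the map $(\mc{A}, X) \mapsto \l X, \mc{A}^{-1} X\r$ is jointly convex and is nonincreasing in $\mc{A}$ with respect to the Loewner order. Granting that $\rho \mapsto [\rho]_{p,\ww}$ is operator concave, i.e. $[\rho_\lambda]_{p,\ww} \ge \lambda [\rho_0]_{p,\ww} + (1-\lambda)[\rho_1]_{p,\ww}$ for $\rho_\lambda = \lambda \rho_0 + (1-\lambda)\rho_1$, the argument is then a three-line composition: by Loewner monotonicity, $\l X_\lambda, [\rho_\lambda]_{p,\ww}^{-1} X_\lambda\r \le \l X_\lambda, (\lambda [\rho_0]_{p,\ww} + (1-\lambda)[\rho_1]_{p,\ww})^{-1} X_\lambda\r$ with $X_\lambda = \lambda X_0 + (1-\lambda)X_1$, and joint convexity of the matrix-fractional function bounds the right-hand side by $\lambda \l X_0, [\rho_0]_{p,\ww}^{-1}X_0\r + (1-\lambda)\l X_1, [\rho_1]_{p,\ww}^{-1}X_1\r$.

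First I would reduce the operator concavity to a statement about the scalar kernel $\theta_p$. Since $[\rho]_{p,\ww} = \gs^{1/\h p}\circ \theta_p(e^{\ww/2p}\gs^{-1/\h p}(\rho), e^{-\ww/2p}\gs^{-1/\h p}(\rho))\circ\gs^{1/\h p}$ and $\rho\mapsto \gs^{-1/\h p}(\rho) =: Y$ is linear, conjugation by the fixed positive map $\gs^{1/\h p}$ preserves operator concavity; testing against a fixed $X$ and setting $X' = \gs^{1/\h p}X$, it suffices to show that for every fixed $X'$ the function $Y \mapsto \l X', \theta_p(e^{\ww/2p}Y, e^{-\ww/2p}Y) X'\r$ is concave on positive definite $Y$, where $\theta_p(\cdot,\cdot)$ denotes the associated double operator sum.

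The key step is the integral representation $\theta_p(x,y) = \int_0^{2-p} x^s y^{2-p-s}\, d\nu_p(s)$ for a positive measure $\nu_p$ on $[0,2-p]$; for $p=1$ this recovers the logarithmic mean with $\nu_1$ the Lebesgue measure, and in general it is equivalent to showing that $(p-1)\,\sinh(\xi/2)/\sinh((p-1)\xi/2)$ is the bilateral Laplace transform of a positive measure supported in $[-(2-p)/2,(2-p)/2]$, which I would extract from a residue/partial-fraction expansion of this $\sinh$-ratio. Granting the representation, the double operator sum factorizes as $\l X', \theta_p(e^{\ww/2p}Y,e^{-\ww/2p}Y)X'\r = \int_0^{2-p} e^{\ww(2s-(2-p))/2p}\,\tr\big((X')^* Y^s X' Y^{2-p-s}\big)\, d\nu_p(s)$, and each integrand is concave in $Y$ by Lieb's concavity theorem \cite{lieb1997inequalities}, since $s,\,2-p-s \in [0,1]$ with $s + (2-p-s) = 2-p \le 1$ for $p \in (1,2]$. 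A positively weighted superposition of concave functions is concave, giving the desired operator concavity; the case $p=2$ is immediate because then $\theta_2 \equiv 1$ and $[\rho]_{2,\ww} = \gs$ is independent of $\rho$.

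The main obstacle is precisely the positivity of the representing measure $\nu_p$. One should note that per-term convexity fails: each kernel $\tr\big((X')^*(\lambda + e^{\ww/2p}Y)^{-1}X'(\lambda+e^{-\ww/2p}Y)^{-1}\big)$ arising from the representation \eqref{eq:integral_thetap} of $f_p^{[1]} = \theta_p^{-1}$ is \emph{not} jointly convex in $(\rho,X)$, so the convexity is genuinely emergent and cannot be read off termwise; it must be drawn out from the complete monotonicity encoded in the $\sinh$-ratio. Once the operator concavity of $[\rho]_{p,\ww}$ is established, the joint convexity of $\l X, [\rho]_{p,\ww}^{-1}X\r$ follows from the composition argument above, and, as remarked after \eqref{def:wp4}, combining this lemma with \cite[Theorem 9.7]{carlen2020non} yields the convexity of the associated Benamou--Brenier optimization problem.
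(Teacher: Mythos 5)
Your overall architecture (operator concavity of $\rho \mapsto [\rho]_{p,\ww}$, then composition with the jointly convex and Loewner--antitone map $(\mc{A},X)\mapsto \langle X,\mc{A}^{-1}X\rangle$) would be sound if the first ingredient were available, but the key step you rely on --- the representation $\theta_p(x,y)=\int_0^{2-p}x^s y^{2-p-s}\,d\nu_p(s)$ with $\nu_p$ a \emph{positive} measure --- is false for generic $p\in(1,2)$. Indeed, if such a representation held, then $g(\xi):=\theta_p(e^{\xi/2},e^{-\xi/2})=\int_0^{2-p}e^{(s-(2-p)/2)\xi}\,d\nu_p(s)$ would extend to an entire function of $\xi$, bounded on the imaginary axis by $\nu_p([0,2-p])=\theta_p(1,1)=1$. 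But $g(\xi)=(p-1)\sinh(\xi/2)/\sinh((p-1)\xi/2)$ has poles at $\xi=2\pi i k/(p-1)$ for every integer $k$ with $k/(p-1)\notin\mathbb{Z}$, so the representation can only exist when $1/(p-1)\in\mathbb{N}$ (your test cases $p=3/2$ and the limit $p\to1$ are exactly of this exceptional form, which is why they worked). For a concrete failure take $p=5/3$: with $t=x^{1/6}$, $u=y^{1/6}$ one computes $\theta_{5/3}(x,y)=\tfrac{4}{3}\cdot\tfrac{(tu)^2(t^2+u^2)}{2}-\tfrac{1}{3}\cdot\tfrac{2(tu)^3}{t^2+u^2}$ in normalized form, i.e.\ $g(\xi)=\tfrac{4}{3}\cosh(\xi/6)-\tfrac{1}{3}\operatorname{sech}(\xi/6)$, which has a genuinely non-representable (and sign-indefinite) $\operatorname{sech}$ component and a pole at $\xi=3\pi i$. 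Since Lieb's concavity theorem only sums with positive weights, a signed residue expansion does not rescue the argument, and the operator concavity of $[\rho]_{p,\ww}$ is left unproven (it is in any case a strictly stronger statement than the lemma requires).

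The paper's proof avoids concavity of $\theta_p$ altogether: it writes $\langle X,[\rho]_{p,\ww}^{-1}X\rangle=\langle \gs^{-1/\h{p}}(X),\,f_p^{[1]}(e^{\ww/2p}\gs^{-1/\h{p}}(\rho),e^{-\ww/2p}\gs^{-1/\h{p}}(\rho))\,\gs^{-1/\h{p}}(X)\rangle$ and invokes the joint convexity of $(A,B,Y)\mapsto\langle Y,f_p^{[1]}(A,B)Y\rangle$ from \cite[Lemma 2.3]{zhang2020optimal}; since $\rho\mapsto\gs^{-1/\h{p}}(\rho)$ and $X\mapsto\gs^{-1/\h{p}}(X)$ are linear, the lemma follows. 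You are right that the resolvent-product decomposition \eqref{eq:integral_thetap} is not termwise jointly convex (already for scalars, $(a,x,y)\mapsto a^2/(xy)$ has an indefinite Hessian), but the correct termwise decomposition is the other integral formula $f_p^{[1]}(x,y)=\int_0^1\big(ux+(1-u)y\big)^{p-2}\,du$: each term is $\langle Y,(uL_A+(1-u)R_B)^{p-2}Y\rangle$, where $uL_A+(1-u)R_B$ is a positive superoperator depending \emph{linearly} on $(A,B)$, and $(\mc{S},Y)\mapsto\langle Y,\mc{S}^{-\alpha}Y\rangle$ is jointly convex for $\alpha=2-p\in[0,1]$ by the matrix-fractional function and the integral representation of $s^{-\alpha}$. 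This gives a short self-contained proof along the lines you were attempting, with the convexity living on the side of $f_p^{[1]}$ rather than the concavity on the side of $\theta_p$.
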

\begin{proof}
Note from \eqref{def:kernel_multiplication} that  $$\l X, [\rho]_{p,\ww}^{-1} X\r =  \l \gs^{-1/\h{p}}(X), f_p^{[1]}(e^{\ww/2p} \gs^{-1/\h{p}}(\rho),  e^{-\ww/2p}  \gs^{-1/\h{p}}(\rho))(\gs^{-1/\h{p}}(X))\r\,.$$ Then the statement follows from \cite[Lemma 2.3]{zhang2020optimal}, which shows the joint convexity of $\l Y, f_p^{[1]}(A,B) Y \r$ for $Y \in \bh$ and full-rank $A,B \in \mc{B}_{sa}^+(\mc{H})$. 
\end{proof}

We summarize the above discussion in the following proposition.

\begin{proposition} \label{prop:dist_convexity}
$W_{2,p}^2$ has an equivalent convex optimization formulation \eqref{def:wp4}. More precisely, let $\rho_0^i, \rho_1^i\in \mc{D}(\mc{H})$ for $i = 0,1$, and set $\rho_0^s := (1-s)\rho_0^0 + s\rho_0^1$ and $\rho_1^s := (1-s)\rho_1^0 + s\rho_1^1$ for $s \in [0,1]$. Then there holds 
\begin{equation*}
W_{2,p}(\rho^s_0,\rho^s_1)^2 \le (1-s) W_{2,p}(\rho^0_0,\rho^0_1)^2 + s W_{2,p}(\rho^1_0,\rho^1_1)^2\,.
\end{equation*}
\end{proposition}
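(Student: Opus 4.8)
The plan is to work entirely within the Benamou--Brenier-type representation \eqref{def:wp4}, in which both the constraint and the action behave well under convex combinations. First I would record the equivalence between the geodesic formulation \eqref{def:wp2} and \eqref{def:wp4}, following the reparameterization and mollification arguments already cited (as in \cite[Lemma 9.1]{carlen2020non} and \cite[Lemma 2.9]{erbar2012ricci}): the relevant change of variables is the momentum field $B_j = [\rho]_{p,\ww_j}\p_j U$, under which $\norm{{\bf B}}^2_{-1,p,\rho} = \norm{\na U}^2_{p,\rho}$, while the flow $\dot{\gamma} = \mf{D}_{p,\rho}U$ turns into the continuity equation $\dot{\gamma} + \ddiv {\bf B} = 0$. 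This reduces the whole statement to showing that the value function of a convex program with an affine constraint is jointly convex.

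With \eqref{def:wp4} in hand I would run the standard linear-interpolation argument. Fix $\delta > 0$ and choose admissible pairs $(\gamma^i,{\bf B}^i)$, $i = 0,1$, joining $\rho_0^i$ to $\rho_1^i$ whose actions are within $\delta$ of $W_{2,p}(\rho_0^i,\rho_1^i)^2$. Set $\gamma^s(t) := (1-s)\gamma^0(t) + s\gamma^1(t)$ and ${\bf B}^s(t) := (1-s){\bf B}^0(t) + s{\bf B}^1(t)$. Since the boundary conditions, the continuity equation, and the membership $\gamma(t) \in \dh$ are all affine in the pair $(\gamma,{\bf B})$, the interpolated pair $(\gamma^s,{\bf B}^s)$ is again admissible for $W_{2,p}(\rho_0^s,\rho_1^s)^2$; in particular $\gamma^s(t)$ stays a density because $\dh$ is convex.

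The key estimate is the joint convexity of the action. By Lemma \ref{lem:joint_convexity} each summand $\l B_j, [\rho]_{p,\ww_j}^{-1}B_j\r$ is jointly convex in $(\rho,B_j)$, so summing over $j$ and integrating in $t$ makes the functional $\int_0^1 \norm{{\bf B}(t)}^2_{-1,p,\gamma(t)}\,dt$ jointly convex in $(\gamma,{\bf B})$. Combined with the admissibility of $(\gamma^s,{\bf B}^s)$ this gives
\begin{align*}
W_{2,p}(\rho_0^s,\rho_1^s)^2 \le \int_0^1 \norm{{\bf B}^s}^2_{-1,p,\gamma^s}\,dt \le (1-s)\int_0^1 \norm{{\bf B}^0}^2_{-1,p,\gamma^0}\,dt + s\int_0^1 \norm{{\bf B}^1}^2_{-1,p,\gamma^1}\,dt\,,
\end{align*}
the rightmost quantity being at most $(1-s)W_{2,p}(\rho_0^0,\rho_1^0)^2 + s\,W_{2,p}(\rho_0^1,\rho_1^1)^2 + \delta$. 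Letting $\delta \to 0$ yields the claim; this is precisely the abstract convexity principle of \cite[Theorem 9.7]{carlen2020non} fed with the integrand convexity from Lemma \ref{lem:joint_convexity}.

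I expect the only genuine difficulty to be the boundary behavior. On the boundary of $\dhh$ the operators $[\rho]_{p,\ww_j}^{-1}$ may degenerate, so the near-optimal pairs should be taken with curves in the interior (or suitably regularized), and one must check that the interpolated action remains finite and that the infimum in \eqref{def:wp4} is actually approached. This is handled by the continuous extension of $W_{2,p}$ to $\dh$ in Lemma \ref{lem:exten_bry} together with the same approximation arguments used to identify \eqref{def:wp2} with \eqref{def:wp4}. Everything else is routine once Lemma \ref{lem:joint_convexity} is available.
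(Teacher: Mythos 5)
Your proposal is correct and follows essentially the same route as the paper: reduce to the Benamou--Brenier formulation \eqref{def:wp4}, observe that the constraints are affine so that linear interpolation of admissible pairs is admissible, and invoke the joint convexity of the integrand from Lemma \ref{lem:joint_convexity} (the paper delegates this last step to the abstract convexity principle of \cite[Theorem 9.7]{carlen2020non}, which is exactly the interpolation argument you write out). Your closing remark on the degeneracy of $[\rho]_{p,\ww_j}^{-1}$ near the boundary of $\dhh$ is the right caveat and is handled the same way in the paper, via the approximation arguments used to identify \eqref{def:wp2} with \eqref{def:wp4}.
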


The main result of this section is Theorem \ref{thm:main_wasser} below.

\begin{theorem}\label{thm:main_wasser}
$(\mc{D}(\mc{H}), W_{2,p})$ for $p \in (1,2]$ is a complete geodesic space. In particular, for any $\rho_0, \rho_1 \in \dh$, the minimizer to \eqref{def:wp4} exists and gives the minimizing geodesic  $(\gamma_*(s))_{s \in [0,1]}$ connecting $\rho_0$ and $\rho_1$ and satisfying 
\begin{equation} \label{eq:geo_mini}
 W_{2,p}(\gamma_*(s),\gamma_*(t)) = |s - t| W_{2,p}(\rho_0,\rho_1)\,, \q \forall s,t \in [0,1]\,.
\end{equation}
\end{theorem}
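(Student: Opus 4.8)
The plan is to follow the direct-method (Benamou--Brenier) strategy of Carlen and Maas \cite{carlen2017gradient,carlen2020non}, now applied to the action associated with the metric $g_{p,\rho}$. There are two assertions: \emph{completeness} of $(\dh, W_{2,p})$ and \emph{existence of constant-speed minimizing geodesics}; both rest on the convex reformulation \eqref{def:wp4} and the joint convexity of Lemma \ref{lem:joint_convexity}. Throughout I would work with the action density $\mc{A}_0(\rho,{\bf B}):=\sum_{j}\l B_j, [\rho]_{p,\ww_j}^{-1}B_j\r$ and its time integral $\mc{A}[\gamma,{\bf B}]:=\int_0^1 \mc{A}_0(\gamma(s),{\bf B}(s))\,ds$, so that $W_{2,p}(\rho_0,\rho_1)^2$ is the infimum of $\mc{A}$ over pairs solving the continuity equation $\dot\gamma+\ddiv{\bf B}=0$ with the prescribed endpoints.

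For completeness, I would first argue that $W_{2,p}$ metrizes the norm topology on $\dh$. One direction is the continuous extension of $W_{2,p}$ to $\dh$ from Lemma \ref{lem:exten_bry}, giving that norm convergence implies $W_{2,p}$-convergence. For the converse I would use that, along any admissible curve, the continuity equation and Cauchy--Schwarz yield $W_{2,p}(\rho_0,\rho_1)\ge c\,\norm{\rho_0-\rho_1}_1$ for a constant $c>0$ depending only on $\si$; the key input here is that $[\rho]_{p,\ww}$, being continuous and bounded up to the boundary of the compact set $\dh$, satisfies a uniform bound $[\rho]_{p,\ww}\le M\mi$ (equivalently $\mc{A}_0(\rho,{\bf B})\ge M^{-1}\norm{{\bf B}}_2^2$), together with boundedness of $\ddiv$. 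With both topologies coinciding and $\dh$ norm-compact, $(\dh,W_{2,p})$ is compact, hence complete.

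For the geodesics, given $\rho_0,\rho_1\in\dh$ I would take a minimizing sequence $(\gamma^n,{\bf B}^n)$ for \eqref{def:wp4}, finiteness of the infimum being guaranteed by Lemma \ref{lem:exten_bry}. The uniform action bound and Cauchy--Schwarz make the $\gamma^n$ uniformly $\tfrac{1}{2}$-H\"older (in $W_{2,p}$, hence in norm), so Arzel\`a--Ascoli together with compactness of $\dh$ yields a uniform limit $\gamma^n\to\gamma_*$; the same uniform lower bound $\mc{A}_0(\rho,{\bf B})\ge M^{-1}\norm{{\bf B}}_2^2$ makes the ${\bf B}^n$ bounded in $L^2([0,1];\bh^J)$, so along a subsequence ${\bf B}^n\rightharpoonup{\bf B}_*$ weakly. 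Since the continuity equation is linear, it passes to the limit under uniform convergence of $\gamma^n$ and weak convergence of ${\bf B}^n$, so $(\gamma_*,{\bf B}_*)$ is admissible. By the joint convexity of $\mc{A}_0$ (Lemma \ref{lem:joint_convexity}) and lower-semicontinuity of convex integral functionals along this mode of convergence (as in \cite[Theorem 9.7]{carlen2020non}), $\mc{A}[\gamma_*,{\bf B}_*]\le\liminf_n\mc{A}[\gamma^n,{\bf B}^n]=W_{2,p}(\rho_0,\rho_1)^2$, so $(\gamma_*,{\bf B}_*)$ is a minimizer. Finally, a standard Cauchy--Schwarz argument shows any minimizer has constant metric speed, which yields \eqref{eq:geo_mini} and exhibits $\gamma_*$ as a minimizing geodesic.

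The hard part will be the lower-semicontinuity step at the boundary of $\dh$. On the interior $\dhh$ the density $\mc{A}_0$ is smooth, but as $\gamma_*(s)$ becomes rank-deficient the operator $[\gamma_*(s)]_{p,\ww_j}$ degenerates (its symbol behaves like $(\gs^{-1}\rho)^{2-p}$, which vanishes on the kernel), so $[\gamma_*(s)]_{p,\ww_j}^{-1}$ blows up and $\mc{A}_0$ must be treated as a jointly convex functional extended lower-semicontinuously by $+\infty$. The delicate point is to verify that the weak limit ${\bf B}_*(s)$ stays compatible with $\gamma_*(s)$ (so that each component of ${\bf B}_*$ lies in the range where $[\gamma_*]_{p,\ww_j}^{-1}$ is finite); this is precisely what Lemma \ref{lem:joint_convexity} and the lower-semicontinuity machinery of \cite{carlen2020non}, combined with the mollification and approximation arguments of \cite[Lemma 2.9]{erbar2012ricci} already invoked for \eqref{def:wp4}, are designed to control.
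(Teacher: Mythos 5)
Your proposal follows essentially the same direct-method strategy as the paper's proof: the uniform upper bound $[\rho]_{p,\ww_j}\le C_j\,\id$ on $\dh$ to obtain compactness of the minimizing sequence (the paper phrases the ${\bf B}^{(n)}$-compactness via weak-$*$ convergence of vector measures with bounded total variation rather than weak $L^2$ convergence, and recovers pointwise convergence of $\gamma^{(n)}$ by integrating the continuity equation rather than by Arzel\`a--Ascoli, but these are cosmetic differences), joint convexity plus lower semicontinuity of the integral action (which the paper makes explicit through the integral representation of $f_p^{[1]}$, Fatou's lemma, and Buttazzo's theorem) to pass to the limit, Jensen/Cauchy--Schwarz for the constant-speed property \eqref{eq:geo_mini}, and the two-sided comparison between $W_{2,p}$ and $\norm{\dd}_1$ (Lemmas \ref{lem:exten_bry} and \ref{lem:lower_bound}) for completeness. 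The step you flag as delicate — lower semicontinuity at the boundary of $\dh$ — is indeed where the paper invests its technical effort, and the tools you name are the ones it uses.
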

The completeness of the metric space $(\mc{D}(\mc{H}), W_{2,p})$
needs the following lemma, which is of interest itself.

\begin{lemma} \label{lem:lower_bound}
There exists a constant $C > 0$, independent of $p \in (1,2]$, such that for any $\rho_0, \rho_1 \in \dh$,
\begin{align*}
    \norm{\rho_1 - \rho_0}_1 \le C W_{2,p}(\rho_0,\rho_1)\,.
\end{align*}
\end{lemma}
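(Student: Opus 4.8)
The plan is to establish a pointwise comparison between the trace-norm speed and the Riemannian speed along curves, and then integrate. First I would reduce to the case $\rho_0,\rho_1\in\dhh$: since $W_{2,p}$ extends continuously to $\dh$ by Lemma \ref{lem:exten_bry} and $\norm{\dd}_1$ is continuous, the general inequality follows by approximating boundary states from the interior and passing to the limit. For interior endpoints I would use the arc-length representation \eqref{def:wp3}, so that it suffices to bound, for any smooth curve $\gamma:[0,1]\to\dhh$ with $\dot{\gamma}=\mf{D}_{p,\gamma}U$, the integrand $\norm{\dot{\gamma}(s)}_1$ by $C\,g_{p,\gamma(s)}(\dot{\gamma}(s),\dot{\gamma}(s))^{1/2}$; integrating and taking the infimum over curves then yields the claim via $\norm{\rho_1-\rho_0}_1=\norm{\int_0^1\dot{\gamma}\,ds}_1\le\int_0^1\norm{\dot{\gamma}}_1\,ds$.

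The pointwise bound I would obtain by Hilbert--Schmidt duality. Writing $\norm{\dot{\gamma}}_1=\sup\{\l Y,\dot{\gamma}\r\,;\ Y=Y^*,\ \norm{Y}_\infty\le1\}$ and using $\dot{\gamma}=\mf{D}_{p,\rho}U=\sum_j\p_j^\dag([\rho]_{p,\ww_j}\p_j U)$ from \eqref{def:dprho}, I get $\l Y,\dot{\gamma}\r=\sum_j\l\p_j Y,[\rho]_{p,\ww_j}\p_j U\r$. Since each $\l\dd,[\rho]_{p,\ww_j}\dd\r$ is an inner product (Lemma \ref{lem:basic_krho}), Cauchy--Schwarz gives $\l Y,\dot{\gamma}\r\le\norm{\na Y}_{p,\rho}\norm{\na U}_{p,\rho}$, and $\norm{\na U}_{p,\rho}=g_{p,\rho}(\dot{\gamma},\dot{\gamma})^{1/2}$ by \eqref{def:gp2}. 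Thus the whole problem reduces to a uniform bound $\norm{\na Y}_{p,\rho}^2=\sum_j\l\p_j Y,[\rho]_{p,\ww_j}\p_j Y\r\le C^2$ over all $\rho\in\dh$ and all $Y$ with $\norm{Y}_\infty\le1$.

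For this uniform bound I would exploit the explicit form \eqref{def:kernel_multiplication} of $[\rho]_{p,\ww}$. Setting $R:=\gs^{-1/\h{p}}(\rho)$, the two arguments $e^{\pm\ww/2p}R$ of $\theta_p$ commute, so the double-operator sum is diagonal in the eigenbasis of $R$ and $\l B,\theta_p(e^{\ww/2p}R,e^{-\ww/2p}R)B\r$ is a nonnegative combination of the scalars $\theta_p(e^{\ww/2p}r_i,e^{-\ww/2p}r_k)$ with weights $\norm{P_iBP_k}_2^2$ summing to $\norm{B}_2^2$. From $f_p^{[1]}(x,y)=\int_0^1((1-t)x+ty)^{p-2}\,dt\ge\max(x,y)^{p-2}$ I deduce $\theta_p(x,y)\le\max(x,y)^{2-p}$ (consistent with \eqref{def:theta_p}), whence the quadratic form with $B=\gs^{1/\h{p}}(\p_j Y)$ is at most $e^{|\ww|(2-p)/2p}\norm{R}_\infty^{2-p}\norm{B}_2^2$. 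Finally $\norm{R}_\infty=\norm{\gs^{-1/\h{p}}(\rho)}_\infty\le\si_{\min}^{-1/\h{p}}\le\si_{\min}^{-1/2}$ uniformly in $p$ (since $\rho\le\mi$, $\si_{\min}\le1$, and $1/\h{p}=(p-1)/p\le1/2$), while $\norm{\gs^{1/\h{p}}(\p_j Y)}_2\le\norm{\p_j Y}_2\le2\sqrt{d}\,\norm{V_j}_\infty$ and $e^{|\ww_j|(2-p)/2p}\le e^{|\ww_j|/2}$, giving $C^2=4d\,\si_{\min}^{-1/2}\sum_j e^{|\ww_j|/2}\norm{V_j}_\infty^2$, independent of $p$.

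The main obstacle is exactly this last, $p$-uniform control of $[\rho]_{p,\ww}$: the exponents $2-p$, $1/\h{p}=(p-1)/p$, and $(2-p)/2p$ all degenerate or vary with $p$, and the mean $\theta_p$ itself changes shape across $(1,2]$, so one must verify that every $p$-dependent factor stays bounded on $(1,2]$. This is precisely what the elementary observations $\si_{\min}\le1$, $\norm{\si}_\infty\le1$, and $(2-p)/2p\in[0,1/2)$ deliver, yielding a constant $C$ that depends only on the fixed data $\si$ and $(V_j,\ww_j)_{j=1}^J$.
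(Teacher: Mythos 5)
Your proof is correct, and at its core it is the same duality argument as the paper's: pair $\rho_1-\rho_0$ against an observable of operator norm at most one, move the derivative onto the observable, apply Cauchy--Schwarz in the weighted inner product $\l\dd,\dd\r_{p,\rho}$, and then bound $\sum_j\l\p_j Y,[\rho]_{p,\ww_j}\p_j Y\r$ uniformly over $\rho\in\dh$, $\norm{Y}_\infty\le1$, and $p\in(1,2]$. The packaging differs slightly --- the paper works with a near-minimizer of the Benamou--Brenier formulation \eqref{def:wp4} and integrates $\l\na X,{\bf B}(s)\r$, whereas you establish a pointwise comparison $\norm{\dot\gamma}_1\le C\,g_{p,\gamma}(\dot\gamma,\dot\gamma)^{1/2}$ and use the arc-length form \eqref{def:wp3}; these are equivalent. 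The one genuinely different ingredient is the kernel estimate: the paper bounds $f_p^{[1]}$ from below via the integral representation \eqref{eq:inte_divi_p} and the logarithmic-mean inequality, obtaining $\theta_p(x,y)\le C_p^{-1}(x+y)^{2-p}$ with $C_p=\frac{\sin((p-1)\pi)}{\pi}\int_0^\infty \frac{2s^{p-2}}{1+2s}\,ds$, and must then spend the last part of the proof verifying that $C_p$ stays bounded away from zero as $p\to1^+$ and $p\to2^-$. Your bound $f_p^{[1]}(x,y)=\int_0^1((1-t)x+ty)^{p-2}\,dt\ge\max(x,y)^{p-2}$, hence $\theta_p(x,y)\le\max(x,y)^{2-p}$, is constant-free and makes the $p$-uniformity immediate, which is a cleaner route to the same conclusion; the resulting constants ($\si_{\min}^{-1/2}$ and Hilbert--Schmidt norms of the $V_j$ versus the paper's $\tr(\si^{(p-2)/\h{p}})$ and operator norms) differ only cosmetically.
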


We now give the proofs of Theorem \ref{thm:main_wasser} and Lemma \ref{lem:lower_bound}. 

\begin{proof}[Proof of Theorem \ref{thm:main_wasser}]
We will first show the existence of the minimizer of \eqref{def:wp4} by direct method. Let $\left\{(\gamma^{(n)},{\bf B}^{(n)})\right\}$ be the minimizing sequence such that 
$
\sup_n \int_0^1 \big\lVert{\bf B}^{(n)}\big\rVert^2_{-1,p,\gamma^{(n)}}\ ds < + \infty\,.
$
We claim that there exists constant $C_j$, depending on $\si \in \dhh$ and $\ww_j \in \R$, such that 
\begin{align} \label{claim_one}
\l X, [\rho]_{p,\ww_j} X\r \le C_j \norm{X}_2^2\,,\q \forall \rho \in \dh\,.
\end{align}
To show this, by the representation \eqref{eq:repre_rpw} of $[\rho]_{p,\ww}$ with notations from Proposition \ref{prop:repkernel} below, we first have 
\begin{align*}
    \l X,  [\rho]_{p,\ww} X \r \le (p-1)  \bigg( \sup_{i,k} \theta_p\left(e^{\ww/2p}\lad_{k,p}, e^{-\ww/2p}\lad_{i,p}\right)
    \bigg) \norm{\gs^{1/\h{p}}(X)}_2^2\,.
\end{align*}
We also note that there exists a closed interval $I$ containing $e^{\pm \ww/2p} \lad_{k,p}$ for all $\rho \in \dh$, and that the function $\theta_p(x,y)$ is bounded for $x,y \in I$. Then the claim \eqref{claim_one} readily follows. Since ${\bf B}^{(n)} \in L^1\left([0,1],\bh^J\right)$, we
can consider the $\bh^J$-valued measure $\mathsf{B}^{(n)}(ds) := {\bf B}^{(n)}(s) ds$. Then, for every Borel set $E \subset [0,1]$, we estimate 
\begin{align} \label{eq:estimate_minimizing}
    |\mathsf{B}^{(n)}|(E) & \le \int_E \bigg(\sum_{j = 1}^J \norm{B_j^{(n)}(s)}_2^2 \bigg)^{1/2} ds \notag \\
    & = \int_E \bigg(\sum_{j = 1}^J \left\l [\gamma^{(n)}]_{p,\ww_j}^{-1/2} B_j^{(n)}, [\gamma^{(n)}]_{p,\ww_j} [\gamma^{(n)}]_{p,\ww_j}^{-1/2} B_j^{(n)} \right\r \bigg)^{1/2} ds \notag  \\
    & \le C \mathscr{L}(E)^{1/2} \bigg( \int_E \sum_{j = 1}^J \left\l  B_j^{(n)}, [\gamma^{(n)}]^{-1}_{p,\ww_j}  B_j^{(n)} \right\r  ds \bigg)^{1/2}  \notag  \\
     & \le C \mathscr{L}(E)^{1/2} \bigg(\int_0^1 \big\lVert{\bf B}^{(n)}\big\rVert^2_{-1,p,\gamma^{(n)}} ds \bigg)^{1/2} < +\infty\,,
\end{align}
for $C := \max_j\{C_j^{1/2}\}$, where $\mathscr{L}$ is the Lebesgue measure on $\R$ and the third line is from H\"{o}lder's inequality and the estimate \eqref{claim_one}. 
It follows that the total variations of the measures $\mathsf{B}^{(n)}$ are uniformly bounded in $n$. Hence there exists a subsequence, still denoted by $\mathsf{B}^{(n)}$, converging weakly* to a $\bh^J$-valued measure $\mathsf{B}_*$. Then, by \eqref{eq:estimate_minimizing}, we can obtain
\begin{equation*}
    |\mathsf{B}_*|(E) \le \liminf_{n \to \infty}  |\mathsf{B}^{(n)}|(E) \le C \mathscr{L}(E)^{1/2}\,,
\end{equation*}
for some constant $C$, which implies that $\mathsf{B}_* \ll \mathscr{L}$ and hence the R-N derivative ${\bf B}_*: = \frac{d \mathsf{B}_*}{d \mathscr{L}}$ exists. We next prove that $\gamma^{(n)}(s)$ converges pointwise to some $\gamma(s): [0,1] \to \dh$. For this, we note 
\begin{equation*}
    \gamma^{(n)}(t) - \gamma^{(n)}(0) = - \int_0^t \ddiv {\bf B}^{(n)}(s)\ ds. 
\end{equation*}
Then by the weak* convergence of $\mathsf{B}^{(n)}$ and $\gamma^{(n)}(0) = \rho_0$, we have the pointwise convergence of $\gamma^{(n)}(s)$ with the limit denoted by $\gamma_*(s)$.
Moreover, it is easy to check that $\gamma_*(s) \in C([0,1],\dh) $ and ${\bf B}_* \in L^1([0,1],\bh^J)$ satisfy the continuity equation. By dominated convergence theorem, we also have $\mc{B}(\bh)$-valued measure $[\gamma^{(n)}(s)]_{p,\ww_j} ds$ weakly* converge to $[\gamma_*(s)]_{p,\ww_j} ds$. Finally, noting the integral representation for $[\rho]^{-1}_{p,\ww}$ from \eqref{eq:inte_divi_p} and \eqref{def:kernel_multiplication}: 
\begin{align*}
      [\rho]^{-1}_{p,\ww}(\dd)
    & =
    \frac{\sin((p-1) \pi)}{\pi} \gs^{-1/\h{p}} \int_0^\infty s^{p - 2}   g_s^{[1]} \left(e^{\ww/2p}  \gs^{-1/\h{p}}(\rho),  e^{-\ww/2p}  \gs^{-1/\h{p}}(\rho) \right)  \gs^{-1/\h{p}}(\dd)\ ds\,,
\end{align*}
 we have 
\small
\begin{align} \label{auxeq_final}
  & \liminf_{n \to \infty} \int_0^1 \big\lVert{\bf B}^{(n)}\big\rVert^2_{-1,p,\gamma^{(n)}}\ ds  
  \notag \\
  \ge & \frac{\sin((p-1)\pi)}{\pi} \sum_{j=1}^J \int_0^\infty t^{p-2} \liminf_{n \to \infty} \int_0^1  \left\l \gs^{-1/\h{p}}\big(B_j^{(n)}(s)\big) , g_t^{[1]} \left(e^{\ww/2p}  \gs^{-1/\h{p}}\big(\gamma^{(n)}(s)\big),  e^{-\ww/2p}  \gs^{-1/\h{p}}\big(\gamma^{(n)}(s)\big) \right)  \gs^{-1/\h{p}}\big(B_j^{(n)}(s)\big) \right\r ds dt    \notag \\
   \ge & \frac{\sin((p-1)\pi)}{\pi} \sum_{j=1}^J \int_0^\infty t^{p-2} \int_0^1  \left\l \gs^{-1/\h{p}}\big(B_{*,j}(s)\big) , g_t^{[1]} \left(e^{\ww/2p}  \gs^{-1/\h{p}}\big(\gamma_*(s)\big),  e^{-\ww/2p}  \gs^{-1/\h{p}}\big(\gamma_*(s)\big) \right)  \gs^{-1/\h{p}}\big(B_{*,j}(s)\big) \right\r ds dt   \notag \\
  \ge  & \int_0^1 \big\lVert{\bf B}_*\big\rVert^2_{-1,p,\gamma_*} ds\,,
\end{align}
\normalsize
where in the first inequality we have used Fatou's lemma, and in the second inequality we have used Theorem 3.4.3 of \cite{buttazzo1989semicontinuity} on the lower-semicontinuity of integral functionals. Then the estimate \eqref{auxeq_final} directly implies that $(\gamma_*, {\bf B}_*)$ is a minimizer to \eqref{def:wp4}. 

Recalling formulations \eqref{def:wp1} and \eqref{def:wp3}, by Jensen's inequality, we find 
\begin{align*}
W_{2,p}(\rho_0,\rho_1) =  \Big(\int_0^1 g_{p, \gamma_*(s)}\left(\dot{\gamma}_*(s),\dot{\gamma}_*(s) \right) ds \Big)^{1/2} = \int_0^1 g_{p, \gamma_*(s)}\left(\dot{\gamma}_*(s),\dot{\gamma}_*(s) \right)^{1/2} ds\,.
\end{align*}
It follows that $W_{2,p}(\rho_0,\rho_1)^2 = g_{p, \gamma_*(s)}\left(\dot{\gamma}_*(s),\dot{\gamma}_*(s) \right)$ for $s \in [0,1]$ a.e.. Then, by definition of $W_{2,p}$ and a time scaling, it is easy to check that the property \eqref{eq:geo_mini} holds. Therefore, we have proved that $(\mc{D}(\mc{H}), W_{2,p})$ is a geodesic space. The completeness of the metric space $(\mc{D}(\mc{H}), W_{2,p})$ is a simple consequence of Lemma \ref{lem:lower_bound}. Indeed, let $\{\rho_n\}$ be a Cauchy sequence such that $W_{2,p}(\rho_n,\rho_m) \to 0$ as $n,m \to \infty$. Then, by Lemma \ref{lem:lower_bound}, it is also Cauchy in the complete metric space $(\dh,\norm{\dd}_1)$, which implies that there exists $\rho_\infty \in \dh$ such that $\norm{\rho_n - \rho_\infty}_1 \to 0$ as $n \to \infty$. Similarly to the proof of Lemma \ref{lem:exten_bry}, by $\norm{\rho_n - \rho_\infty}_1 \to 0$, we can conclude $W_{2,p}(\rho_n,\rho_\infty) \to 0$ as $n \to \infty$. 
\end{proof}

\begin{proof}[Proof of Lemma \ref{lem:lower_bound}]
For any $\d > 0$ and $\rho_0, \rho_1 \in \dhh$, by Theorem \ref{thm:main_wasser}, there exists a curve $(\gamma(s),{\bf B}(s))$, $s \in [0,1]$, satisfying $\dot{\gamma}(s) + \ddiv {\bf B}(s) = 0$ with  $\gamma(0) = \rho_0$ and $\gamma(1) = \rho_1$, such that
\begin{align*}
    \int_0^1 \norm{{\bf B}(s)}^2_{-1, p, \gamma(s)} ds \le W_{2,p}(\rho_0,\rho_1)^2 + \d\,.
\end{align*}
It then follows that, by Cauchy's inequality,
\begin{align} \label{auxeq_bound1}
    \tr(X(\rho_1 - \rho_0))  = \tr \left(X \int_0^1 \dot{\gamma}(s) ds \right) \notag
     & = \int_0^1  \l \na X, {\bf B}(s) \r ds \notag \\
    &\le \left(\int_0^1 \norm{\na X}_{p,\gamma(s)}^2 d s \right)^{1/2} \left( \int_0^1 \norm{{\bf B}(s) }_{-1,p,\gamma(s)}^2 d s \right)^{1/2} \notag \\
    &\le \bigg(\int_0^1  \sum_j \l \p_j X, [\gamma(s)]_{p,\ww_j} \p_j X  \r  d s 
 \bigg)^{1/2} \left( W_{2,p}(\rho_0,\rho_1)^2 + \d \right)^{1/2}. 
\end{align}

To deal with the term $\l \p_j X, [\gamma(s)]_{p,\ww_j} \p_j X  \r$, we next estimate the kernel operator $[\rho]_{p,\ww}$ for $\rho \in \mc{D}_+$ and $\ww \in \R$. For this, recalling \eqref{eq:repre_rpw} below:
\begin{align} \label{eq:kernel2}
      [\rho]_{p,\ww}(\dd) = (p-1)\sum_{i,k}\frac{e^{\ww/2p}\lad_{k} - e^{-\ww/2p}\lad_{i}}{\left(e^{\ww/2p}\lad_{k}\right)^{p-1} - \left(e^{-\ww/2p}\lad_{i}\right)^{p-1}} \gs^{1/\h{p}} \left(E_{k} \gs^{1/\h{p}}(\dd) E_{i}\right),
\end{align}
where $\lad_{i}$ and $E_{i}$ are eigenvalues and the associated eigen-projections of $\gs^{-1/\h{p}}(\rho)$, respectively (we omit the subscript $p$ of $\lad_i$ and $E_i$ for simplicity). 
By the integral formula \eqref{eq:inte_divi_p}, we can estimate 
\begin{align} \label{eq:ker_estimate}
  &\frac{1}{p-1}  \frac{\left(e^{\ww/2p}\lad_{k}\right)^{p-1} - \left( e^{-\ww/2p}\lad_{i}\right)^{p-1}}{e^{\ww/2p}\lad_{k} - e^{-\ww/2p}\lad_{i}}
\notag \\  
= &  \frac{\sin((p-1) \pi)}{\pi} \int_0^\infty  s^{p-2} 
\frac{\log (s + e^{\ww/2p} \lad_k) - \log (s + e^{-\ww/2p}\lad_i)  }{e^{\ww/2p} \lad_k - e^{-\ww/2p}\lad_i}
\  ds \notag\\
\ge &  \frac{\sin((p-1) \pi)}{\pi} \int_0^\infty  s^{p-2} \frac{2}{2s + e^{\ww/2p} \lad_k + e^{-\ww/2p}\lad_i}\  ds \notag \\
= & \frac{\sin((p-1) \pi)}{\pi} \left(e^{\ww/2p}\lad_k + e^{-\ww/2p} \lad_i\right)^{p-2} \int_0^\infty s^{p-2}  \frac{2}{1+ 2 s}\  ds\,, 
\end{align}
where the third line is from the elementary inequality: 
\begin{equation*}
    \frac{x - y}{\log x - \log y} \le \frac{x + y}{2}\q \text{for all}\  x, y > 0\,,
\end{equation*}
and the last line is by change of variable.  We define the constant:
\begin{equation}\label{def:cons_cp}
     C_p := \frac{\sin((p - 1) \pi)}{\pi}  \int_0^\infty s^{p-2}  \frac{2}{1+ 2 s} \ d t\,. 
\end{equation}
By \eqref{eq:kernel2} and \eqref{eq:ker_estimate}, there holds 
\begin{align*}
\left\l \p_j X,    [\rho]_{p,\ww}\p_j X \right\r \le & C_p^{-1} \sum_{i,k} \left(e^{\ww/2p}\lad_k + e^{-\ww/2p} \lad_i\right)^{2-p}  \left\l \gs^{1/\h{p}} (\p_j X),  \left(E_{k} \gs^{1/\h{p}}(\p_j X) E_{i}\right) \right\r \\
    \le &  C_p^{-1} \sum_{i,k} \left(\left(e^{\ww/2p}\lad_k \right)^{2-p} + \left(e^{-\ww/2p} \lad_i\right)^{2-p}\right)   \left\l \gs^{1/\h{p}} (\p_j X),  \left(E_{k} \gs^{1/\h{p}}(\p_j X) E_{i}\right) \right\r \\ 
     = &  C_p^{-1}  \left\l \gs^{1/\h{p}} (\p_j X),  \left(L_{\left(e^{\ww/2p}\gs^{-1/\h{p}}(\rho)\right)^{2-p}} + R_{\left(e^{-\ww/2p}  \gs^{-1/\h{p}}(\rho) \right)^{2-p}}\right)  \gs^{1/\h{p}}(\p_j X)  \right\r \\
      \le &  C_p^{-1}  \left\l \gs^{1/\h{p}} (\p_j X),  \left(L_{\left(e^{\ww/2p}\si^{-1/\h{p}}\right)^{2-p}} + R_{\left(e^{-\ww/2p}  \si^{-1/\h{p}} \right)^{2-p}}\right)  \gs^{1/\h{p}}(\p_j X)  \right\r  \\
     \le & C_p^{-1} \left( e^{(2-p)\ww/2p} + e^{(p-2)\ww/2p}  \right) \tr \left( \si^{(p-2)/\h{p}} \right)  \norm{\gs^{1/\h{p}}(\p_j X)}_{\infty}^2 \\
     \le & 4 C_p^{-1} \left(e^{(2-p)\ww/2p} + e^{(p-2)\ww/2p}  \right) \tr \left( \si^{(p-2)/\h{p}} \right)  \norm{\si}_\infty^{2/\h{p}}\norm{V_j}^2_\infty \norm{X}^2_\infty\,,
\end{align*}
where the second inequality is by $(x + y)^p \le x^p + y^p$ for $p \in (0,1)$ and $x, y > 0$; the third inequality is by $\rho \le \mi$ for all $\rho \in \dh$ and the operator monotonicity of $t^p$ for $0 \le p \le 1$; the fourth inequality is by
H\"{o}lder's inequality.  Then we arrive at, by \eqref{auxeq_bound1}, 
\begin{align} \label{eq:est_lower}
     \norm{\rho_1 - \rho_0}_1 & \le \bigg( 4 C_p^{-1} \tr \left( \si^{(p-2)/\h{p}} \right) \norm{\si}_\infty^{2/\h{p}} \sum_j \left(e^{(2-p)\ww_j/2p} + e^{(p-2)\ww_j/2p}  \right) \norm{V_j}_\infty^2 \bigg)^{1/2}  W_{2,p}(\rho_0,\rho_1)\,. 
\end{align}
 
We finally prove the uniform boundedness of the prefactor in \eqref{eq:est_lower} for $p \in (1,2]$. It suffices to consider the constant $C_p$ in \eqref{def:cons_cp}. By elementary calculation, we derive 
\begin{align*}
\frac{2}{3}\left( \frac{1}{p - 1} + \frac{1}{2 - p} \right) \le \int_0^\infty \frac{2 s^{p-2} }{1+ 2 s} d s  &= \int_0^1  \frac{2 s^{p-2}}{1+ 2 s} d s + \int_1^\infty   \frac{2 s^{p-2}}{1+ 2 s} d s \le \frac{2}{p - 1} + \frac{1}{2 - p}\,,
\end{align*}
which immediately gives the following estimates: 
\begin{align*}
\frac{2}{3} + O(p - 1) \le  C_p \le 2 + O(p-1) \q \text{as} \ p \to 1^+\,,
\end{align*}
and 
\begin{align*}
\frac{2}{3} + O(p - 1) \le  C_p \le 1 + O(p-2) \q \text{as} \ p \to 2\,.
\end{align*}
The proof is complete. 
\end{proof}

We next derive the geodesic equations for the Riemannian manifold $(\dhh, g_{p,\rho})$. Instead of regarding the geodesic equation as the Euler-Lagrange equation for the minimization problem \eqref{def:wp2} as in \cite{datta2020relating,carlen2020non}, we interpret it as the Hamiltonian flow of the Hamiltonian associated with the metric $g_{p,\rho}$: 
$$
H(\rho,U) := \frac{1}{2} g^{-1}_{p,\rho}(U,U) = \frac{1}{2}\l \mf{D}_{p,\rho}U, U\r \q \text{for}\  \rho \in \dhh\ \text{and}\  U \in \mc{B}_{sa}^0(\mc{H})\,,
$$
where $g^{-1}_{p,\rho}$ is the inverse of the metric tensor. Then we can write the geodesic equations:
\begin{align} \label{def:geo_eq}
&  \dot{\rho} = \d_{U} H(\rho,U)\,,\q   \dot{U} = -\d_\rho H(\rho,U)\,.
\end{align}
By definition, it is clear that $\d_{U} H(\rho,U) = \mf{D}_{p,\rho}U$. To find $\d_\rho H$, by Lemma \ref{lem:high_order}, we have 
\begin{align} \label{eq:func_dev}
   \l \d_\rho H(\rho, U), A\r  =  & \mb{\lim_{\ep \to 0} \frac{1}{2} \big(\left\l \mf{D}_{p, \rho + \ep A} U, U\right\r - \left\l \mf{D}_{p, \rho} U, U\right\r \big)/\ep} \notag \\
    =  & \mb{\lim_{\ep \to 0} \frac{1}{2} \big(\left\l \p_j U, [\rho+\ep A]_{p,\ww_j} \p_j U \right\r - \left\l \p_j U, [\rho]_{p,\ww_j} \p_j U \right\r \big)/\ep} \notag \\ 
     =  & \frac{1}{2} \sum_{j = 1}^J\left\l \gs^{1/\h{p}} (\p_j U), (\d_1 \theta_p)\big((l_j(\rho),l_j(\rho)), r_j(\rho)\big)\big(l_j(A), \gs^{1/\h{p}}(\p_j U)\big)\right\r  \notag 
    \\
        & +  \frac{1}{2} \sum_{j=1}^J \left\l \gs^{1/\h{p}} (\p_j U), (\d_2 \theta_p)\big(l_j(\rho), ( r_j(\rho), r_j(\rho))\big)\big(\gs^{1/\h{p}}(\p_j U), r_j(A)\big)\right\r\,, 
\end{align}
where for any $X \in \bh$, 
\begin{align*}
 l_j(X): = e^{\ww_j/2p} \gs^{-1/\h{p}}(X)\,,\q  r_j(X): = e^{-\ww_j/2p} \gs^{-1/\h{p}}(X)\,.  
\end{align*}
We also note 
\begin{align} \label{eq:observation}
   & \left\l \gs^{1/\h{p}} (\p_{j'} U), (\d_1 \theta_p)\big((l_{j'}(\rho),l_{j'}(\rho)), r_{j'}(\rho)\big)\big(l_{j'}(A), \gs^{1/\h{p}}(\p_{j'} U)\big)\right\r  \notag \\
= &  \left\l \gs^{1/\h{p}} ((\p_{j} U)^*), (\d_1 \theta_p)\big((r_j(\rho),r_j(\rho)), l_j(\rho)\big)\big(r_j(A), \gs^{1/\h{p}}((\p_{j} U)^*)\big)\right\r \notag \\
= &  \left\l \gs^{1/\h{p}} (\p_{j} U), (\d_2 \theta_p)\big(l_j(\rho), (r_j(\rho),r_j(\rho))\big)\big(\gs^{1/\h{p}}(\p_{j} U), r_j(A)\big)\right\r \,,
\end{align}
where the first equality is from
the relations $l_j(\rho) = r_{j'}(\rho)$, $r_j(\rho) = l_{j'}(\rho)$, and $(\p_j U)^* = - \p_{j'}U$ by 
\eqref{eq:adjoint_index}, and the second equality is by the following formula from definitions \eqref{def_1:divi_diff} and \eqref{def_2:multiple}: 
\begin{equation*}
    \l X^*, (\d_1 f)((A,A),B)(Y,X^*) \r =  \l X, (\d_2 f)(B,(A,A))(X,Y)\r \,,
\end{equation*}
for any symmetric $f$: $f(s,t) = f(t,s)$, $X \in \bh$, $Y \in \mc{B}_{sa}(\mc{H})$, and commuting matrices $A,B \in \mc{B}_{sa}(\mc{H})$. The identity \eqref{eq:observation} implies that the two sums in \eqref{eq:func_dev} are actually equal. Therefore, we obtain from \eqref{def:geo_eq} and \eqref{eq:func_dev} the following proposition, where the local existence and uniqueness of geodesics follow from the standard theory of ODE. 

\begin{proposition}
On the Riemannian manifold $(\dhh, g_{p,\rho})$, the unique constant speed geodesic $(\rho(t))_{t \in (-\ep,\ep)}$, $\ep > 0$, with initial data: $\rho(0) = \rho_0 \in \dhh$ and $\dot{\rho}(0) = \nu_0 \in \mc{B}_{sa}^0(\mc{H})$, satisfies the equation:
\begin{equation} \label{eq:geod_equa}
\begin{split}
    & \dot{\rho} = \mf{D}_{p,\rho}U \,,\\
    &   \l \dot{U}, A\r  = - \sum_{j = 1}^J\left\l \p_j U,  \mc{K}_{\rho,A}^{(i),j} \left[ \p_j U \right] \right \r\,, \q \forall A \in \mc{B}^0_{sa}(\mc{H})\,,
\end{split}
\end{equation}
with $\rho(0) = \rho_0$ and $U(0) = \mf{D}_{p,\rho_0}^{-1} \nu_0$,
where $i = 1$ or $2$, and $\mc{K}_{\rho,A}^{(i),j}$ is defined by, for $\rho \in \dh$ and $A \in \mc{B}_{sa}^0(\mc{H})$, 
\begin{equation} \label{def:kgeode}
\begin{split}
 &\mc{K}_{\rho,A}^{(1),j}[\dd] = \gs^{1/\h{p}} \circ
 (\d_1 \theta_p)\big((l_j(\rho),l_j(\rho)), r_j(\rho)\big)\big[l_j(A), \gs^{1/\h{p}}(\dd) \big]: \bh \to \bh\,,
 \\ & \mc{K}_{\rho,A}^{(2),j}[\dd] = \gs^{1/\h{p}} \circ (\d_2 \theta_p)\big(l_j(\rho), ( r_j(\rho), r_j(\rho))\big)\big[\gs^{1/\h{p}}(\dd), r_j(A)\big]: \bh \to \bh\,.
\end{split}
\end{equation}
\end{proposition}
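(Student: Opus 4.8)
The plan is to realize the geodesic flow as the Hamiltonian flow generated by the kinetic-energy (co-metric) Hamiltonian $H(\rho,U) = \frac{1}{2} g^{-1}_{p,\rho}(U,U) = \frac{1}{2}\l \mf{D}_{p,\rho}U, U\r$, and then read off Hamilton's equations \eqref{def:geo_eq}, using the standard fact that geodesics of a Riemannian metric are exactly the base projections of the Hamiltonian flow of $\tfrac12 g^{-1}(\dd,\dd)$. The two computational inputs, the partial derivatives $\d_U H$ and $\d_\rho H$, have essentially been assembled in the buildup to the statement, so the proof amounts to putting them together and verifying the initial data and the regularity needed for ODE theory.

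First I would record the $\dot{\rho}$ equation: since $H$ is quadratic in $U$ with $\mf{D}_{p,\rho}$ independent of $U$, differentiating in $U$ gives $\d_U H(\rho,U) = \mf{D}_{p,\rho}U$ at once, so the first equation in \eqref{def:geo_eq} reads $\dot{\rho} = \mf{D}_{p,\rho}U$. For the $\dot{U}$ equation I would invoke the functional derivative $\d_\rho H$ computed in \eqref{eq:func_dev} through the divided-difference calculus of Lemma \ref{lem:high_order}, which expresses $\l \d_\rho H(\rho,U), A\r$ as the sum of a $\d_1\theta_p$-term and a $\d_2\theta_p$-term. The observation \eqref{eq:observation}, which rests on the pairing relations $V_j^* = V_{j'}$, $\ww_j = -\ww_{j'}$ from \eqref{eq:adjoint_index} and the symmetry of $\theta_p$ in \eqref{def:theta_p}, shows that these two sums coincide; hence $\l \d_\rho H(\rho,U), A\r = \sum_{j} \l \p_j U, \mc{K}^{(i),j}_{\rho,A}[\p_j U]\r$ for either choice $i = 1$ or $i = 2$, with $\mc{K}^{(i),j}_{\rho,A}$ as in \eqref{def:kgeode}. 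Substituting into $\dot{U} = -\d_\rho H$ yields precisely the second line of \eqref{eq:geod_equa}.

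For the initial data, $\rho(0) = \rho_0$ is given, while evaluating $\dot{\rho} = \mf{D}_{p,\rho}U$ at $t = 0$ against $\dot{\rho}(0) = \nu_0$ forces $\mf{D}_{p,\rho_0}U(0) = \nu_0$; since $\mf{D}_{p,\rho_0}$ is positive definite on $\mc{B}^0_{sa}(\mc{H})$ by Lemma \ref{lem:basic_krho}, this inverts to $U(0) = \mf{D}_{p,\rho_0}^{-1}\nu_0$. Local existence and uniqueness then follow from the Picard--Lindel\"{o}f theorem once I note that the right-hand sides of \eqref{eq:geod_equa} are $C^\infty$ functions of $(\rho,U)$ on the open manifold $\dhh \times \mc{B}^0_{sa}(\mc{H})$, which uses the smoothness of $[\rho]_{p,\ww}$ and of the derivatives $\d_i\theta_p$ guaranteed by Lemma \ref{lem:basic_krho} together with the explicit form of $\theta_p$ in \eqref{def:theta_p}. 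The constant-speed property is then automatic: since $g_{p,\rho}(\dot{\rho},\dot{\rho}) = \l U, \mf{D}_{p,\rho}U\r = 2H(\rho,U)$ and $H$ is conserved along its own Hamiltonian flow, the speed is constant in $t$.

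The genuinely delicate step is the computation of $\d_\rho H$, namely differentiating the operator-valued kernel $\rho \mapsto [\rho]_{p,\ww}$ through the double-operator-integral formalism and then verifying the symmetrization identity \eqref{eq:observation} that collapses the two derivative terms into a single $\mc{K}^{(i),j}_{\rho,A}$. Since this has already been carried out before the statement, what remains is routine assembly of Hamilton's equations plus the standard ODE argument.
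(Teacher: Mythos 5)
Your proposal is correct and follows essentially the same route as the paper: the paper also derives the geodesic equations as the Hamiltonian flow of $H(\rho,U)=\frac{1}{2}\l \mf{D}_{p,\rho}U,U\r$, computes $\d_\rho H$ via the chain rule of Lemma \ref{lem:high_order} in \eqref{eq:func_dev}, uses the symmetrization identity \eqref{eq:observation} to collapse the two divided-difference terms into a single $\mc{K}^{(i),j}_{\rho,A}$, and invokes standard ODE theory for local existence and uniqueness. Your added observation that conservation of $H$ along its own flow gives the constant-speed property is a correct (and welcome) supplement to what the paper leaves implicit.
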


We end this section with the observation that the Riemannian metrics $g_{p,\rho}$ with $1 < p\le 2$ serve as an interpolating family between the metric defined by Carlen and Mass in \cite{carlen2017gradient} and the one induced from the KMS inner product $\l \dd, \dd\r_{\si,1/2}$. It can be easily proved by an elementary analysis with
the fact, from the analytic perturbation theory \cite{rellich1969perturbation}, that the eigenvalues and eigenfunctions of $\gs^{\sss -1/\h{p}}(\rho)$ are differentiable with respect to $p$.

\begin{proposition}\label{prop:repkernel}
Suppose that $\gs^{-1/\h{p}}(\rho)$ has the eigen-decomposition:
$
    \gs^{-1/\h{p}}(\rho) = \sum_{j = 1}^{d}  \lad_{j,p} E_{j,p}. 
$ Then, we have 
\begin{align} \label{eq:repre_rpw}
    [\rho]_{p,\ww}A = \sum_{i,k = 1}^{d}
    \theta_p\left(e^{\ww/2p}\lad_{k,p}, e^{-\ww/2p}\lad_{i,p} \right)
    \gs^{1/\h{p}} \left(E_{k,p} \gs^{1/\h{p}}(A) E_{i,p}\right)\,, 
\end{align}
for $A \in \bh$ and $p \in (1,2]$ with $\theta_p$ given in \eqref{def:theta_p}. Moreover, $[\rho]_{p,\ww}$ is continuous in $p$, and it holds that 
\begin{align*}
    [\rho]_{2,\ww}A = \gs(A)\,,
\end{align*}
and when $p \to 1^+$, 
\begin{align} \label{def:metricpeq1}
     [\rho]_{p,\ww}A \to [\rho]_{\ww}A :=  \sum_{i,k = 1}^{d}
     \theta_{log}\left(e^{\ww/2}\lad_k, e^{-\ww/2}\lad_i \right)
     E_k A E_i\,,
\end{align}
where $\theta_{log}$ is the logarithmic mean:
\begin{align*}
    \theta_{log}(x,y): = 
    \begin{cases}
        \frac{x - y}{\log x - \log y} \,, & x \neq y\,, \\
        x \,,\q  & x = y\,,
    \end{cases}   
\end{align*}
and $\lad_j$ are eigenvalues of $\rho$ with $E_j$ being the associated rank-one eigen-projections: $\rho = \sum_{j = 1}^{d} \lad_j E_j$. 
\end{proposition}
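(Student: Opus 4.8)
The plan is to reduce everything to the explicit spectral form of the double sum operator $\theta_p(\dd,\dd)$, and then obtain the two boundary cases $p=2$ and $p\to1^+$ by elementary analysis of the scalar kernel $\theta_p$. First I would unwind the definition \eqref{def:kernel_multiplication}: writing $\gs^{-1/\h{p}}(\rho) = \sum_j \lad_{j,p}E_{j,p}$, the two arguments of $\theta_p$ appearing in \eqref{def:kernel_multiplication}, namely $e^{\ww/2p}\gs^{-1/\h{p}}(\rho)$ and $e^{-\ww/2p}\gs^{-1/\h{p}}(\rho)$, share the eigenprojections $E_{j,p}$ but carry eigenvalues $e^{\pm\ww/2p}\lad_{j,p}$. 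By the very definition of the double sum operator (cf.\,\eqref{def:doutble_op_sum}), $\theta_p(e^{\ww/2p}\gs^{-1/\h{p}}(\rho),e^{-\ww/2p}\gs^{-1/\h{p}}(\rho))$ acts on $Y\in\bh$ as $\sum_{i,k}\theta_p(e^{\ww/2p}\lad_{k,p},e^{-\ww/2p}\lad_{i,p})\,E_{k,p}\,Y\,E_{i,p}$. Sandwiching by $\gs^{1/\h{p}}$ on both sides and setting $Y=\gs^{1/\h{p}}(A)$ yields \eqref{eq:repre_rpw} directly.

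For the continuity in $p$, I would appeal to the analytic perturbation theory of Rellich \cite{rellich1969perturbation}. Since $\gs^{-1/\h{p}}(\rho) = \si^{-(p-1)/(2p)}\rho\,\si^{-(p-1)/(2p)}$ (using $1/\h{p}=(p-1)/p$) is a real-analytic family of positive-definite self-adjoint operators in the single real parameter $p\in(1,2]$, Rellich's theorem guarantees that its eigenvalues $\lad_{j,p}$ and eigenprojections $E_{j,p}$ can be chosen real-analytic in $p$, even across eigenvalue crossings; this is the structural fact that prevents the individual spectral data from jumping. Combined with the joint continuity of the scalar kernel $\theta_p(x,y)$ in $(p,x,y)$ for $x,y>0$ — which holds because the off-diagonal expression in \eqref{def:theta_p} limits to the diagonal value $x^{2-p}$ as $y\to x$ — the representation \eqref{eq:repre_rpw} is continuous in $p$.

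The remaining two claims are scalar limits of $\theta_p$ together with the behaviour of $\gs^{1/\h{p}}$. For $p=2$ one has $\h{p}=2$, so $\gs^{1/\h{p}}=\gs^{1/2}$, and $\theta_2(x,y)=\frac{x-y}{x-y}=1$ for $x\neq y$ (and $x^{0}=1$ on the diagonal), hence $\theta_2\equiv1$; using completeness $\sum_k E_{k,2}=\sum_i E_{i,2}=\mi$ in \eqref{eq:repre_rpw} collapses the double sum to $\gs^{1/2}(\gs^{1/2}(A))=\gs(A)$. For $p\to1^+$ I would use $1/\h{p}=(p-1)/p\to0$, so that $\gs^{1/\h{p}}\to\id$ and $\gs^{-1/\h{p}}(\rho)\to\rho$, giving $\lad_{j,p}\to\lad_j$ and $E_{j,p}\to E_j$; meanwhile the elementary expansion $x^{p-1}-y^{p-1}=(p-1)(\log x-\log y)+o(p-1)$ shows $\theta_p(x,y)=(p-1)\frac{x-y}{x^{p-1}-y^{p-1}}\to\frac{x-y}{\log x-\log y}=\theta_{log}(x,y)$, with the diagonal case $x^{2-p}\to x$ matching. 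Passing to the limit termwise in \eqref{eq:repre_rpw} then produces \eqref{def:metricpeq1}.

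The main obstacle is the continuity statement, not the two boundary computations: one must justify that the spectral data of $\gs^{-1/\h{p}}(\rho)$ vary continuously in $p$ despite possible degeneracies, and this is precisely where one-parameter analytic perturbation theory is indispensable, since ordinary continuity of eigenprojections can fail at crossings and it is the real-analyticity in the single parameter $p$ that rescues the argument. Once this is secured, everything else is a termwise limit of the jointly continuous kernel $\theta_p$.
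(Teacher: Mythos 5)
Your proposal is correct and follows essentially the same route the paper indicates: unwind the definition \eqref{def:kernel_multiplication} through the spectral form \eqref{def:doutble_op_sum} of the double sum operator to get \eqref{eq:repre_rpw}, compute the scalar limits $\theta_2\equiv 1$ and $\theta_p\to\theta_{log}$ together with $\gs^{1/\h{p}}\to\gs^{1/2}$ resp.\ $\id$, and invoke Rellich's one-parameter analytic perturbation theory \cite{rellich1969perturbation} to control the spectral data of $\gs^{-1/\h{p}}(\rho)$ across possible eigenvalue crossings. This matches the argument the authors sketch, and your identification of the perturbation-theoretic step as the only nontrivial point is accurate.
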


Recalling the integral formula: $\frac{x - y}{\log x - \log y} = \int_0^1 x^{1-s} y^s \, ds $, we can write the operator $[\rho]_\ww$ in \eqref{def:metricpeq1} as
\begin{align*}
     [\rho]_\ww(A) = \int_0^1 e^{\ww(1/2-s)} \rho^{1-s} A \rho^{s}\, ds\,,\q A \in \bh\,, 
\end{align*}
which is nothing but the noncommutative multiplication by $\rho$ involved in the definition of the quantum 2-Wasserstein distance $W_2$ \cite{carlen2014analog,carlen2017gradient}. It follows that in the limiting case $p \to 1^+$, our transport distance $W_{2,p}$ reduces to the one $W_2$ introduced by Carlen and Maas. When $p = 2$, the representation \eqref{def:wp4} of $W_{2,p}$ gives
\begin{align} \label{eq:distance_1}
    W_{2,2}(\rho_0,\rho_1)^2 = \inf \Big\{
    \int_0^1 \sum_{j = 1}^J \l B_j, \gs^{-1} B_j  \r \, ds \,;\ &\dot{\gamma}(s) + \ddiv {\bf B} (s) = 0\ \text{with}\  \gamma(0) = \rho_0\,, \gamma(1) = \rho_1 
    \Big\}\,,
\end{align}
which has a similar form as the classical distance $\mathscr{W}_{2,\alpha,\gamma}$ in \eqref{def:distance_savare} with $\alpha = 0$, and thus can be regarded as a quantum analog of dual $\dot{H}^{-1}$ Sobolev distance. These facts allow us to (at least formally) conclude that our new family of quantum distances $W_{2,p}$ interpolates the noncommutative $2$-Wasserstein and the $\dot{H}^{-1}$ Sobolev ones. One may further expect a stronger result that for $\rho_0,\rho_1 \in \dh$, $W_{2,p}(\rho_0,\rho_1)$ is continuous in $p \in (1,2]$ and $W_{2,p}(\rho_0,\rho_1) \to W_2(\rho_0,\rho_1)$ as $p \to 1^+$, but this task seems to be challenging, and we leave it for future investigation.


\subsection{Ricci curvature and functional inequalities} \label{sec:geodesic_convexity} 
In this section, we will investigate the entropic Ricci curvature lower bound in terms of $p$-divergence $\mc{F}_{p,\si}(\rho)$ with $p \in (1,2]$. Thanks to the gradient flow structure obtained in Section \ref{sec:gradient}, we are allowed to derive some new functional inequalities from the Ricci curvature lower bound: a quantum HWI-type inequality and a transportation cost inequality, and connect them with the quantum Beckner's inequality \eqref{ineq_becp}, in the spirit of Otto and Villani \cite{otto2000generalization}.

Let us first introduce the Ricci curvature in our setting. We follow the terminology of \cite{erbar2012ricci,datta2020relating}, and say that a primitive QMS $\mc{P}_t$ with $\si$-{\rm GNS DBC} has the Ricci curvature lower bound $\kappa \in \R$ associated with $\mc{F}_{p,\si}$ if
 \begin{equation} \label{eq:ricci_p}
    \frac{d^2}{ds^2}\Big|_{s = 0} \mc{F}_{p,\si}(\gamma(s)) \ge \kappa g_{p,\gamma(0)}(\dot{\gamma}(0),\dot{\gamma}(0))\,,
\end{equation}
where $\gamma(s)$, $s \in (-\ep,\ep)$, is a geodesic satisfying $\gamma(0) = \rho \in \dhh$. We compute the second derivative of $\mc{F}_{p,\si}$ along the constant-speed geodesic $\gamma(s)$. For this, noting the Riemannian gradient \eqref{eq:riemannian_grad} of $\mc{F}_{p,\si}$, by definition, we have 
\begin{align} \label{eq:first_dev_f}
     \frac{d}{ds} \mc{F}_{p,\si}(\gamma(s)) &= - g_{p,\gamma(s)} (\dot{\gamma}(s), \mc{L}^\dag(\gamma(s)))  = - \l U(s), \mc{L}^\dag(\gamma(s))\r\,,
\end{align}
where $(\gamma,U)$ is the unique solution to \eqref{eq:geod_equa}. Then, differentiating \eqref{eq:first_dev_f} again with respect to $s$, we obtain  
\begin{align} \label{eq:second_dev_f}
     \frac{d^2}{ds^2}\Big|_{s = 0} \mc{F}_{p,\si}(\gamma(s)) & = - \l \dot{U}(s), \mc{L}^\dag(\gamma(s))\r - \l U(s), \mc{L}^\dag(\dot{\gamma}(s))\r \Big|_{s = 0} \notag \\
     & = \sum_{j = 1}^J \big\l \p_j U(0),  \mc{K}_{\rho,\mc{L}^\dag(\gamma(0))}^{(i),j} \left[ \p_j U(0) \right] \big \r - \left\l U(0), \mc{L}^\dag(\mf{D}_{p,\gamma(0)} U(0)) \right \r\,.
\end{align}
Recall that the Riemannian Hessian of $\mc{F}_{p,\si}$ at $\rho \in \dhh$ is defined by, for $U \in \mc{B}_{sa}^0(\mc{H})$, 
\begin{align*}
\hess \mc{F}_{p,\si}(\rho)[U,U] := \frac{d^2}{ds^2}\Big|_{s = 0} \mc{F}_{p,\si}(\gamma(s))\,,
\end{align*}
where $(\gamma(s),U(s))$ satisfies the geodesic equation \eqref{eq:geod_equa} with initial conditions $\gamma(0) = \rho$, $U(0) = U$, and $\dot{\gamma}(0) = \mf{D}_{p,\rho}U$. We readily conclude from the formula \eqref{eq:second_dev_f} that 
\begin{align} \label{eq:hess_of_f}
    \hess \mc{F}_{p,\si}(\rho)[U,U] = \sum_{j = 1}^J \big\l \p_j U,  \mc{K}_{\rho,\mc{L}^\dag(\rho)}^{(i),j} \left[ \p_j U \right] \big \r - \left\l U, \mc{L}^\dag(\mf{D}_{p,\rho} U) \right \r\,, \q \forall U \in \mc{B}_{sa}^0(\mc{H})\,.
\end{align}
Hence, it follows from definition that \eqref{eq:ricci_p} is equivalent to 
\begin{align} \label{eq:ricci_hess}
    \hess\mc{F}_{p,\si}(\rho)[U,U] \ge \kappa \l U, \mf{D}_{p,\rho} U\r\,,  \tag{${\rm Ric}_p(\mc{L}) \ge \kappa$}
\end{align}
for $\rho \in \dhh$ and $U \in \mc{B}^0_{sa}(\mc{H})$. The next proposition provides several equivalent characterizations of \eqref{eq:ricci_hess}, in terms of the $\kappa$-geodesic convexity of $\mc{F}_{p,\si}(\rho)$, the gradient estimates, and the evolution variational inequality. In what follows, we will use the notation:
\begin{align} \label{def:upper_deriv}
    \frac{d^+}{d t} f(t) = \limsup_{h \to 0^+} \frac{f(t+h) - f(t)}{h}\,.
\end{align}



\begin{proposition} \label{prop:geodesic_convex}
Let $\mc{P}_t$ be a primitive QMS satisfying $\si$-{\rm GNS DBC} with $\si \in \dhh$. For $\kappa \in \R$, \eqref{eq:ricci_hess} is equivalent to the following statements:
\begin{enumerate}[(i)]
    \item $\mc{F}_{p,\si}(\rho)$ is geodesically $\kappa$-convex on $(\mc{D}(\mc{H}), W_{2,p})$, that is, for any constant-speed geodesic $(\gamma(s))_{s \in [0,1]} \subset \dh$, 
    \begin{align} \label{eq:geo_conv}
        \mc{F}_{p,\si}(\gamma(s)) \le (1-s) \mc{F}_{p,\si}(\gamma(0)) + s \mc{F}_{p,\si}(\gamma(1)) - \frac{\kappa}{2}s(1-s) W_{2,p}(\gamma(0),\gamma(1))^2\,.
    \end{align}
    \item For any $\rho_0, \rho_1 \in \dh$, the following evolution
     variational inequality (EVI) holds: $\forall t \ge 0$,
     \begin{align} \label{eq:evi}
         \frac{1}{2}\frac{d^+}{d t} W_{2,p}(\mc{P}_t^\dag \rho_0 ,\rho_1)^2 + \frac{\kappa}{2} W_{2,p}(\mc{P}_t^\dag \rho_0 ,\rho_1)^2  \le \mc{F}_{p,\si}(\rho_1) - \mc{F}_{p,\si}(\mc{P}_t^\dag \rho_0)\,.
    \end{align}
\item The following gradient estimate holds: for $\rho \in \dhh$ and $U \in \mc{B}_{sa}^0(\mc{H})$, $\forall t > 0$,
\begin{align*}
    \norm{\na \mc{P}_t(U)}^2_{p,\rho} \le e^{-2 \kappa t} \norm{\na U}^2_{p, \mc{P}_t^\dag\rho}\,.
\end{align*}
\item The following contraction of the transport distance $W_{2,p}$ along the gradient flow holds:
\begin{equation*}
  W_{2,p}(\mc{P}_t^\dag \rho_0, \mc{P}_t^\dag \rho_1) \le e^{-\kappa t} W_{2,p}(\rho_0,\rho_1) \q \forall \rho_0,\rho_1 \in \dhh \,.
\end{equation*}
\end{enumerate}
\end{proposition}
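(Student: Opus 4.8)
The plan is to prove the four-way equivalence by a cycle of implications built on the gradient flow structure of Proposition~\ref{prop:gradient_flow} and the explicit Hessian formula \eqref{eq:hess_of_f}, following the Eulerian calculus of Otto--Villani and Daneri--Savar\'{e} as adapted to the noncommutative setting in \cite{carlen2017gradient,datta2020relating}. The backbone is the equivalence ${\rm Ric}_p(\mc{L}) \ge \kappa \Leftrightarrow (i)$. Given a constant-speed geodesic $(\gamma(s))_{s \in [0,1]}$ solving \eqref{eq:geod_equa}, the covariant acceleration vanishes, so $h(s) := \mc{F}_{p,\si}(\gamma(s))$ satisfies $h''(s) = \hess\mc{F}_{p,\si}(\gamma(s))[U(s),U(s)]$ with $U(s) = \mf{D}_{p,\gamma(s)}^{-1}\dot{\gamma}(s)$; since $g_{p,\gamma(s)}(\dot{\gamma}(s),\dot{\gamma}(s))$ is constant and equal to $W_{2,p}(\gamma(0),\gamma(1))^2$ along a constant-speed geodesic, the Hessian bound \eqref{eq:ricci_hess} reads exactly $h''(s) \ge \kappa\, W_{2,p}(\gamma(0),\gamma(1))^2$. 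Integrating this scalar inequality twice yields \eqref{eq:geo_conv}, and conversely a second-order Taylor expansion of \eqref{eq:geo_conv} at $s=0$ recovers \eqref{eq:ricci_hess}. I would first argue on the smooth interior $\dhh$ and then transfer to $\dh$ using the continuous extension of $W_{2,p}$ and the completeness of $(\dh,W_{2,p})$ from Theorem~\ref{thm:main_wasser}.

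Next I would prove $(i)\Leftrightarrow(ii)$. For $(i)\Rightarrow(ii)$, fix $\rho_1$, set $\rho_t := \mc{P}_t^\dag\rho_0$, and combine the gradient flow identity $\dot{\rho}_t = -\grad\mc{F}_{p,\si}(\rho_t)$ from \eqref{eq:riemannian_grad} with the first variation formula for the Riemannian distance to obtain $\tfrac12\tfrac{d^+}{dt}W_{2,p}(\rho_t,\rho_1)^2 = \tfrac{d}{ds}\big|_{s=0}\mc{F}_{p,\si}(\gamma_t(s))$, where $\gamma_t$ is a minimizing geodesic from $\rho_t$ to $\rho_1$. Differentiating the $\kappa$-convexity inequality \eqref{eq:geo_conv} at $s=0$ then gives $\tfrac{d}{ds}\big|_{s=0}\mc{F}_{p,\si}(\gamma_t(s)) \le \mc{F}_{p,\si}(\rho_1) - \mc{F}_{p,\si}(\rho_t) - \tfrac{\kappa}{2}W_{2,p}(\rho_t,\rho_1)^2$, which is precisely \eqref{eq:evi}. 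The converse $(ii)\Rightarrow(i)$ is the abstract fact that an $\mathrm{EVI}_\kappa$ flow is the gradient flow of a $\kappa$-geodesically convex functional; I would reproduce the Ambrosio--Gigli--Savar\'{e} argument recovering convexity by inserting the endpoints of a geodesic into \eqref{eq:evi} and optimizing in $t$.

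To close the loop I would establish $(ii)\Rightarrow(iv)$, $(iii)\Leftrightarrow(iv)$, and $(iii)\Rightarrow{\rm Ric}_p(\mc{L})\ge\kappa$. The step $(ii)\Rightarrow(iv)$ is the standard EVI-to-contraction estimate: applying \eqref{eq:evi} to the two flows $\mc{P}_t^\dag\rho_0$ and $\mc{P}_t^\dag\rho_1$ with a simultaneous variation of the two time parameters, adding, and using the metric derivative of $W_{2,p}$ gives $\tfrac{d^+}{dt}W_{2,p}(\mc{P}_t^\dag\rho_0,\mc{P}_t^\dag\rho_1)^2 \le -2\kappa\, W_{2,p}(\mc{P}_t^\dag\rho_0,\mc{P}_t^\dag\rho_1)^2$, and Gr\"{o}nwall's inequality yields the contraction. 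For $(iii)\Leftrightarrow(iv)$ I would invoke the Kuwada-type duality, using the dual Hamilton--Jacobi--Bellman formulation of $W_{2,p}$ recorded in the remark following Theorem~\ref{thm:main_wasser}: contraction of the distance is equivalent, by the associated Kantorovich-type duality, to the commutation gradient estimate $\norm{\na\mc{P}_t(U)}^2_{p,\rho} \le e^{-2\kappa t}\norm{\na U}^2_{p,\mc{P}_t^\dag\rho}$. Finally, differentiating this gradient estimate at $t=0$ (where both sides agree) returns the infinitesimal $\Gamma_2$-type bound \eqref{eq:ricci_hess}, completing the cycle.

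The main obstacle will be the two duality-type steps: the first variation of $W_{2,p}$ required for $(i)\Rightarrow(ii)$, and the Kuwada duality $(iii)\Leftrightarrow(iv)$. Both are delicate because the metric tensor $g_{p,\rho}$ depends on the base point $\rho$ through the operator-mean kernel $[\rho]_{p,\ww}$, so the time derivative of $W_{2,p}(\rho_t,\rho_1)^2$ couples the motion of $\rho_t$ with the $\rho$-dependence of the norm $\norm{\cdot}_{p,\rho}$; making the first variation rigorous needs the existence and regularity of minimizing geodesics from Theorem~\ref{thm:main_wasser} and a careful justification of differentiating under the infimum in \eqref{def:wp4}. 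The Kuwada step additionally requires verifying that the dual HJB subsolutions generate admissible competing curves, which is where I expect the bulk of the technical work to lie; everything else reduces to scalar ODE comparison and the abstract $\mathrm{EVI}_\kappa$ calculus.
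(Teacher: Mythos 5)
Your overall architecture is reasonable and you have correctly located the two delicate points, but as written the proposal has genuine gaps at exactly those points; the paper's proof simply routes everything through the Eulerian calculus of Otto--Westdickenberg and Daneri--Savar\'{e}, as adapted in \cite{datta2020relating} and \cite[Theorems 10.2 and 10.4]{carlen2020non}, precisely to avoid them. First, your implication ${\rm Ric}_p(\mc{L})\ge\kappa \Rightarrow (i)$ integrates the Hessian bound twice along a constant-speed geodesic; this requires the minimizing geodesic between two states of $\dh$ to be a $C^2$ solution of \eqref{eq:geod_equa} remaining in $\dhh$, so that $s\mapsto\mc{F}_{p,\si}(\gamma(s))$ is twice differentiable with second derivative equal to the Hessian. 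Theorem \ref{thm:main_wasser} only produces minimizers of the convex problem \eqref{def:wp4} by the direct method; these can touch the boundary of $\dh$ (where $[\gamma(s)]_{p,\ww}$ degenerates) and are not known to solve the geodesic ODE. The same regularity issue undermines the first-variation formula you invoke for $(i)\Rightarrow(ii)$. This is exactly why the cited proofs replace the Lagrangian argument by an action-interpolation (Eulerian) estimate along the semigroup, which needs only smooth competing curves, and then recover geodesic convexity \emph{from} the EVI by the abstract Ambrosio--Gigli--Savar\'{e} machinery rather than the other way around. (Your converse direction, Taylor-expanding \eqref{eq:geo_conv} at $s=0$ along the local geodesic with data in $\dhh$, is fine.)

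Second, your equivalence $(iii)\Leftrightarrow(iv)$ via Kuwada duality rests on the Hamilton--Jacobi--Bellman dual formulation of $W_{2,p}$, which the paper only derives formally in the remark following Lemma \ref{lem:lower_bound} and explicitly leaves unjustified; you cannot use it as a black box here. The paper instead obtains ${\rm Ric}_p(\mc{L})\ge\kappa\Leftrightarrow(iii)$ directly by a semigroup interpolation argument (differentiating $s\mapsto \norm{\na\mc{P}_{t-s}(U)}^2_{p,\mc{P}_s^\dag\rho}$ and recognizing the Hessian, as in \cite[Theorem 10.4]{carlen2020non}), and ${\rm Ric}_p(\mc{L})\ge\kappa\Leftrightarrow(iv)$ by the contraction-of-action argument of Daneri--Savar\'{e}, so that neither the duality nor the regularity of geodesics is needed. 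If you replace your two problematic steps by these, the remaining pieces of your cycle (the EVI doubling for $(ii)\Rightarrow(iv)$, Gr\"{o}nwall, and the $t\to 0^+$ linearization) go through.
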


\begin{proof}
The equivalence: \eqref{eq:ricci_hess} $\eqq (i) \eqq (ii)$ can be proved in the same manner as \cite[Theorem 3]{datta2020relating} with the gradient flow techniques from \cite{otto2005eulerian,daneri2008eulerian,dolbeault2012poincare}; see also \cite[Theorem 10.2]{carlen2020non}. \eqref{eq:ricci_hess} $\eqq (iii)$ follows from a similar semigroup interpolation argument as in the proof of \cite[Theorem 10.4]{carlen2020non}, while the proof of \eqref{eq:ricci_hess} $\eqq (iv)$ can be easily modified from those of 
Proposition 3.1 and (2.12) of \cite{daneri2008eulerian}. 
\end{proof}

With the notion of Ricci curvature, we next prove some interesting implications between functional inequalities, following the arguments of Otto and Villani \cite{otto2000generalization} (see also \cite{erbar2012ricci,datta2020relating}). 
 We start with an HWI-type inequality, which relates the generalized quantum transport distance $W_{2,p}$, the quantum $p$-divergence $\mc{F}_{p,\si}$, and the entropy production ($p$-Dirichlet form) $\mc{E}_{p,\mc{L}}$. The following lemma will be used later on.

\begin{lemma} \label{lem:deriva_wasser}
Let $\rho, \w{\rho} \in \dhh$ and $\rho_t = \mc{P}_t^\dag \rho$. Then there holds, for $t \ge 0$,
\begin{align*}
    \frac{d^+}{d t} W_{2,p}(\rho_t,\w{\rho}) \le  \frac{2}{p} \sqrt{\mc{E}_{p,\mc{L}}(\gs^{-1}(\rho_t))}\,.
\end{align*}
\end{lemma}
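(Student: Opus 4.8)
The plan is to use the standard triangle-inequality and metric-speed argument from gradient-flow theory. First I would invoke the triangle inequality for $W_{2,p}$, which is legitimate on all of $\dh$ thanks to the continuous extension in Lemma \ref{lem:exten_bry}, to write, for $h > 0$,
\begin{equation*}
W_{2,p}(\rho_{t+h}, \w{\rho}) - W_{2,p}(\rho_t, \w{\rho}) \le W_{2,p}(\rho_{t+h}, \rho_t)\,,
\end{equation*}
so that it suffices to estimate the right-hand side and divide by $h$.

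Next I would bound $W_{2,p}(\rho_t, \rho_{t+h})$ by the Riemannian length of the gradient-flow curve itself. Since $\rho \in \dhh$ and the primitive QMS keeps full-rank states full rank, $s \mapsto \rho_{t+s}$ is a smooth curve in $\dhh$; by Proposition \ref{prop:gradient_flow} and the range identity of Lemma \ref{lem:basic_krho}, $\dot{\rho}_s = \mc{L}^\dag(\rho_s) = \mf{D}_{p,\rho_s} U_s$ for a suitable $U_s \in \mc{B}^0_{sa}(\mc{H})$, so this curve is an admissible competitor in \eqref{def:wp1}. Restricting it to $[t, t+h]$ yields
\begin{equation*}
W_{2,p}(\rho_t, \rho_{t+h}) \le \int_t^{t+h} \sqrt{g_{p,\rho_s}(\dot{\rho}_s, \dot{\rho}_s)}\, ds\,.
\end{equation*}

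The crucial step is to identify this integrand with the $p$-Dirichlet form. Comparing the two expressions already derived for $\frac{d}{dt}\mc{F}_{p,\si}(\rho_s)$, namely \eqref{eq:grad_entropy} and \eqref{eq:ep_divi}, gives
\begin{equation*}
g_{p,\rho_s}\big(\mc{L}^\dag(\rho_s), \mc{L}^\dag(\rho_s)\big) = \frac{4}{p^2}\,\mc{E}_{p,\mc{L}}(\gs^{-1}(\rho_s))\,,
\end{equation*}
so the integrand equals $\frac{2}{p}\sqrt{\mc{E}_{p,\mc{L}}(\gs^{-1}(\rho_s))}$. Finally I would divide the length bound by $h$, take $\limsup_{h \to 0^+}$, and invoke the continuity of $s \mapsto \mc{E}_{p,\mc{L}}(\gs^{-1}(\rho_s))$ together with the fundamental theorem of calculus to pass the limit through the averaged integral, obtaining the asserted inequality.

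The main obstacle I anticipate is justifying the regularity needed to make the competitor genuinely admissible with finite speed up to the boundary: one must verify that $\rho_s$ stays full rank for all $s \ge 0$ (so that $g_{p,\rho_s}$ is well defined and $\mf{D}_{p,\rho_s}$ is invertible on $\mc{B}^0_{sa}(\mc{H})$), and that $s \mapsto \sqrt{g_{p,\rho_s}(\dot{\rho}_s, \dot{\rho}_s)}$ is continuous. Both points rest on the $C^\infty$ dependence of $[\rho]_{p,\ww}$ and $\mf{D}_{p,\rho}$ from Lemma \ref{lem:basic_krho} and on the primitivity of $\mc{P}_t$; this is the only place where more than a formal computation is required.
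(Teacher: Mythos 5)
Your proposal is correct and follows essentially the same route as the paper's proof: triangle inequality, bounding $W_{2,p}(\rho_t,\rho_{t+h})$ by the arc length of the gradient-flow trajectory itself via \eqref{def:wp3}, and identifying $g_{p,\rho_s}(\mc{L}^\dag\rho_s,\mc{L}^\dag\rho_s)$ with $\frac{4}{p^2}\mc{E}_{p,\mc{L}}(\gs^{-1}(\rho_s))$ by comparing \eqref{eq:grad_entropy} with \eqref{eq:ep_divi}. The regularity points you flag at the end are real but routine, and the paper treats them implicitly.
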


\begin{proof}
By definition \eqref{def:upper_deriv} and the triangle inequality, we have 
\begin{align*} 
    \frac{d^+}{dt}W_{2,p}(\rho_t,\w{\rho}) = \limsup_{s \to 0^+} \frac{1}{s}\big(W_{2,p}(\rho_{t + s},\w{\rho}) - W_{2,p}(\rho_t,\w{\rho})\big) \le \limsup_{s \to 0^+} \frac{1}{s}W_{2,p}(\rho_t,\rho_{t+s})\,.
\end{align*}
The expression \eqref{def:wp3} with a time scaling gives 
\begin{align*}
W_{2,p}(\rho_t,\rho_{t+s}) \le \int_t^{t+s} \sqrt{g_{p,\gamma(\tau)}(\dot{\gamma}(\tau),\dot{\gamma}(\tau))} \ d\tau\,, 
\end{align*}
for any smooth curve $\gamma$ in $\dhh$ such that $\gamma(t) = \rho_t$ and $\gamma(t+s) = \rho_{t+s}$.  Note from \eqref{eq:ep_divi} and \eqref{eq:grad_entropy} that 
\begin{align*}
    g_{p,\gamma(\tau)}(\dot{\gamma}(\tau),\dot{\gamma}(\tau)) = \frac{4}{p^2} \mc{E}_{p,\mc{L}}(\gs^{-1}(\gamma(\tau)))\,.
\end{align*}
Therefore, we can find 
\begin{equation*}
    \frac{d^+}{dt}W_{2,p}(\rho_t,\w{\rho}) \le \limsup_{s \to 0^+} \frac{1}{s} \int_t^{t+s} \frac{2}{p} \sqrt{\mc{E}_{p,\mc{L}}(\gs^{-1}(\gamma(\tau)))}  \ d\tau = \frac{2}{p} \sqrt{\mc{E}_{p,\mc{L}}(\gs^{-1}(\rho_t))}\,.   \qedhere
\end{equation*}
\end{proof}

\begin{theorem} \label{them:riccitohwi}
If \eqref{eq:ricci_hess} holds for some $\kappa \in \R$, then the following HWI-type inequality holds:
\begin{equation} \label{eq:HWI}
    \mc{F}_{p,\si}(\rho) \le \frac{2}{p} W_{2,p}(\rho, \si) \sqrt{\mc{E}_{p,\mc{L}}(\gs^{-1}(\rho))}- \frac{\kappa}{2} W_{2,p}(\rho, \si)^2\,, \q \text{for all}\ \rho \in \dh\,. 
\end{equation}
\end{theorem}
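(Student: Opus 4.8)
The plan is to follow the Otto--Villani strategy \cite{otto2000generalization}, exploiting the geodesic $\kappa$-convexity of $\mc{F}_{p,\si}$ provided by the hypothesis \eqref{eq:ricci_hess} together with Proposition \ref{prop:geodesic_convex}(i). First I would treat the interior case $\rho \in \dhh$. By Theorem \ref{thm:main_wasser} there exists a constant-speed minimizing geodesic $(\gamma(s))_{s \in [0,1]}$ with $\gamma(0) = \rho$ and $\gamma(1) = \si$, so that $g_{p,\gamma(s)}(\dot{\gamma}(s),\dot{\gamma}(s)) \equiv W_{2,p}(\rho,\si)^2$ for every $s$ by the constant-speed normalization. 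Since $\mc{F}_{p,\si}(\si) = 0$ by Lemma \ref{lem:prop_divi}, the convexity inequality \eqref{eq:geo_conv} specializes to
\[
\mc{F}_{p,\si}(\gamma(s)) \le (1-s)\mc{F}_{p,\si}(\rho) - \frac{\kappa}{2}s(1-s)W_{2,p}(\rho,\si)^2, \qquad s \in [0,1].
\]

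Next I would extract the two matching bounds on the derivative of $s \mapsto \mc{F}_{p,\si}(\gamma(s))$ at $s=0$. Because $\dhh$ is open and $\gamma(0)=\rho\in\dhh$, the initial segment of the geodesic stays in the interior where $g_{p,\rho}$ is smooth (Lemma \ref{lem:basic_krho}), so this derivative exists as an ordinary derivative. Rearranging the displayed inequality, dividing by $s$ and letting $s \to 0^+$ gives the lower bound
\[
-\frac{d}{ds}\Big|_{s=0}\mc{F}_{p,\si}(\gamma(s)) \ge \mc{F}_{p,\si}(\rho) + \frac{\kappa}{2}W_{2,p}(\rho,\si)^2.
\]
For the upper bound I would use the chain rule together with the Riemannian gradient identity \eqref{eq:riemannian_grad}, namely $\grad \mc{F}_{p,\si}(\rho) = -\mc{L}^\dag(\rho)$, to write $\frac{d}{ds}|_{s=0}\mc{F}_{p,\si}(\gamma(s)) = -g_{p,\rho}(\mc{L}^\dag(\rho),\dot{\gamma}(0))$, and then apply the Cauchy--Schwarz inequality for the metric $g_{p,\rho}$:
\[
\Big|\tfrac{d}{ds}\big|_{s=0}\mc{F}_{p,\si}(\gamma(s))\Big| \le \sqrt{g_{p,\rho}(\mc{L}^\dag(\rho),\mc{L}^\dag(\rho))}\,\sqrt{g_{p,\rho}(\dot{\gamma}(0),\dot{\gamma}(0))}.
\]
The second factor equals $W_{2,p}(\rho,\si)$; the first is computed by equating the two expressions \eqref{eq:ep_divi} and \eqref{eq:grad_entropy} for $\tfrac{d}{dt}\mc{F}_{p,\si}(\rho_t)$ along the gradient flow, which yields the identity $g_{p,\rho}(\mc{L}^\dag(\rho),\mc{L}^\dag(\rho)) = \tfrac{4}{p^2}\mc{E}_{p,\mc{L}}(\gs^{-1}(\rho))$, hence $\sqrt{g_{p,\rho}(\mc{L}^\dag(\rho),\mc{L}^\dag(\rho))} = \tfrac{2}{p}\sqrt{\mc{E}_{p,\mc{L}}(\gs^{-1}(\rho))}$. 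Chaining the lower and upper bounds produces \eqref{eq:HWI} for $\rho \in \dhh$.

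The main obstacle is extending the result from the interior to the full boundary statement $\rho \in \dh$, since the minimizing geodesic supplied by Theorem \ref{thm:main_wasser} lives only in $\dh$ and the metric $g_{p,\rho}$ degenerates on $\partial\dh$, so the differentiation argument above is available only when the endpoint $\rho$ is full-rank. I would resolve this by a continuity argument: the quantum $p$-divergence $\mc{F}_{p,\si}$ and the distance $W_{2,p}(\cdot,\si)$ extend continuously to $\dh$ (Lemma \ref{lem:exten_bry}), and the entropy production $\rho \mapsto \mc{E}_{p,\mc{L}}(\gs^{-1}(\rho))$ is continuous on all of $\dh$ for $p \in (1,2]$ (the map $X \mapsto I_{\h{p},p}(X)$ involves only the nonsingular power $|\cdot|^{p-1}$). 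Approximating a boundary state $\rho$ by full-rank states $\rho_n \to \rho$, applying \eqref{eq:HWI} to each $\rho_n$, and passing to the limit then yields the inequality for every $\rho \in \dh$, completing the proof.
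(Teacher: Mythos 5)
Your argument is correct in outline but follows a genuinely different route from the paper's. You run the classical Otto--Villani scheme directly on a minimizing geodesic $\gamma$ from $\rho$ to $\si$: the geodesic $\kappa$-convexity of Proposition \ref{prop:geodesic_convex}(i) gives the lower bound $-\frac{d}{ds}\big|_{s=0}\mc{F}_{p,\si}(\gamma(s)) \ge \mc{F}_{p,\si}(\rho) + \frac{\kappa}{2}W_{2,p}(\rho,\si)^2$, and Cauchy--Schwarz in $g_{p,\rho}$ together with the identity $g_{p,\rho}(\mc{L}^\dag\rho,\mc{L}^\dag\rho) = \frac{4}{p^2}\mc{E}_{p,\mc{L}}(\gs^{-1}(\rho))$ (from \eqref{eq:ep_divi} and \eqref{eq:grad_entropy}) gives the matching upper bound. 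The paper instead works along the \emph{semigroup} trajectory $\rho_t = \mc{P}_t^\dag\rho$: it invokes the EVI characterization of Proposition \ref{prop:geodesic_convex}(ii) at $t=0$ with $\rho_1 = \si$, and controls $-\frac{1}{2}\frac{d^+}{dt}\big|_{t=0}W_{2,p}(\rho_t,\si)^2$ via the triangle inequality and Lemma \ref{lem:deriva_wasser}. The payoff of the paper's route is that it never needs any regularity of the minimizing geodesic: the flow $t\mapsto\mc{P}_t^\dag\rho$ is smooth and only an upper Dini derivative of the distance is required. Your route is more geometric and closer to the original Otto--Villani argument, but it leans on properties of $\gamma$ that the paper does not establish.

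That is where the one genuine gap sits. Theorem \ref{thm:main_wasser} produces a minimizer $(\gamma_*,{\bf B}_*)$ of \eqref{def:wp4} with ${\bf B}_*\in L^1([0,1];\bh^J)$ and the constant-speed identity $g_{p,\gamma_*(s)}(\dot\gamma_*(s),\dot\gamma_*(s)) = W_{2,p}(\rho,\si)^2$ only for \emph{almost every} $s$. Your upper bound requires the tangent vector $\dot\gamma(0)$ to exist, the chain rule $\frac{d}{ds}\big|_{s=0}\mc{F}_{p,\si}(\gamma(s)) = -g_{p,\rho}(\mc{L}^\dag(\rho),\dot\gamma(0))$ to hold pointwise at $s=0$, and $g_{p,\rho}(\dot\gamma(0),\dot\gamma(0)) = W_{2,p}(\rho,\si)^2$ at that same point; none of these follow merely from $\dhh$ being open and $\rho\in\dhh$. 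To close this you would need an extra regularity step --- e.g.\ showing that on the initial interval where $\gamma_*$ stays in $\dhh$ it solves the smooth Hamiltonian geodesic system \eqref{eq:geod_equa} (hence is $C^1$ with constant speed everywhere there), or replacing the derivative at $s=0$ by difference quotients evaluated at a sequence $s_n\downarrow 0$ of points of differentiability and passing to the limit by continuity of $g_{p,\cdot}$ and of $\d_\rho\mc{F}_{p,\si}$ on $\dhh$. Either repair is standard but must be stated; as written the step is not justified. Your boundary-approximation argument for general $\rho\in\dh$ is fine and matches the paper's own density-plus-continuity reduction.
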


\begin{proof}
It suffices to consider $\rho \in \dhh$, since $\dhh$ is dense in $\dh$ and the inequality \eqref{eq:HWI} is continuous with respect to $\rho$. Letting $\rho_t = \mc{P}_t^\dag \rho$ for $\rho \in \dhh$, we derive from Lemma \ref{lem:deriva_wasser} that
\begin{align*}
    - \frac{1}{2} \frac{d^+}{d t}\Big|_{t =0} W_{2,p}(\rho_t, \si)^2 = &\liminf_{t \to 0^+} \frac{1}{2t} \big(W_{2,p}(\rho, \si)^2 - W_{2,p}(\rho_t, \si)^2\big) \\
    \le & \limsup_{t \to 0^+} \frac{1}{2t} \big( W_{2,p}(\rho, \rho_t)^2 + 2 W_{2,p}(\rho, \rho_t) W_{2,p}(\rho_t,\si) \big) \\
    \le &  \frac{2}{p} \sqrt{\mc{E}_{p,\mc{L}}(\gs^{-1}(\rho))} W_{2,p}(\rho,\si)\,.
\end{align*}
Then, by above estimate, recalling the EVI \eqref{eq:evi} in Proposition \ref{prop:geodesic_convex} with $\rho_0 = \rho$ and $\rho_1 = \si$, we obtain
\begin{align*}
        \mc{F}_{p,\si}(\rho) & \le - \frac{1}{2}\frac{d^+}{d t}\Big|_{t= 0} W_{2,p}(\rho_t, \si)^2 - \frac{\kappa}{2} W_{2,p}(\rho, \si)^2 \\
        & \le \frac{2}{p} W_{2,p}(\rho,\si) \sqrt{\mc{E}_{p,\mc{L}}(\gs^{-1}(\rho))} - \frac{\kappa}{2} W_{2,p}(\rho, \si)^2 \,.  \qedhere
\end{align*}
\end{proof}

As a direct consequence of Theorem \ref{them:riccitohwi} above, in the case of positive Ricci curvature lower bound, we can obtain the quantum Beckner's inequality.

\begin{corollary} \label{coro:poricci_beck}
If \eqref{eq:ricci_hess} holds for some $\kappa > 0$, then the quantum $p$-Beckner's inequality \eqref{eq_beck} holds with constant \mb{$\alpha_p \ge \kappa /2 $}. 
\end{corollary}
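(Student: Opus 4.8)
The plan is to read off the Beckner inequality directly from the HWI-type inequality \eqref{eq:HWI} of Theorem \ref{them:riccitohwi} by eliminating the transport-distance factor through Young's inequality. For a fixed $\rho \in \dh$, abbreviate the three quantities $F := \mc{F}_{p,\si}(\rho)$, $E := \mc{E}_{p,\mc{L}}(\gs^{-1}(\rho))$, and $W := W_{2,p}(\rho,\si)$; note that $W$ is finite since $(\dh, W_{2,p})$ is a complete geodesic space by Theorem \ref{thm:main_wasser}. With this notation, \eqref{eq:HWI} reads
\begin{equation*}
    F + \frac{\kappa}{2} W^2 \le \frac{2}{p}\, W \sqrt{E}\,.
\end{equation*}

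First I would estimate the right-hand side by Young's inequality in the form $\frac{2}{p}\, W\sqrt{E} \le \frac{\kappa}{2}W^2 + \frac{2}{p^2 \kappa}\,E$, which is nothing but the expansion of the perfect square $\bigl(\sqrt{\kappa/2}\,W - \sqrt{2/(p^2\kappa)}\,\sqrt{E}\bigr)^2 \ge 0$ (here $\kappa>0$ and $p>0$ are used). Inserting this into the displayed HWI bound cancels the term $\frac{\kappa}{2}W^2$ on both sides and leaves $F \le \frac{2}{p^2\kappa}\,E$; equivalently, one may simply optimize the concave quadratic $W \mapsto \frac{2}{p}W\sqrt{E} - \frac{\kappa}{2}W^2$ over $W \ge 0$, whose maximum $\frac{2}{p^2\kappa}E$ is attained at $W = \frac{2\sqrt{E}}{p\kappa}$. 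Either route yields $\frac{\kappa p}{2}\, F \le \frac{1}{p}\, E$.

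Rewriting this last bound as $\frac{\kappa p}{2}\,\mc{F}_{p,\si}(\rho) \le p^{-1}\,\mc{E}_{p,\mc{L}}(\gs^{-1}(\rho))$ is precisely the quantum $p$-Beckner's inequality \eqref{eq_beck} with constant $\alpha_p = \kappa p/2$, and since this holds for every $\rho \in \dh$ the optimal constant satisfies $\alpha_p(\mc{L}) \ge \kappa p/2$. There is essentially no analytic obstacle here, the substantive work being entirely front-loaded into the HWI inequality of Theorem \ref{them:riccitohwi} and the positivity of $\kappa$; the only points deserving a word of care are the degenerate case $\rho = \si$, where $F = E = 0$ and the inequality is trivial, and the validity of \eqref{eq:HWI} for all $\rho \in \dh$ (by the density and continuity argument already used in the proof of Theorem \ref{them:riccitohwi}), which is what legitimizes passing to the optimal Beckner constant.
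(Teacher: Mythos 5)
Your proof is correct and follows essentially the same route as the paper: apply Young's inequality to the HWI bound of Theorem \ref{them:riccitohwi} with the weight chosen so that the $\frac{\kappa}{2}W_{2,p}(\rho,\si)^2$ terms cancel, leaving $\mc{F}_{p,\si}(\rho) \le \frac{2}{p^2\kappa}\mc{E}_{p,\mc{L}}(\gs^{-1}(\rho))$, i.e.\ $\alpha_p \ge \kappa p/2$. The paper writes the Young step with a free parameter $C$ and then sets $C = 2/(p\kappa)$, which is exactly your choice.
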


\begin{proof}
By Theorem \ref{them:riccitohwi} and Young's inequality, we have 
\begin{align*}
       \mc{F}_{p,\si}(\rho) \le \frac{2}{p} \Big( \frac{1}{2 C} W_{2,p}(\rho, \si)^2 + \frac{C}{2} \mc{E}_{p,\mc{L}}(\gs^{-1}(\rho)) \Big) - \frac{\kappa}{2} W_{2,p}(\rho, \si)^2\,.
\end{align*}
Letting $C = 2/(p\kappa)$, by definition \eqref{eq_beck}, we complete the proof. 
\end{proof}

Another implication of positive Ricci curvature is the finite diameter of the metric space $(\dh, W_{2,p})$, which can be viewed as a noncommutative Bonnet–Myers theorem.

\begin{corollary}
If \eqref{eq:ricci_hess} holds for some $\kappa > 0$, then it holds that
\begin{equation} \label{est:diameter}
    \sup_{\rho_0, \rho_1\in \mc{D}(\mc{H})} W_{2,p}(\rho_0,\rho_1)^2 \le \frac{8}{\kappa p (p-1)} (\si_{\min}^{1-p} - 1)\,,
\end{equation}
where $\si_{\min}$ is the minimal eigenvalue of the invariant state $\si \in \dhh$. 
\end{corollary}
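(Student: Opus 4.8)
The plan is to combine the geodesic $\kappa$-convexity of $\mc{F}_{p,\si}$ with a uniform upper bound on the $p$-divergence over all states. By Proposition \ref{prop:geodesic_convex}, the hypothesis \eqref{eq:ricci_hess} with $\kappa > 0$ is equivalent to its statement (i), that $\mc{F}_{p,\si}$ is geodesically $\kappa$-convex on $(\dh, W_{2,p})$. Since Theorem \ref{thm:main_wasser} guarantees a constant-speed minimizing geodesic $(\gamma(s))_{s\in[0,1]}$ joining any $\rho_0,\rho_1 \in \dh$, I would apply the convexity inequality \eqref{eq:geo_conv} and evaluate it at the midpoint $s = 1/2$, where the prefactor $s(1-s)$ takes its maximal value $1/4$.

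At $s = 1/2$, \eqref{eq:geo_conv} reads $\mc{F}_{p,\si}(\gamma(1/2)) \le \tfrac12\big(\mc{F}_{p,\si}(\rho_0)+\mc{F}_{p,\si}(\rho_1)\big) - \tfrac{\kappa}{8}W_{2,p}(\rho_0,\rho_1)^2$. Discarding the nonnegative left-hand side, justified by the positivity $\mc{F}_{p,\si} \ge 0$ in Lemma \ref{lem:prop_divi}, yields the intermediate estimate $W_{2,p}(\rho_0,\rho_1)^2 \le \tfrac{4}{\kappa}\big(\mc{F}_{p,\si}(\rho_0) + \mc{F}_{p,\si}(\rho_1)\big)$, which reduces the problem to bounding the divergence itself.

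The remaining task is to control $\mc{F}_{p,\si}$ uniformly over $\dh$. Here I would invoke the identity \eqref{eq:rela_sandpdivi} expressing $\mc{F}_{p,\si}$ through the sandwiched R\'enyi entropy, $\mc{F}_{p,\si}(\rho) = \frac{1}{p(p-1)}\big(e^{(p-1)D_p(\rho\|\si)}-1\big)$, together with the supremum formula \eqref{eq:suprhodp}, $\sup_{\rho\in\dh} D_p(\rho\|\si) = \log\si_{\min}^{-1}$. Since $t \mapsto e^{(p-1)t}$ is increasing for $p > 1$, the supremum of $D_p$ transports to that of $\mc{F}_{p,\si}$, giving $\sup_{\rho\in\dh}\mc{F}_{p,\si}(\rho) = \frac{\si_{\min}^{1-p}-1}{p(p-1)}$ upon simplifying $\si_{\min}^{-(p-1)} = \si_{\min}^{1-p}$. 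Substituting $\mc{F}_{p,\si}(\rho_0)+\mc{F}_{p,\si}(\rho_1) \le 2\sup_{\rho}\mc{F}_{p,\si}(\rho)$ into the intermediate estimate produces exactly the diameter bound \eqref{est:diameter}. I do not anticipate any genuine obstacle; the only steps requiring care are the bookkeeping of exponents and the verification that the monotone transformation $t\mapsto e^{(p-1)t}$ preserves the supremum, both of which are routine.
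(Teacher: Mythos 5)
Your proof is correct and follows essentially the same route as the paper: the geodesic $\kappa$-convexity \eqref{eq:geo_conv} evaluated at $s=1/2$ gives $\frac{\kappa}{8}W_{2,p}(\rho_0,\rho_1)^2 \le \frac12\mc{F}_{p,\si}(\rho_0)+\frac12\mc{F}_{p,\si}(\rho_1)$, and the uniform bound on $\mc{F}_{p,\si}$ via \eqref{eq:rela_sandpdivi} and \eqref{eq:suprhodp} is exactly how the paper concludes. No issues.
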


\begin{proof}
Note that the geodesic convexity \eqref{eq:geo_conv} gives 
\begin{align} \label{auxest:diameter}
     \frac{\kappa}{8} W_{2,p}(\rho_0,\rho_1)^2 \le \frac{1}{2} \mc{F}_{p,\si}(\rho_0) + \frac{1}{2} \mc{F}_{p,\si}(\rho_1) \,.
\end{align}
Then the estimate \eqref{est:diameter} follows from \eqref{eq:rela_sandpdivi}, \eqref{eq:suprhodp}, and \eqref{auxest:diameter}.
\end{proof}

We say that a primitive QMS with $\si$-{\rm GNS DBC} satisfies a transport cost inequality associated with the distance $W_{2,p}$ with constant $c > 0$ if for all $\rho \in \dh$,
\begin{equation} \label{ineq:tc}
    W_{2,p}(\rho,\si) \le \sqrt{c \mc{F}_{p,\si}(\rho)}\,. \tag{TCp}
\end{equation}
We will show the chain of quantum functional inequalities \eqref{eq:chain}.

\begin{proposition} \label{propa}
Suppose that $p$-Beckner's inequality \eqref{eq_beck} holds for some $p \in (1,2]$. Then the transport cost inequality  \eqref{ineq:tc} holds with constant \mb{$c \ge 1/\alpha_p$}. 
\end{proposition}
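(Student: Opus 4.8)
The plan is to adapt the Otto--Villani strategy: integrate the $W_{2,p}$ arc length along the gradient flow of $\mc{F}_{p,\si}$ and control it by the decay of $\mc{F}_{p,\si}$ that Beckner's inequality \eqref{eq_beck} provides. Since both $W_{2,p}(\dd,\si)$ (Lemma \ref{lem:exten_bry}) and $\mc{F}_{p,\si}$ are continuous in $\rho$ on $\dh$, it suffices to prove \eqref{ineq:tc} for $\rho \in \dhh$. So I would fix $\rho \in \dhh$, set $\rho_t = \mc{P}_t^\dag(\rho)$ and $\psi(t) := \mc{F}_{p,\si}(\rho_t)$, and note that $\rho_t \in \dhh$ for all $t \ge 0$ and $\psi$ is smooth.

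First I would record the speed identity along the flow. Combining \eqref{eq:grad_entropy} (which gives $\psi'(t) = - g_{p,\rho_t}(\dot\rho_t,\dot\rho_t)$) with \eqref{eq:ep_divi} yields
$g_{p,\rho_t}(\dot\rho_t,\dot\rho_t) = \tfrac{4}{p^2}\mc{E}_{p,\mc{L}}(\gs^{-1}(\rho_t)) = -\psi'(t)$,
exactly the identity used in the proof of Lemma \ref{lem:deriva_wasser}. Since $W_{2,p}$ is the infimal constant-speed arc length \eqref{def:wp3}, testing it against the gradient-flow curve on $[0,T]$ gives $W_{2,p}(\rho,\rho_T) \le \int_0^T \sqrt{-\psi'(t)}\,dt$, and the triangle inequality yields
$W_{2,p}(\rho,\si) \le \int_0^T \sqrt{-\psi'(t)}\,dt + W_{2,p}(\rho_T,\si)$.

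Next I would insert Beckner's inequality \eqref{eq_beck} in the form $-\psi'(t) = \tfrac{4}{p^2}\mc{E}_{p,\mc{L}}(\gs^{-1}(\rho_t)) \ge \tfrac{4\alpha_p}{p}\,\psi(t)$ into the elementary identity $\sqrt{-\psi'} = (-\psi')/\sqrt{-\psi'}$, obtaining $\sqrt{-\psi'(t)} \le \sqrt{p/(4\alpha_p)}\,(-\psi'(t))/\sqrt{\psi(t)}$. Integrating and using $\int_0^\infty (-\psi')/\sqrt{\psi}\,dt = 2\sqrt{\psi(0)} - 2\lim_{t\to\infty}\sqrt{\psi(t)}$ together with $\psi(t) \to \mc{F}_{p,\si}(\si) = 0$ gives the finite bound $\int_0^\infty \sqrt{-\psi'}\,dt \le \sqrt{p/\alpha_p}\,\sqrt{\mc{F}_{p,\si}(\rho)}$; convergence of the integral near $t=\infty$ is ensured by the exponential decay \eqref{eq:exp_divi}, and near $t=0$ by boundedness of $\psi'$. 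This is exactly the claimed constant $c = p/\alpha_p$, provided the boundary term $W_{2,p}(\rho_T,\si)$ can be discarded as $T\to\infty$.

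The main obstacle is precisely that last point: Lemma \ref{lem:lower_bound} bounds trace norm by $W_{2,p}$ but not conversely, so the trace-norm convergence $\rho_T\to\si$ (the Schr\"odinger-picture form of primitivity, dual to \eqref{eq:conver_qms}) does not immediately give $W_{2,p}(\rho_T,\si)\to 0$. I would resolve this using the finiteness of the total length just established: for $T' > T$ one has $W_{2,p}(\rho_T,\rho_{T'}) \le \int_T^{T'}\sqrt{-\psi'}\,dt$, whose right side is a vanishing tail of a convergent integral, so $\{\rho_T\}_{T}$ is $W_{2,p}$-Cauchy; completeness of $(\dh,W_{2,p})$ (Theorem \ref{thm:main_wasser}) produces a $W_{2,p}$-limit $\rho_\infty$, and Lemma \ref{lem:lower_bound} then forces $\rho_T\to\rho_\infty$ in trace norm, whence $\rho_\infty = \si$ by uniqueness of trace-norm limits. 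Thus $W_{2,p}(\rho_T,\si)\to 0$, and letting $T\to\infty$ completes the proof. The remaining regularity checks (smoothness of $\psi$, that $\rho_t$ stays full-rank, and legitimacy of the integrations) are routine.
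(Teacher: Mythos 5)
Your proposal is correct and is essentially the paper's own argument: the paper packages the same computation as the monotonicity of $h(t) = W_{2,p}(\rho_t,\rho) + \sqrt{c\,\mc{F}_{p,\si}(\rho_t)}$ using Lemma \ref{lem:deriva_wasser} and \eqref{eq:ep_divi}, which is the pointwise form of your integrated arc-length bound, and yields the same constant $c = p/\alpha_p$. The only (minor) divergence is at the endpoint: the paper sends $t\to\infty$ using the continuity of the extended metric from Lemma \ref{lem:exten_bry}, whereas you obtain $W_{2,p}(\rho_T,\si)\to 0$ via the Cauchy property of $\{\rho_T\}$, completeness from Theorem \ref{thm:main_wasser}, and Lemma \ref{lem:lower_bound} — both routes are valid.
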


\begin{proof}
Again, it suffices to consider $\rho \in \dhh$. Let $\rho_t = \mc{P}_t^\dag \rho$ and define the function 
\begin{align*}
    h(t) := W_{2,p}(\rho_t,\rho) + \sqrt{c \mc{F}_{p,\si}(\rho_t)}\,, \q  t \ge 0\,.
\end{align*}
Clearly, $h(t)$ satisfies that $h(0)= \sqrt{c \mc{F}_{p,\si}(\rho)}$ and $h(t) \to W_{2,p}(\si,\rho)$ as $t \to \infty$ by \eqref{eq:conver_qms}. We now claim that when \mb{$c \ge 1/\alpha_p$}, $\frac{d^+}{d t} h(t) \le 0$ holds for $t \ge 0$, which completes the proof. By Lemma \ref{lem:deriva_wasser} and \eqref{eq:ep_divi}, when $\rho_t \neq \si$, we compute 
\begin{align*}
    \frac{d^+}{d t} h(t) &\le \frac{2}{p} \sqrt{\mc{E}_{p,\mc{L}}(\gs^{-1}(\rho_t))} -  \frac{2\sqrt{c}}{p^2 \sqrt{\mc{F}_{p,\si}(\rho_t)}} \mc{E}_{p,\mc{L}}\big(\gs^{-1}(\rho_t)\big) \\
    & = \frac{2}{p} \sqrt{\mc{E}_{p,\mc{L}}(\gs^{-1}(\rho_t))} \Big(1 -  \frac{\sqrt{c}}{p \sqrt{\mc{F}_{p,\si}(\rho_t)}} \sqrt{\mc{E}_{p,\mc{L}}(\gs^{-1}(\rho_t))}\big) \Big) \le 0\,,
\end{align*}
where the last inequality follows from \mb{$c \ge 1/\alpha_p$} and the Beckner's inequality \eqref{eq_beck}. If $\rho_{t_0} = \si$ for some $t_0$, then $\rho_t = \si$ for $t \ge t_0$ and hence $\frac{d^+}{d t} h(t) = 0$ for $t \ge t_0$. 
\end{proof}

\begin{proposition} \label{propb}
If the transport cost inequality \eqref{ineq:tc} holds with constant $c$, then the Poincar\'{e} inequality \eqref{ineq_pi} holds with $f = \vp_p$ and constant $\lad \ge 2/c$.
\end{proposition}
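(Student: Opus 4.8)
The plan is to linearize the transport cost inequality \eqref{ineq:tc} around the invariant state, in the spirit of Otto and Villani. Fix $X \in \mc{B}_{sa}(\mc{H})$ with $\tr(\si X) = 0$ (the general case reduces to this one, since both sides of \eqref{ineq_pi} are unchanged under $X \mapsto X - \tr(\si X)\mi$ because $\mc{L}(\mi) = 0$), and set $\nu := \gs(X)$, which is trace-zero and self-adjoint. For small $\ep > 0$ I consider the curve $\rho_\ep := \si + \ep \nu \in \dhh$, whose relative density is $\gs^{-1}(\rho_\ep) = \mi + \ep X$. I would apply \eqref{ineq:tc} to $\rho_\ep$, expand both sides to second order in $\ep$, and divide by $\ep^2$.

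For the right-hand side, the expansion \eqref{eq:asy_divi} with $\tr(\si X) = 0$ gives $\norm{\mi + \ep X}_{p,\si}^p = 1 + \tfrac{\ep^2}{2}p(p-1)\norm{X}_{\si,\vp_p}^2 + O(\ep^3)$, so that by the definition \eqref{def:quanpdivi} of $\mc{F}_{p,\si}$,
\[
\mc{F}_{p,\si}(\rho_\ep) = \frac{\ep^2}{2}\norm{X}_{\si,\vp_p}^2 + O(\ep^3)\,.
\]
For the left-hand side I would use that $\si$ is an interior (full-rank) point of the smooth Riemannian manifold $(\dhh, g_{p,\rho})$, so the squared distance admits the second-order expansion $W_{2,p}(\si, \si + \ep\nu)^2 = \ep^2 g_{p,\si}(\nu,\nu) + o(\ep^2)$. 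Only the lower bound $W_{2,p}(\rho_\ep,\si)^2 \ge \ep^2 g_{p,\si}(\nu,\nu) - o(\ep^2)$ is actually needed here, and I expect this to be the main technical obstacle: it is the standard Riemannian fact that curves realizing the distance between nearby points stay in a neighborhood of $\si$ on which $g_{p,\rho}$ is uniformly comparable to $g_{p,\si}$, which can be made rigorous via the arc-length formulation \eqref{def:wp3} together with the $C^\infty$-smoothness of $\mf{D}_{p,\rho}^{-1}$ on $\dhh$ from Lemma \ref{lem:basic_krho}. Combining the two expansions and letting $\ep \to 0^+$ yields $g_{p,\si}(\nu,\nu) \le \tfrac{c}{2}\norm{X}_{\si,\vp_p}^2$.

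Next I would identify the metric term with a resolvent of $\mc{L}$. Linearizing the gradient-flow identity \eqref{eq:riemannian_grad} at $\rho = \si$, exactly as in the earlier derivation of $\mc{L}^\dag X = - \mf{D}_{\rho}[\si]_{p,0}^{-1}X$, gives $-\mc{L}^\dag X = \mf{D}_{p,\si}[\si]_{p,0}^{-1}X$ on trace-zero operators (the derivative of $\mf{D}_{p,\rho}$ applied to the constant $\d_\rho\mc{F}_{p,\si}(\si) = \tfrac{1}{p-1}\mi$ drops out since $\mf{D}_{p,\rho}\mi = 0$), hence $\mf{D}_{p,\si} = -\mc{L}^\dag[\si]_{p,0}$ and $\mf{D}_{p,\si}^{-1}\nu = -[\si]_{p,0}^{-1}(\mc{L}^\dag)^{-1}\nu$. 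Using Lemma \ref{lem:key_rela} in the form $[\si]_{p,0}^{-1} = \gs^{-1}J_\si^{\vp_p}\gs^{-1}$, writing $(\mc{L}^\dag)^{-1}\nu = \gs(Y)$ with $Y = \mc{L}^{-1}X$ (the solution with $\tr(\si Y) = 0$), and invoking $\mc{L}^\dag\gs = \gs\mc{L}$ from Lemma \ref{lem:self_adjoint}, I obtain
\[
g_{p,\si}(\nu,\nu) = \l \mf{D}_{p,\si}^{-1}\nu, \nu\r = -\l \mc{L}^{-1}X, X\r_{\si,\vp_p} = \l (-\mc{L})^{-1}X, X\r_{\si,\vp_p}\,.
\]
Thus the linearized inequality reads $\l (-\mc{L})^{-1}X, X\r_{\si,\vp_p} \le \tfrac{c}{2}\norm{X}_{\si,\vp_p}^2$ for every $X$ with $\tr(\si X) = 0$.

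Finally I would conclude by spectral theory. By Lemma \ref{lem:self_adjoint} the operator $\mc{L}$ is self-adjoint and negative semidefinite with respect to $\l\dd,\dd\r_{\si,\vp_p}$, with $\ker \mc{L} = {\rm span}\{\mi\}$; moreover, since $\vp_p(1) = 1$ gives $J_\si^{\vp_p}\mi = \si$, one has $\l \mi, X\r_{\si,\vp_p} = \tr(\si X)$, so the constraint $\tr(\si X) = 0$ is precisely orthogonality to $\mi$ in this inner product. On $\{\mi\}^\perp$ the operator $-\mc{L}$ has spectrum bounded below by the spectral gap $\lad(\mc{L})$, so $(-\mc{L})^{-1}$ has norm $\lad(\mc{L})^{-1}$ there. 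The inequality above therefore forces $\lad(\mc{L})^{-1} \le c/2$, i.e. $\lad(\mc{L}) \ge 2/c$, and testing against the eigenvector realizing the spectral gap shows no better bound follows from linearization. Since by \eqref{def:spect} the Poincar\'{e} constant equals the spectral gap, this establishes \eqref{ineq_pi} with $f = \vp_p$ and $\lad \ge 2/c$.
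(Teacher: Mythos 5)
Your argument is correct in substance but follows a genuinely different route from the paper's. The paper never expands $W_{2,p}(\rho_\ep,\si)^2$ from below: instead it takes the optimal curve $(\gamma_\ep,{\bf B}_\ep)$ for $W_{2,p}(\rho_\ep,\si)$, pairs the continuity equation against the fixed observable $\Omega_\si^{\kappa_{1/p}}(\rho_\ep-\si)=\ep\,[\si]_{p,0}^{-1}\gs(X)$, and applies Cauchy--Schwarz in $\l\cdot,\cdot\r_{-1,p,\gamma_\ep(s)}$ to get $\norm{X}_{\si,\vp_p}^2 \le \ep^{-2}\bigl(\int_0^1\norm{\na\Omega_\si^{\kappa_{1/p}}(\rho_\ep-\si)}^2_{p,\gamma_\ep(s)}ds\bigr)^{1/2}W_{2,p}(\rho_\ep,\si)$; only the \emph{upper} bound on $W_{2,p}$ coming from \eqref{ineq:tc} is then needed, and the gradient integral converges to $-\l X,\mc{L}X\r_{\si,\vp_p}$ via \eqref{auxeqq_rela} and dominated convergence. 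This sidesteps both the inversion of $\mf{D}_{p,\si}$ and the second-order lower expansion of the squared distance, which is exactly the step you flag as your main obstacle. Your route — establishing $W_{2,p}(\si,\si+\ep\nu)^2\ge(1-o(1))\,\ep^2 g_{p,\si}(\nu,\nu)$, identifying $\mf{D}_{p,\si}=-\mc{L}^\dag[\si]_{p,0}$ (your linearization of \eqref{eq:riemannian_grad} is consistent with Lemma \ref{lem:key_rela}), and finishing by spectral calculus for $(-\mc{L})^{-1}$ on $\{\mi\}^{\perp_{\si,\vp_p}}$ — is the more classical Otto--Villani linearization and does close. Be aware, though, that to make your localization rigorous you must rule out near-optimal curves dipping toward the boundary of $\dh$, where $g_{p,\rho}$ degenerates and the comparison with $g_{p,\si}$ fails; the tool for this is Lemma \ref{lem:lower_bound} (any curve of small action stays $\norm{\cdot}_1$-close to $\si$, hence in a compact subset of $\dhh$ on which the metric is uniformly comparable to $g_{p,\si}$), which you do not cite but which the paper itself invokes for the analogous limit passage. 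With that reference supplied, your proof is complete; what the paper's pairing argument buys is precisely that it needs no two-sided control of the metric along the optimal curve, only the pointwise convergence $\gamma_\ep(t)\to\si$.
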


\begin{proof}
We consider $X \in \mc{B}_{sa}(\mc{H})$ with $\tr(\si X) = 0$, and define $\rho_\ep = \gs(\mi + \ep X)$.  Recall Theorem \ref{thm:main_wasser} and let 
$(\gamma_\ep, {\bf B}_\ep)$ be the minimizer to \eqref{def:wp4} for $W_{2,p}(\rho_\ep,\si)$. Then, note from \eqref{eq:chi_heisen} that 
\begin{align} \label{auxeqq_tctopi}
    \norm{X}_{\si,\vp_p}^2 & =  \frac{1}{\ep^2} \Big\l \Omega_\si^{\kappa_{1/p}}(\rho_\ep - \si), \int_0^1 - \ddiv {\bf B}_\ep(s)  ds    \Big\r \notag \\
    & \le \frac{1}{\ep^2} \Big( \int_0^1 \norm{\na \Omega_\si^{\kappa_{1/p}}(\rho_\ep - \si)}^2_{p,\gamma_\ep(s)} ds  \Big)^{1/2} \Big(  \int_0^1 \norm{{\bf B}_\ep(s)}^2_{-1,p,\gamma_\ep(s)}  ds  \Big)^{1/2} \notag \\
     & \le \frac{1}{\ep^2} \Big( \int_0^1 \norm{\na \Omega_\si^{\kappa_{1/p}}(\rho_\ep - \si)}^2_{p,\gamma_\ep(s)} ds  \Big)^{1/2} W_{2,p}(\rho_\ep,\si)\,,
\end{align}
by using the continuity equation in the first line, and Cauchy's inequality in the second line. Applying \eqref{ineq:tc} with the expansion \eqref{eq:asy_divi}, by definition \eqref{def:quanpdivi} of $\mc{F}_{p,\si}(\rho)$ and the relation \eqref{eq:chi_heisen}, 
we find 
\begin{align} \label{auxeq:wasserexp}
    \frac{1}{\ep} W_{2,p}(\rho_\ep,\si) \le \sqrt{ \frac{c}{2} \norm{X}_{\si,\vp_p}^2 + O(\ep)}\,.
\end{align}
A direct computation with \eqref{eq:kernel_iden_2} and Lemma \ref{lem:key_rela} gives  
\begin{align} \label{newlabel_aux}
    \p_j \Omega_\si^{\kappa_{1/p}}(\rho_\ep - \si) & = \ep \p_j \gs^{-1}\vp_p(\Delta_\si) R_\si \gs^{-1} \gs X \notag \\ 
    & = \ep \p_j [\si]_{p,0}^{-1} \gs X  \notag\\
    & = \ep [\si]_{p,\ww_j}^{-1} \gs \p_j  X\,.
\end{align}
By the proof of Lemma \ref{lem:exten_bry}, $\norm{\rho_\ep -\si}_1 \to 0$ as $\ep \to 0$ implies $W_{2,p}(\rho_\ep,\si) \to 0$, which further yields $W_{2,p}(\gamma_\ep(t),\si) = |1 - t| W_{2,p}(\rho_\ep,\si) \to 0$, as $\ep \to 0$, for all $t \in (0,1)$. Moreover, using Lemma \ref{lem:lower_bound}, we have  $\norm{\gamma_\ep(t)-\si}_1  \to 0$, as $\ep \to 0$, for  $t \in (0,1)$. 
It follows from the dominated convergence theorem that
\begin{align*}
   \frac{1}{\ep^2} \int_0^1 \norm{\na \Omega_\si^{\kappa_{1/p}}(\rho_\ep - \si)}^2_{p,\gamma_\ep(s)} ds  & \overset{\eqref{newlabel_aux}}{=} \sum_{j = 1}^J \int_0^1 \l [\si]_{p,\ww_j}^{-1} \gs \p_j  X, [\gamma_\ep(s)]_{p,\ww_j} [\si]_{p,\ww_j}^{-1} \gs \p_j  X \r \\
   & \to  \sum_{j = 1}^J \int_0^1 \l  \gs \p_j  X,  [\si]_{p,\ww_j}^{-1} \gs \p_j  X \r \overset{\eqref{auxeq_repinn}}{=}  - \l X, \mc{L} X \r_{\si,\vp_p}\,, \q \text{as}\ \ep \to 0\,.
\end{align*}
 Combining the above formula with \eqref{auxeqq_tctopi} and \eqref{auxeq:wasserexp}, we conclude 
\begin{equation*}
     \norm{X}_{\si,\vp_p}^2 \le - \frac{c}{2}  \l X, \mc{L} X \r_{\si,\vp_p}\,. \qedhere
\end{equation*}
\end{proof}

We have seen that the entropic Ricci curvature lower bound \eqref{eq:ricci_hess} can imply a sequence of quantum functional inequalities. A natural and important following-up question is how to estimate the lower bound $\kappa$ for the Ricci curvature. Following closely the arguments in \cite[Theorem 10.6]{carlen2020non}, we can explicitly estimate the Ricci curvature lower bound for the depolarizing semigroup by definition \eqref{eq:ricci_hess}. 

\begin{proposition}  \label{prop:ricci_depol}
Let $\mc{L}_{\rm depol}$ be the generator \eqref{def:depol} for the depolarizing semigroup with $\gamma > 0$ and $\si = \mi/d$. Then the Ricci curvature of $\mc{L}_{\rm depol}$ is bounded below by $\gamma p/2$.
\end{proposition}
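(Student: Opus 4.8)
The plan is to follow the computation of \cite[Theorem 10.6]{carlen2020non} and reduce everything to the symmetric setting of Remark \ref{eq:tracial}, where $\si = \mi/d$, all Bohr frequencies vanish ($\ww_j = 0$), the $V_j$ are self-adjoint, $\gs = d^{-1}\id$, and $\mc{L}_{depol}^\dag = \mc{L}_{depol}$ with $\mc{L}^\dag(\rho) = \gamma(\si - \rho)$ by \eqref{def:depol}. First I would evaluate the two pieces of the Hessian \eqref{eq:hess_of_f} separately. For the linear piece, since $\mc{L}^\dag(\cdot) = \gamma(\tr(\cdot)/d\,\mi - \cdot)$ and $\tr(U) = 0$ for $U \in \mc{B}_{sa}^0(\mc{H})$, the identity-valued term drops out and I expect
\begin{align*}
-\langle U, \mc{L}^\dag(\mf{D}_{p,\rho}U)\rangle = \gamma\langle U, \mf{D}_{p,\rho}U\rangle .
\end{align*}
It then remains to bound below the curvature term $T(\mc{L}^\dag\rho)$, where $T(A) := \sum_{j}\langle \p_j U, \mc{K}^{(i),j}_{\rho,A}[\p_j U]\rangle$.

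Because $\mc{L}^\dag\rho = \gamma(\si - \rho)$ and the maps $\mc{K}^{(i),j}_{\rho,A}$ in \eqref{def:kgeode} depend linearly on the direction $A$ (it enters only through $l_j(A), r_j(A)$), I would split $T(\mc{L}^\dag\rho) = \gamma T(\si) - \gamma T(\rho)$. For the $\rho$-part I would exploit that $\theta_p$ in \eqref{def:theta_p} is homogeneous of degree $2-p$, so Euler's identity, applied at the diagonal base point $R := \gs^{-1/\h{p}}(\rho)$, gives the operator relation $(\d_1\theta_p)[R,\cdot] + (\d_2\theta_p)[\cdot, R] = (2-p)\,\theta_p(R,R)[\cdot]$. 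Combining this with the symmetry identity \eqref{eq:observation}, which forces the two admissible forms $i=1,2$ of the curvature term to coincide in the symmetric case (where $j'=j$), I expect the clean value $T(\rho) = \tfrac{2-p}{2}\langle U, \mf{D}_{p,\rho}U\rangle$. Feeding both pieces back would yield
\begin{align*}
\hess\mc{F}_{p,\si}(\rho)[U,U] = \gamma\, T(\si) + \frac{\gamma p}{2}\langle U, \mf{D}_{p,\rho}U\rangle ,
\end{align*}
so that the claimed bound ${\rm Ric}_p(\mc{L}_{depol}) \ge \gamma p/2$ in \eqref{eq:ricci_hess} follows as soon as $T(\si) \ge 0$.

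The last step is where the real work lies, and it is the step I expect to be the main obstacle. Since the direction $\gs^{-1/\h{p}}(\si)$ is a positive multiple of $\mi$ and hence commutes with $R$, the directional derivative of the double operator sum should collapse to an ordinary partial derivative: writing $R = \sum_i \mu_i P_i$, the map $\mc{K}^{(1),j}_{\rho,\si}$ acts, up to the fixed conjugation by $\gs^{1/\h{p}}$, as the Schur multiplier with entries $\partial_1\theta_p(\mu_i,\mu_k)$. As $\p_j U$ is anti-self-adjoint, one has $\langle \gs^{1/\h{p}}\p_j U,\, \mc{M}(\gs^{1/\h{p}}\p_j U)\rangle = \sum_{i,k}\partial_1\theta_p(\mu_i,\mu_k)\,\|P_i\,\gs^{1/\h{p}}(\p_j U)\,P_k\|_2^2$, so positivity of $T(\si)$ reduces to the scalar inequality $\partial_1\theta_p(x,y) \ge 0$. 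This holds because $\theta_p(x,y) = (x-y)\big/\int_y^x t^{p-2}\,dt$ is a Stolarsky-type mean of $x$ and $y$ and is therefore monotone increasing in each argument. The delicate part throughout is the careful bookkeeping of the first- and second-slot derivatives $\d_1\theta_p,\ \d_2\theta_p$ of the double operator sums and rigorously justifying the collapse to a partial derivative in the commuting direction; the homogeneity and monotonicity facts about $\theta_p$ are then elementary once that machinery is in place.
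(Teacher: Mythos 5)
Your proof is correct and follows essentially the same route as the paper: both evaluate the linear term as $\gamma\langle U,\mf{D}_{p,\rho}U\rangle$, split the direction $\mc{L}_{depol}^\dag\rho=\gamma(\mi/d-\rho)$ into its identity and $\rho$ parts, dispose of the $\rho$ part via Euler's identity for the $(2-p)$-homogeneous $\theta_p$ together with the symmetry \eqref{eq:observation}, and reduce the remaining term to positivity of a derivative of $\theta_p$. The only (cosmetic) difference is that you justify the leftover term by $\partial_1\theta_p\ge 0$ (monotonicity of the Stolarsky-type mean in each slot), whereas the paper bounds the combined $\partial_1\theta_p+\partial_2\theta_p\ge 0$ via concavity of $x^{p-1}$; both are valid.
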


\begin{proof}
Note from the definition of $\mc{L}_{\rm depol}$ that $\p_j \mc{L}_{\rm depol} = - \gamma \p_j$.
Recalling \eqref{eq:hess_of_f}, and we compute 
\begin{align*}
    -\l \mc{L}_{\rm depol} U, \mf{D}_{p,\si} U \r =  \gamma d^{1-p} \sum_{j = 1}^J  \l \p_j U, \theta_p(\rho,\rho)   \p_j U\r\,.
\end{align*}
By definition \eqref{def:kgeode}, we can also calculate 
\begin{align*}
    \sum_{j = 1}^J \big\l \p_j U,  \mc{K}_{\rho,\mc{L}_{\rm depol}^\dag(\rho)}^{(1),j} \left[ \p_j U \right] \big \r & = \gamma d^{1-p} \sum_{j =1}^J \big\l  \p_j U,
 (\d_1 \theta_p)\big((\rho,\rho), \rho\big)\big[ \frac{\mi}{d} - \rho, \p_j U \big] \big\r \\
 & =  \gamma d^{1-p} \sum_{j =1}^J \big\l  \p_j U, L_{\frac{\mi}{d} - \rho} (\p_1 \theta_p)(\rho,\rho)[\p_j U] \big\r\,,
\end{align*}
since $\mc{L}^\dag_{\rm depol} = \gamma\big(\frac{\mi}{d} - \rho \big)$. Similarly, we have 
\begin{align*}
    \sum_{j = 1}^J \big\l \p_j U,  \mc{K}_{\rho,\mc{L}_{\rm depol}^\dag(\rho)}^{(2),j} \left[ \p_j U \right] \big \r 
 =  \gamma d^{1-p} \sum_{j =1}^J \big\l  \p_j U, R_{\frac{\mi}{d} - \rho} (\p_2 \theta_p)(\rho,\rho)[\p_j U] \big\r\,.
\end{align*}
It follows that 
\begin{align*}
      \hess \mc{F}_{p,\si}(\rho)[U,U] &= \frac{1}{2} \sum_{j = 1}^J \Big\l \p_j U,  \big(\mc{K}_{\rho,\mc{L}_{\rm depol}^\dag(\rho)}^{(1),j} + \mc{K}_{\rho,\mc{L}_{\rm depol}^\dag(\rho)}^{(1),j}\big) \left[ \p_j U \right] \Big \r - \left\l U, \mc{L}_{\rm depol}^\dag(\mf{D}_{p,\rho} U) \right \r \\
      & =  \gamma d^{1-p} \sum_{j =1}^J \Big\l  \p_j U, \big(\frac{1}{2d}\p_1 \theta_p + \frac{1}{2d}\p_2 \theta_p + \frac{p}{2}\theta_p \big)     (\rho,\rho)[\p_j U] \Big\r \\
      & \ge d^{1-p} \frac{p\gamma }{2} \sum_{j =1}^J \Big\l  \p_j U, \theta_p     (\rho,\rho)[\p_j U] \Big\r = \frac{p\gamma}{2} \l U, \mf{D}_{p,\rho} U\r\,,
\end{align*}
where the second line is from $ x \p_x \theta_p(x,y) +  y \p_y \theta_p(x,y) = (2-p) \theta_p(x,y)$; the third inequality follows from $ \p_x \theta_p(x,y) +  \p_y \theta_p(x,y) \ge 0$ by the concavity of $x^{p-1}$.
\end{proof}

However, similarly to the case of estimating the functional inequality constant, there are very few examples where the explicit expressions of the Ricci curvature lower bounds can be obtained. To avoid the complicated computation and estimation based on the definition \eqref{eq:ricci_hess}, Carlen and Maas \cite{carlen2017gradient} consider the following intertwining property of a QMS: for some $\kappa \in \R$ and all $j$, 
\begin{align} \label{eq:intertwin}
    \p_j \mc{P}_t = e^{- \kappa t} \mc{P}_t \p_j\,,
\end{align}
which can be verified for many interesting cases, e.g., Fermi and Bose Ornstein-Uhlenbeck semigroups \cite[Section 6]{carlen2017gradient}. They showed that under the condition \eqref{eq:intertwin}, the Ricci curvature of the QMS $\mc{P}_t$ associated with the relative entropy $D(\rho\|\si)$ is bounded from below by $\kappa$. The key step in their argument is the monotonicity of the action functional:  
\begin{align} \label{eq:monotone}
    \l \mc{P}^\dag_t A, [\mc{P}^\dag_t\rho]_{\ww}^{-1} \mc{P}^\dag_t A\r \le  \l A, [ \rho]_{\ww}^{-1}   A\r\,, 
\end{align}
where $\mc{P}_t$ is the primitive QMS satisfying $\si$-{\rm GNS DBC}. To extend their approach to our case, we need to show a similar monotonicity result as \eqref{eq:monotone}  for $ \l A, [\rho]_{p,\ww}^{-1}  A\r$. It is a nontrivial task, since $\l A, [\rho]_{p,\ww}^{-1}  A\r$ is not $1$-homogeneous so that the contractivity of $\l A, [\rho]_{p,\ww}^{-1}  A\r$ under $\mc{P}_t^{\sss \dag}$ can not be implied from its joint convexity \cite{lesniewski1999monotone}. For the symmetric QMS, $\mc{P}_t = \mc{P}_t^{\sss \dag}$ is the unital quantum channel for each $t$. Note that \cite[Theorem 5.1]{zhang2021some} has shown that 
\begin{equation} \label{auxeq_p-nomo}
       \l \Phi(A), [\Phi(\rho)]_{p,0}^{-1} \Phi(A)\r \le  \l A, [ \rho]_{p,0}^{-1}  A\r\,, \q \forall \rho \in \dhh\,, A \in \bh\,,
\end{equation}
holds for any unital quantum channel $\Phi$. Thus, in this case, it is straightforward to conclude as in \cite[Theorem 10.9]{carlen2020non} that if the primitive symmetric QMS $\mc{P}_t$ satisfies 
the property \eqref{eq:intertwin},  then its Ricci curvature associated with $\mc{F}_{p,\si}$ has a lower bound $\kappa$. It is known \cite{carlen2017gradient,carlen2020non} that the infinite temperature Fermi Ornstein-Uhlenbeck semigroup is symmetric and satisfies \eqref{eq:intertwin} with $\kappa = 1$, which readily gives that it has Ricci curvature lower bound $1$ and then Beckner's inequality \eqref{ineq_becp} holds with \mb{$\alpha_p \ge 1/2$} by Corollary \ref{coro:poricci_beck}. But it seems not easy to extend the monotonicity result \eqref{auxeq_p-nomo} beyond the symmetric regime, namely, to show the monotonicity $\l A, [\rho]_{p,\ww}^{-1}  A\r$ under quantum channels with $\si$-{\rm GNS DBC}. We choose to investigate it in the future.

\section{Conclusions and discussion} \label{sec:discussion}
We have introduced the families of quantum Beckner's inequalities \eqref{ineq_becp} and \eqref{ineq_dbecq} \mb{on a finite-dimensional matrix algebra} that interpolate between Sobolev-type and Poincar\'{e} inequalities. The basic properties of Beckner's inequality, e.g., the monotonicity, the uniform
positivity, and the stability of the optimal constant, have been investigated in detail. We have also discussed their relations \mb{with the hypercontractivity} and other known quantum functional inequalities and applied Beckner's inequalities \eqref{ineq_becp} to estimating the mixing time and deriving moment estimates. Furthermore, we have provided a quantum optimal transport framework for analyzing Beckner's inequalities. In doing so, we have defined a new class of quantum transport distances $W_{2,p}$ such that the QMS with \mb{$\si$-{\rm GNS DBC}} is the gradient flow of the $p$-divergence $\mc{F}_{p,\si}$. The main properties of the metric space $(\dh,W_{2,p})$ have been analyzed. 
We have then introduced the associated entropic Ricci curvature and showed that it could yield a number of implications between \eqref{ineq_becp}, \eqref{ineq_pi2}, an HWI-type inequality, and a transport cost inequality. This provides an interesting starting point for an optimal transport-inspired
approach to study quantum Beckner's inequalities.

We briefly discuss below some further generalizations and applications of our results and methods. The details and refinements of these results might be worth being reported elsewhere.

\begin{enumerate}[\textbullet]
    \item As mentioned in the introduction, the tensorization property for quantum functional inequalities is much more subtle than classical ones. Some tensorization-type results have been obtained for quantum MLSI and LSI, e.g., \cite[Lemma 25]{kastoryano2013quantum}, \cite[Section 4]{beigi2020quantum} and \cite[Theorem 9]{temme2014hypercontractivity}; see also \cite{king2014hypercontractivity,montanaro2008quantum}. By comparison results in Section \ref{sec:positive_stab}, these results 
    can be easily adapted to provide dimension-independent lower bounds for the quantum Beckner constant $\alpha_p(\mc{L})$ in certain scenarios. 
    \mb{For instance, let $\Psi_t(X) = e^{-t} X + \frac{1}{2} (1 -  e^{- t}) \tr(X) \mi$ be the qubit depolarizing 
    channel. \cite[Theorem 1]{king2014hypercontractivity} shows that
    \begin{align*}
        \norm{\Psi_t^{\otimes n}(X)}_{q,\frac{\mi}{2^n}} \le \norm{X}_{p,\frac{\mi}{2^n}}\,,\q \text{for}\ q \ge p  > 1\,,\ t \ge \log \sqrt{\frac{q-1}{p-1}}\,.
    \end{align*}
    Then, by Lemma \ref{lem:sobo_hyper} and Proposition \ref{prop:connectpsob}, we have $\alpha_p \ge \frac{1}{p} \w{\beta}_p \ge \frac{1}{2p}$ for $p \ge 1$.} 
    \item Thanks to Lemma \ref{lem:two-sided}, we can estimate the strong data processing inequality constant for the quantum $p$-divergence $\mc{F}_{p,\si}$ in terms of the $\chi^2_{\kappa_{1/p}}$-contraction coefficient, in the sense of \cite[Theorem 4.1]{gao2021complete}. We can also 
    discuss the stability of the data processing inequality for the divergence $\mc{F}_{p,\si}$ similarly to \cite[Proposition 5.1]{junge2019stability} and 
    the approximate tensorization property of $\mc{F}_{p,\si}$ similarly to \cite[Theorem 5.1]{gao2021complete}. 
    \item It is straightforward to generalize the arguments in \cite{wirth2022dual} to derive the dual formulation for the distance $W_{2,p}$ in terms of noncommutative Hamilton-Jacobi-Bellman-type equations. Indeed, a formal calculation gives 
\begin{align*}
\frac{1}{2} W_{2,p}^2(\rho_0,\rho_1) = \sup\Big\{\tr(A(1)\rho_1 - A(0)\rho_0)\,;\ \tr(\dot{A}(t)\rho) + \frac{1}{2}\norm{\na A(t)}^2_{p,\rho} \le 0\,,\ \forall \rho \in \dh \Big\}\,.
\end{align*}
The above dual formula allows us to fit our distance $W_{2,p}$ into the general framework of noncommutative transportation metrics recently proposed in \cite{gao2021ricci}, and to further discuss the coarse Ricci curvature of quantum channels with respect to $W_{2,p}$. Moreover, 
    in view of \cite{wirth2021curvature}, it is also straightforward to consider the curvature-dimension conditions for quantum systems and investigate the finite-dimensional version of quantum Beckner's inequalities \eqref{ineq_becp}.  
\item Note that one main ingredient for our analysis is 
Alicki’s theorem in Lemma \ref{lem:struc_gene}, which actually holds for the QMS with $\si$-{\rm GNS DBC} on an 
arbitrary finite-dimensional unital $*$-subalgebra; see  
\cite[Corollary 5.4]{wirth2022christensen}. Hence, we are allowed to extend most results in this work to the QMS not necessarily acting on $\mc{B}(\mc{H})$ but only
on a $*$-subalgebra which includes the finite state Markov chain as a special case; see also \cite{carlen2020non}. In particular, it connects the classical Beckner's inequality with a class of Wasserstein distance, which enables us to investigate the Beckner constant for Markov chains in terms of the Ricci curvature lower bound as in \cite{fathi2016entropic}. 

\item For the numerical computation of functional inequality constants, there is a recent research line attempting to estimate the classical log-Sobolev constant by the sum-of-squares relaxation \cite{faust2021sum}. It is interesting to extend their method to quantum functional inequalities.
\end{enumerate}
 
We conclude with some interesting and important open questions. 
\begin{enumerate}[\textbullet] 
    \item \mb{There are two technical questions we are unable to solve. The first one is stated in Remark \ref{rem:positivediri}:  for a primitive Lindbladian $\mc{L}$ satisfying $[\si]_{p,0}$-DBC, 
    \begin{equation} \label{ques1}
        \mc{E}_{p,\mc{L}}(\gs^{-1}(\rho)) > 0\,,\q \text{for}\ \rho \in \dhh\ \text{with}\ \rho \neq \si\,.
    \end{equation}
    The case of $p = 1$ has been proved in \cite[Proposition\,6.2]{brooks2022characterisation}, from which we easily see that the key step for \eqref{ques1} is the convexity of $p$-Dirichlet form $\mc{E}_{p,\mc{L}}$ which seems open. Another one is mentioned at the end of Section \ref{sec:qot_beck}: the monotonicity $\l \Phi(A), [\Phi(\rho)]_{p,\ww}^{-1} \Phi(A)\r \le  \l A, [ \rho]_{p,\ww}^{-1}  A\r$  under quantum channels with $\si$-{\rm GNS DBC}, for which the characterization for such channels \cite[Lemma 13]{beigi2020quantum} might be helpful. Addressing these questions would directly make our results more complete.
    }
    \item \mb{One important feature of the classical Beckner's inequality is its ability to capture the tail behavior of the family of probability measures: $d \mu_\alpha(x) = c_\alpha e^{-(1+x^2)^{\alpha/2}} dx$ on $\R$ with $\alpha \in [1,2]$, which satisfies the Poincar\'{e} inequality for all $\alpha$ but satisfies LSI only for $\alpha = 2$ \cite{bakry2014analysis,barthe2001levels,latala2000between}. In detail, \cite{latala2000between} considers the following variant of \eqref{ineq_e}: for $a \in [0,1]$ and some $C_a > 0$, 
    \begin{align} \label{eq_lo}
         \sup_{q \in [1,2)} \frac{1}{(2-q)^a} \big( \mu[g^2] - \mu[g^q]^{2/q} \big)  \le C_a \mc{E}(g, g)\,,
    \end{align}
    where $\mu$ is a probability measure. It can be shown that the measure $\mu_\alpha$ satisfies the inequality \eqref{eq_lo} with parameter $a \in [0,1]$ if and only if $\alpha \ge \frac{2}{2-a}$; see \cite[Section 7.6]{bakry2014analysis}. It would be very much desirable to find a similar example in the quantum setting. 
    Moreover, while 
    we have obtained several generic results for the quantum Beckner constant, it would also be beneficial to establish more quantitative estimates (even numerically) for some concrete, physically meaningful models, for instance, the quantum spin system \cite{capel2020modified} and the quantum Markov semigroups constructed from the classical ones via the group transference \cite{gao2020fisher}. 
    
    In view of these, it seems necessary to develop the theory of quantum Beckner's inequalities on a general von Neumann algebra to access more advanced examples to enrich the applications; see \cite{brannan2021complete,brannan2022complete,wirth2021complete,wirth2018noncommutative} and references therein for recent progress on quantum MLSI in the operator algebra framework. }
   
\item We have only considered the quantum Beckner's inequality for the primitive QMS. It would be very useful to extend the theory to the non-primitive setting, which might be necessary for the general tensorization results and most of the applications. The study of non-primitive quantum functional inequalities is initiated by Bardet \cite{bardet2017estimating} and has become an active research topic in recent years. \mb{Here, we have included a short discussion on the non-primitive Beckner's inequality in Appendix \ref{app:nonprimitive} to 
stimulate the further investigation in the spirit of recent works \cite{gao2020fisher,brannan2021complete,brannan2022complete,gao2021complete}. }
\end{enumerate}

\vspace{3 mm}
\noindent\textbf{Funding}\ \ The work is supported in part by National Science Foundation via award CCF-1910571.

\vspace{3 mm}
\noindent\textbf{Data Availability}\ \  Data sharing not applicable to this article as no datasets were generated or analysed during
the current study.

\section*{Declarations}

\noindent\textbf{Conflict of interest} \ \ The authors declare no conflict of interest.

\begin{appendix}

\section{Some additional preliminaries} \label{app:pre}

\subsection{Quantum \texorpdfstring{$\chi^2$}--divergence} \label{app:quantum_diver} 
In this appendix, we briefly recall the quantum $\chi^2$-divergences introduced in \cite{temme2010chi}. For any $\si \in \dhh$ and $\kappa: (0,\infty) \to (0,\infty)$, we define the operator: 
\begin{align} \label{def:omega}
    \Omega_\si^\ka = R_\si^{-1} \kappa(\Delta_\si):\ \bh \to \bh\,.
\end{align}
 Recalling $J_\si^f$ in \eqref{def:operator_kernel}, clearly there holds
\begin{align} \label{aux_rela}
    (\Omega_\si^\kappa)^{-1} = J_\si^{1/\kappa}\,.
\end{align}
The quantum $\ck$-divergence for $\rho\in \dh$ and $\si \in \dhh$ is defined by
\begin{align} \label{def:quantumchi}
    \chi_{\kappa}^2(\rho,\sigma) = \l \rho - \si, \Omega_\si^\kappa(\rho-\si)\r\,.
\end{align}
To make the divergence $\chi_{\kappa}^2(\dd,\dd)$ have nice properties, we usually consider $\kappa$ in the following functional class:   
\begin{align*}
     \mc{K} = \{\kappa: (0,\infty) \to (0,\infty)\,;\ \kappa \ \text{is operator convex},\ x\kappa(x) = \kappa(x^{-1}),\ \kappa(1) = 1\}.
\end{align*}
For the purposes of this work, the following family of power difference means in $\mc{K}$ is of particular interest \cite{hiai1999means}:
\begin{align}\label{def:pdiff}
\kappa_\alpha = \frac{\alpha}{\alpha-1} \frac{x^{\alpha - 1} - 1}{x^{\alpha} - 1}\,, \q \alpha \in [-1,2]\,.
\end{align}
In fact, the kernel function of the operator $J_\si^{1/k}$ in \eqref{aux_rela}  is given by  
\begin{align*}
    M_\alpha = y \ka_\alpha^{-1}(x/y) = \frac{\alpha - 1}{\alpha}\frac{x^\alpha - y^\alpha}{x^{\alpha - 1} - y^{\alpha - 1}}\,,
\end{align*} 
which is the so-called A-L-G  interpolation mean since $M_\alpha$, for $\alpha = -1$, $\alpha = 1/2$, $\alpha  = 1$, and $\alpha = 2$, gives the harmonic mean,  the geometric mean, the logarithmic mean, and the arithmetic mean, respectively.

\subsection{Noncommutative calculus} \label{app:NCC}
In this appendix, following \cite{carlen2020non,bardet2017estimating}, we 
review some fundamentals about noncommutative calculus associated with the derivation $\p_j$. Let  $A,B \in \mc{B}_{sa}(\mc{H})$ admit the spectral decompositions:
\begin{align*}
    A = \sum_{i = 1}^{d}\lad_i A_i\,,\q B = \sum_{k=1}^{d} \mu_k B_k\,,
\end{align*} 
where $\lad_i$ and $\mu_k$ are eigenvalues of $A$ and $B$, respectively; $A_i$ and $B_i$ are the associated rank-one spectral projections. For a function $f \in C(I \t I)$ with $I$ being a compact interval containing the spectra of $A$ and $B$,  we define the Schur multiplier (double sum operator) by \cite{birman2003double,potapov2011operator,de2004differentiation}
\begin{align} \label{def:doutble_op_sum}
    f(A,B) = \sum_{i,k = 1}^{d} f(\lad_i,\mu_k) L_{A_i}R_{B_k}\,,
\end{align}
where $C(I \t I)$ is the Banach space of complex-valued continuous functions on $I \t I$. It was observed in \cite{bardet2017estimating} that given $A,B \in \mc{B}_{sa}(\mc{H})$, $f(A,B)$ is $*$-representation between $C(I \t I)$ and $\mc{B}(\mc{B}(\mc{H}))$. Indeed, we have the following lemma from \cite[Lemma 4.1]{bardet2017estimating} and \cite[Lemma 6.6]{carlen2020non}. 
\begin{lemma} \label{lem:double_inner}
Let $A,B \in \mc{B}_{sa}(\mc{H})$ and the compact interval $I$ contain the spectra of $A$ and $B$. It holds that 
\begin{enumerate}[1.]
    \item $f(A,B)g(A,B) = (fg)(A,B)$ for $f,g \in C(I \t I)$. 
    \item If $f \in C(I \t I)$ is non-negative, then $f(A,B)$ is a positive semidefinite operator on $B(\mc{H})$ with respect to the inner product $\l \dd, \dd \r$. It follows that if $f$ is strictly positive, the sequilinear form $\l \dd, f(A,B)(\dd)\r$ defines an inner product on $\mc{B}(\mc{H})$. 
\end{enumerate}
\end{lemma}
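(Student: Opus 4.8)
The plan is to exploit that the elementary superoperators $L_{A_i} R_{B_k}$ behave like a complete system of orthogonal matrix units inside $\mc{B}(\bh)$, so that $f(A,B)$ is simultaneously diagonalized with ``eigenvalues'' $f(\lad_i,\mu_k)$; both assertions then reduce to idempotent algebra together with the completeness relations $\sum_i A_i = \sum_k B_k = \mi$. First I would record the structural identities used throughout: $L_{A_i}$ and $R_{B_k}$ commute; the projections satisfy $A_i A_{i'} = \d_{ii'} A_i$ and $B_k B_{k'} = \d_{kk'} B_k$, whence $L_{A_i} L_{A_{i'}} = \d_{ii'} L_{A_i}$ and $R_{B_k} R_{B_{k'}} = \d_{kk'} R_{B_k}$; and, since $A_i,B_k$ are self-adjoint, $(L_{A_i} R_{B_k})^\dag = L_{A_i} R_{B_k}$ with respect to $\l\dd,\dd\r$.

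For the first claim I would simply expand the product, using commutativity to group the left and right multiplications,
\begin{align*}
    f(A,B)\, g(A,B) = \sum_{i,k,i',k'} f(\lad_i,\mu_k)\, g(\lad_{i'},\mu_{k'})\, L_{A_i} L_{A_{i'}} R_{B_k} R_{B_{k'}}\,,
\end{align*}
and then apply the idempotent relations to collapse the four-fold sum onto the diagonal $i=i'$, $k=k'$. This leaves $\sum_{i,k} (fg)(\lad_i,\mu_k)\, L_{A_i} R_{B_k} = (fg)(A,B)$, which is the stated multiplicativity (that the map also respects the involution and is unital follows from the same identities, the latter from $\sum_i A_i = \sum_k B_k = \mi$).

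For the second claim the key computation is that each term is a square: using $A_i = A_i^2$, $B_k = B_k^2$, self-adjointness, and cyclicity of the trace,
\begin{align*}
    \l X, L_{A_i} R_{B_k} X\r = \tr(X^* A_i X B_k) = \tr\big((A_i X B_k)^*(A_i X B_k)\big) = \norm{A_i X B_k}_2^2 \ge 0\,.
\end{align*}
Hence $\l X, f(A,B) X\r = \sum_{i,k} f(\lad_i,\mu_k)\norm{A_i X B_k}_2^2 \ge 0$ whenever $f \ge 0$, giving positive semidefiniteness; self-adjointness of $f(A,B)$ (from $f$ real together with the identity $(L_{A_i} R_{B_k})^\dag = L_{A_i} R_{B_k}$) makes $\l\dd, f(A,B)(\dd)\r$ a Hermitian sesquilinear form. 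When $f>0$ all coefficients are strictly positive, so $\l X, f(A,B) X\r = 0$ forces $A_i X B_k = 0$ for every $i,k$; and the Parseval-type identity $\sum_{i,k}\norm{A_i X B_k}_2^2 = \norm{X}_2^2$, which follows from $\sum_i A_i = \sum_k B_k = \mi$ and cyclicity, then yields $X=0$. Thus $\l\dd, f(A,B)(\dd)\r$ is positive definite, i.e.\ an inner product.

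There is no serious obstacle here; the content is purely the idempotent calculus of the spectral projections. The only points requiring mild care are the bookkeeping when eigenvalues are degenerate --- handled by the convention that the $A_i$ (resp.\ $B_k$) are the rank-one projections onto an orthonormal eigenbasis, so that $A_i A_{i'} = \d_{ii'} A_i$ persists --- and verifying Hermitian symmetry (not merely nonnegativity) for the inner-product conclusion, which is precisely why I isolate the self-adjointness identity $(L_{A_i} R_{B_k})^\dag = L_{A_i} R_{B_k}$ at the outset.
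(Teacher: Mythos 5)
Your proof is correct. The paper itself does not prove this lemma but imports it from \cite[Lemma 4.1]{bardet2017estimating} and \cite[Lemma 6.6]{carlen2020non}, and your argument is essentially the standard one given there: the superoperators $L_{A_i}R_{B_k}$ are mutually orthogonal self-adjoint idempotents summing to the identity, so $f\mapsto f(A,B)$ is a unital $*$-homomorphism, and positivity plus the completeness relation $X=\sum_{i,k}A_iXB_k$ give definiteness for $f>0$. All the individual computations you record (the idempotent collapse of the double sum, $\l X,L_{A_i}R_{B_k}X\r=\norm{A_iXB_k}_2^2$, and the handling of degenerate eigenvalues via rank-one projections onto an orthonormal eigenbasis) check out.
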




In this work, we mainly consider the case where $f$ is the divided difference of some differentiable function $\vp$ on $I$:
\begin{align} \label{def:divi_diff}
    \vp^{[1]}(\lad,\mu) = 
    \begin{cases}
        \frac{\vp(\lad)-\vp(\mu)}{\lad-\mu}\,, \q & \lad \neq \mu\,, \\ 
        \vp'(\lad)\,,\q  & \lad = \mu\,,
    \end{cases}
\end{align}
which is closely related to the chain rule for $\p_j$ (cf.\cite[Lemma 4.2]{bardet2017estimating} and \cite[Proposition 6.2]{carlen2020non}). 
\begin{lemma} \label{lem:chain_rule}
Under the same assumption as in Lemma \ref{lem:double_inner}, for any $f: I \to \C$, we have, for $V \in \bh$,
\begin{align} \label{eq:chain_rule}
    V f(B) - f(A) V = f^{[1]}(A,B)(V B - A V)\,.
\end{align}
\end{lemma}
Then, by Lemma \ref{lem:chain_rule}, for a differentiable curve $A(t): (a,b) \to \mc{B}(\mc{H})$ and function $f$, we have 
\begin{align}\label{eq:chain_curve}
& \frac{d}{d t} f(A(t)) = f^{[1]}(A(t),A(t))(A'(t))\,.    
\end{align}
We also need a multiple operator version of \eqref{eq:chain_curve}. We recall that for a differentiable function $\vp: \R^n \to \C$, the partial divided difference $\d_j \vp: \R^{n+1} \to \C $ with respect to the variable 
$x_j$ is defined by 
\begin{align} \label{def_1:divi_diff}
(\d_{j} \vp)(x_1,\cdots,x_{j-1},(\lad,\mu),x_{j+1},\cdots,x_n)=  \left(\vp(x_1,\cdots,x_{j-1},\dd,x_{j+1},\cdots,x_n)\right)^{[1]}(\lad,\mu)\,.
\end{align}
Let $A^{(k)}$, $k = 1,\ldots,n$, be self-adjoint operators with the spectral decompositions: $A_i = \sum_{i} \lad_{i}^{(k)}A_{i}^{(k)}$, where $\lad_i^{(k)}$ are eigenvalues and $A_{i}^{(k)}$ are the associated rank-one spectral projections. For a function $\vp: I \t \cdots \t I \to \C$ with the interval $I$ containing the spectra of $A^{(i)}$,
the multiple operator sum is defined as:
\begin{align} \label{def_2:multiple}
    \vp(A_1,\cdots,A_n) = \sum_{i_1,\cd,i_n = 1}^{d} \vp(\lad^{(1)}_{i_1},\cd,\lad^{(n)}_{i_n})A_{i_1}^{(1)}\otimes\cd \otimes A_{i_n}^{(n)}.
\end{align}
The following chain rule from \cite[Proposition 6.8]{carlen2020non} shall be useful in the expression of the geodesic equations for the generalized quantum transport distance.
\begin{lemma} \label{lem:high_order}
Let the curves $A_t, B_t: (a,b) \to \mc{B}_{sa}(\mc{H})$ be differentiable, and let $\vp: I \t I \to \C$ be differentiable with $I$ containing the spectra of $A_t$ and $B_t$ for all $t \in (a,b)$. Then there holds 
\begin{align*}
   \p_t \vp(A_t,B_t)(\dd) = (\d_1 \vp) ((A_t,A_t),B_t)[\p_t A_t, \dd] + (\d_2 \vp)(A_t,(B_t,B_t))[\dd, \p_t B_t].
\end{align*}
\end{lemma}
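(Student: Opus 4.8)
The plan is to prove the identity first for separable functions $\vp(x,y) = f(x)g(y)$ and then extend to all differentiable $\vp$ by linearity together with a density argument. For a separable $\vp$ the double sum operator \eqref{def:doutble_op_sum} factorizes as $\vp(A_t,B_t) = L_{f(A_t)} R_{g(B_t)}$, so that $\vp(A_t,B_t)(X) = f(A_t)\, X\, g(B_t)$. Differentiating in $t$ by the ordinary Leibniz rule gives
\[
\p_t \vp(A_t,B_t)(X) = \big(\p_t f(A_t)\big)\, X\, g(B_t) + f(A_t)\, X\, \big(\p_t g(B_t)\big)\,,
\]
and the single-variable chain rule \eqref{eq:chain_curve} rewrites the two time derivatives as $\p_t f(A_t) = f^{[1]}(A_t,A_t)(\p_t A_t)$ and $\p_t g(B_t) = g^{[1]}(B_t,B_t)(\p_t B_t)$.

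It then remains to match these two terms with the right-hand side of the asserted formula, which is a direct bookkeeping check from definitions \eqref{def_1:divi_diff} and \eqref{def_2:multiple}. For separable $\vp$ one has $(\d_1\vp)\big((\lad,\lad'),\mu\big) = f^{[1]}(\lad,\lad')\, g(\mu)$, and the associated multiple operator sum $(\d_1\vp)\big((A_t,A_t),B_t\big)$ acts on the pair $[\p_t A_t, X]$ by $A_i\,(\p_t A_t)\, A_{i'}\, X\, B_k$ (summed with the coefficients $f^{[1]}(\lad_i,\lad_{i'})g(\mu_k)$), which collapses to exactly $\big(\p_t f(A_t)\big)\, X\, g(B_t)$; the symmetric computation with $B_k\,(\p_t B_t)\, B_{k'}$ identifies the $\d_2$ term with $f(A_t)\, X\, \big(\p_t g(B_t)\big)$. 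Next I would observe that both sides of the claim are linear in $\vp$, and that the three $\vp$-dependent objects $\vp \mapsto \vp(A_t,B_t)$, $\vp \mapsto (\d_1\vp)((A_t,A_t),B_t)$, and $\vp \mapsto (\d_2\vp)(A_t,(B_t,B_t))$ depend on $\vp$ only through the values of $\vp$ and of its first partial divided differences at the finitely many eigenvalues of $A_t$ and $B_t$. Since divided differences of a $C^1$ function are continuous and controlled by the $C^1$ norm, each of these maps is continuous from $C^1(I \t I)$ into the corresponding finite-dimensional space of multilinear operators. Because finite linear combinations of separable functions are dense in $C^1(I \t I)$ — polynomials approximate a $C^1$ function together with its first partials by the Weierstrass/Bernstein theorem — the identity extends from separable $\vp$ to arbitrary differentiable $\vp$, completing the argument.

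The hard part will be the density step: one must check carefully that the partial divided differences behave continuously under $C^1$-convergence of $\vp$, so that the finitely many scalar coefficients $(\d_1\vp)(\dd)$ and $(\d_2\vp)(\dd)$ converge, and that separable functions are genuinely $C^1$-dense on $I \t I$. A conceivable alternative is to differentiate the spectral representation $\vp(A_t,B_t)(X) = \sum_{i,k} \vp(\lad_i(t),\mu_k(t))\, A_i(t)\, X\, B_k(t)$ termwise; however this requires differentiability of the eigenvalues and eigenprojections of $A_t$ and $B_t$, which can fail at eigenvalue crossings and would itself demand a perturbation/continuity argument. For this reason the separable-plus-density route is the cleaner one to carry out.
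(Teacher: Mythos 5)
The paper does not actually prove this lemma: it is quoted verbatim from \cite[Proposition 6.8]{carlen2020non}, so there is no in-paper argument to compare against. Judged on its own merits, your proof is correct in substance and your reduction to separable $\vp(x,y)=f(x)g(y)$ is clean: the factorization $\vp(A_t,B_t)=L_{f(A_t)}R_{g(B_t)}$, the Leibniz rule, \eqref{eq:chain_curve}, and the bookkeeping identification of $(\d_1\vp)((A_t,A_t),B_t)[\p_tA_t,\dd]$ with $(\p_tf(A_t))\,(\dd)\,g(B_t)$ via \eqref{def_1:divi_diff}--\eqref{def_2:multiple} are all as they should be, and polynomials are indeed $C^1$-dense on $I\t I$. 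The route taken in the cited reference is different and avoids the density step altogether: one splits the difference quotient as
\begin{align*}
\frac{1}{h}\bigl(\vp(A_{t+h},B_{t+h})-\vp(A_t,B_t)\bigr)=\frac{1}{h}\bigl(\vp(A_{t+h},B_{t+h})-\vp(A_t,B_{t+h})\bigr)+\frac{1}{h}\bigl(\vp(A_t,B_{t+h})-\vp(A_t,B_t)\bigr)\,,
\end{align*}
applies the finite-difference chain rule of Lemma \ref{lem:chain_rule} in each slot separately to rewrite each bracket as a partial divided difference acting on $A_{t+h}-A_t$ (resp.\ $B_{t+h}-B_t$), and then lets $h\to0$ using the continuity of the Schur multiplier $\psi\mapsto\psi(\dd,\dd)$ in its operator arguments for continuous $\psi$. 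That argument buys you a pointwise-in-$t$ statement under exactly the stated hypotheses, whereas your approach is more elementary but requires one extra interchange of limits.

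That interchange is the one place where your write-up has a soft spot. Knowing that the identity holds for each $\vp_n$ and that $\vp_n\to\vp$ in $C^1(I\t I)$ gives you convergence of the right-hand sides and of $\vp_n(A_t,B_t)$, but to conclude that $\vp(A_t,B_t)$ is $t$-differentiable with the limiting derivative you must invoke the standard theorem on differentiating a uniform limit, which needs the convergence of the derivatives $\p_t\vp_n(A_t,B_t)$ to be \emph{locally uniform in $t$}. The error term is controlled by $\norm{\d_i(\vp_n-\vp)}_\infty$ times $\norm{\p_tA_t}$ and $\norm{\p_tB_t}$, so this is automatic when the curves are $C^1$ (as they are in every application in this paper), but it does not follow from mere differentiability of $A_t,B_t$ as stated. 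Either add the local boundedness of $\p_tA_t,\p_tB_t$ as a (harmless) hypothesis, or switch to the difference-quotient argument above, which sidesteps the issue.
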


\section{Note on the detailed balance condition} \label{app:dbc}

Noting \eqref{auxeqq_rela_00}, it follows from Lemma \ref{lem:self_adjoint} that the QMS satisfying $\si$-{\rm GNS DBC} is also self-adjoint with respect to the inner product $ \l \dd , \dd \r_{[\si]_{p,0}}$. In this appendix, we modify the discussions in \cite[Appendix B]{carlen2020non} and \cite[Appendix B]{benoist2021deviation} to show that the $[\si]_{p,0}$-DBC, for $p \in (1,2)$, and KMS DBC are not comparable, and that there exists a primitive QMS satisfying $[\si]_{p,0}$-DBC but not $\si$-{\rm GNS DBC}. 

Let $\{\ket{0},\ket{1}\}$ be the standard basis of $\C^2$, and $\ket{v_1} = \frac{1}{\sqrt{2}}(\ket{0} + \ket{1})$ and $\ket{v_2} = \frac{1}{\sqrt{5}}(\ket{0} + 2\ket{1})$ be an another basis of $\C^2$. We define the quantum channel $\Phi(X) = K_1^* X K_1 + K_2^* X K_2$ with $K_1 = \ket{v_1}\bra{0}$ and $K_2 = \ket{v_2}\bra{1}$. It is easy to see that the associated unique invariant state of $\Phi^\dag$ is $$\si = \frac{1}{7}\mm 2 & 3 \\ 3 & 5 \nn\,,$$
and the spectrum of $\Phi^\dag$ is given by $\{1, \frac{3}{10}, 0\}$ with $0$ of multiplicity two. We denote by $\Phi^{\dagger}_{\mathrm{KMS}}$ the adjoint of $\Phi$ with respect to the KMS inner product and define $\Psi = \Phi^\dag_{\mathrm{KMS}} \Phi$, which is also a quantum channel with $\Psi^\dag(\si) = \si$ but satisfying the KMS DBC.  Note that for a linear map $\Phi$ on $\mc{B}(\mc{H})$ satisfying both $[\si]_{p,0}$-DBC and KMS DBC, there holds $g(\Delta_\si) \circ \Phi = \Phi \circ g(\Delta_\si)$, where $g(x) = \kappa_{1/p}(x) x^{1/2}$ and $g(\Delta_\si) = [\si]_{p,0}^{-1} \gs$. By definition \eqref{def:pdiff}, we have $g(1/x) = g(x)$. Then, by exactly the same argument as in \cite[Appendix B]{benoist2021deviation}, we can show that the operator $\Psi$ defined above does not commute with $g(\Delta_\si)$ and hence the Lindbladian $\Psi - \id$ satisfies the KMS DBC but not $[\si]_{p,0}$-DBC.

We next consider $\w{\Psi} := [\si]_{p,0}^{-1} \circ \Psi^\dag \circ \gs$, which is a quantum channel, since the operator $[\si]_{p,0}^{-1}$ is completely positive for $p \in (1,2)$ by \cite[Example 4.7]{hiai2013families}.  Then we can check from definition that $\w{\Psi}$ satisfies the $[\si]_{p,0}$-DBC. Again, since $\Psi$ defined above does not commute with $g(\Delta_\si)$, neither does $\w{\Psi}$. Hence, the Lindbladian $\w{\Psi} - \id$ satisfies $[\si]_{p,0}$-DBC but not KMS DBC. 

We conclude with an example of a primitive QMS with $[\si]_{p,0}$-DBC but not $\si$-{\rm GNS DBC}. The construction is modified from \cite{benoist2021deviation} as well. We define, for some $\eta \in (0,1/2)$,  
\begin{align*}
K_1 = \mm \sqrt{\eta} & 0 \\ 0 & \sqrt{1 - \eta} \nn\,,\q K_2 = \mm 0 & \sqrt{\eta} \\ \sqrt{1 - \eta} & 0 \nn\,,
\end{align*}
and the associated quantum channel $\Phi(X) = K_1^* X K_1 + K_2^* X K_2$, which has the unique invariant state:
\begin{equation*}
    \si = \mm \eta & 0 \\ 0 & 1 - \eta \nn\,.
\end{equation*}
It is direct to verify $[\si]_{p,0}^{-1} \circ \Phi^\dag = \Phi \circ [\si]_{p,0}^{-1}$, i.e., $\Phi$ satisfies the $[\si]_{p,0}$-DBC. However, it was shown in \cite{benoist2021deviation} that $\si$-{\rm GNS DBC} does not hold for $\Phi$. It follows that the generator $\Phi - \id$ is the desired example.   

\mb{
\section{Beckner's inequality for non-primitive QMS} \label{app:nonprimitive}

In this appendix, we introduce $p$-Beckner's inequality with $p \in (1,2]$ for the non-primitive QMS and show that it holds for any QMS satisfying {\rm GNS DBC}, which extends \cite[Theorem 3.3]{gao2021complete} for MLSI. 

Let $\mc{P}_t = e^{t \mc{L}}$ be a non-primitive QMS with a full-rank invariant state $\si \in \dhh$, which may be non-unique. We introduce the fixed-point algebra $\mc{F} := \left\{X \in \bh;\ \forall t \ge 0,\, \mc{P}_t(X) = X \right\}$, and denote the associated conditional expectation by $E_\mc{F}$. It is known from \cite{frigerio1982long,gao2021complete} that 
$E_\mc{F} \mc{P}_t = \mc{P}_t E_\mc{F} = E_\mc{F}$ and for $X \in \mc{B}(\mc{H})$,
\begin{align} \label{eq:conver}
\lim_{t \to \infty} \mc{P}_t (X -  E_{\mc{F}}(X)) = \lim_{t \to \infty} \mc{P}_t (X) -  E_{\mc{F}}(X) = 0\,.
\end{align}
Similarly, Beckner's inequality quantifies the convergence rate of \eqref{eq:conver} in terms of the $p$-divergence $  $\eqref{def:quanpdivi}. We consider QMS that satisfies $\si$-{\rm GNS DBC} as in Definition \ref{def:sidbc}, which is well-defined since the self-adjointness of $\mc{L}$ with respect to $\l \dd,\dd\r_{\si,1}$ is independent of the choice of invariant state $\si$; see \cite[Lemma 2.6]{gao2021complete}. 
We compute the entropy production of $\mc{F}_{p,\si}(\rho)$ as in \eqref{eq:ep_divi}: for $\rho_t = \mc{P}_t^\dag(\rho)$ with $\rho \in \dhh$,
\begin{align*}
    \frac{d}{dt}\Big|_{t = 0} \mc{F}_{p,E_\mc{F}^\dag(\rho)}(\rho_t) = -\frac{4}{p^2} \mc{E}_{p,\mc{L}}(X)\,,\q X = \Gamma_{E_\mc{F}^\dag(\rho)}^{-1}(\rho) \,,
\end{align*}
where $\mc{E}_{p,\mc{L}}$ is given in \eqref{def:p_diri} with $\si = E_\mc{F}^\dag(\rho)$.  Hence, we can define the non-primitive Beckner's inequality as \eqref{eq_beck}: for some $\alpha_p > 0$ and any $\rho \in \dhh$, 
\begin{align} \label{def:nonbeck}
    \alpha_p \mc{F}_{p,E_{\mc{F}}^\dag(\rho)}(\rho) \le p^{-2} \mc{E}_{p,\mc{L}}\big(\Gamma_{E_\mc{F}^\dag(\rho)}^{-1}(\rho)\big)\,,
\end{align}
which is equivalent to the exponential convergence: $ \mc{F}_{p,E_{\mc{F}}^\dag(\rho)} (\mc{P}_t^\dag(\rho)) \le e^{- 4 \alpha_p t} \mc{F}_{p,E_{\mc{F}}^\dag(\rho)}(\rho)$.

We introduce the subalgebra index for the fixed-point algebra $\mc{F}$ by
\begin{align} \label{def:index}
C(E_\mc{F}) = \inf\big\{c> 0 \,;\ \rho \le c E_{\mc{F}}^\dag(\rho)\,,\ \forall \rho \in \dh \big\}\,,
\end{align}
which is finite in the finite-dimensional setting \cite[Theorem 6.1]{pimsner1986entropy}.
For a primitive QMS with the unique invariant state $\si \in \dhh$, the index \eqref{def:index} reduces to 
\begin{align} \label{def:constcsi}
    C(\si):= & \inf\{c > 0\,;\ \rho \le c \si
 \ \, \text{for all}\ \rho \in \mc{D}(\mc{H})
\}\,,
\end{align}
which is closely related to the max-relative entropy $D_{\infty}$ in \eqref{def:maxentropy} and can be explicitly represented by \eqref{eq:suprhodp},
\begin{align}  \label{def:constcsi22}
    C(\si) = \sup_{\rho \in \mc{D}(\mc{H})} \exp\left(D_{\infty}(\rho \| \si) \right) = \si_{\min}^{-1}\,.
\end{align}
We also recall the spectral gap (Poincar\'{e} constant) $\lad(\mc{L})$ for a non-primitive Lindbladian $\mc{L}$: 
\begin{align*}
    \lad(\mc{L}) = \inf_{X \in \mc{B}(\mc{H})} \frac{-\l X, \mc{L}(X) \r_{\si,f}}{\norm{X - E_\mc{F}(X)}_{\si,f}^2}\,,
\end{align*}
for an invariant state $\si \in \dhh$ and function $f:(0,\infty) \to (0,\infty)$, where the inner product $\l\dd,\dd\r_{\si,f}$ is given in \eqref{eq:general_inner}. It was proved in \cite[Lemma 3.2]{gao2021complete} that $\lad(\mc{L})$ is independent of the choice of $\si$ and $f$. We are now prepared to give the following result.

\begin{theorem} \label{thm:beck_poincare}
Let $\mc{P}_t = e^{t \mc{L}}$ be a QMS satisfying $\si$-{\rm GNS DBC} for some $\si \in \dhh$. Then the Beckner's inequality  \eqref{def:nonbeck} holds for all $p \in (1,2]$ with constant $\alpha_p(\mc{L})$ satisfying the estimate:
\begin{align} \label{eq:beck_poin}
       \alpha_p(\mc{L}) \ge \frac{p}{4} C(E_\mc{F})^{p-2} \lad(\mc{L})\,.
\end{align}
\end{theorem}

\begin{proof} 
We consider the relative density $X = \Gamma_{E_\mc{F}^\dag(\rho)}^{-1}(\rho)$ for $\rho \in \dhh$ and then have 
\begin{align} \label{auxeqq_1} 
   p^2 \mc{F}_{p,E_{\mc{F}}^\dag(\rho)}(\rho) =  \frac{p}{p-1}\big(\norm{X}_{p,E_{\mc{F}}^\dag(\rho)}^p - 1\big) \le p \norm{X - \mi}_{E_{\mc{F}}^\dag(\rho),\vp_p}^2 \le - \lad(\mc{L})^{-1} p 
    \l X, \mc{L} X \r_{E_{\mc{F}}^\dag(\rho),\vp_p},
\end{align} 
by using $E_{\mc{F}}^\dag \gs = \gs E_\mc{F}$ for any invariant state $\si$ and the upper estimate in \eqref{eq:two_sided_est}. By definition \eqref{def:index} and formulas \eqref{eq:rep_epl} and \eqref{eq:rep_dbcl}, it follows from 
Lemma \ref{lem:mono_norm} that 
\begin{align} \label{auxeqq_2}
 - \left\l X, \mc{L} X \right\r_{\si,\vp_p}
   & =  \left\l \gs^{1/p} \big(\p_j X \big), f_p^{[1]}\left(e^{\ww_j/2p}\si^{1/p}, e^{-\ww_j/2p}\si^{1/p}\right) \gs^{1/p} \big(\p_j X\big) \right\r  \notag \\
   & \le C(E_{\mc{F}})^{2-p} \left\l \gs^{1/p}  \big(\p_j X\big), f_p^{[1]}\left(e^{\ww_j/2p} \gs^{1/p}(X), e^{-\ww_j/2p} \gs^{1/p}(X) \right) \gs^{1/p}\big(\p_j X\big) \right\r \notag \\
   & \le 4 p^{-2} C(E_{\mc{F}})^{2-p} \mc{E}_{p,\mc{L}}(X)\,,
\end{align}
where $\si = E_{\mc{F}}^\dag(\rho)$ and $X = \Gamma_{E_\mc{F}^\dag(\rho)}^{-1}(\rho)$. 
Therefore, by \eqref{auxeqq_1} and \eqref{auxeqq_2}, we obtain
\begin{equation*}
    p^2 \mc{F}_{p,E_{\mc{F}}^\dag(\rho)}(\rho) \le \frac{4}{p \lad(\mc{L})}
C(E_{\mc{F}})^{2-p} \mc{E}_{p,\mc{L}}\big(\Gamma_{E_\mc{F}^\dag(\rho)}^{-1}(\rho)\big)\,.
\end{equation*}
The proof is complete by \eqref{def:nonbeck}. 
\end{proof}

\begin{remark}
    If $\mc{P}_t$ in Theorem \ref{thm:beck_poincare} is primitive, then we have $\alpha_p \ge p\,\si_{\min}^{2-p} \lad/4$, which is asymptotically worse than the one in \eqref{est:lowerbeck2} for fixed $\si_{\min} \le 1/2$ and $p$ close to $1$ or fixed $p \in (1,2]$ and small enough $\si_{\min}$. 
\end{remark}

\begin{remark}
    When $p \to 1^+$, the lower bound \eqref{eq:beck_poin} reduces to the one in \cite[Theorem 3.3]{gao2021complete} for MLSI constant, which has been improved 
    very recently in \cite[Theorem 4.18]{gao2022complete} by Gao et al. with a logarithmic dependence on the complete version of 
    subalgebra index.  One may hence expect a similar improvement for $\alpha_p(\mc{L})$ as well, which we leave for future investigation. 
\end{remark}

}



\end{appendix}

\end{document}